\documentclass[12pt]{article}
\usepackage{amscd, amsmath, amsthm}
\usepackage{amsfonts}
\usepackage[titles]{tocloft}
\usepackage{longtable}
\usepackage{amssymb} 
\usepackage{array}
\usepackage{tabu}
\usepackage[T1]{fontenc}
\usepackage[all,cmtip]{xy}
\usepackage{graphicx}
\usepackage{mathtools}
\usepackage{stmaryrd }
\usepackage{mathtools}
\usepackage{hyperref}
\usepackage{multicol}
\DeclarePairedDelimiter{\ceil}{\lceil}{\rceil}
\usepackage{url}
\topmargin=-5mm
\evensidemargin=0cm
\oddsidemargin=0cm
\textwidth=16cm
\textheight=22cm
\theoremstyle{definition}
\newtheorem{theorem}{Theorem}[section]
\newtheorem{prop}[theorem]{Proposition}
\newtheorem{lemma}[theorem]{Lemma}

\newtheorem{cor}[theorem]{Corollary}

\newtheorem{ex}[theorem]{Example}
\newtheorem{exercise}[theorem]{Exercise}

\newtheorem{dfn}[theorem]{Definition}

\newtheorem{remark}[theorem]{Remark}
\newtheorem{claim}[theorem]{Claim}
\newtheorem{fact}[theorem]{Fact}
\newenvironment{eqsys}{\begin{equation}\begin{dcases}}{\end{dcases}\end{equation}}
\usepackage{url}
\makeatletter
\newcommand{\address}[1]{\gdef\@address{#1}}
\newcommand{\email}[1]{\gdef\@email{\url{#1}}}
\newcommand{\@endstuff}{\par\vspace{\baselineskip}\noindent\small
\begin{tabular}{@{}l}\scshape\@address\\\textit{E-mail address:} \@email\end{tabular}}
\AtEndDocument{\@endstuff}
\makeatother

\def\ep{\epsilon}
\def\Hom{\rm{Hom}}

\def\R{\mathbb{R}}
\def\Z{\mathbb{Z}}
\def\G{\mathcal{G}}
\def\A{\mathcal{A}}
\def\F{\mathcal{F}}
\def\H{\mathcal{H}}
\def\N{\mathbb{N}}

\def\K{\mathcal{K}}
\def\D{\mathcal{D}}

\def\Sh{{\rm Sh}}
\def\I{\mathbb{I}}
\def\T{\mathcal T}

\def\Com{{\rm Com}}

\def\Spec{{\rm Spec\,}}
\def\k{{\bf k}}
\def\AD{A_{\bullet}}
\def\BD{B_{\bullet}}
\def\CD{C_{\bullet}}
\def\XD{X_{\bullet}}
\def\YD{Y_{\bullet}}
\def\ZD{Z_{\bullet}}
\def\B{\mathcal B}

\DeclareMathOperator{\coker}{coker}

\def\HF{{\rm HF}}
\def\SH{{\rm SH}}
\def\Ham{{\rm Ham}}

\def\Diff{{\rm Diff}}
\def\graph{{\rm graph}}
\def\pt{{\rm pt}}
\def\Hom{{\rm Hom}}
\def\HOM{{\mathcal Hom}^*}

\def\T{{\mathcal T}}
\def\sp{{\rm supp}}
\begin{document}
\title{Quantitative Tamarkin category}
\author{Jun Zhang \footnote{Supported by the European Research Council Advanced grant 338809.}}
\address{Jun Zhang \\ School of Mathematical Sciences\\
Tel Aviv University\\
Tel Aviv 69978, Israel}
\email{junzhang@mail.tau.ac.il}

\maketitle

\abstract{This is a lecture note from a seminar course given at Tel Aviv University in Spring 2018. Part of the preliminary section is built from Kazhdan's seminar organized in the Hebrew University of Jerusalem in Fall 2017. The main topic of this note is a detailed introduction of Tamarkin category theory from a quantitative perspective, followed by a demonstration of various applications in symplectic topology. Many examples are provided in order to obtain certain geometric intuitions of those abstract algebraic constructions in Tamarkin category. In this note, we try to explain how standard symplectic techniques, for instance, generating function, capacities, symplectic homology, etc., are elegantly packaged in the language of sheaves as well as related intriguing sheaf operators. In addition, many concepts developed in Tamarkin category theory are natural generalizations of persistent homology theory, so their relations are emphasized along with the development of the note. The standard references of this note are \cite{AI17}, \cite{Chiu17}, \cite{GKS12}, \cite{GS14} and \cite{Tam08}.

\newpage
\tableofcontents
\newpage
\listoffigures

\newpage 

\section{Introduction}
In this introduction, we will give a brief overview of most of the main materials in this note with their backgrounds, motivations, and relations with each other. Each subsection below focuses on a single topic, and subsections are not completely ordered as the contents of the note. Furthermore, the part on derived category and derived functors are regarded only as mathematical languages needed in this note (hence not motivated and elaborated at all in this introduction). 

\subsection{A brief background of symplectic geometry} Symplectic geometry has its origin way back to classical Hamiltonian mechanics in 19th century, where total energy of the system induces a flow, called {\it Hamiltonian flow}, preserving the standard volume form of the phase space (this is usually called {\it Liouville Theorem}). In fact, one gets a stronger result that this Hamiltonian flow preserves a 2-form $\omega$ of the phase space and, up to a scale, for some $n$, power $\omega^n$ provides the standard volume form. In particular, $\omega$ satisfies two properties: one is that $\omega$ is closed that is $d \omega =0$, and the other is that $\omega$ is non-degenerate that is precisely (top-dimensional form) $\omega^n$ is a volume form. Any such 2-form is called a {\it symplectic form} or a {\it symplectic structure}. For instance, on $\R^{2n}$ with coordinate $(x_1, y_1, ..., x_n, y_n)$, $\omega_{std} = dx_1 \wedge dy_1 + ... dx_n \wedge dy_n$ is the standard symplectic form. An even dimensional manifold $M^{2n}$ paired with a symplectic form $\omega$ on it is called a {\it symplectic manifold}, denoted as $(M^{2n}, \omega)$.  Besides $(\R^{2n}, \omega_{std})$, there are numerous symplectic manifolds in nature. One of the most popular examples people often refer to is cotangent bundle $T^*M$ where there exists a canonical symplectic form $\omega_{can} = - d\theta_{can}$ where $\theta_{can}$ is the canonical 1-form defined locally at point $(q,p) \in T^*M$ by $(\theta_{can})_{(q,p)}(v) = p(\pi_*v)$ where $\pi: T^*M \to M$ is just the projection. Symplectic geometry studies those diffeomorphisms which preserve symplectic structures, that is diffeomorphisms $\phi: (M_1, \omega_1) \to (M_2, \omega_2)$ such that $\phi^*\omega_2 = \omega_1$. Any such diffeomorphism is called a {\it symplectic diffeomorphism}. For instance, the Hamiltonian flows are important examples. People call the time-1 map of a Hamiltonian flow a {\it Hamiltonian diffeomorphism}. 

Be aware of the following fact. Denote the group of Hamiltonian diffeomorphisms as ${\rm Ham}(M, \omega)$, the group of symplectic diffeomorphisms as ${\rm Symp}(M, \omega)$ and the group of volume preserving diffeomorphisms as ${\rm Diff_{vol}}(M)$. Then in general, 
\[ \Ham(M, \omega) \subsetneqq {\rm Symp}(M, \omega) \subsetneqq {\rm Diff_{vol}}(M). \]
Both of the strict inclusions are due to deep results in symplectic geometry which characterizes symplectic geometry to be a subject that is worthwhile to be further investigated. Explicitly, the inclusion ${\rm Ham}(M, \omega) \subsetneqq {\rm Symp}(M, \omega)$ (more precisely, contained in ${\rm Symp}_0(M, \omega)$ - the identity component) which is in general strict comes from {\it $C^{\infty}$-flux conjecture}. Roughly speaking, their difference is detected by $H^1(M; \R)$. For more details, see Section 10.2 in \cite{MS98}.  Meanwhile, the inclusion ${\rm Symp}(M, \omega) \subsetneqq {\rm Diff_{{\rm vol}}}(M)$ which is in general strict is first observed by a celebrated result from M.\ Gromov \cite{Gro85} (usually called {\it Gromov's non-squeezing theorem}) saying that there is {\it no} symplectic embedding from symplectic ball $B^{2n}(R)$ to symplectic cylinder $Z^{2n}(r)$ if $R>r$ (note that certainly there exists a ``squeezing'' by a $\phi \in {\rm Diff_{vol}}(M)$ from $B^{2n}(R)$ to $Z^{2n}(r)$ even though $R>>r$). Here $B^{2n}(R) = \{(x_1, ..., y_n) \,| \, x_1^2 + ... + y_n^2 < R^2\}$ and $Z^{2n}(r) = \{(x_1, ..., y_n) \,| \, x_1^2 + y_1^2 < r^2\}$. This non-squeezing theorem motivates an activity and fruitful research direction specifically on symplectic embeddings. 

Submanifolds of symplectic manifold $(M^{2n}, \omega)$ are also of great interest. The most famous one is called {\it Lagrangian submanifold}, denoted as $L$, which is defined by $\omega|_{L}=0$ and $\dim L = n$ (half of the dimension of the ambient symplectic manifold). For instance, the zero-section $0_M$ or, in general, any $\graph(df)$ for some differential function $f: M \to \R$ is well-known Lagrangian submanifolds of $(T^*M, \omega_{can})$. Lagrangian submanifolds very often show some rigidity in terms of intersection with each other. A related famous result is {\it Arnold conjecture} saying that $\phi(0_M) \cap 0_M \neq 0$ for any Hamiltonian diffeomorphism $\phi$ on $T^*M$. Moreover, by an assumption of transversality, one can even provide a lower bound as big as $\sum b_j(M; \k)$. A different version of Arnold conjecture (linked via Weinstein neighborhood theorem by identifying $0_M$ with the diagonal of $M \times M$) is stated in terms of Hamiltonian diffeomorphisms, that is, for any (non-degenerate) Hamiltonian diffeomorphism $\phi$ on a symplectic manifold $(M, \omega)$, the set of its fixed points satisfies $\#{\rm Fix}(\phi) \geq \sum b_j(M; \k)$. This conjecture has been proved by various people and groups such as Floer, Hofer-Salamon, Fukaya-Ono, Liu-Tian, Pardon. This is such a landmark result that many people actually view the {\it modern} (means more than just a mathematical language of the classical mechanics) symplectic geometry begins from the formulation of this conjecture where its proof motivates various mathematical machinery specifically for symplectic geometry. 

\subsection{New methods introduced in symplectic geometry} Along with the development of symplectic geometry, for instance, proving Gromov's non-squeezing theorem or Arnold conjecture, new ideas are invented and used. Introduced by Gromov, {\it $J$-holomorphic curve} is not only the key to prove his non-squeezing theorem, but also serves as a useful tool to create invariants like Gromov-Witten invariants (see Chapter 7 in \cite{MS12}). Moreover, this directly inspires the Floer theory by A. Floer with his original attempt to prove Arnold conjecture. Nowadays, in various flavors, Floer theory has become one of the central theories of symplectic geometry. Roughly speaking, one should regard (Hamiltonian) Floer theory as an $\infty$-dimensional Morse theory on (some cover of) loop space. Obviously, extra difficulty compared with the standard Morse theory lies in the analysis in infinite dimensional space, which requires some results from Gromov's work \cite{Gro85}. 

Around the same time of the invention of Floer theory, non-squeezing theorem and Arnold conjecture inspire another machinery called {\it generating function} based on works by Chaperon, Lalonde-Sikorav, Sandon, Th\'eret, Traynor, and Viterbo (see \cite{San14} for a detailed review of this theory). As mentioned earlier that $\graph(df)$ is a Lagrangian submanifold of $T^*M$, we call function $f$ {\it defines} the Lagrangian $\graph(df)$. Though not every Lagrangian submanifold $L \subset T^*M$ behaves as nice as a graph (for instance, possibly immersed), method of generating function enables us still to use a function to define this $L$ by sacrificing the complexity of the ambient space, i.e., we call $L$ has a generating function $F(m, \xi): M \times \R^K \to \R$ if
\[ L = \left\{ \left(m, \frac{\partial F}{\partial m}(m, \xi) \right) \bigg| \, \frac{\partial F}{\partial \xi}(m, \xi) =0 \right\} \]
where $\R^K$ is the auxiliary space coming from the construction of such $F$ (basically to resolve the ``non-graphic'' part of $L$). With a certain assumption on the behavior at infinity, such $F$ (if exists) is uniquely defined. One typical example of $L$ which admits a generating function is the one which is Hamiltonian isotopic to $0_M$, that is $\phi(0_M)$ for some $\phi \in \Ham(T^*M, \omega_{can})$. Note the critical points of $F$ correspond to the intersection points of $L$ with $0_M$, so one sees this method reduces those conjectures to be set up in the classical Morse theory. 

It is worthwhile to emphasize how to associate a generating function to $\phi(0_M)$ where Section 4 in \cite{Tra94} provides a detailed construction when $M = \R^{2n}$. The idea is to first divide the Hamiltonian diffeomorphism $\phi$ into small pieces, that is, 
\begin{equation} \label{gf-decomp}
\phi = \phi^{(N)} \circ \phi^{(N-1)}  \circ \ldots \circ \phi^{(1)}
\end{equation}
where each $\phi^{(i)}$ is a Hamiltonian diffeomorphism and $C^1$-small. One can explicitly associate a generating function for such $C^1$-small Hamiltonian diffeomorphisms. Then by a composition formula (called Chekanov's formula, see \cite{Cha89}), we are able to ``glue'' all these pieces inductively to obtain a generating function for the general $\phi$. This composition formula is the key to success but unfortunately appears to be complicated (see Section 8 in \cite{San14} where a geometric interpretation is given). Later soon we will see the whole idea of this construction, in particular composition formula, has an easy reformulation in a new language from microlocal analysis, which enables us to view symplectic geometry from another perspective. 

\subsection{Singular support and its geometry} The concept {\it singular support} appears in different branches of mathematics, for instance, Fourier analysis (where it is more precisely called {\it wavefront set}), D-modules (where it is more precisely called {\it characteristic variety}), microlocal analysis in the framework of sheaf theory (or simply called microlocal sheaf theory) founded by Sato, Kashiwara-Schapira. Interestingly enough, though in different areas, they are all defined in a similar way and more or less related to each other by serious mathematical results. For us in this note, singular support, sometimes denoted as $SS$ for brevity, serves as a useful transformation from algebra to geometry. 

Explicitly, for any $\F \in \D(\k_{X})$, derived category of sheaves of $\k$-modules over manifold $X$, output $SS(\F)$ is a closed conical subset (very singular in general) of cotangent bundle $T^*X$. In particular, when $\F$ is constructible, $SS(\F)$ is always a (singular) Lagrangian submanifold. For instance, take sheaf $\k_{\{f(m) + t \geq 0\}}$ in derived category $\D(\k_{M \times \R})$ for some differential function $f$ on $M$ where variable of $\R$ is labelled as $t$. One has 
\begin{equation} \label{intro-ss-trans}
\mbox{(function) $f$} \longrightarrow \mbox{(algebra) $\k_{\{f(m) + t \geq 0\}}$} \xrightarrow{\tiny{\mbox{$SS$ + reduction}}}\mbox{(geometry) $\graph(df)$} 
\end{equation}
where {\it reduction} is from $T^*(M \times \R)$ to $T^*M$ simply by intersection with $\tau=1$ and projection, here $\tau$ is the dual variable of $t$. Note that we recover the Lagrangian submanifold $\graph(df) \subset T^*M$ of our interest as above. Moreover, it is not hard to carry on a similar process to associate a sheaf to a Lagrangian submanifold admitting a generating function. In other words, interaction of Lagrangian submanifolds of $T^*M$ can be transferred to some interaction of sheaves. 

Back to the definition of $SS$, it measures the co-directions in $T^*X$ where $\F$ ``{\it does not propagate}'', i.e., if $(x,\xi) \in SS(\F)$, then there exists a smooth function $\phi: X \to \R$ satisfying $\phi(x) = 0$ and $d\phi(x) = \xi$ such that some section $H^*(\{\phi<0\}; \F)$ cannot extend to a neighborhood of $(x, \xi)$. For instance, if $\F$ is a constant sheaf, $SS(\F) = 0_M$. Moreover, $SS(\F) \cap 0_M = {\rm supp}(\F)$, therefore, to some extent, $SS$ is a generalization of ${\rm supp}(\F)$. Definition of singular support is a perfect example that one good definition may lead to many interesting and meaningful consequences. To be more specific, on the one hand, directly from its definition, $SS$ is rather computable in many elementary cases, see Section \ref{sec-dfn-ss}; on the other hand, there is a rich pool of operators on sheaves such as $Rf_*$, $f^{-1}$, $\otimes$, $R\Hom$, etc., which results in a colorful functorial behaviors of $SS$, see Section \ref{sec-ss-p}. More interestingly, some fancier operators on sheaves (which are actually combinations of basic operators) correspond to some familiar operators on Lagrangian submanifolds. Here we give some examples as follows. 
\begin{center}
\begin{tabular}{ m{3cm} | m{5.5cm} | m{4cm}}
 \hline
{\it operators} & {\it sheaves} & {\it Lagrangians} \\
\hline
Corollary \ref{ex-t} & external product $\F \boxtimes \G$ &  \mbox{Cartesian product}  \\
 \hline
Defintion \ref{dfn-sh-comp} & composition $\F \circ \G$ & \mbox{Lag. correspondence}\\
 \hline
Definition \ref{dfn-sh-conv}  & convolution $\F \ast \G$  & fiberwise summation   \\
\hline
Definition \ref{dfn-hom} & ``dual'' convolution $\HOM(\F, \G)$ & fiberwise subtraction\\
\hline
\end{tabular}
\end{center}

Finally, we want to emphasize another point that is quite essential in some arguments: constraints from singular supports can force sheaves to behavior in restricted ways. For instance, if $SS(\F) \subset 0_M$, then $\F$ is locally constant. Proving it is more non-trivial than it looks where a result called {\it microlocal Morse theory} is needed, see Theorem \ref{g-mml} in Subsection \ref{sec-MM}.

\subsection{Different appearances of Tamarkin category} \label{ssec-dTc} So far we have seen some stories in symplectic geometry (in particular on $T^*M)$ and a possible way to associate sheaves to Lagrangian submanifolds in $T^*M$ (via generating functions and singular support). This motivates people to attempt to realize as many stories in symplectic geometry (on $T^*M$) as possible in the language of sheaves. The first and foremost question is certainly the platform that we shall work on. Instead of $\D(\k_M)$ itself, as seen in the transformation (\ref{intro-ss-trans}) we will start from $\D(\k_{M \times \R})$ where the extra $\R$-component carries the role of natural conical property of singular support where this conical property is, however, slightly {\it unnatural} in the regular symplectic geometry. Besides this formal compatibility with singular support, adding this extra variable $\R$ admits several meaningful explanations and is absolutely crucial in our story. In this subsection, we will emphasize the aspect that $\R$ can be used as a ruler (for filtration). This will be elaborated as below. 

{\it Tamarkin category} (free version) $\T(M)$ is defined as $\D_{\{\tau\leq 0\}}(\k_{M \times \R})^{\perp, \ell}$, left orthogonal complement of full triangulated subcategory $\D_{\{\tau\leq 0\}}(\k_{M \times \R})$ in $\D(\k_{M \times \R})$ where $\D_{\{\tau\leq 0\}}(\k_{M \times \R})$ consists of all elements in $\D(\k_{M \times \R})$ such that their singular supports lie in $T^*M \times (\R \times \{\tau \leq 0\})$. Though it looks a little bizarre at the first glance, it is in fact an ingenious observation from \cite{Tam08} that thanks to sheaf convolution operator ``$\ast$'', every $\F \in \D(\k_{M \times \R})$ can be decomposed by a distinguished triangle in $\D(\k_{M \times \R})$, that is, 
\begin{equation} \label{intro-ast-d} \F \ast \k_{M \times [0, \infty)} \to \F \to \F \ast \k_{M \times (0, \infty)}[1] \xrightarrow{+1}
\end{equation}
where $\F \ast \k_{M \times [0, \infty)} \in \T(M)$ and $\F \ast \k_{M \times (0, \infty)}[1] \in \D_{\{\tau\leq 0\}}(\k_{M \times \R})$. In fact, by orthogonality, $\F \in \T(M)$ if and only if $\F = \F \ast \k_{M \times [0, \infty)}$ (see Theorem \ref{tam-elm} in Subsection \ref{sec-tam-elm}). Therefore, we have a precise characterization of elements in $\T(M)$ and it is exactly this characterization that highlights the role of $\R$ and necessity of our choice of working platform to be $\T(M)$ instead of $\D(\k_{M\times \R})$. In fact, it's easy to see shift along $\R$, $(m,t) \to (m, t+c)$, induces a natural operator ${T_c}_*$ on element in $\D(\k_{M \times \R})$. Furthermore, in $\T(M)$, for every $a\leq b$, there exists a well-defined functor $\tau_{a,b}: {T_a}_*\F \to {T_b}_*\F$ (which does {\it not} exist in $\D(\k_{M \times \R})$ in general!). Geometrically, people should regard this shift functor acting on $\T(M)$ as the change of levels of sublevel sets in classical Morse theory. 

There are other Tamarkin categories with (further) restrictions of singular supports by closed subsets $A \subset T^*M$. Denoted as $\T_A(M)$, it is a full triangulated subcategory of $\T(M)$ such that reduction of $SS(\F)$ lies in $A$ for any $\F \in \T(M)$. Roughly speaking, there are two types where $A$ is either a Lagrangian submanifold of $T^*M$, for instance $A = 0_M$ or its Hamiltonian deformations;  or $A$ is a closed (possibly unbounded) domain of $T^*M$, for instance $A = B^{2n}(r)^c$, complement of an open symplectic ball in $T^*\R^n (\simeq \R^{2n})$. We will see later that the first case is related with (Lagrangian) Arnold conjecture and the second case is related with non-squeezing theorem. Finally, let us address the issue when restriction subset is open, denoted as $U$. Mimicking the definition of $\T_U(M)$ above does {\it not} work because $\T_U(M)$ is not a well-defined {\it triangulated} subcategory. Therefore, one correct way to define it is to let $A = U^c$ and $\T_U(M) : = \T_A(M)^{\perp}$, (left or right) orthogonal complement of $\T_A(M)$ in $\T(M)$. In fact, Chiu's work \cite{Chiu17} carefully deals with the case when $M = \R^n$ and $U = B(r)$, symplectic ball. One of his main results provides another {\it orthogonal} decomposition in the same spirit as (\ref{intro-ast-d}), that is for any $\F \in \T(\R^n)$, there exist $P_{B(r)}$ and $Q_{B(r)}$ in $\D(\k_{\R^n \times \R^n \times \R})$ and a decomposition 
\begin{equation} \label{intro-ast-d2}
\F \bullet P_{B(r)}  \to \F \to \F \bullet Q_{B(r)} \xrightarrow{+1}
\end{equation} 
such that $\F \bullet P_{B(r)} \in \T_{B(r)}(\R^n)$ and $\F \bullet Q_{B(r)} \in \T_{{B(r)}^c}(\R^n)$. Here ``$\bullet$'' is a mixture of composition and convolution operators (see Definition \ref{dfn-cc}) and $P_{B(r)}$ is called {\it ball-projector}. We will say more words about it later in Subsection \ref{subsec-ball} and also see Section \ref{sec-ball-proj}. Therefore, similarly, $\F \in \T_{B(r)}(\R^{n})$ if and only if $\F \bullet P_{B(r)} = \F$, which is a complete characterization of elements in $\T_{B(r)}(\R^{n})$. This construction can possibly be extended to more general open domain $U \subset T^*{\R^{n}}$ and obtain restricted Tamarkin category $\T_U(\R^n)$ where the associated $P_U$ is called {\it $U$-projector}. All in all, various Tamarkin categories provide our working environments dealing with standard objects appearing in symplectic geometry. One remark is that $\T(M)$ or $\T_A(M)$ seems having richer structures since instead of facing Lagrangians or domains directly, we are free to choose our preferred elements inside to work with. This is actually a meaningful observation that will be useful in Subsection \ref{subsec-hofer}.

\subsection{Inspirations from persistence $\k$-modules} Before starting to transfer more symplectic geometry objects into the framework of Tamarkin categories, we want to take a detour to a new algebraic structure called {\it persistence $\k$-modules} which is a quick and elegant way to package a large family data. Explicitly, a persistence $\k$-module $V$ consists of $\{\{V_t\}_{t \in \R}, \iota_{s,t}\}_{s\leq t}$ where each $V_t$ is a finite dimensional $\k$-module and for each $s\leq t$, the transfer map $\iota_{s,t}: V_s \to V_t$ such that $\iota_{t,t} = \I_{V_t}$ and if $r \leq s \leq t$, then $\iota_{r,t} = \iota_{s,t} \circ \iota_{r,s}$. A standard algebraic example is {\it interval-type} persistence $\k$-module $\I_{[a,b)}$ where $(\I_{[a,b)})_t = \k$ if and only if $t \in [a,b)$ and transfer maps are non-zero and identity on $\k$ if and only if both $s,t \in [a,b)$. What is interesting is that this interval-type persistence $\k$-modules form the building blocks of the general cases in that, by a decomposition theorem, 
\begin{equation} \label{intro-per-d}
V = \bigoplus \I_{[a_j,b_j)}^{m_j}, \,\,\,\,\mbox{where $m_j$ is the multiplicity of $\I_{[a_j,b_j)}$} 
\end{equation}
and moreover this decomposition is unique (up to reordering). Therefore, for each $V$, it is valid to associate a collection of intervals (exactly) from this decomposition, i.e., $V \rightarrow \mathcal B(V)$ where conventionally this $\mathcal B(V)$ is called the {\it barcode} of $V$.  

One reason of the birth of persistence $\k$-modules is clearly from classical Morse theory that for any Morse function $f: M \to \R$, set $V_t: = H_*(\{f<t\}; \k)$. Homologies of such sublevel sets can be put together to form a persistence $\k$-module denoted as $V(f)$ where $\iota_{s,t}$ is induced by inclusion $\{f<s\} \hookrightarrow \{f<t\}$. Interestingly, for two such Morse functions $f$ and $g$, $V(f)$ and $V(g)$ are comparable by the following sandwich-type inclusion 
\[ \{f<t\} \subset \{g < t+ c\} \subset \{f< t + 2c\} \]
where $c = ||f-g||_{C^0}$ (similarly one has another symmetric inclusion). Note that shift of level set is easily corresponding to parameter-shift of persistence $\k$-module, that is $(V[c])_t: = V_{t + c}$ and all related morphisms can also be shifted in the same manner. In general, in the language of persistence $\k$-module, we have symmetric sandwich relations involving shifted $V$ and shifted $W$, that is, there exist morphisms $F: V \to W[\delta]$ and $G: W \to V[\delta]$ such that 
\begin{equation} \label{intro-il}
G[\delta] \circ F = \Phi_V^{2\delta} \,\,\,\, \mbox{and} \,\,\,\, F[\delta] \circ G = \Phi_W^{2\delta}
\end{equation}
where $\Phi_V^{2\delta}: V \to V[2\delta]$ is the canonical transfer morphisms on $V$ (and similar to $\Phi_W^{2\delta}$). People then call $V$ and $W$ are {\it $\delta$-interleaved}. For instance, $V(f)$ and $V(g)$ are $c$-interleaved as above. This interleaving relation provides a quantitative way to compare (and also to define a distance between) two persistence $\k$-modules. Interested readers can refer to a research direction - {\it topological data analysis} - for numerous practical applications of this theory. Meanwhile, based on the Hamiltonian Floer theory, \cite{PS16} first introduces this language in the study of symplectic geometry. 

Though not directly dealing with persistence $\k$-modules in this note, its spirit has been spread around. Let us emphasize two points. One is the structure as in (\ref{intro-per-d}). The role of persistence $\k$-modules in our note is very often replaced by constructible sheaves over $\R$ mainly due to the following fact similar to (\ref{intro-per-d}), see Theorem 1.15 in \cite{KS17}: for any constructible sheaf $\F$ over $\R$, $\F \simeq \bigoplus \k_{I_j}$ for a (locally finite) collection of intervals $\{I_j\}_{j \in J}$. Moreover, this decomposition is unique, therefore we can associate $\F \to \mathcal B(\F)$ which we will call {\it sheaf barcode}. Apparently, there is a(n) (equivalence) relation between persistence $\k$-modules and constructible sheaves over $\R$ and this is discussed in details in appendix Section \ref{sec-per-sh}. The other is interaction relation as in (\ref{intro-il}). Recall one crucial feature of Tamarkin category is the possibility to ``shift'' elements along $\R$-direction. It is not hard to see how this can be used to define an interleaving type (pseudo-)distance between two elements in $\T(M)$; see $d_{\T(M)}$ in Definition \ref{dfn-interleaving}, which will be very useful once displacement energy is involved. This will be explained in the next subsection.

\subsection{Sheaf quantization and the Hofer norm} \label{subsec-hofer} The key ingredient making symplectic geometry to be a quantitative study is the definition of the Hofer norm defined on every Hamiltonian diffeomorphism, for every $\phi \in \Ham(M, \omega)$, 
\begin{equation} \label{intro-dfn-hofer}
||\phi||_{{\rm Hofer}} : = \inf\left\{ \int_0^1 (\max_M H_t - \min_M H_t) dt \, \bigg| \, \phi_H^1 = \phi\right\}.
\end{equation}
It is indeed a norm, in particular, non-degenerate, and it is a highly non-trivial to show $||\cdot||_{{\rm Hofer}}$ is non-degenerate (historically, hard machinery like $J$-holomorphic curve is heavily used). A geometric way to describe the Hofer norm is via {\it displacement energy}, that is for a given (closed) subset $A$, define $e(A): = \inf\{||\phi||_{\rm Hofer} \, |\, \phi(A) \cap A = \emptyset\}$. This kind of geometry derived from this norm is called {\it Hofer geometry} and it leads the development of symplectic geometry over the past few decades. Interestingly, on $T^*M$ this non-degeneracy can be confirmed by sheaf method (and so it recovers the result from \cite{Pol93}). This is the work done by \cite{AI17}, showing that for every closed ball with non-empty interior, its displacement energy is strictly positive, see Corollary \ref{cor-pol93}. Its success is essentially attributed to a transformation from $\phi \in \Ham(T^*M, \omega_{can})$ to its sheaf counterpart called {\it GKS's sheaf quantization} (of Hamiltonian diffeomorphism) based on the work \cite{GKS12}. 

Explicitly, for every compactly support $\phi \in \Ham(T^*M, \omega_{can})$, first homogenize it to be a homogeneous Hamiltonian diffeomorphism $\Phi$ on $T^*(M \times \R)$ (to be compatible our working space). Then the main result in \cite{GKS12} implies there exists a {\it unqiue} $\K ( = \K_{\phi}) \in \D(\k_{M \times \R \times M \times \R})$ such that $SS(\K) \subset \graph(\Phi) \cup 0_{M \times \R \times M \times \R}$. Again, we saw a transformation (from dynamics to algebra then to geometry via $SS$) that $\phi \,(\mbox{or} \,\,\Phi) \to \K_{\phi} \to \graph(\Phi)$ as in (\ref{intro-ss-trans}). Convolution with $\K_{\phi}$ is a well-defined operator on $\T(M)$ and 
\[ \mbox{convolution with $\K_{\phi}$} \,\,\, \,\Longleftrightarrow\,\,\,\, \mbox{geometric action by $\phi$}, \]
i.e., for any $\F \in \T_A(M)$, $\K_{\phi}\circ \F \in \T_{\phi(A)} (M)$. Remarkably, the quantitative measurement by the (pseudo-)distance $d_{\T(M)}$ mentioned above in the interleaving style says $d_{\T(M)}(\F, \K_{\phi} \circ \F)) \leq ||\phi||_{\rm Hofer}$. With some further work, mainly to give a well-defined capacity $c$ to sheaves (see Section \ref{sec-pb-sb} and Section \ref{sec-ec}), this estimation implies a sheaf version {\it energy-capacity inequality} 
\[ c(\F) \leq e(A) \,\,\,\,\mbox{for any $\F \in \T_A(M)$}.\]
Therefore, as observed at the end of Subsection \ref{ssec-dTc} on the freedom of choosing preferred $\F$ from $\T_A(M)$, clever choices of $\F$ result in different (non-)displaceability results. For instance, a ``torsion'' sheaf (see Example \ref{cap-eye}) results in the desired positivity conclusion when $A$ is a closed ball in $T^*M$, while a ``non-torsion'' sheaf (see Example \ref{ex-non-tor}) proves (Lagrangian) Arnold conjecture when $A = 0_M$. This entire process is similar to the classical argument in symplectic geometry involving displacement energy (where capacity is usually constructed based on some hard machinery like Floer theory or generating function, see Section 5.3 in \cite{Ush13}), but here everything is disguised in the language of sheaves and presented in the  framework of Tamarkin categories. We expect this new aspect can handle some additional and more {\it singular} situations that classical methods are not able to reach, see Remark \ref{rmk-que-bd}.

Finally, it is enlightening to review the construction of GKS's sheaf quantization which illuminates the advantage of using sheaves (for more details, see Section \ref{sec-gks}). Let us state their theorem first: for any (compactly supported) homogeneous Hamiltonian isotopy $\Phi= \{\phi_t\}_{t \in I}$ on $\dot{T}^*X$ (that is $T^*X$ deleting $0_X$), there exists a unique element $\K \in \D(\k_{I \times X \times X})$ such that 
\begin{itemize}
\item[(i)] $SS(\K) \subset \Lambda_{\Phi} \cup 0_{I \times X \times X}$;
\item[(ii)] $\K|_{t=0} = \k_{\Delta}$ where $\Delta$ is the diagonal of $X \times X$. 
\end{itemize}
Here $\Lambda_{\Phi}$ is the time-involving trace of (negative) $\graph(\phi_t)$, that is, denote $H_t$ as Hamiltonian function generating isotopy $\Phi$, 
\[ \Lambda_{\Phi} : = \left\{ (z, - \phi_t(z), t, - H_t(\phi_t(z))) \,\big| \, z \in \dot T^*X \right\} \] 
which is usually called the {\it Lagrangian suspension} of $\Phi$. This is the right geometric realization of Hamiltonian isotopy (and $\graph(\phi_1)$ is just the restriction of $\Lambda_{\Phi}$ at $t =1$). The basic idea of the construction of $\K$ is not complicated and very similar to the generating function theory - two steps: divide into ``small'' Hamiltonian diffeomorphisms or isotopies as in (\ref{gf-decomp}) and then glue them together. GKS's method is gluing {\it sheaves} (instead of gluing {\it generating functions} as in generating function theory which appears to be complicated as mentioned above) associated to those $C^1$-small Hamiltonian diffeomorphisms or isotopies. The magic operator for this gluing is simply sheaf convolution ``$\circ$'', that is, $\K$ is constructed inductively in the form of 
\[ \K = \K_1 \circ \K_2 \circ ... \circ \K_N \]
for some ``small'' $\K_{i}$ where condition (i) above ensures the output $\K$ to represent the correct geometry. Interestingly, we can not skip the role of isotopy (i.e., time $I$-component) in this construction if we want to prove the uniqueness of such $\K$, see Subsection \ref{subsec-cont-ss} for a detailed discussion where such uniqueness is actually guaranteed by some constraints from singular supports, which makes this theorem full of microlocal flavor. 

\subsection{$U$-projector and symplectic homology} \label{subsec-ball} Gromov's non-squeezing theorem is a story about (reasonable) open domains of $\R^{2n}$.  A direct and more symplectic way to handle such a domain $U$ is by symplectic homology (there exist many versions!) denoted as $\SH_*(U)$ (see \cite{Oan04} for a good survey comparing different versions), which is built from a limit version of Hamiltonian Floer theory. Section \ref{comp-sh} gives a relatively detailed explanation of this construction for symplectic ball $B^{2n}(r)$. Roughly speaking, it characterizes a domain from the dynamics (in the sense of contact topology) of its boundary. Historically one application of symplectic homology is to associate symplectic invariant on $U$, for instance, some symplectic capacity $c(U)$ (see \cite{HZ94}, \cite{Vit92} and \cite{San11}), whose existence is actually equivalent to Gromov's non-squeezing theorem. Moreover, $\SH_*(U)$ can also be viewed as a persistence $\k$-module and any such symplectic capacity admits an ``easy'' explanation in terms of the corresponding barcode. Keeping in mind of the (equivalence) relation between persistence $\k$-modules and constructible sheaves over $\R$, it should be a natural question to ask for a sheaf counterpart of $\SH_*(U)$. Interestingly, $U$-projector $P_U \in \D(\k_{\R^n \times \R^n \times \R})$ appearing in the construction/decomposition in $\T_U(\R^n)$ above already provides the key ingredient. Explicitly, our sheaf analog is 
\begin{equation} \label{intro-ss}
\F(U) : = R\pi_! \Delta^{-1} P_U \in \D(\k_{\R}) \,\,\,\,\mbox{where}\,\,\,\, \R \xleftarrow{\pi} \R^n \times \R \xrightarrow{\Delta} \R^n \times \R^n \times \R
\end{equation}
where $\Delta$ is the diagonal embedding from $\R^n$-component and $\pi$ is projection onto $\R$. For instance, one can compute $\mathcal B(\F(B(r))) = \{[m\pi r^2, (m+1)\pi r^2)\}_{m \geq 0}$ which coincides with the standard barcode of symplectic homology $\mathcal B(\SH_*(B(r)))$. 

Digging into the construction of $P_U$, such an coincidence is not surprising at all. Let us at least roughly unravel the formula of $P_U$. Label time by variable $a$ (where its co-variable is denoted by $b$) and label extra $\R$-component by $t$, then one defines (see (10) in \cite{Chiu17})
\begin{equation} \label{intro-pb}
P_{U} : = \k_{\{S + t \geq 0\}} \bullet_{\R_a} \k_{\{t + ab \geq 0\}}[1] \circ_{\R_b} \k_{\{b<r^2\}}
\end{equation}
where $S$ is a generating function of Hamiltonian isotopy $\phi_a$ generated by (any) Hamiltonian function $H$ defining $U$ in the sense that $U = \{H<1\}$. The rigorous construction of $P_U$ needs some composition/convolution process (as concatenation in terms of time) since $S$ is not well-defined for all $a \in \R$ (see Subsection \ref{subsec-ssc}). We need to make two remarks here: (i) By orthogonality in the decomposition (\ref{intro-ast-d2}), one can show $P_U$ is independent of such defining Hamiltonian functions (see Section \ref{sec-proj-prop}); (ii) carefully computing $SS(P_{B(r)})$, it shows operators ``$\bullet_{\R_a}$'' and ``$\circ_{\R_b}$'' in the construction (\ref{intro-pb}) are deliberately designed so that $P_{U}$ behaves like a projector in the sense that action $\F \bullet_{\R^n} P_{U}$ cuts $\F$ outside $U$ (see Subsection \ref{subsec-g-proj} for a detailed explanation, in a geometric manner \footnote{This is an outcome of a long discussion and a joint work with L. Polterovich.}, of operator $\bullet_{\R^n} P_{U}$). For those who are familiar with symplectic homology, be aware of the similarity between (i), (ii) and the well-known features of symplectic homology that its computation is independent of choice of Hamiltonian functions and (one way to choose) admissible Hamiltonian functions are those supported {\it inside} $U$. 

The punch line of similarity between $\SH_*(U)$ and $\F(U)$ comes from a geometric meaning of stalks of $\F(U)$ (see Lemma \ref{lemma-fu} for the case when $U = B(r)$). Roughly speaking, for any given filtration $\lambda \in \R$, 
\[ \F(U)_\lambda = H_c^*(\{\mbox{level set defined by $S$ and $\lambda$}\}; \k)\]
where $S$ is a generating function of dynamics associated to $U$ as above. Generators of this cohomology are some critical points of $S$ (hence Hamiltonian loops and only loops are counted due to $\Delta^{-1}$ in (\ref{intro-ss})) with actions bounded by $\lambda$. This should remind of Traynor's work \cite{Tra94} defining symplectic homology via generating functions. To sum up, we have seen $P_U$ and $\F(U)$ concisely package all the necessary elements to form a cohomology theory over the domain $U$. With many functorial properties $\F(U)$ enjoys, Section \ref{sec-proof-ns} shows how they can easily imply non-squeezing theorem. 

\subsection{Further discussions} Certainly there are many interesting sheaf-symplectic-related topics that are not included in this note. For instance, modification of our discussion based on ball-projector can be used to prove contact non-squeezing theorem (see \cite{EKP06}) which has been done by \cite{Chiu17} (and also by \cite{Fra14} in a more symplectic way). Different groups successfully apply sheaf method into knot theory and related homology theories, see \cite{STZ17}, \cite{NRSSZ15} etc.. Meanwhile, discovered by \cite{NZ09}, there exists a relation between microlocal sheaf theory and Fukaya category, motivated by Kontsevich's mirror symmetry. From a different background, in \cite{Tsy15}, in the language of deformation quantization, another category is established which partially shares some common features with Tamarkin category. It will be a very interesting research direction to see how our quantitative perspective can fit into some of these referred works. Last but not least, note that all Tamarkin category stories happen in this note only on cotangent bundle (partially due to the reason that output of singular support naturally lies in cotangent bundle). How to generalize them to any (or more general) symplectic manifold so that sheaf method can recover more classical symplectic geometry results is a big question. Tamarkin's work \cite{Tam15} claims a well-defined microlocal category over any compact prequantizable symplectic manifold, which definitely needs to be digested more by the general public.

\subsection{Acknowledgement} This note is based on an ongoing project at Tel Aviv University, starting from Fall 2016, guided by Leonid Polterovich, trying to understand how sheaf methods can be/are/will be applied in symplectic topology. I express my sincere gratitude to him for providing this opportunity for me to participate and also to learn many interesting mathematics. Also, I want to thank those participants in my seminar course given in Spring 2018 at Tel Aviv University, who are Yaniv Ganor, Matthias Meiwes, Andr\'{e}s Pedroza, Leonid Polterovich, Vuka\v{s}in Stojisavljevi\'{c}, Igor Uljarevic and Frol Zapolsky. Moreover, during writing the note, I got help from Semyon Alesker, Tomohiro Asano, Sheng-Fu Chiu, Leonid Polterovich and Nick Rozenblyum from many fruitful conversations, so I am grateful for their patience and inspirations.

\section{Preliminary}
Section 2.1 - Section 2.5 on derived category, derived functor are based on lectures given by Yakov Varshavsky and Section 2.6 - Section 2.7 on persistent homology theory are based on lectures given by Leonid Polterovich, where both are from Kazhdan's seminar held in Hebrew University of Jerusalem in Fall 2017. 

\subsection{$i$-th derived functor}
In this section, we will define a family of functors called $i$-th derived functor and demonstrate how these functors are constructed and computed. Let $\mathcal A$ be an abelian category, for instance $\A = \Sh(X, \G)$ category of sheaves of groups, or $\A = \Sh(\k_X)$ category of sheaves of $\k$-modules over a topological space $X$, or $\A = {\rm Mod}_{R}$ category of sheaves of $R$-modules where $R$ is a commutative ring. 

\begin{dfn} \label{com} Define {\it chain complex category} of an abelian category $\A$, 
\[ \Com(\A) = \left\{X_{\bullet} = \ldots \to X_{i-1} \xrightarrow{d_{i-1}} X_i \xrightarrow{d_{i}} X_{i+1} \to \ldots \,\bigg|\,\begin{array}{cc} X_i \in \A, \,\,\forall i\\ d_{i} \circ d_{i-1} = 0 \end{array} \right\}. \]
\end{dfn}

Then for each element $X_{\bullet} \in \Com(\A)$, we can associate a $\Z$-family of elements in $\A$, that is 
\[ h^i(X_{\bullet}) = \frac{\ker(d_{i})}{{\rm Im}(d_{i-1})} \in \A \]
for each $i \in \Z$ which is called the {\it $i$-th} cohomology of $X_{\bullet}$. In particular, if $h^i(X_{\bullet}) = 0$ for each $i \in \Z$, then $X_{\bullet}$ is called {\it exact}.

\begin{dfn} Let $F: \A \to \mathcal B$ be an additive functor. $F$ is called {\it exact} if for any short exact sequence $0 \to X \to Y \to Z \to 0$ in $\A$, the following 
\[ 0 \to F(X) \to F(Y) \to F(Z) \to 0 \]
is also a short exact sequence in $\mathcal B$. \end{dfn}

\begin{exercise} \label{ex-1} If $F$ is an exact functor, then for any exact sequence $X_{\bullet}$, $F(X_{\bullet})$ is also an exact sequence. Moreover, for any $X_{\bullet} \in \mathcal \Com(\A)$, $h^i(F(X_{\bullet})) = F(h^i(X_{\bullet}))$. \end{exercise}

\begin{remark} \label{que-exc} {\it Pathetic reality}: many functors are NOT exact! In order to deal with this situation, we will ``embed'' $\mathcal A$ into a bigger category $\D(\A)$ (called derived category of $\mathcal A$, defined in Section \ref{sec-der-cat}) such that for any functor $F: \A \to \mathcal B$, one gets an upgraded functor $RF: \D(\A) \to \D(\mathcal B)$ and it is ``exact'', see Theorem \ref{RFT2} and Remark \ref{ans-exc} for a precise formulation of this procedure.\end{remark}

The good news is that some functors are {\it left exact}, that is for any short exact sequence $0 \to X \to Y \to Z \to 0$, we get one half exact sequence 
\[ 0 \to F(X) \to F(Y) \to F(Z). \]

\begin{ex} \label{ex-le} The following examples are all left exact functors. Simply denote $\Sh(X)$ or $\Sh(Y)$ as category of sheaves of abelian groups. 
\begin{itemize}
\item[(1)] Define functor $\Gamma(X, \cdot): \Sh(X) \to \G$ by $\F \to \Gamma(X, \F)$, {\it taking the global section} of a sheaf. Similarly, $\Gamma_c(X, \cdot)$ takes compactly supported global sections. 
\item[(2)] Let $f: X \to Y$ be a continuous map. Define {\it pushforward (or direct image)} functor $f_*: \Sh(X) \to \Sh(Y)$ by $(f_*\F)(U) : = \F(f^{-1}(U))$. 
\item[(3)] Let $f:X \to Y$ be a continuous map where $X$ and $Y$ are locally compact. Define {\it proper pushforward (or direct image with compact support)} functor $f_!: \Sh(X) \to \Sh(Y)$ by 
\[ (f_!\F)(U) = \left\{ s \in (f_*\F)(U) \,| \, \mbox{$f: supp(s) \to U$ is proper}\right\}. \]
In particular, $f_!\F$ is a subsheaf of $f_*\F$. 
\end{itemize}
\end{ex}

\begin{exercise} \label{ex-2} Prove in Example \ref{ex-le} these functors are indeed left exact. \end{exercise}

Now we will clarify the ``exactness'' of $RF$ mentioned earlier in a way based on the following theorem (see Theorem 1.1 A. and Corollary 1.4. in Section 1 of Chapter III in \cite{Har} ) which claims that by adding extra terms we can complete the half exact sequence induced by a left exact functor into a long exact sequence.  

\begin{theorem} \label{key} Let $\A$ be an abelian category (satisfying condition ($\ast$) specified later). $F: \A \to \mathcal B$ is a left exact functor. Then there exists a sequences of functors $R^iF: \mathcal A \to \mathcal B$, $i =0, 1, ...$, such that 
\begin{itemize}
\item[(1)] $R^0F = F$;
\item[(2)] for any short exact sequence $0\to X \to Y \to Z \to 0$, there exists a long exact sequence, 
\begin{align*}
... \to R^iF(X) \to R^i F(Y) \to  R^iF &(Z) \xrightarrow{\delta_i}\\
& R^{i+1}F(X) \to R^{i+1}F(Y) \to R^{i+1}F(Z) \to ...; 
\end{align*}
\item[(3)] long exact sequence in (2) is functorial in the sense that a morphism between two short exact sequences will induce a morphism between long exact sequences;
\item[(4)] it is universal among all such {\it family of functors} satisfying (1) - (3). 
\end{itemize}
\end{theorem}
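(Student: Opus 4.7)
The plan is to prove Theorem~\ref{key} via the classical injective-resolution construction, so I expect the unspecified condition $(\ast)$ to be that $\A$ has enough injectives, i.e.\ every object admits a monomorphism into an injective object. Granted this, for each $X \in \A$ I would choose an injective resolution $0 \to X \to I^0 \to I^1 \to I^2 \to \cdots$, apply $F$ term by term (dropping $F(X)$) to obtain the complex $F(I^\bullet)$ in $\B$, and set
\[ R^iF(X) := h^i(F(I^\bullet)). \]

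First I would check well-definedness. The fundamental lemma of homological algebra says that any morphism $X \to Y$ lifts to a morphism between chosen injective resolutions $I^\bullet \to J^\bullet$, unique up to chain homotopy; this uses only injectivity of the targets together with a standard diagram chase on $\ker$ and $\coker$. Applying the additive functor $F$ preserves chain homotopies, so the induced map on cohomology $h^i(F(I^\bullet)) \to h^i(F(J^\bullet))$ is canonical. In particular $R^iF(X)$ is independent (up to canonical isomorphism) of the chosen resolution, and $R^iF$ becomes a functor. Property (1), $R^0F = F$, is immediate from left exactness applied to $0 \to X \to I^0 \to I^1$.

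Next I would produce the long exact sequence in (2). Given $0 \to X \to Y \to Z \to 0$, the horseshoe lemma builds injective resolutions $I_X^\bullet$, $I_Y^\bullet$, $I_Z^\bullet$ of $X$, $Y$, $Z$ together with a short exact sequence of complexes $0 \to I_X^\bullet \to I_Y^\bullet \to I_Z^\bullet \to 0$ that is termwise split (one may take $I_Y^n = I_X^n \oplus I_Z^n$ as objects). Since termwise-split short exact sequences are preserved by every additive functor, applying $F$ yields an honest short exact sequence $0 \to F(I_X^\bullet) \to F(I_Y^\bullet) \to F(I_Z^\bullet) \to 0$ in $\B$, and the standard snake-lemma long exact sequence in cohomology is exactly the desired sequence, with $\delta_i$ produced by the snake construction. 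Naturality of the snake lemma, together with the homotopy-uniqueness step above, then yields the functoriality claim (3); this is where care is needed to verify that $\delta_i$ is independent of the horseshoe choices.

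Finally, for the universality assertion (4), I would establish that $\{R^iF\}$ is \emph{effaceable}: for any $X$ pick a monomorphism $X \hookrightarrow I$ into an injective $I$, and observe $R^iF(I) = 0$ for all $i \geq 1$ (since $0 \to I \to I \to 0 \to \cdots$ is itself an injective resolution of $I$). A standard dimension-shifting argument then shows that any other family $\{T^i\}$ satisfying (1)--(3) admits a unique natural transformation $\{R^iF\} \to \{T^i\}$ extending the identity on $F$: one inducts on $i$ using the long exact sequences attached to $0 \to X \to I \to I/X \to 0$, with effaceability pinning down the degree-$i$ component from the degree-$(i-1)$ component. The main obstacle I anticipate is not any individual step but the bookkeeping needed to track compatibility across the homotopy ambiguities in the injective resolutions; this is precisely why the cleanest modern treatment prefers to first construct the derived category $\D(\A)$ and total derived functor $RF$ (as the paper does in Section~\ref{sec-der-cat}) and then simply \emph{define} $R^iF(X) := h^i(RF(X))$.
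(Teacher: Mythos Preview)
Your proposal is correct and follows exactly the approach the paper sketches: the condition $(\ast)$ is indeed ``$\A$ has enough injectives,'' and the paper's Subsection ``Construction of $R^iF$'' defines $R^iF(A):=h^i(F(I_\bullet))$ via an injective resolution, remarks that independence of resolution is routine, and checks $R^0F=F$ from left exactness. In fact the paper gives \emph{less} than you do---it cites Hartshorne for the full statement and only verifies (1)---so your treatment of (2) via the horseshoe lemma, (3) via naturality of the snake lemma, and (4) via effaceability plus dimension shifting genuinely completes what the paper leaves to the reference.
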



\begin{dfn} \label{i-th-df} Given a left exact functor $F$, $R^i F$ promised in Theorem \ref{key} is called the {\it $i$-th derived functor of $F$}. \end{dfn}

\subsubsection{Construction of $R^i F$} Obviously the first and foremost question is the existence of $i$-th derived functors. It turns out this is closely related with the condition ($\ast$) in the statement of Theorem \ref{key} above. 

\begin{dfn} Let $I \in \A$ be an object of an abelian category. We call $I$ is {\it injective} if functor ${\rm Hom}(\cdot, I)$ is exact (note that in general, ${\rm Hom}(\cdot, I)$ is only left exact). Moreover, we call $\A$ {\it has enough injectives} if for any object $A \in \A$, there exists an injective $I$ such that $0 \to A \to I$. \end{dfn}

\begin{ex} (1) When $\A = \G$, category of abelian groups, $I$ is injective if and only if $I$ is divisible, for instance $\mathbb Q$ or $\mathbb Q/\Z$ (because quotient of a divisible group is divisible). (2) $\Sh(X, \G)$ and $\Sh(\k_X)$ have enough injectives (see Corollary 2.3 in Section 2 in Chapter III in \cite{Har}). \end{ex}

The following exercise is a standard fact in homological algebra and it's also the first step to construct derived functors. 

\begin{exercise} \label{ex-3} Suppose $\A$ has enough injectives. Then for any object $A \in \A$, there exists a long exact sequence with $I_i$ being injective,
\begin{equation} \label{resolution}
 0 \to A \to I_0 \xrightarrow{f}  I_1 \xrightarrow{f^{(1)}} ... 
\end{equation} 
where this exact sequence is called an {\it injective resolution of $A$} in $\A$.  
\end{exercise}

Now let us construct $R^iF$ applying on an object $A \in \A$. First, truncate term $A$ from {\it any} injective resolution (\ref{resolution}) of $A$, that is, one gets 
\[ 0 \to I_0 \xrightarrow{f} I_1 \xrightarrow {f^{(1)}} .... \]
Note that we won't lose any information because $\ker(f) = A$. Second, apply functor $F$ to this truncated sequence and get 
\[ 0 \to F(I_0) \xrightarrow{F(f)}  F(I_1) \xrightarrow{F(f^{(1)})} .... \]
Note that this is still a complex (because $F$ is assumed to be additive) but not necessarily exact anymore. Label the starting $F(I_0)$ as the degree-0, then {\bf define} 
\begin{equation} \label{derived}
R^i F (A): = \frac{\ker(F(f^{(i)}))}{{\rm Im}(F(f^{(i-1)}))} = h^i(F(I_{\bullet})).
\end{equation}

\begin{remark} (1) It is a routine to check the resulting $R^iF(A)$ is independent of the choice of injective resolutions of $A$. (2) It is easy to see $R^0F = F$, satisfying condition (1) in Theorem \ref{key}. In fact, since $F$ is left exact, it preserves kernels, therefore, as $\ker(f) = A$, one knows $F(A) = h^0(F(I_{\bullet})) = R^0 F(A)$. \end{remark}

\subsubsection{Computation of $R^iF$} Though we have manually constructed derived functors $R^iF$, using injective resolution to carry out concrete computation is most likely impossible in practice. A nicer class that can be used for computation is the following. 

\begin{dfn} Let $F: \A \to \mathcal B$ be a left exact functor. An object $X \in \A$ is called {\it $F$-acyclic} if $R^i F(X) =0$ for all $i \geq 1$. \end{dfn}

\begin{ex} \label{inj-flabby} If $X$ is injective, then $X$ is $F$-acyclic for any left exact functor $F$. In fact, we have a simple injective resolution of $X$ 
\[ 0 \to X \to X \to 0 \to 0... \]
which implies the $R^i F(X) = 0$ for all $i \geq 1$ from complex $0 \to F(X) \to 0 \to ...$.\end{ex}

Specifically in the case of sheaves, we define 
\begin{dfn} $\F \in \Sh(X, \G)$ is {\it flabby} if for every open subset $U \subset X$, the restriction map $F(X) \to F(U)$ is surjective. It follows then for any open subsets $U \subset V$, the restriction map $F(V) \to F(U)$ is surjective because restriction maps commute $res_{V,U} \circ res_{X,V} = res_{X,U}$. \footnote{There is also a notion called {\it soft} which is defined as for any $Z \subset X$ compact, the restriction map $\F(X) \to F(Z)$ is surjective. In particular, when $X$ is locally compact, any flabby sheaf is soft. Moreover, soft sheaf is $f_!$-acyclic.}
 \end{dfn}

\begin{exercise} \label{ex-4} Flabby sheaf is $\Gamma(X, \cdot)$-acyclic and also $f_*$-acyclic. \end{exercise}

Then the following lemma (see Proposition 1.2 A. in Section 1 in Chapter III in \cite{Har}) shows that using $F$-cyclic resolution we can also compute $R^iF$. 

\begin{lemma} Let $0 \to A \to X_1 \to X_2 \to ... $ be an $F$-acyclic resolution, i.e. this sequence is exact and each $X_i$ is $F$-acyclic for any $i$. Then $R^i F(A) \simeq h^i(F(X_{\bullet}))$. \end{lemma}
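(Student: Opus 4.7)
The plan is to use the classical dimension-shifting argument, relying only on property (2) of Theorem \ref{key} together with the vanishing $R^j F(X_n) = 0$ for $j \geq 1$ granted by $F$-acyclicity.

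First I would split the long resolution into short exact sequences. Setting $Z_0 := A$ and, for $n \geq 1$, $Z_n := \im(X_n \to X_{n+1}) = \ker(X_{n+1} \to X_{n+2})$, the exactness of the given resolution yields short exact sequences $0 \to Z_{n-1} \to X_n \to Z_n \to 0$ for every $n \geq 1$. Then to each of these I would apply the long exact sequence from Theorem \ref{key}(2); since $R^j F(X_n) = 0$ for all $j \geq 1$, it collapses to a four-term sequence in degree zero together with isomorphisms in higher degree:
\begin{equation*}
0 \to F(Z_{n-1}) \to F(X_n) \to F(Z_n) \to R^1 F(Z_{n-1}) \to 0, \qquad R^j F(Z_n) \simeq R^{j+1} F(Z_{n-1}) \text{ for } j \geq 1.
\end{equation*}

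Iterating the shift gives $R^i F(A) = R^i F(Z_0) \simeq R^{i-1} F(Z_1) \simeq \cdots \simeq R^1 F(Z_{i-1})$ for $i \geq 1$, and the four-term sequence above identifies $R^1 F(Z_{i-1}) \simeq \coker(F(X_i) \to F(Z_i))$. To match this cokernel with $h^i(F(X_\bullet))$ I would use left exactness of $F$ applied to $0 \to Z_i \to X_{i+1} \to X_{i+2}$, which yields $F(Z_i) \simeq \ker(F(X_{i+1}) \to F(X_{i+2}))$, and observe that under the induced injection $F(Z_i) \hookrightarrow F(X_{i+1})$ the image of $F(X_i) \to F(Z_i)$ corresponds exactly to $\im(F(X_i) \to F(X_{i+1}))$ (since the original $X_i \to X_{i+1}$ factors as $X_i \twoheadrightarrow Z_i \hookrightarrow X_{i+1}$ and $F$ preserves this factorization on the monic part). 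Hence $\coker(F(X_i) \to F(Z_i)) = \ker(F(X_{i+1}) \to F(X_{i+2}))/\im(F(X_i) \to F(X_{i+1})) = h^i(F(X_\bullet))$. The case $i = 0$ is immediate from left exactness: $R^0 F(A) = F(A) = \ker(F(X_1) \to F(X_2)) = h^0(F(X_\bullet))$.

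None of the individual steps is really an obstacle — the whole argument is an exercise in diagram-chasing with the long exact sequence. The main thing to be careful about is the index bookkeeping: making sure the $i$-fold iterated shift $R^i F(Z_0) \leadsto R^1 F(Z_{i-1})$ lands precisely on the short exact sequence whose cokernel, after identifying $F(Z_i)$ via left exactness, realizes the correct degree-$i$ term of the cohomology of the truncated complex. A secondary subtlety is that the iterated dimension shift silently uses that $R^j F(X_n) = 0$ for \emph{every} $n$ and every $j \geq 1$, which is exactly the $F$-acyclicity hypothesis — this is what distinguishes the argument from the injective case of Exercise \ref{inj-flabby} and justifies replacing injective resolutions by the broader class of acyclic resolutions in computations.
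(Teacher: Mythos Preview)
Your dimension-shifting argument is correct and is the standard proof of this result. Note that the paper itself does not supply a proof: it merely states the lemma and refers the reader to Proposition 1.2A in Chapter III of \cite{Har}. Your write-up is essentially the argument one finds there, so there is no discrepancy to discuss.
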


\begin{prop} \label{prop-push} Let $f: X \to Y$ be a continuous map between topological spaces $X$ and $Y$ and $\F \in \Sh(X, \G)$. $R^i f_* (\F)$ is a sheaf associated to presheaf $U \to h^i(f^{-1}(U); \F) ( = R^i \Gamma(f^{-1}(U))(\F))$. \end{prop}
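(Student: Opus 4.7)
The plan is to unwind the definition of $R^i f_*$ using an injective resolution of $\F$ and show that on each open $U \subset Y$ the resulting presheaf sections compute the cohomology of $f^{-1}(U)$. First I would pick an injective resolution $0 \to \F \to I^0 \to I^1 \to \cdots$ in $\Sh(X,\G)$, which exists since this category has enough injectives. By construction (see (\ref{derived})), $R^i f_*\F$ is the $i$-th cohomology \emph{sheaf} of the complex $f_* I^\bullet$, which equals the sheafification of the presheaf $U \mapsto h^i\bigl((f_* I^\bullet)(U)\bigr)$; recall that taking cohomology of a complex of sheaves in the sheaf category is, by definition, the sheafification of the naive termwise cohomology presheaf.

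Next I would compute this presheaf. By the definition of pushforward in Example \ref{ex-le}(2), $(f_* I^j)(U) = I^j(f^{-1}(U))$, so the presheaf in question is $U \mapsto h^i(I^\bullet(f^{-1}(U)))$. The point is to identify this with $R^i\Gamma(f^{-1}(U))(\F) = H^i(f^{-1}(U);\F)$. For this I would invoke the criterion that $F$-acyclic resolutions compute $R^i F$: it suffices to show that the restriction $I^j|_{f^{-1}(U)}$ is $\Gamma$-acyclic for every $j$ and that $0 \to \F|_{f^{-1}(U)} \to I^\bullet|_{f^{-1}(U)}$ remains exact. Exactness is automatic since restriction to an open set is exact. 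For acyclicity, I would use the standard facts (cf.\ Example \ref{inj-flabby} and Exercise \ref{ex-4}) that every injective sheaf of abelian groups is flabby, that flabby sheaves restrict to flabby sheaves on open subsets, and that flabby sheaves are $\Gamma$-acyclic.

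Putting the two steps together yields $h^i(I^\bullet(f^{-1}(U))) \simeq H^i(f^{-1}(U);\F)$, so the presheaf whose sheafification gives $R^i f_*\F$ is precisely $U \mapsto H^i(f^{-1}(U);\F)$, which is the claim. The main obstacle I anticipate is the acyclicity step: one really needs the implication ``injective $\Rightarrow$ flabby $\Rightarrow$ flabby under restriction $\Rightarrow$ $\Gamma$-acyclic on the open subset'', and all three implications, while standard, require separate justifications (the first is essentially a gluing-by-extension-by-zero argument using the injectivity property, the second is immediate from the definition of flabby, and the third is the content of Exercise \ref{ex-4}). A small secondary point worth remarking is that the sheafification in the last step is genuinely necessary: the presheaf $U \mapsto H^i(f^{-1}(U);\F)$ is not in general a sheaf, which is precisely why the proposition is phrased with ``sheaf associated to'' rather than equality of presheaves.
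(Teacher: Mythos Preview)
The paper does not actually give a proof of this proposition; it is stated and then immediately followed by an example. Your argument is correct and is the standard one (this is essentially Proposition III.8.1 in \cite{Har}): unwind $R^i f_*$ via an injective resolution, identify sections over $U$ with sections over $f^{-1}(U)$, and use that injective $\Rightarrow$ flabby $\Rightarrow$ flabby on opens $\Rightarrow$ $\Gamma$-acyclic so that the same resolution computes $H^i(f^{-1}(U);\F)$.
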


\begin{ex} We know $\Gamma(X, \cdot)$ can be regarded as $f_*$ for $f: X \to Y$ where $Y = \{{\rm pt}\}$. Then Proposition \ref{prop-push} writes as 
\[ R^i \Gamma(X, \cdot)  = h^i(X, \cdot). \]
Apply to $\F = \k_{X}$, then the right hand side is just homology of space $X$ while the left hand side is an algebraic object coming from a bigger machinery which constructs $i$-th derived functors explained earlier. So we should think {\it higher-degree} $i$-th derived functors are algebraic analogues of {\it higher-degree} homology groups. Moreover, if $Y = \R$ and $\F= \k_{X}$, then Proposition \ref{prop-push} with interval $U = (-\infty, \lambda)$ recovers the classical Morse theory. 
\end{ex}


\subsection{Derived category} \label{sec-der-cat}

In this section, we will give the definition of a derived category. Let us start from specifying subcategories of $\Com(\A)$ defined in Definition \ref{com}. 
\begin{itemize}
\item[(1)] $\Com^+(\A) = \left\{ X_{\bullet} \in \Com(\A) \,| \, X_i = 0 \,\,\,\mbox{for $i <<0$}\right\}$;
\item[(2)] $\Com^-(\A) =\left\{ X_{\bullet} \in \Com(\A) \,| \, X_i = 0 \,\,\,\mbox{for $i >>0$}\right\}$;
\item[(3)] $\Com^b(\A) = \left\{ X_{\bullet} \in \Com(\A) \,| \, X_i = 0 \,\,\,\mbox{for $|i| >>0$}\right\}$;
\item[(4)] $\Com^{[m,n]}(\A) = \left\{ X_{\bullet} \in \Com(\A) \,| \, X_i = 0 \,\,\,\mbox{for $i \notin [m,n]$}\right\}$.
\end{itemize}

Observe that any chain map (or morphism) $f: X_{\bullet} \to Y_{\bullet}$ induces a map $h^i(f): h^i(X_{\bullet}) \to h^i(Y_{\bullet})$. Recall $f$ is called a {\it quasi-isomorphism} if $h^i(f)$ is an isomorphism for each $i \in \Z$. 

\begin{remark} Note the isomorphism between $h^i(X_{\bullet})$ and $h^i(Y_{\bullet})$ that is {\it induced} by a morphism $f: X_{\bullet} \to Y_{\bullet}$ is important. Simply requiring $h^i(X_{\bullet}) \simeq h^i(Y_{\bullet})$ does {\it not} guarantee the existence of a quasi-isomorphism between $X_{\bullet}$ and $Y_{\bullet}$. \end{remark}

The idea of constructing derived category $\D(\A)$ is to formally invert all the quasi-isomorphisms. An analogue situation comes from commutative algebra. Given $R$ as a commutative ring and $S \subset R$ as a multiplicative subset, we can form a new ring $(R,S)$ (or $R_S$) known as {\it localization by $S$}. Each of the element in $(R,S)$ is an equivalent class of symbol $\frac{r}{s}$ and surely we have a canonical map $f:R \to (R,S)$. More importantly, $(R,S)$ satisfies a universal property that is given any $g: R \to R'$ mapping $r$ to an invertible element $g(r)$ in $B$, we have a unique map $h: (R,S) \to R'$ such that $g = h \circ f$ (that is $g$ factors through $(R,S)$). 

\begin{dfn} (or {\bf Theorem}) \label{dr} For a given abelian category $\A$, there exists a category denoted as $\mathcal D(\A)$ and a functor $F: \Com(\A) \to \D(\A)$ such that for any functor $G: \Com(\A) \to \mathcal C$ where $G$ maps any quasi-morphism to an isomorphism, there exists a unique (up to canonical isomorphisms) functor $H: \D(\A) \to \mathcal C$  such that the following diagram commutes
\[ \xymatrix{
\Com(\A) \ar[rr]^-{F} \ar[rd]_-{G} & & \D(\A) \ar@{-->}[ld]^-{ \exists \,! \, H} \\
& \mathcal C &}. \]
This category $\D(\A)$ is called the {\it derived category of $\A$}. 
\end{dfn}
 
More explicitly, ${\rm Obj}(\D(\A)) = {\rm Obj}(\Com(\A))$. For any $X, Y \in {\rm Obj}(\D(\A))$, a morphism in ${\rm Mor}_{\D(\A)}(X, Y)$ is represented by a composition of {\it roofs} in the following form 
\[ \xymatrix{
& X_1 \ar[ld]_-{s_1} \ar[rd]^-{f_1} & & X_3  \ar[ld]_-{s_2} \ar[rd]^-{f_2}& \ldots & X_{2N-1}\ar[ld]_-{s_N} \ar[rd]^-{f_N} & \\
X & & X_2 & & \ldots & & Y } \]
where $s_i$ are all quasi-isomorphism (hence invertible in $\D(\A)$) and $f_i$ are morphisms in $\A$. The equivalence relation can be referred to Definition 1.6.2 in \cite{KS90}. 

\begin{ex} For each $i \in \Z$, consider functor $h^i: \Com(\A) \to \A$. By definition, $h^i$ maps each quasi-isomorphism to be an isomorphism (in $\A$). By Definition/Theorem \ref{dr}, there exists a unique functor (still denoted as) $h^i: \D(\A) \to \A$, that is, cohomology functor is still well-defined on $\D(\A)$. \end{ex}

\begin{remark} There are variants based on Definition/Theorem \ref{dr} if we start from different subcategories of $\Com(\A)$. For instance, starting from $\Com^{+}(\A)$, we can define $\D^{+}(\A)$, similar to $\D^{-}(\A)$, $\D^{b}(\A)$ and $\D^{[m,n]}(\A)$. \end{remark}

The following theorem (see Proposition 2.30 in \cite{Huy06}) shows derived category actually contains more elements than $\Com(\A)$. 

\begin{theorem}\label{iden}  There exists an equivalence of categories 
\[ \D^{+}(\A) \simeq \left\{ X_{\bullet} \in \D(\A) \,| \, h^i(X_{\bullet}) = 0 \,\,\,\mbox{for $i <<0$}\right\} \]
where the right hand side is a full subcategory of $\D(\A)$. The similar conclusions hold for $\D^{-}(\A)$, $\D^{b}(\A)$ and $\D^{[m,n]}(\A)$. \end{theorem}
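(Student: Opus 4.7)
The plan is to construct the equivalence directly using the canonical truncation functor on complexes, which replaces an unbounded-below complex by a quasi-isomorphic bounded-below one as soon as its cohomology vanishes in sufficiently negative degree. First I would observe that the composition $\Com^{+}(\A) \hookrightarrow \Com(\A) \to \D(\A)$ sends quasi-isomorphisms to isomorphisms, so by the universal property of Definition \ref{dr} applied to $\D^{+}(\A)$ one obtains a canonical functor $\iota: \D^{+}(\A) \to \D(\A)$. Since any $X_\bullet \in \Com^{+}(\A)$ has $h^i(X_\bullet) = 0$ for $i \ll 0$, the essential image of $\iota$ lies in the full subcategory $\mathcal{E} := \{X_\bullet \in \D(\A) \mid h^i(X_\bullet) = 0 \text{ for } i \ll 0\}$, and it remains to show that $\iota: \D^{+}(\A) \to \mathcal{E}$ is essentially surjective and fully faithful.

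The key tool I would use is the canonical truncation $\tau^{\geq N}$: for a complex $Z_\bullet$, set $(\tau^{\geq N}Z)_i = 0$ for $i < N$, $(\tau^{\geq N}Z)_N = Z_N/\im(d_{N-1})$, and $(\tau^{\geq N}Z)_i = Z_i$ for $i > N$, with the differentials induced from those of $Z_\bullet$. There is a natural chain map $p_Z: Z_\bullet \to \tau^{\geq N}Z_\bullet$ given by the identity in degrees $> N$, the quotient in degree $N$, and zero below, and a direct computation of $h^i$ shows $p_Z$ is a quasi-isomorphism exactly when $h^i(Z_\bullet) = 0$ for all $i < N$. For essential surjectivity I would take any $X_\bullet \in \mathcal{E}$ with $h^i(X_\bullet) = 0$ for $i < N$; then $\tau^{\geq N}X_\bullet \in \Com^{+}(\A)$ and $p_X$ exhibits $X_\bullet \cong \iota(\tau^{\geq N}X_\bullet)$ in $\D(\A)$.

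For fully faithfulness I would fix $X_\bullet, Y_\bullet \in \Com^{+}(\A)$, both zero in degrees $< N$, and a morphism in $\D(\A)$ represented by a roof $X_\bullet \xleftarrow{s} Z_\bullet \xrightarrow{f} Y_\bullet$ with $s$ a quasi-isomorphism. Since $h^i(X_\bullet) = 0$ for $i < N$ and $s$ is a quasi-isomorphism, $h^i(Z_\bullet) = 0$ for $i < N$, so $p_Z$ is a quasi-isomorphism. The targets vanish in degrees $< N$, forcing $s_i = f_i = 0$ there, and a short diagram chase using $d \circ d = 0$ produces unique $s', f'$ out of $\tau^{\geq N}Z_\bullet$ with $s = s' \circ p_Z$ and $f = f' \circ p_Z$; the map $s'$ is automatically a quasi-isomorphism by the two-out-of-three property. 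The equivalent roof $X_\bullet \xleftarrow{s'} \tau^{\geq N}Z_\bullet \xrightarrow{f'} Y_\bullet$ now lies entirely in $\Com^{+}(\A)$, giving surjectivity of $\Hom_{\D^{+}(\A)}(X_\bullet, Y_\bullet) \to \Hom_{\D(\A)}(X_\bullet, Y_\bullet)$. Applying the same truncation to a common dominating refinement witnessing equivalence of two roofs gives injectivity.

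The main obstacle I anticipate is the careful bookkeeping around the calculus of fractions: one needs to know that morphisms in $\D(\A)$ and $\D^{+}(\A)$ both admit the standard roof presentation with equivalence generated by a single common dominating refinement, so that replacing the middle term by its truncation genuinely produces an equivalent roof rather than merely a related one. Once that foundational input from the general localization theory is in place, the proof reduces to the single recurring trick of applying $\tau^{\geq N}$ to middle terms of roofs.
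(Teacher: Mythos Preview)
The paper does not give its own proof of this theorem; it simply cites Proposition 2.30 in \cite{Huy06}. Your approach via the canonical truncation $\tau^{\geq N}$ is correct and is precisely the standard argument one finds in that reference and elsewhere, so there is nothing to compare. Your identification of the main subtlety is also accurate: one must pass through the homotopy category $K(\A)$ where quasi-isomorphisms form a multiplicative system (Ore conditions), so that morphisms in $\D(\A)$ are represented by single roofs rather than the chains of roofs described after Definition~\ref{dr}; once that is granted, the truncation trick goes through exactly as you describe.
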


In particular, since $\A \simeq \Com^{[0,0]}(\A)$ by just identifying $X$ with complex 
\[ X_{\bullet} = ... \to 0 \to X \to 0 \to ... \]
where $X$ is concentrated at degree $0$, Theorem \ref{iden} says (up to an equivalence) $\D^{[0,0]}(\A)$ consists of all complexes $Y_{\bullet} \in \D(\A)$ such that $h^i(Y_{\bullet}) = 0$ for all $|i|  \geq 1$. In particular the above $X_{\bullet}$ provides such an example. In other words, one has an embedding $\A \hookrightarrow \D^{[0,0]}(\A)$. In fact, this is an equivalence, see 2. Proposition in Section 5 in Chapter III in \cite{GM97}. Thus we can regard any $\A$ as a segmental information in the bigger frame - its derived category. 

\subsection{Upgrade to functor $RF$}

In this section, we will give the definition of $RF$ (called derived functor of $F$) \footnote{In case of notation confusion, in the previous section, we have defined a family of functors $R^i F: \A \to \mathcal B$ for $i \geq 0$, see (\ref{derived}), while $RF$ here should be regarded as a new definition/notation.}  introduced in Remark \ref{que-exc}. Let $\A$ be an abelian category and $\Com(\A)$ is the associated chain complex category. 

\begin{dfn} \label{htp} Let $f_{\bullet}, g_{\bullet}: X_{\bullet} \to Y_{\bullet}$ be two morphisms in $\Com(\A)$. We say $f_{\bullet}$ is {\it homotopic to} $g_{\bullet}$ (or simply denoted as $f \sim g$), if there exist maps $h_{\bullet}: X_{\bullet} \to Y_{\bullet-1}$ such that 
\[ \xymatrixcolsep{4pc} \xymatrix{
\ldots \ar[r] & X_{i-1} \ar[r]^{d_{i-1}^X} \ar@<-.5ex>[d]^-{\,\,f_{i-1}} \ar@<.5ex>[d]_-{g_{i-1}\,\,}  & X_{i} \ar[ld]^-{h_i} \ar[r]^{d_{i}^X} \ar@<-.5ex>[d]^-{\,\,f_{i}} \ar@<.5ex>[d]_-{g_{i}\,\,}  & \ar[r] X_{i+1} \ar[ld]^{h_{i+1}} \ar[r] \ar@<-.5ex>[d]^-{\,\,f_{i+1}} \ar@<.5ex>[d]_-{g_{i+1}\,\,} & \ldots\\
\ldots \ar[r] &Y_{i-1} \ar[r]_{d_{i-1}^Y} & Y_{i} \ar[r]_{d_{i}^Y} & Y_{i+1} \ar[r] & \ldots} \]
such that for any degree $i \in \Z$, 
\[ f_{i} - g_{i} = h_{i+1} \circ d_{i}^X  + d_{i-1}^Y \circ h_{i}. \]
Moreover, we say two chain complexes $X_{\bullet}$ and $Y_{\bullet}$ are {\it homotopic} (denoted as $X_{\bullet} \sim Y_{\bullet}$) if there exist morphisms $f_{\bullet}: X_{\bullet} \to Y_{\bullet}$ and $g_{\bullet}: Y_{\bullet} \to X_{\bullet}$ such that $g \circ f \sim \I_{X_{\bullet}}$ and $f \circ g \sim \I_{Y_{\bullet}}$.
\end{dfn} 

\begin{exercise} \label{ex1} If $f \sim g: X_{\bullet} \to Y_{\bullet}$, then $h^i(f) = h^i(g): h^i(X_{\bullet}) \to h^i(Y_{\bullet})$. \end{exercise}

\begin{exercise} \label{ex2} Let $0 \to A \to I_{\bullet}$ and $0 \to A \to J_{\bullet}$ be two injective resolutions, then $X_{\bullet} \sim Y_{\bullet}$. \end{exercise}

\begin{exercise} \label{ex3} If $X_{\bullet} \sim Y_{\bullet}$, then $[X_{\bullet}] \simeq [Y_{\bullet}]$ in $\D(\A)$ where $[-]$ represents the quasi-isomorphic class in the derived category. \end{exercise}

\begin{theorem} \label{RFT} (Relation between $RF$ and $R^iF$) Let $F: \A \to \mathcal B$ be a left exact functor and $\A$ has enough injectives. There exists a natural functor $RF: \A \to \D^+(\mathcal B)$ such that $h^i \circ RF \simeq R^iF$. \end{theorem}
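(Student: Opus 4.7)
The plan is to define $RF$ by choosing, for each $A \in \A$, an injective resolution $0 \to A \to I_\bullet$ (which exists because $\A$ has enough injectives, via Exercise \ref{ex-3}), truncating to $I_\bullet \in \Com^+(\A)$, and setting
\[
RF(A) := [F(I_\bullet)] \in \D^+(\mathcal B),
\]
where $[-]$ denotes the quasi-isomorphism class. The desired relation $h^i \circ RF \simeq R^i F$ will then be immediate from the very construction of $R^iF$ in (\ref{derived}), since $h^i$ is well-defined on $\D^+(\mathcal B)$ and $h^i([F(I_\bullet)]) = h^i(F(I_\bullet)) = R^iF(A)$.

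The first thing to verify is that $RF(A)$ is independent of the chosen injective resolution. Given two injective resolutions $I_\bullet$ and $J_\bullet$ of $A$, Exercise \ref{ex2} gives a homotopy equivalence $I_\bullet \sim J_\bullet$ in $\Com^+(\A)$. Since $F$ is additive, applying $F$ term by term preserves chain homotopies (the homotopy data $\{h_i\}$ of Definition \ref{htp} is pushed forward to $\{F(h_i)\}$, and the defining identity still holds), so $F(I_\bullet) \sim F(J_\bullet)$ in $\Com^+(\mathcal B)$. Exercise \ref{ex3} then yields $[F(I_\bullet)] \simeq [F(J_\bullet)]$ in $\D^+(\mathcal B)$, as required.

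Next I would extend $RF$ to morphisms. Given $f: A \to A'$ together with fixed injective resolutions $I_\bullet$ and $J_\bullet$, the standard ``injectivity lifting lemma'' produces a chain map $\tilde f: I_\bullet \to J_\bullet$ lifting $f$, and any two such lifts are chain homotopic (both are classical consequences of the defining exactness property of $\Hom(-, J_i)$ on an injective $J_i$). Define $RF(f) := [F(\tilde f)]$. Well-definedness follows exactly as above: applying the additive functor $F$ to a chain homotopy between two lifts gives a chain homotopy between $F(\tilde f)$ and $F(\tilde f{}')$, which by Exercise \ref{ex1}/\ref{ex3} collapses to equality in $\D^+(\mathcal B)$. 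Functoriality ($RF(\I_A) = \I$ and $RF(g \circ f) = RF(g) \circ RF(f)$) comes from the fact that identity lifts can be chosen as identities, and compositions of lifts are lifts of compositions, with both identities holding up to homotopy, which is enough in the derived category.

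Finally, the isomorphism $h^i \circ RF \simeq R^iF$ is natural in $A$: for a morphism $f: A \to A'$ lifted to $\tilde f: I_\bullet \to J_\bullet$, the induced map $h^i(F(\tilde f))$ is by definition (\ref{derived}) the map $R^iF(f)$, and the identification $h^i(RF(A)) = h^i(F(I_\bullet))$ intertwines these. The main obstacle in the argument is not any hard computation but bookkeeping around the ``lifts are only unique up to homotopy'' issue; the key technical input that makes everything collapse cleanly is the elementary but essential observation that an additive functor preserves chain homotopies, so that all ambiguities at the level of $\Com^+$ become invisible in $\D^+(\mathcal B)$.
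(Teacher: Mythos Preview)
Your proof is correct and follows exactly the same approach as the paper: define $RF(A) = [F(I_\bullet)]$ for a chosen injective resolution, verify well-definedness via Exercise \ref{ex2} (homotopy equivalence of injective resolutions), the preservation of chain homotopies by additive functors, and Exercise \ref{ex3}, then read off $h^i \circ RF \simeq R^iF$ from the definition (\ref{derived}). You actually go further than the paper by spelling out the action on morphisms and the naturality of the isomorphism, which the paper's proof leaves implicit.
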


\begin{proof} For any $A \in \A$, since $\A$ has enough injective objects, we can find an injective resolution 
\[ 0 \to A \to I_0 \to I_1 \to .... \]
Denote $I_{\bullet}$ as chain complex $\ldots \to 0 \to I_0 \to I_1 \to \ldots$ and define 
\begin{equation} \label{RF}
RF(A) = [ F(I_{\bullet})] \in \D^+(\mathcal B). 
\end{equation} 
We are left to show (\ref{RF}) is well-defined. In fact, if $J_{\bullet}$ is another injective resolution, by Exercise \ref{ex2}, $I_{\bullet} \sim J_{\bullet}$. Then $F(I_{\bullet}) \sim F(J_{\bullet})$. By Exercise \ref{ex3}, $[F(I_{\bullet})] \simeq [F(J_{\bullet})]$. Finally since $h^i$ is well-defined in $\D^+(\mathcal B)$, by definition (\ref{derived}) of $R^iF$ , the last conclusion follows. \end{proof}

In fact, we can extend the domain of Theorem \ref{RFT} from $\A$ to $\D^+(\A)$ since we have seen $\A \simeq \D^{[0,0]}(\A) \hookrightarrow \D^+(\A)$. 


\begin{theorem} \label{RFT2} Let $F: \A \to \mathcal B$ be a left exact functor and $\A$ has enough injectives. There exists a natural functor $RF: \D^+({\mathcal A}) \to \D^+({\mathcal B})$ such that it is an extension of $RF$ on $\mathcal A$. This updated functor $RF$ is then called {\it the derived functor of $F$}. \end{theorem}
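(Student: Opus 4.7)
The plan is to extend the construction of Theorem \ref{RFT} from objects of $\A$ to bounded-below complexes, using injective resolutions of complexes in place of injective resolutions of objects. The crucial input is the fact that every $X_\bullet \in \D^+(\A)$ admits a quasi-isomorphism $X_\bullet \to I_\bullet$ to a bounded-below complex $I_\bullet$ whose terms are all injective objects of $\A$. Granting this, one defines
\[ RF(X_\bullet) := [F(I_\bullet)] \in \D^+(\mathcal B). \]
When $X_\bullet$ is concentrated in degree $0$, i.e.\ $X_\bullet \simeq A \in \A$, an injective resolution of the complex coincides with an injective resolution of the object $A$ in the sense of Exercise \ref{ex-3}, so this definition restricts to the construction in (\ref{RF}), giving the desired extension.

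First, I would establish the resolution step. For $X_\bullet \in \Com^+(\A)$ (which by Theorem \ref{iden} represents a general object of $\D^+(\A)$), the existence of $I_\bullet$ and a quasi-isomorphism $X_\bullet \to I_\bullet$ follows inductively: start at the lowest nonzero degree, use the assumption of enough injectives to embed $X_i$ into an injective, and at each subsequent degree embed the appropriate pushout into a further injective to continue the resolution. The result is a Cartan–Eilenberg type resolution with the two required properties, namely all $I_i$ are injective and the total map $X_\bullet \to I_\bullet$ induces isomorphisms on each $h^i$.

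Next, I would show well-definedness. The key lemma, generalizing Exercise \ref{ex2}, is that any two bounded-below complexes of injectives $I_\bullet$, $J_\bullet$ equipped with quasi-isomorphisms from a common $X_\bullet$ are homotopy equivalent in the sense of Definition \ref{htp}. This is the standard ``lifting'' argument: injectivity of each $J_i$ allows one to construct a morphism $I_\bullet \to J_\bullet$ compatible with the maps from $X_\bullet$, and an analogous morphism in the reverse direction, with both compositions homotopic to identities by another injectivity-based lifting. Applying $F$ (which is additive, hence respects the homotopy relation of Definition \ref{htp} term by term) gives $F(I_\bullet) \sim F(J_\bullet)$, and then Exercises \ref{ex1} and \ref{ex3} ensure $[F(I_\bullet)] \simeq [F(J_\bullet)]$ in $\D^+(\mathcal B)$.

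Finally, I would promote this assignment on objects to a functor. Given a morphism in $\D^+(\A)$ represented by a roof $X_\bullet \xleftarrow{s} Z_\bullet \xrightarrow{f} Y_\bullet$ with $s$ a quasi-isomorphism, pick injective resolutions $X_\bullet \to I_\bullet^X$, $Y_\bullet \to I_\bullet^Y$, $Z_\bullet \to I_\bullet^Z$; the lifting lemma produces chain maps $I_\bullet^X \leftarrow I_\bullet^Z \to I_\bullet^Y$ covering $s$ and $f$ up to homotopy, and the left arrow is automatically a quasi-isomorphism of injective complexes, hence a homotopy equivalence (by the same uniqueness argument). Applying $F$ gives a roof in $\D^+(\mathcal B)$, well-defined independently of choices by the uniqueness up to homotopy, and compatible with composition and with the roof equivalence relation. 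The main obstacle I anticipate is precisely this verification that $RF$ is well-defined on morphisms and respects composition of roofs: all the ambiguity in choices of injective resolutions and of lifts has to be absorbed by the homotopy equivalence above, and this bookkeeping is where the bulk of the work lies, though conceptually the argument is the same lifting-through-injectives principle used at the level of objects.
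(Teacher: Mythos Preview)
Your proposal is correct and follows essentially the same approach as the paper: construct, inductively using enough injectives, a degree-wise embedding $Z_\bullet \hookrightarrow I_\bullet$ into a bounded-below complex of injectives (unique up to homotopy), and set $RF(Z_\bullet) = [F(I_\bullet)]$. Your write-up is in fact more detailed than the paper's sketch, which only exhibits the first inductive step of the resolution diagram and asserts uniqueness up to homotopy, whereas you also spell out well-definedness and functoriality on morphisms; the paper relegates the Cartan--Eilenberg viewpoint you allude to to a remark citing \cite{Vit11}.
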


\begin{proof} The proof is essentially giving an algorithm to construct a complex $I_{\bullet}$ for a given $Z_{\bullet} \in \Com^+(\A)$ such that $Z_{\bullet} \hookrightarrow I_{\bullet}$ where the embedding is degree-wise and each $I_i$ is injective. Moreover such $I_{\bullet}$ is unique up to homotopy. Then we will define 
\[ RF(Z_{\bullet}) = [F(I_{\bullet})]. \]
In fact, we can inductively find the following diagram by taking advantage that $\A$ has enough injective objects. 
\begin{equation} \label{ind}
\xymatrix{ 
0 \ar[r] & Z_0 \ar@{^{(}->}[dd] \ar[rr]^{d_0} \ar[rd] & & Z_1 \ar[r] \ar@{^{(}->}[dd] & \ldots \\
           &                           & Z_0/\ker(d_0) \ar@{^{(}->}[ru] \ar@{^{(}->}[d]& & \\
           & I_0 \ar[r] & I_0/\ker(d_0) \ar@{^{(}->}[r]& I_1 & }
           \end{equation} 
where $I_1$ can be explicitly written out as $I_1 = (Z_1\oplus I_0/\ker(d_0))/(Z_0/\ker(d_0))$.            
\end{proof}
 
\begin{remark} (a) To understand (\ref{ind}) better, readers are encouraged to consider the simplest case that complex $Z_\bullet = (0 \to A \to 0)$ with only term $A$ non-trivial, a single term complex with $A \in \A$. Then diagram (\ref{ind}) gives  
\begin{equation} \label{injective}
X_{\bullet} = I_0 \to I_0 / A \hookrightarrow I_1 \to ...
\end{equation}
where $I_i$ is injective.  An interesting observation is that $(0 \to A \to 0)$ and $I_{\bullet}$ in (\ref{injective}) are quasi-isomorphic but {\it not} homotopic! In fact, any morphism from $g_{\bullet}: I_{\bullet} \to (0 \to A \to 0)$ satisfying the homotopy relation will result in an isomorphism $A \simeq I_0$ (because any $h_i$ required in Definition \ref{htp} is $0$), which is certainly not true in general. (b) Definition 8.12 in \cite{Vit11} provides another way to define $RF$ between derived categories by acting $F$ on the total complex of a Cartan-Eilenberg resolution (see Definition 8.4 in \cite{Vit11}) of $Z_{\bullet}$. In the spirit of replacing any given complex by a complex only with injective objects, Proposition 8.7 in \cite{Vit11} proves a desired property that this total complex is indeed quasi-isomorphic to $Z_{\bullet}$. 
\end{remark}

\subsection{Triangulated structure}

Derived category $\D(\A)$ has more structures than chain complex category $\Com(\A)$. One extra structure is called {\it triangulated structure}. The starting observation is that in category $\Com(\A)$, a short exact sequence $0 \to \AD \to \BD \to \CD \to 0$ does not necessarily admit any map from $\CD$ (back) to $\AD$. On the contrary, once we pass to derived category, we have 

\begin{fact} \label{fact1} In $\D(\A)$, there exists a well-defined map $\tau_{\bullet}: \CD \to \AD[1]$ such that after applying cohomological functor $h^i$ on $\AD \to \BD \to \CD \xrightarrow{\tau_{\bullet}} \AD[1]$, one gets a long exact sequence 
\[ ... \to h^i(\AD) \to h^i(\BD) \to h^i(\CD) \xrightarrow{h^i(\tau_{\bullet})} h^{i+1}(\AD) \to ... \]
which is the same as the one induced from short exact sequence $0 \to \AD \to \BD \to \CD \to 0$ by connecting morphisms $\delta_i$ from diagram chasing.\end{fact}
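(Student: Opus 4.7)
The plan is to produce the map $\tau_\bullet$ via the mapping cone construction, which is the canonical way distinguished triangles arise from short exact sequences in the derived category. Given a short exact sequence $0 \to \AD \xrightarrow{f} \BD \xrightarrow{g} \CD \to 0$ in $\Com(\A)$, I would form the mapping cone $\mathrm{Cone}(f)$, whose term in degree $n$ is $\BD^n \oplus \AD^{n+1}$ with differential
\[ d(b,a) = (d_\BD b + f(a),\, -d_\AD a). \]
There are canonical chain maps $\iota: \BD \to \mathrm{Cone}(f)$ given by $b \mapsto (b,0)$ and $\pi: \mathrm{Cone}(f) \to \AD[1]$ given by $(b,a) \mapsto a$.

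The key step is to define the quasi-isomorphism $q: \mathrm{Cone}(f) \to \CD$ by $(b,a) \mapsto g(b)$; it is a chain map thanks to $g \circ f = 0$. To verify it is a quasi-isomorphism, I would exhibit the kernel complex as the mapping cone of the \emph{isomorphism} $f: \AD \xrightarrow{\sim} \ker(g) = \im(f)$ (injectivity of $f$ from the short exact sequence is used here) and note that the cone of an isomorphism is contractible, hence acyclic. Inverting $q$ in $\D(\A)$ (which is legitimate by Definition/Theorem \ref{dr}) one then sets
\[ \tau_\bullet := \pi \circ q^{-1} : \CD \to \AD[1]. \]

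The long exact sequence then falls out of the triangulated structure of $\D(\A)$: applying $h^i$ to the triangle $\AD \xrightarrow{f} \BD \xrightarrow{\iota} \mathrm{Cone}(f) \xrightarrow{\pi} \AD[1]$ is standard, and through the isomorphism $h^i(q)$ this transports to a long exact sequence on $h^i(\AD) \to h^i(\BD) \to h^i(\CD) \to h^{i+1}(\AD)$.

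The main obstacle will be the last clause of the statement: verifying that $h^i(\tau_\bullet)$ coincides with the classical connecting homomorphism $\delta_i$ obtained by diagram chasing. I would handle this by explicit unwinding: pick $[c] \in h^i(\CD)$, lift $c$ to some $b \in \BD^i$ using surjectivity of $g$; then $d_\BD b \in \ker(g) = \im(f)$, so $d_\BD b = -f(a)$ for a \emph{unique} $a \in \AD^{i+1}$ (using injectivity of $f$), and the relation $f(d_\AD a) = -d_\BD(d_\BD b) = 0$ forces $d_\AD a = 0$. The pair $(b,a) \in \mathrm{Cone}(f)^i$ is then a cocycle mapping to $c$ under $q$, and $\pi(b,a) = a$ is precisely the cocycle produced by the snake lemma. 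This identifies $h^i(\tau_\bullet)[c] = [a] = \delta_i[c]$ up to the standard sign convention, completing the proof.
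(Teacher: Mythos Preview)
Your proposal is correct and follows essentially the same route as the paper: form the mapping cone of $\AD \to \BD$, exhibit the quasi-isomorphism $\mathrm{Cone}(f) \to \CD$ via $g$, and define $\tau_\bullet$ as projection composed with the inverse of that quasi-isomorphism in $\D(\A)$. The paper leaves both the quasi-isomorphism check and the identification $h^i(\tau_\bullet) = \delta_i$ as exercises (citing Proposition 1.7.5 in \cite{KS90}), whereas you actually carry them out; one small point to make explicit in your argument is that $q$ is surjective (since $g$ is), so that acyclicity of $\ker(q)$ indeed forces $q$ to be a quasi-isomorphism.
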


We can elaborate the origin of $\tau_{\bullet}$ in the following way. For the given short exact sequence $0 \rightarrow \AD \xrightarrow{\alpha} \BD \xrightarrow{\beta} \CD \to 0$, consider mapping cone of $\alpha$ that is $Cone(\alpha)_{\bullet} = A_{\bullet}[1] \oplus B_{\bullet}$ (with a well-known boundary operator) and the following map 
\[ \phi_{\bullet}: Cone(\alpha)_{\bullet} \to C_{\bullet} \,\,\,\,\mbox{by} \,\,\,\,\phi_{\bullet} = (0, \beta_{\bullet}). \]
It can be checked ({\bf Exercise}) that such $\phi_{\bullet}$ is a quasi-isomorphism (Proposition 1.7.5 in \cite{KS90}). Then in $\D(\A)$, we can safely replace $C_{\bullet}$ with $Cone(\alpha)_{\bullet}$. An immediate advantage is that there exists a well-defined map $M(\alpha)_{\bullet} \to \A_{\bullet}[1]$ just by projection. Therefore, in $\D(\A)$ we have the following sequence
\begin{equation} \label{ses-tri}
\AD \to \BD \to \CD \xrightarrow{\tau_{\bullet}} \AD[1]
\end{equation}
where morphism $\tau_{\bullet}$ is defined by first identifying $\CD$ with $M(\alpha)_{\bullet}$ and then project. Moreover, applying cohomology function, (\ref{ses-tri}) gives a long exact sequence and we can check the induced $\tau_{\bullet}$ is the same as $\delta_{\bullet}$, constructed in homological algebra with elementary argument. 

\begin{dfn} \label{dist} A {\it distinguished triangle} in $\D(\A)$ is (defined as) a sequence 
\[ \XD \to \YD \to \ZD \to \XD[1] \]
such that it is quasi-isomorphism (for each position) to some (\ref{ses-tri}) - the sequence/triangle derived from a short exact sequence. \end{dfn}

Traditionally, whenever a category $\mathcal C$ is endowed with an automorphism $T: \mathcal C \to \mathcal C$ (for instance, degree shift in $\Com(\A)$) and a family of special triangles, called distinguished triangles, satisfying certain properties, we call $\mathcal C$ a {\it triangulated category}. A complete theory in triangulated category is developed, see Section 1.5 in Chapter 1 in \cite{KS90}. In particular, any derived category is a triangulated category. 

\begin{prop} \label{fact2} One has following properties of distinguished triangles.
\begin{itemize}
\item[(0)] Every morphism $X \xrightarrow{f} Y$ can be completed into a distinguished triangle $X \xrightarrow{f} Y \to Z \xrightarrow{+1}$.
\item[(1)] (Definition 1.5.2 and Proposition 1.5.6 in \cite{KS90}) If $\XD \to \YD \to \ZD \to \XD[1]$ is a distinguished triangle as defined in Definition \ref{dist}, then we have a long exact sequence 
\[ \ldots \to h^i(\XD) \to h^i(\YD) \to h^i(\ZD) \to h^{i+1}(\XD) \to \ldots. \]
\item[(2)] For any left exact functor $F: \A \to \mathcal B$, $RF$ maps a distinguished triangle to a distinguished triangle. 
\end{itemize}
\end{prop}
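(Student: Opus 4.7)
My plan is to treat the three items in order, since (1) and (2) use the completion construction of (0).

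For item (0), the strategy is to imitate the construction sketched right before Definition \ref{dist}. Given $f_\bullet: X_\bullet \to Y_\bullet$, I would form the mapping cone $\mathrm{Cone}(f)_\bullet = X_\bullet[1] \oplus Y_\bullet$ with its usual differential, together with the mapping cylinder $\mathrm{Cyl}(f)_\bullet$, which comes with a canonical quasi-isomorphism $\mathrm{Cyl}(f)_\bullet \xrightarrow{\sim} Y_\bullet$ and a genuine short exact sequence of complexes
\[ 0 \to X_\bullet \hookrightarrow \mathrm{Cyl}(f)_\bullet \twoheadrightarrow \mathrm{Cone}(f)_\bullet \to 0. \]
By Definition \ref{dist} this short exact sequence yields a distinguished triangle $X_\bullet \to \mathrm{Cyl}(f)_\bullet \to \mathrm{Cone}(f)_\bullet \xrightarrow{+1}$. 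Replacing $\mathrm{Cyl}(f)_\bullet$ by $Y_\bullet$ through the quasi-isomorphism (which is an isomorphism in $\D(\A)$), and checking that the composition $X_\bullet \hookrightarrow \mathrm{Cyl}(f)_\bullet \xrightarrow{\sim} Y_\bullet$ coincides with $f_\bullet$ up to the relevant identifications, produces the desired completion $X_\bullet \xrightarrow{f} Y_\bullet \to \mathrm{Cone}(f)_\bullet \xrightarrow{+1}$.

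For item (1), I would simply unwind Definition \ref{dist}: the given distinguished triangle fits into a commutative diagram of distinguished triangles in $\D(\A)$ whose other row comes from a short exact sequence $0 \to \AD \to \BD \to \CD \to 0$, with each vertical arrow a quasi-isomorphism. Applying $h^i$ to the bottom row gives the classical long exact sequence in cohomology, produced by diagram chasing (snake lemma), with connecting morphism $\delta_i$. On the other hand $h^i$ is well-defined on $\D(\A)$ and preserves isomorphisms, so the same long exact sequence is transported to the original triangle; the fact that the connecting map coincides with $h^i$ of $\tau_\bullet$ is exactly the content of Fact \ref{fact1}.

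For item (2), which I expect to be the main obstacle, I would first reduce to the case of a triangle arising from a short exact sequence $0 \to \AD \to \BD \to \CD \to 0$ in $\mathrm{Com}^+(\A)$. The key step is a horseshoe-type argument: using that $\A$ has enough injectives, I construct injective resolutions $\AD \hookrightarrow I_\AD^\bullet$ and $\CD \hookrightarrow I_\CD^\bullet$, and then assemble an injective resolution $\BD \hookrightarrow I_\BD^\bullet$ with $I_\BD^i = I_\AD^i \oplus I_\CD^i$ so that
\[ 0 \to I_\AD^\bullet \to I_\BD^\bullet \to I_\CD^\bullet \to 0 \]
is degree-wise split exact. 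Since this sequence is split in each degree, applying the additive functor $F$ preserves exactness, yielding a short exact sequence of complexes in $\mathcal B$, hence a distinguished triangle $F(I_\AD^\bullet) \to F(I_\BD^\bullet) \to F(I_\CD^\bullet) \xrightarrow{+1}$ in $\D^+(\mathcal B)$ by Definition \ref{dist}. By Theorem \ref{RFT2} this is exactly $RF(\AD) \to RF(\BD) \to RF(\CD) \xrightarrow{+1}$. The delicate point is not the algebra of splitting but verifying that the connecting morphism produced this way agrees with the one canonically attached to $RF$ applied to the original triangle; this amounts to checking naturality of the cone construction against the chosen injective resolutions, which I would handle by the same diagram-chase as in (1).
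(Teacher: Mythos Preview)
The paper does not actually prove this proposition: it is stated with a pointer to \cite{KS90} (Definition~1.5.2 and Proposition~1.5.6) and then immediately followed by Remark~\ref{ans-exc}. So there is no in-paper argument to compare against; the author is treating these as standard facts about triangulated/derived categories.

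Your sketch is the correct textbook route and would constitute a genuine proof. Two minor comments. First, for (0) you do not strictly need the mapping cylinder: already the discussion preceding Definition~\ref{dist} shows that $X_\bullet \to Y_\bullet \to \mathrm{Cone}(f)_\bullet \to X_\bullet[1]$ is, degreewise, a termwise split short exact sequence, hence distinguished in the sense of Definition~\ref{dist}; the cylinder is one clean way to make the ``quasi-isomorphic to a triangle from a short exact sequence'' clause literal, but a direct appeal to the cone also works. Second, for (2) your horseshoe step is right in spirit but note that here $A_\bullet$, $B_\bullet$, $C_\bullet$ are complexes, not single objects, so the resolution you build is the complex-level horseshoe (equivalently, a compatible triple of injective resolutions in the sense of Theorem~\ref{RFT2}, or a Cartan--Eilenberg style construction); the inductive construction goes through because at each degree the sequence $0 \to I^i_{A_\bullet} \to I^i_{A_\bullet}\oplus I^i_{C_\bullet} \to I^i_{C_\bullet} \to 0$ is split and injectivity lets you lift the required maps. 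With that understood, the degreewise split exactness survives the additive functor $F$, and the identification of the $+1$ map follows from naturality of the cone exactly as you say.
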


\begin{remark} \label{ans-exc} {\bf (Important!)} A functor from category $\D(\A)$ to $\D(\B)$ satisfying result from (2) in Proposition \ref{fact2} is called an {\it exact functor in derived category}. Now we arrive at the point really clarifying the ``exactness'' of a derived functor raised from Remark \ref{que-exc}. Recall that in $\A$, an exact functor (from $\A$ to $\B$) takes a short exact sequence to a short exact sequence. If not, then we get a long exact sequence by successive $R^iF$ to detect the non-exactness. However, by discussion above, a distinguished triangle is an analogue of a short exact sequence in $\D(\A)$. Upgrade $F$ to its derived functor $RF$ and then it preserves distinguished triangles, which, by definition, is (always) exact in derived category. \end{remark}

\subsection{Applications in sheaves}
In this section, we will apply abstract construction developed in the previous sections to the concrete category - category of sheaves of groups. 

\subsubsection{Base change formula} 
In Example \ref{ex-le}, given a continuous map $f: X\to Y$, we have defined sheaf operators $f_*, f^{-1}$ and $f_!$ (for this we need space $X$ and $Y$ are locally compact). Let us describe the stalk of $f_!$ first (which behaves better than the stalk of $f_*$ in general). 

\begin{prop} (Proposition 2.5.2 in \cite{KS90}) Let $f: X \to Y$ be a continuous function where $X$ and $Y$ are locally compact. For any $y \in Y$ and $\F \in \Sh(X)$, there exists a canonical isomorphism
\[ (f_! \F)_y \simeq \Gamma_c(f^{-1}(y), \F|_{f^{-1}(y)}). \]
\end{prop}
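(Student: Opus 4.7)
The plan is to construct the natural restriction map
\[
\phi \colon (f_!\F)_y \longrightarrow \Gamma_c(f^{-1}(y), \F|_{f^{-1}(y)})
\]
and to show it is bijective. A germ on the left is represented by some $s \in \F(f^{-1}(U))$ for an open $U \ni y$, with $f\colon \text{supp}(s) \to U$ proper. Pulling back along the inclusion $i\colon f^{-1}(y) \hookrightarrow X$ yields a section of $i^{-1}\F = \F|_{f^{-1}(y)}$ whose support equals $\text{supp}(s) \cap f^{-1}(y)$, which is the fiber of the proper map $f|_{\text{supp}(s)}$ over $y$, hence compact. So $\phi$ lands in $\Gamma_c$, and a routine check shows it is independent of the representative $(U,s)$.

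For injectivity I would argue directly from properness. If $s|_{f^{-1}(y)} = 0$, then $\text{supp}(s) \cap f^{-1}(y) = \emptyset$, so $y \notin f(\text{supp}(s))$. Since $f|_{\text{supp}(s)}$ is proper and $U$ is Hausdorff, $f(\text{supp}(s))$ is closed in $U$, so one obtains a smaller open $U' \ni y$ with $s|_{f^{-1}(U')} = 0$, and the germ vanishes.

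Surjectivity is the main obstacle, and the plan is to leverage the identification $(i^{-1}\F)_x = \F_x$ for $x \in f^{-1}(y)$ together with local compactness of $X$. Given $t \in \Gamma_c(f^{-1}(y), i^{-1}\F)$ with compact support $K$, choose a finite open cover $V_1, \ldots, V_n$ of $K$ in $X$ and sections $s_j \in \F(V_j)$ representing $t$ on $V_j \cap f^{-1}(y)$. Pairwise agreement on the fiber propagates to an open neighborhood $O_{jk} \subset V_j \cap V_k$ of $V_j \cap V_k \cap f^{-1}(y)$. The key geometric lemma I would invoke is that for any compact $L \subset X$ disjoint from $f^{-1}(y)$, the image $f(L)$ is closed in $Y$ and misses $y$; applied inside relatively compact shrinkings (available by local compactness), this lets one find a single open $U \ni y$ with $V_j \cap V_k \cap f^{-1}(U) \subset O_{jk}$ for all pairs. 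Gluing then produces a section $s$ on $(\bigcup_j V_j) \cap f^{-1}(U)$ extending $t$ on the fiber.

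To pass from this to an element of $(f_!\F)(U)$, the plan is to pick a relatively compact open $W$ with $K \subset W \subset \overline{W} \subset \bigcup_j V_j$ and a slightly smaller open $W' \supset K$. One more application of the same closed-image principle, now to the compact set $\overline{W}\setminus W'$ (which is disjoint from $f^{-1}(y)$), permits a further shrinking of $U$ so that $s$ vanishes on $(\overline{W}\setminus W') \cap f^{-1}(U)$; this allows gluing $s$ with the zero section on $f^{-1}(U) \setminus \overline{W'}$ to yield a section of $\F$ on $f^{-1}(U)$ with support inside $\overline{W'}$, hence proper over $U$. This section maps to the prescribed $t$ under $\phi$, finishing surjectivity. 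The chief technical burden throughout is the bookkeeping of the finitely many successive shrinkings of $U$ needed so that the overlap conditions, the gluing, and the properness condition all hold simultaneously.
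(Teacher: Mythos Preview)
The paper does not supply its own proof of this proposition; it merely states the result and cites Kashiwara--Schapira. So there is no ``paper's proof'' to compare against.

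Your plan is the standard direct argument and is essentially correct. The construction of $\phi$ and the injectivity step are clean. For surjectivity, your outline is sound but there is one point worth tightening: when you pass from pairwise agreement on the fiber to agreement on $f^{-1}(U)$, the obstruction sets $(V_j\cap V_k)\setminus O_{jk}$ are only closed in $V_j\cap V_k$, not in $X$, so you must indeed first replace the $V_j$ by relatively compact shrinkings $V_j'$ (still covering $K$) before invoking the ``compact image is closed'' lemma. You do say this, but it is the step where a careless write-up can go wrong. Similarly, in the extension-by-zero step, the reason $s$ vanishes on $(\overline{W}\setminus W')\cap f^{-1}(U)$ after one more shrink is that $\operatorname{supp}(s)\cap(\overline{W}\setminus W')$ is compact and disjoint from $f^{-1}(y)$; it may help to spell that out explicitly rather than leaving it implicit in ``one more application of the same principle''. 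With those two clarifications, your argument goes through.
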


Functors $\Gamma(X, \cdot)$, $\Gamma_c(X, \cdot)$, $f_*$ and $f_!$ are all left exact. Therefore, we have their derived functors, denoted as $R\Gamma(X, \cdot)$, $R\Gamma_c(X, \cdot)$, $Rf_*$ and $Rf_!$, well-defined in the corresponding derived categories. Note that $f^{-1}$, by definition, is already exact, so $Rf^{-1} = f^{-1}$. Moreover, there is an extremely useful fact called {\it Grothendieck composition formula} which addresses the issue of the composition of two functors $G \circ F$ where $\mathcal A \xrightarrow{F} \mathcal B \xrightarrow{G} \mathcal C$. Under a certain condition (for instance, $F$ maps $F$-acyclic objects to be $G$-acyclic), we get
\begin{equation} \label{comp}
R(G \circ F) = RG \circ RF.
\end{equation}

\begin{ex} Let $f: X \to Y$ be a continuous map between two topological spaces. Consider composition 
\[ \Sh(X) \xrightarrow{f_*} \Sh(Y) \xrightarrow{\Gamma(Y, \cdot)} {\mathcal Ab}. \]
Note that by Exercise 1.16 (d) in \cite{Har}, $f_*$ maps a flabby sheaf to a flabby sheaf. Moreover, by Exercise \ref{ex-4} above that flabby sheaves are $\Gamma(X, \cdot)$-acyclic, apply the composition formula and get 
\[  R\Gamma(Y, Rf_* \F) = R(\Gamma(Y, f_*\F)) = R\Gamma(X, \F). \]
\end{ex}

The main theorem in this subsection is called {\it base change} formula. 

\begin{prop} \label{bsc} Suppose we have the following {\bf cartesian} square 
\[ \xymatrix{ 
X \ar[d]_f & X' \ar[l]_{\beta} \ar[d]^g\\
Y & Y' \ar[l]^{\alpha}} \]
that is $X' \simeq X\times_Y Y'$. Then there exists a canonical isomorphism such that, for any $\F \in \D^+(\Sh(X))$, $\alpha^{-1} (Rf_! \F) \simeq Rg_! (\beta^{-1} \F)$. \end{prop}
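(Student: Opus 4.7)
The plan is to construct a canonical base-change morphism $\alpha^{-1} Rf_!\F \to Rg_! \beta^{-1}\F$ first, and then verify it is an isomorphism by a stalk-by-stalk check.

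First, I would build the natural morphism at the underived level. For any open $V' \subset Y'$ and any open $V \subset Y$ containing $\alpha(V')$, there is a canonical map $(f_!\F)(V) \to (g_!\beta^{-1}\F)(V')$ obtained by restricting a section $s$ of $\F$ with $f$-proper support on $f^{-1}(V)$ to its pullback on $\beta^{-1}f^{-1}(V) \supset g^{-1}(V')$: the cartesian hypothesis $X' \simeq X \times_Y Y'$ is exactly what guarantees $g$-properness of the support of the restricted section. Taking the colimit over $V \supset \alpha(V')$ produces a morphism $\alpha^{-1}f_!\F \to g_!\beta^{-1}\F$ in $\Sh(Y')$. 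To derive this, replace $\F$ by a c-soft (hence $f_!$-acyclic) resolution $\mathcal{I}_{\bullet}$; since $\beta^{-1}\mathcal{I}_{\bullet}$ remains c-soft in the locally compact setting (softness passes through pullback to fibers), the induced morphism in $\D^+(\Sh(Y'))$ is well-defined, and independence from the choice of resolution follows from the uniqueness up to homotopy discussed in Theorem \ref{RFT2}.

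Second, to show it is an isomorphism, I would check stalks at an arbitrary $y' \in Y'$. Using the derived analogue of the stalk formula stated just above the proposition, the left-hand side stalk is
\[
(\alpha^{-1} Rf_!\F)_{y'} \;=\; (Rf_!\F)_{\alpha(y')} \;\simeq\; R\Gamma_c\bigl(f^{-1}(\alpha(y')),\, \F|_{f^{-1}(\alpha(y'))}\bigr),
\]
while the right-hand side stalk is $R\Gamma_c(g^{-1}(y'),\, \beta^{-1}\F|_{g^{-1}(y')})$. The cartesian hypothesis implies that $\beta$ restricts to a homeomorphism $g^{-1}(y') \xrightarrow{\sim} f^{-1}(\alpha(y'))$, under which $\beta^{-1}\F|_{g^{-1}(y')}$ is identified with $\F|_{f^{-1}(\alpha(y'))}$. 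Hence both stalks compute the same $R\Gamma_c$, and the base-change morphism is a quasi-isomorphism at every point of $Y'$, giving the claimed isomorphism in $\D^+(\Sh(Y'))$.

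The main obstacle is the upgrade of the underived stalk formula for $f_!$ to its derived counterpart $(Rf_!\F)_y \simeq R\Gamma_c(f^{-1}(y), \F|_{f^{-1}(y)})$; this needs the c-soft resolution together with the fact that $\Gamma_c$ on the fiber commutes with the filtered colimit defining the stalk, a step that genuinely uses local compactness of $Y$. A secondary technical point is to verify that the base-change morphism constructed at the underived level is compatible with Grothendieck's composition formula (\ref{comp}) applied to $\beta^{-1} \circ f^{-1}_{\bullet} = g^{-1} \circ \alpha^{-1}_{\bullet}$ on the restricted sheaves, which is what makes the identification of stalks on both sides go through naturally rather than only up to an unspecified isomorphism.
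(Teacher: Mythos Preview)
Your proposal is correct and follows the same overall strategy as the paper: construct a canonical base-change morphism and then verify it is an isomorphism by comparing stalks via the identification of fibers $g^{-1}(y')\simeq f^{-1}(\alpha(y'))$ coming from the cartesian hypothesis.

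There are two technical differences worth noting. First, the paper constructs the canonical morphism $\alpha^{-1}f_!\F\to g_!\beta^{-1}\F$ abstractly via adjunction (building $f_!\beta_*\to\alpha_*g_!$ by hand and then using the $(\alpha^{-1},\alpha_*)$ and $(\beta^{-1},\beta_*)$ adjunctions), whereas you construct it concretely by pulling back sections with proper support; both yield the same map. Second, and more substantively, the paper avoids your ``main obstacle'' altogether: rather than invoking the derived stalk formula $(Rf_!\F)_y\simeq R\Gamma_c(f^{-1}(y),\F|_{f^{-1}(y)})$, it uses Remark~\ref{red-0} to reduce the entire statement to the \emph{underived} isomorphism $\alpha^{-1}f_!\F\simeq g_!\beta^{-1}\F$ via the composition formula~(\ref{comp}) (since $\alpha^{-1},\beta^{-1}$ are exact), and then checks only underived stalks. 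This sidesteps the c-soft resolution bookkeeping you flag, at the cost of needing the hypothesis of~(\ref{comp}) (that $\beta^{-1}$ sends $f_!$-acyclics to $g_!$-acyclics), which is itself essentially the c-soft-preservation fact you use. So the two routes trade the same technical input in different places.
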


\begin{remark} \label{red-0} Since $R\alpha^{-1} = \alpha^{-1}$ and $R \beta^{-1} = \beta^{-1}$, we know as long as we proved 
\begin{equation} \label{bsc2}
\alpha^{-1} (f_! \F) \simeq g_! (\beta^{-1} \F),
\end{equation}
by composition formula mentioned (\ref{comp}), one gets 
\begin{align*}
\alpha^{-1} (Rf_! \F) = R(\alpha^{-1}) (R f_! (\F)) & = R(\alpha^{-1} \circ f_!) (\F) \\
& = R(g_! \circ \beta^{-1})(\F) = R g_! (R \beta^{-1}(\F)) = Rg_! (\beta^{-1} \F),
\end{align*}
which is our desired conclusion in Proposition \ref{bsc}. 
\end{remark}

\begin{proof} Our plan to prove Proposition \ref{bsc} is by first assuming the following claim (which needs a concept call {\it adjoint functors} introduced later)
\begin{claim} \label{claim} There exists a canonical morphism $\alpha^{-1} (f_! \F) \to g_! (\beta^{-1} \F)$. 
\end{claim}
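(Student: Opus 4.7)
The plan is to use the adjunction $\alpha^{-1} \dashv \alpha_*$ to convert the desired morphism into one going ``in the other direction'', and then construct that morphism directly at the level of sections by pulling back along $\beta$ and checking that the properness of supports is preserved by base change. By the $(\alpha^{-1}, \alpha_*)$-adjunction, a morphism $\alpha^{-1}(f_!\F) \to g_!(\beta^{-1}\F)$ in $\Sh(Y')$ is the same datum as a morphism $f_!\F \to \alpha_* g_!(\beta^{-1}\F)$ in $\Sh(Y)$. So I would focus on constructing the latter naturally in $\F$ and in open sets.

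For any open $U \subseteq Y$, the commutativity $f\beta = \alpha g$ together with the Cartesian property gives $\beta^{-1}f^{-1}(U) = g^{-1}\alpha^{-1}(U)$, so one has
\[
(\alpha_* g_!(\beta^{-1}\F))(U) \;=\; \{\, t \in (\beta^{-1}\F)(\beta^{-1}f^{-1}(U)) \;:\; g \colon \sp(t) \to \alpha^{-1}(U) \text{ is proper} \,\}.
\]
Given $s \in (f_!\F)(U)$, i.e.\ a section $s \in \F(f^{-1}(U))$ with $f \colon \sp(s) \to U$ proper, the adjunction unit $\F \to \beta_*\beta^{-1}\F$ (equivalently the presheaf definition of $\beta^{-1}$ via $W \mapsto \varinjlim_{V \supseteq \beta(W)} \F(V)$) produces a pulled-back section $\beta^{\#}(s) \in (\beta^{-1}\F)(\beta^{-1}f^{-1}(U))$ satisfying $\sp(\beta^{\#}(s)) \subseteq \beta^{-1}(\sp(s))$. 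Sending $s$ to $\beta^{\#}(s)$ is clearly a morphism of abelian groups and is compatible with restrictions of open sets, so it will assemble into a morphism of sheaves once the properness condition is verified.

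The crucial point is thus to show $g \colon \sp(\beta^{\#}(s)) \to \alpha^{-1}(U)$ is proper. Restricting the ambient Cartesian square to $\sp(s) \subseteq f^{-1}(U)$ yields a Cartesian square in which $\beta^{-1}(\sp(s)) \to \alpha^{-1}(U)$ (via $g$) is the base change of $\sp(s) \to U$ (via $f$) along $\alpha$. Since properness is stable under base change, this pulled-back map is proper, and since $\sp(\beta^{\#}(s))$ is a closed subset of $\beta^{-1}(\sp(s))$, its restriction to this closed subset is also proper. Therefore $\beta^{\#}(s) \in (\alpha_* g_!(\beta^{-1}\F))(U)$, which completes the construction.

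Naturality in $\F$ and in $U$ is immediate from the functoriality of $\beta^{-1}$ and of the unit of adjunction; applying the $(\alpha^{-1}, \alpha_*)$-adjunction then produces the canonical morphism $\alpha^{-1}(f_!\F) \to g_!(\beta^{-1}\F)$ asserted in the claim. The main obstacle in making the argument airtight is handling the sheafification inherent in $\beta^{-1}$ carefully, so that ``pulling back a section'' actually yields a well-defined section of the associated sheaf whose support is controlled by $\beta^{-1}(\sp(s))$; once that is set up, the properness step is the standard base-change stability argument and is mostly a formality.
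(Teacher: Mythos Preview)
Your proof is correct and follows essentially the same route as the paper: both arguments use the $(\alpha^{-1},\alpha_*)$-adjunction to reduce to building $f_!\F \to \alpha_* g_!(\beta^{-1}\F)$, invoke the unit $\F \to \beta_*\beta^{-1}\F$ to pull back a section $s$ along $\beta$, and then check properness of $g$ on the support via stability of properness under base change in the Cartesian square. The only cosmetic difference is that the paper first isolates a natural transformation $f_!\beta_* \to \alpha_* g_!$ on $\Sh(X')$ and then plugs in $\beta^{-1}\F$, whereas you carry out both steps at once on the level of sections.
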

Then by Remark \ref{red-0}, one only needs to check (\ref{bsc2}) at the level of stalk. In fact, for any point $y' \in Y'$, on the one hand, we have 
\begin{equation} \label{1-1} ((\alpha^{-1} f_!)(\F))_{y'} = (f_!\F)_{\alpha(y')} = \Gamma_c(f^{-1}(\alpha(y')), \F)
\end{equation}
while on the other hand, 
\begin{equation} \label{2-2}
((g_! \beta^{-1})(\F))_{y'} = \Gamma_c(g^{-1}(y'), \beta^{-1}(\F)).
\end{equation}
Now, (\ref{1-1}) is the same as (\ref{2-2}) because by assumption of square being cartesian where $\beta$ provides an isomorphism between fibers $g^{-1}(y')$ and $f^{-1}(\alpha(y'))$. 
\end{proof}

\subsubsection{Adjoint relation} Gven two functors $F: \Sh(X) \to \Sh(Y)$ and $G: \Sh(Y) \to \Sh(X)$, we say $F$ and $G$ form an {\it adjoint pair} if for any $\F \in \Sh(X)$ and $\G \in \Sh(Y)$, 
\begin{equation} \label{adjoint}
 {\Hom}_{\Sh(Y)} (F(\F), \G) = {\Hom}_{\Sh(X)} (\F, G(\G)).
 \end{equation}
To be more precisely $F$ is the \footnote{There possibly exists more than one left adjoint of $G$. But by Exercise I.2 (ii) in \cite{KS90} all of them will be isomorphism, so left adjoint will be unique (up to isomorphism). The same is true for right adjoint.} left adjoint functor of $G$ and $G$ is the right adjoint functor of $F$. One of the most important properties of adjoint functors is providing canonical morphisms. 

\begin{lemma} \label{car} (Exercise I.2 (i) in \cite{KS90}) For any $\F \in \Sh(X)$ and $\G \in \Sh(Y)$, if $F$ and $G$ are adjoint pair as in (\ref{adjoint}), then we have morphisms 
\[ \F \to (G \circ F)(\F)  \,\,\,\,\mbox{and}\,\,\,\, (F \circ G)(\G) \to \G.\]
Note the order is important!
\end{lemma}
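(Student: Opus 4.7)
The plan is to construct the two canonical morphisms (the unit and counit of the adjunction) by plugging in identity morphisms on the two sides of the adjunction isomorphism.

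For the first morphism $\F \to (G \circ F)(\F)$, I would specialize the adjunction relation (\ref{adjoint}) by setting $\G := F(\F) \in \Sh(Y)$. This yields a bijection
\[ {\Hom}_{\Sh(Y)}(F(\F), F(\F)) \;\cong\; {\Hom}_{\Sh(X)}(\F, G(F(\F))). \]
The identity morphism $\I_{F(\F)}$ sits in the left-hand side, and its image under this bijection is a distinguished element of ${\Hom}_{\Sh(X)}(\F, (G \circ F)(\F))$. This is the canonical morphism we want.

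For the second morphism $(F \circ G)(\G) \to \G$, I would symmetrically specialize (\ref{adjoint}) by setting $\F := G(\G) \in \Sh(X)$, producing a bijection
\[ {\Hom}_{\Sh(Y)}(F(G(\G)), \G) \;\cong\; {\Hom}_{\Sh(X)}(G(\G), G(\G)). \]
The identity morphism $\I_{G(\G)}$ now lives in the right-hand side, and its preimage under this bijection gives the desired morphism $(F \circ G)(\G) \to \G$. The asymmetry (``the order is important'') arises precisely because in one case we plug identity into the $\Hom_{\Sh(Y)}$-side and in the other case into the $\Hom_{\Sh(X)}$-side, dictated by which side of the adjunction makes the relevant identity available.

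There is no real obstacle here since the construction is formal; the only point worth a remark is naturality. The adjunction bijection in (\ref{adjoint}) is implicitly natural in both $\F$ and $\G$ (this is part of what it means to be an adjoint pair), and this naturality is what makes the two morphisms above genuinely ``canonical'' --- i.e.\ they assemble into natural transformations $\I_{\Sh(X)} \Rightarrow G \circ F$ and $F \circ G \Rightarrow \I_{\Sh(Y)}$. This verification is routine diagram-chasing in (\ref{adjoint}) and I would omit it, as the statement of the lemma only asserts existence of the morphisms.
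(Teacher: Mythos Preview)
Your construction is correct and is the standard argument for producing the unit and counit of an adjunction. The paper does not supply its own proof of this lemma; it is stated with a reference to Exercise I.2~(i) in \cite{KS90} and used without further justification. Your write-up is exactly the intended solution to that exercise, so nothing is missing.
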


\begin{ex} 
Functor $f_*$ and $f^{-1}$ are adjoint pair. Explicitly, we have 
\begin{equation} \label{sheaf-adjoint}
{\Hom}(f^{-1} \F, \G) = {\Hom}(\F, f_* \G).
\end{equation}
\end{ex}

Now let's back to the gap in the proof of Proposition \ref{bsc}. 

\begin{proof} (Proof of Claim \ref{claim}) We will first show there exists a morphism, for any $\F \in \Sh(X')$, 
\begin{equation} \label{adj-1}
(f_! \circ \beta_*) (\F) \to (\alpha_* \circ g_!)(\F). 
\end{equation}
In fact, for any open subset $U \subset Y$, a section $s$ of $(f_! \circ \beta_*) (\F)(U)$ satisfies 
\[ {\rm supp}(s) \subset \beta^{-1}(V)  \]
where $V \subset f^{-1}(U)$ and $f: V \to U$ is proper. Then we have the following picture 
\[ \xymatrix{
V \ar[d]_-{{\rm proper}} & \beta^{-1}(V) \ar[l]_-{\beta} \ar[d]^-{{\rm proper}} \\
U & \alpha^{-1}(U) \ar[l]_-{\alpha}} \]
where the properness on the right vertical comes from hypothesis that square being cartesian (so $g$ provides an isomorphism on fibers). Therefore, $s$ defines a section on $(\alpha_* \circ g_!)(\F)(U)$ trivially by definition. Next, we will play with the adjoint relation. Note that for any $\G \in \Sh(X)$, by Lemma \ref{car} and (\ref{sheaf-adjoint}), 
\begin{equation} \label{functor-exist}
 f_! (\G) \to (f_! \circ \beta_* \circ \beta^{-1}) (\G) \to (\alpha_* \circ g_! \circ \beta^{-1}) (\G)
 \end{equation}
where the last arrow comes from (\ref{adj-1}). Therefore, by adjoint relation again,
\[ {\rm Hom}(\alpha^{-1}(f_!(\G)), g_!(\beta^{-1}(\G))) = {\rm Hom}(f_!(\G), (\alpha_* \circ g_! \circ \beta^{-1})(\G)) \]
where we know at least one morphism on the right hand side (exists by (\ref{functor-exist})) which corresponds to a morphism on the left hand side. This provides a desired morphism in the conclusion. 
\end{proof}

Since (bi-)functor $\Hom$ is well-known to be left exact, its derived functor $R\Hom$ is well-defined in the derived category. Moreover, in derived category, we also have adjoint relation, for instance, 
\begin{equation} \label{adj-2}
R\Hom(f^{-1} \F, \G) = R\Hom(\F, Rf_* \G).
\end{equation}
By (1) in Theorem \ref{key}, (\ref{sheaf-adjoint}) can be regarded as the $0$-th derived version of (\ref{adj-2}). 

\begin{remark} An interesting phenomenon is that in general functor $f_!$ does not admit any adjoint functor on the level of category of sheaves (except for the elementary case of open/closed embeddings). In order to define an adjoint (more precisely right adjoint), we need to pass to derived version $Rf_!$. This story involves Verdier duality that we will not elaborate much in this note. Interested reader can check Chapter III in \cite{KS90}. \end{remark}

\subsection{Persistence modules}
Persistent homology theory provides a translation from topological/geometrical questions into combinatorics questions via an algebraic structure call {\it persistence modules}. Let $\k$ be a fixed field. We will be only interested in persistence $\k$-modules and of course for any ring $R$, one can define persistence $R$-module. 

\begin{dfn} A persistence $\k$-module $(V, \pi)$ consists of the following data:
\begin{itemize}
\item{} $V = \{V_t\}_{t \in \R}$ such that for each $t \in \R$, $V_t$ is finite dimensional over $\k$;
\item{} $\pi_{st}: V_s \to V_t$ for any $s\leq t$ such that (i) $\pi_{tt} = \mbox{identity}$ and (ii) for any $s \leq t \leq r$, $\pi_{sr} = \pi_{tr} \circ \pi_{st}$. 
\end{itemize}
Moreover, for practical application, most of the time we will also assume the following conditions:
\begin{itemize}
\item{} $V_s = 0$ for $s <<0$;
\item{} (regularity) For all but finitely many points $t \in \R$, there exists a neighborhood $U$ of $t$ such that $\pi_{sr}: V_s \to V_r$ is an isomorphism for any $s \leq r$ in $U$. 
\item{} (semi-continuity) For any $t \in \R$, $\pi_{st}$ is an isomorphism for any $s$ sufficiently close to $t$ from {\it left}. 
\end{itemize}
In regularity condition above, denote $Spec(V, \pi)$ to be those (finite) jumping points which do not satisfy the neighborhood isomorphism condition. 
\end{dfn}

\begin{exercise} For $s >>0$, $V_s = V_{\infty}$ \end{exercise}

\begin{ex} \label{inter-type-p} A standard example of a persistent $\k$-module is called an {\it interval-type} $\k$-module. Fix an interval $(a, b]$ 
\[ \I_{(a,b]} = \left\{ \begin{array}{cc} \k \,\,& \mbox{when $t \in (a,b]$} \\ 0 \,\,& \mbox{otherwise} \end{array} \right.\]
and $\pi_{st}$ is identity map whenever both $s,t \in (a,b]$ and $0$ otherwise. \end{ex} 


\begin{ex} We can get a persistence $\k$-module from classical Morse theory. Let $X$ be a closed manifold and $f: X \to \R$ be a Morse function. Define 
\[ V_t = H_*(\{f <t\}; \k).\]
Then if $s <t$, the inclusion map $\{f<s\} \subset \{f<t\}$ induces a map on (filtered) homologies $\pi_{st}: V_s \to V_t$. Moreover, $Spec(V, \pi) = \mbox{\{critical values of $f$\}}$. 
\end{ex}

\begin{ex} 
Let $(X,d)$ be a finite metric space. For any $t >0$, set $R_t(X,d)$ to be a simplical complex (called {\it Rips complex}) such that (i) $\{x_i\}$ are vertices and $(ii)$ $\sigma \subset X$ is a simplex in $R_t(X,d)$ if ${\rm diam}(\sigma) <t$. Then $H_*(R_t(X, d); \k)$ is a persistence $\k$-module. \end{ex}

\begin{dfn} Let $(V, \pi)$ and $(W, \theta)$ be two persistence $\k$-modules. A (persistence) morphism is a $\R$-family of maps $A_t: V_t \to W_t$ such that 
\[ \xymatrix{
V_s \ar[r]^{\pi_{st}} \ar[d]_{A_s}& V_t  \ar[d]^{A_t} \\
W_s \ar[r]_{\theta_{st}} & W_t}\]
\end{dfn}

\begin{ex} \label{inter-1} There is {\it no} morphism from $\I_{(1,2]}$ to $\I_{(1, 3]}$. But there exist morphisms from $\I_{(2,3]} \to \I_{(1,3]}$. \end{ex}

\begin{dfn} $(V, \pi) \subset (W, \theta)$ is a persistence submodule if inclusion is a morphism. \end{dfn}

\begin{ex} \label{inter-2} $\I_{(2,3]} \subset \I_{(1,3]}$ is a persistence submodule. However, it does not have a direct complement (in the persistence sense). \end{ex} 

\begin{exercise} \label{inter-3} For any $a<b<c$, there exists an short exact sequence 
\[0 \to \I_{(b,c]} \to \I_{(a,c]} \to \I_{(a,b]} \to 0.\]
\end{exercise}

\begin{remark} Example \ref{inter-2} can be interpreted as $\I_{(a,c]}$ is {\it indecomposable}. In other words, the short exact sequence from Exercise \ref{inter-3} does not split. An analogue result in sheaves is that any sheaf $\k_{(a,c]}$ is indecomposable. \end{remark}

Special emphasize should be put on interval-type persistence $\k$-modules from Example \ref{inter-type-p} due to the following {\it structure theorem}.

\begin{theorem} ({\bf Normal form}) \label{normal} For any persistence $\k$-module $(V, \pi)$, there exists a unique collection of intervals $\B = \{(I_j = (a_j, b_j], m_j)\}$, where $m_j$ is multiplicity of interval $(a_j, b_j]$, such that 
\[ V = \bigoplus \I_{(a_j, b_j]}^{m_j}. \]
\end{theorem}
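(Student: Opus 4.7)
The plan is to reduce the continuous classification problem to a finite-dimensional linear algebra problem, and then invoke a standard decomposition for representations of type-$A$ quivers (equivalently, for finitely generated modules over the PID $\k[t]$).

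First, I would use the regularity and left semi-continuity hypotheses to discretize. Let $Spec(V,\pi) = \{s_1 < \cdots < s_N\}$, which is finite by regularity. Off the spectrum, regularity makes all transition maps isomorphisms, and left semi-continuity pins down each value $V_{s_i}$ as the limit from below. Setting $W_i := V_{s_i}$ together with $W_0 := 0$ (using $V_s = 0$ for $s \ll 0$) and $W_{N+1} := V_\infty$, the whole persistence module is reconstructable from the finite diagram
\[ 0 = W_0 \xrightarrow{f_0} W_1 \xrightarrow{f_1} \cdots \xrightarrow{f_{N-1}} W_N \xrightarrow{f_N} W_{N+1}, \]
with $f_i$ induced by the appropriate $\pi_{st}$; explicitly $V_t \cong W_i$ for $t \in (s_{i-1}, s_i]$. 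Morphisms of persistence modules correspond to intertwiners of such diagrams, so classifying $V$ up to isomorphism is the same as classifying these finite diagrams.

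Next, I would recognize this as a representation of the $A_{N+1}$-quiver $\bullet \to \bullet \to \cdots \to \bullet$, and invoke its classification: every such representation decomposes, uniquely up to reordering, as a direct sum of indecomposable interval representations $J_{[p,q]}$ (with $\k$ in positions $p$ through $q$, identity maps between consecutive copies, zero elsewhere). I can prove this either by viewing $\bigoplus_i W_i$ as a finitely generated graded module over $\k[t]$ and applying the PID structure theorem, or by an elementary induction on $\sum \dim W_i$: lift a basis vector realizing a maximal surviving ``lifetime'', split off the $J_{[p,q]}$ it generates as a direct summand via a complement, and recurse; uniqueness then follows from Krull--Schmidt. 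Translating back, each summand $J_{[p,q]}$ corresponds to the persistence interval module $\I_{(s_{p-1}, s_q]}$ (with the conventions $s_0 = -\infty$ and $s_{N+1} = +\infty$ for unbounded bars), yielding $V \cong \bigoplus_j \I_{(a_j, b_j]}^{m_j}$ with uniqueness inherited from the quiver level.

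The main obstacle is Step 2: showing both that the $J_{[p,q]}$ are indecomposable with local endomorphism rings (needed for Krull--Schmidt) and that they exhaust the indecomposables. The existence half is accessible by the inductive peeling argument, but uniqueness requires genuine care. A subtler bookkeeping point, baked into Step 1, is that the half-open convention $(a,b]$ in the statement is forced precisely by \emph{left} semi-continuity: swapping to right semi-continuity would produce bars of the form $[a,b)$ instead, so the regularity axioms must be respected at every stage of the translation. Finally, I would need to verify that the resulting decomposition is internal -- i.e., that the summands actually assemble back into a persistence submodule decomposition of $V$, not merely an abstract isomorphism -- which follows routinely from functoriality of the reconstruction $W_\bullet \mapsto V$.
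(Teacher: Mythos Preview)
The paper does not actually prove Theorem~\ref{normal}; it is stated as a known structure theorem and immediately used to define the barcode, with no argument given. So there is nothing in the paper to compare your proposal against.

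That said, your outline is correct and is the standard route: the finiteness hypotheses (finite spectrum, finite-dimensional pieces, semi-continuity) reduce the problem to classifying representations of a type-$A$ quiver over a field, and Gabriel's theorem (or an elementary inductive splitting, or the graded $\k[t]$-module viewpoint) finishes it. Your remark that left semi-continuity forces the $(a,b]$ convention is exactly right and is the only place where the translation back from the quiver requires care. One small bookkeeping point: in your discretization you set $W_i = V_{s_i}$, but left semi-continuity gives $V_{s_i} \cong \lim_{t \to s_i^-} V_t$, so $W_i$ records the module just \emph{before} the jump at $s_i$; make sure your dictionary $J_{[p,q]} \mapsto \I_{(s_{p-1}, s_q]}$ is consistent with this (it is, but it is easy to slip by one index here). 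Otherwise the plan is complete.
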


\begin{dfn} We call the collection of $\B = \B(V)$ from Theorem \ref{normal} {\it the barcode of $(V, \pi)$}. Therefore, we have a well-defined association $V \to \mathcal B(V)$. \end{dfn}

\subsection{Persistence interleaving distance} \label{sec-per-int}

Before defining interleaving distance, we need to denote shift functor. Let $(V, \pi)$ be a persistence $\k$-module and $\delta >0$. Denote $(V[\delta], \pi[\delta])$ to be a new persistence $\k$-module such that for any $t \in \R$, 
\[ V[\delta]_t = V_{t + \delta} \,\,\,\,\mbox{and}\,\,\,\, \pi[\delta]_{s,t} = \pi_{s+\delta, t+ \delta}. \]
Moreover, due to positivity of $\delta$, we have a natural map 
\[ \Phi^{\delta}_V: (V, \pi) \to (V[\delta], \pi[\delta]) \]
where for any $t \in \R$, $\Phi_V^{\delta}(t) = \pi_{t, t+ \delta}$. For any morphism $F: V \to W$, denote $F[\delta]: V[\delta] \to W[\delta]$. Now we are ready to define persistence interleaving distance, where interleaving relation should be regarded as a shifted version of a persistence morphism. 

\begin{dfn} Let $(V, \pi)$ and $(W ,\pi')$ be two persistence $\k$-modules and $\delta>0$. We call they are {\it $\delta$-interleaved} if there exists morphisms $F: V \to W[\delta]$ and $G: W \to V[\delta]$ such that 
\[ G[\delta] \circ F = \Phi_V^{2\delta} \,\,\,\,\mbox{and}\,\,\,\, F[\delta] \circ G = \Phi_W^{2\delta}. \]
Moreover, define 
\[ d_{int}(V,W) = \inf\{\delta >0\,| \, \,\mbox{$V$ and $W$ are $\delta$-interleaved}\}.\]
\end{dfn}

\begin{exercise} Show $d_{int}(V,W) < \infty$ if and only if $\dim(V_{\infty}) = \dim(W_{\infty})$. \end{exercise}

\begin{exercise} Fix any $n \in \N$. Show
\[ \left(\left\{\mbox{persis. $\k$-mod with $\dim(V_{\infty}) =n$}\right\}/\mbox{isom.}, d_{int}\right)\]
is a metric space. \end{exercise}

\begin{ex}(How to get an interleaving relation) Let $a<b$ and $c<d$. Consider interval-type persistence $\k$-modules $\I_{(a,b]}$ and $\I_{(c,d]}$. There are two strategies to get interleaving relations. 
\begin{itemize}
\item[(I)] Let $\delta = \max\left(\frac{b-a}{2}, \frac{d-c}{2} \right)$. Note that then $b-2\delta \leq a$, so interval $(a- 2\delta, b- 2\delta]$ is disjoint from interval $(a,b]$. Therefore, $\Phi^{2\delta}_{\I_{(a,b]}} =0$ which implies we can choose $0$-morphisms to form an interleaving relation. Denote this $\delta$ by $\delta_I$.
\item[(II)] Let $\delta = \max(|a-c|, |b-d|)$. Then 
\[ a - 2\delta \leq c-\delta \leq a \,\,\,\,\mbox{and}\,\,\,\, b- 2\delta \leq d- \delta \leq b. \]
One gets a well-defined morphism from $\I_{(a,b]} \to \I_{(c-\delta, d-\delta]} \to \I_{(a- 2\delta, b-2\delta]}$. Denote this $\delta$ by $\delta_{II}$.
\end{itemize}
Hence, by definition $d_{int}(\I_{(a,b]}, \I_{(c,d]}) \leq \min(\delta_I, \delta_{II})$. In fact, it is an equality. \end{ex}

\begin{ex} (interleaving from geometry) \label{ex-int-geo} Let $M$ be a closed manifold and $h \in C^{\infty}(M)$. Denote $||h|| = \max_M|h|$ and $V(f) = H_*(\{f<t\}; \k)$. For $c : = ||f-g||$, since we have inequalities 
\[ g-2c \leq f-c \leq g \,\,\,\,\mbox{and}\,\,\,\, f-2c\leq g-c\leq f.\]
Inclusions of sublevel sets as follows 
\[ \{f <t\} \subset \{g < t +c\} \subset \{f < t+2c\}.\]
give the desired interleaving relation, that is,  $d_{int}(V(f), V(g)) \leq ||f-g||$. \end{ex}

\begin{dfn} Barcodes $\mathcal B$ and $\mathcal C$ are $\delta$-matched for $\delta>0$ if after erasing {\it some} intervals of length $<2\delta$ in $\mathcal B$ and $\mathcal C$, the rest can be matched in 1-to-1 manner 
\[ (a,b] \in \mathcal B \,\,\,\,\longleftrightarrow \,\,\,\, (c,d] \in \mathcal C\]
such that $|a-c|<\delta$ and $|b-d|< \delta$. Moreover, 
\[ d_{bottle}(\mathcal B, \mathcal C) = \inf\{\delta>0 \,| \, \mbox{$\mathcal B$ and $\mathcal C$ are $\delta$-matched}\}. \]
\end{dfn}

The main theorem in persistent homology theory is 
\begin{theorem} \label{per-iso-thm} The association $V \to \mathcal B(V)$ is an isometry under $d_{int}$ and $d_{bottle}$. \end{theorem}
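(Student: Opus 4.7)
The plan is to prove the two inequalities $d_{int}(V,W) \leq d_{bottle}(\mathcal{B}(V),\mathcal{B}(W))$ and $d_{bottle}(\mathcal{B}(V),\mathcal{B}(W)) \leq d_{int}(V,W)$ separately, since together they give the isometric identification.

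For the easy direction $d_{int} \leq d_{bottle}$, suppose the two barcodes are $\delta$-matched. Applying Theorem \ref{normal}, write $V = \bigoplus_k \I_{(a_k,b_k]}^{m_k}$ and $W = \bigoplus_\ell \I_{(c_\ell,d_\ell]}^{n_\ell}$. For each matched pair $(a_k,b_k] \leftrightarrow (c_{\ell(k)}, d_{\ell(k)}]$ with $|a_k - c_{\ell(k)}| < \delta$ and $|b_k - d_{\ell(k)}| < \delta$, strategy (II) in the example preceding the theorem produces a $\delta$-interleaving between the two interval modules. For every unmatched interval, whose length is strictly less than $2\delta$ by the definition of $\delta$-matching, strategy (I) of the same example produces a $\delta$-interleaving with the zero module. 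Taking direct sums of these interleavings over all (matched and unmatched) bars yields a $\delta$-interleaving between $V$ and $W$; passing to the infimum over $\delta$ gives the inequality.

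For the harder direction $d_{bottle} \leq d_{int}$, start from a $\delta$-interleaving $F: V \to W[\delta]$, $G: W \to V[\delta]$ satisfying $G[\delta]\circ F = \Phi_V^{2\delta}$ and $F[\delta]\circ G = \Phi_W^{2\delta}$. The strategy is to build a canonical \emph{induced matching} from the image submodule $\im(F) \subset W[\delta]$. Since a persistence submodule (and a persistence quotient) of an interval-decomposable module is again interval-decomposable, one obtains a two-sided matching of $\mathcal B(\im(F))$ into $\mathcal B(V)$ via the surjection $V \twoheadrightarrow \im(F)$, and into $\mathcal B(W[\delta]) = \mathcal B(W)[\delta]$ via the inclusion. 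The identity $G[\delta]\circ F = \Phi_V^{2\delta}$ forces every bar of $\mathcal B(V)$ of length at least $2\delta$ to survive into $\im(F)$ (its $2\delta$-shift is nonzero, hence it cannot be killed by $F$), so this gives an injective matching of the long bars of $\mathcal B(V)$ to bars of $\mathcal B(W[\delta])$ with endpoints displaced by at most $\delta$. Running the symmetric construction with $G$ in place of $F$ and composing the two matchings (short bars of length $<2\delta$ are declared unmatched) produces the required $\delta$-matching of $\mathcal B(V)$ and $\mathcal B(W)$.

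The main obstacle is the induced-matching step: one has to confirm that the two submodule/quotient presentations of $\im(F)$ fit into a single consistent bar-to-bar assignment, and that endpoint displacements \emph{telescope} to at most $\delta$ rather than $2\delta$. This reduces to linear algebra in a basis adapted to the decomposition of Theorem \ref{normal}, tracking birth and death times under $F$ and $G$ and using that the interleaving equations $G[\delta]\circ F = \Phi_V^{2\delta}$, $F[\delta]\circ G = \Phi_W^{2\delta}$ pin down the two sides tightly; the control of endpoints is precisely where the algebraic stability content enters. An alternative route that avoids induced matchings is a double induction: first truncate to finitely many bars using the regularity and semi-continuity hypotheses, then induct on the total number of bars, with base case the one-bar-vs-one-bar computation already carried out in the example above.
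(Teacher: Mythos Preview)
The paper does not prove this theorem at all: it is simply stated as the main structural fact of persistence theory and then used (see the remark immediately following). So there is no ``paper's own proof'' to compare against; you are supplying an argument where the paper defers to the literature.

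Your easy direction $d_{int}\le d_{bottle}$ is correct and standard, and your invocation of strategies (I) and (II) from the preceding example is exactly right.

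Your hard direction is the right general idea (the induced-matching approach of Bauer--Lesnick), but the step ``running the symmetric construction with $G$ in place of $F$ and composing the two matchings'' is a genuine gap. From $F$ you obtain an injection from the long bars of $\mathcal B(V)$ into $\mathcal B(W)$, and from $G$ an injection from the long bars of $\mathcal B(W)$ into $\mathcal B(V)$; two such injections do not in general assemble into a single $\delta$-matching (a long bar of $V$ could land on a short bar of $W$ under the first injection while a different long bar of $W$ claims it under the second). The standard argument avoids this by using \emph{only} the factorization $V\twoheadrightarrow \im(F)\hookrightarrow W[\delta]$ and extracting a single matching from it: the relation $G[\delta]\circ F=\Phi_V^{2\delta}$ controls $\ker F$ (so all long bars of $V$ are matched), while the \emph{other} relation $F[\delta]\circ G=\Phi_W^{2\delta}$ controls $\coker F$ (so all long bars of $W$ are matched), and the structure theorems for monos and epis of interval-decomposable modules pin the endpoint shifts to at most $\delta$. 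Rewriting your argument this way removes the need to reconcile two separate matchings and makes the ``telescoping to $\delta$ rather than $2\delta$'' transparent.
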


\begin{remark} \label{rmk-app} Note that for any $\phi \in \Diff(M)$, we have $V(f) \simeq V(\phi^*f)$. Then by Example \ref{ex-int-geo} above, 
\[ d_{int}(V(f), V(g)) \leq \inf_{\phi \in \Diff(M)} ||f - \phi^*g||. \]
A direct application is using this interleaving to answer the following question: roughly represented by Figure \ref{M2}, how well can we approximate a $C^0$-function by a Morse function with exactly two critical points?
\begin{figure}[h]
\centering
\includegraphics[scale=0.6]{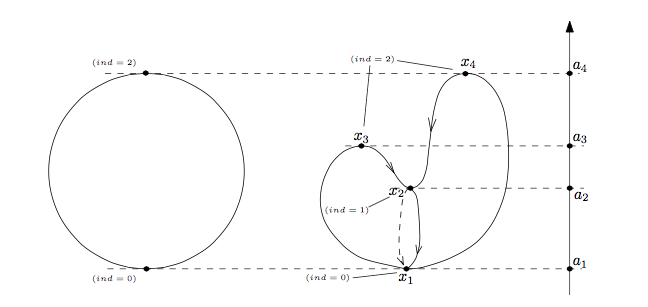}
\caption{$C^0$-approximation by a Morse function}
\label{M2}
\end{figure}
\begin{exercise} In Figure \ref{M2}, the answer to the question raised in Remark \ref{rmk-app} is $\frac{a_3 -a_2}{2}$. (Hint: Use barcode.)\end{exercise} \end{remark}

\subsection{Definition of singular support} \label{sec-dfn-ss}
Simply denote $\D(\Sh(\k_X))$ by $\D(\k_X)$, derived category of sheaves of $\k$-module over space $X$. Singular support of a sheaf $\F \in \D(\k_X))$ can be regarded as a geometrization of $\F$ which provides more information than just support of $\F$. We will first give the definition of singular support of a sheaf $\F \in \Sh(\k_X)$, then upgrade to $\D(\k_X)$. 

\begin{dfn} \label{dfn-SS}  
Let $\F \in \Sh(\k_X)$. We say $\F$ {\it propagates} at $(x,\xi) \in T^*X$ if for any $\phi: M \to \R$ such that $\phi(x) = 0$ and $d\phi(x) = \xi$, one has 
\[ \varinjlim_{x \in U} H^*(\{\phi<0\} \cup U; \F) \to H^*(\{\phi<0\}; \F) \]
is an isomorphism. Then 
\[ SS(\F) = \overline{\left\{(x, \xi) \in T^*X \,|\, \mbox{$\F$ does {\it not} propagate at $(x, \xi)$}\right\}}. \]
\end{dfn}

Geometrically, $(x, \xi) \notin SS(\F)$ means locally moving along direction $\xi$ at point $x \in X$ does not change the sheaf cohomology (so any section can be extended locally). There are some easy facts directly from Definition \ref{dfn-SS}. For instance, $SS(\F)$ is always closed and conical (meaning $\R_+$-invariant in $T^*X$); $SS(\F) \cap 0_M = {\rm supp}(\F)$. Also from this definition, one can check 

\begin{ex} $SS(\k_{X}) = 0_X$. \end{ex}

\begin{ex} \label{ex-ss-oc} (1) Let $D$ be a closed domain. $SS(\k_{D}) = \nu_-^*(\partial D) \cup 0_{D}$. (2) Let $U$ be an open domain. $SS(\k_{U}) = \nu_+^*(\partial \bar{U}) \cup 0_{\bar{U}}$. Here $\nu_-^*(\partial D)$ means negative conormal bundle of $\partial D$ - pointing inside; $\nu_+^*(\partial \bar{U})$ means positive conormal bundle over $\partial \bar{U}$ - pointing outside. See Figure \ref{i30}. 
\begin{figure}[h]
\begin{center}
\includegraphics[scale=0.3]{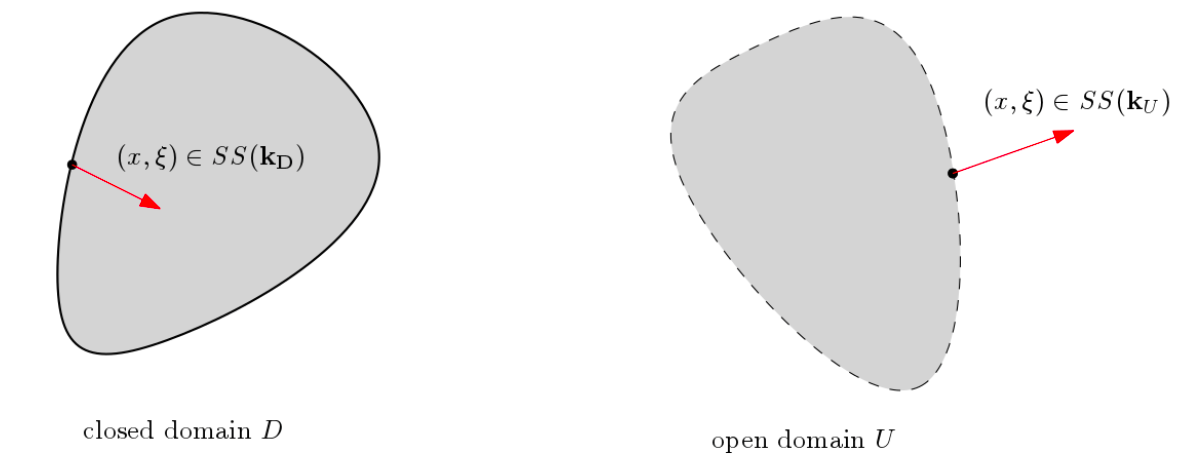}
\end{center}
\caption{SS of constant sheaf over closed/open domain}
\label{i30}
\end{figure}
\end{ex}

A very useful singular support computation comes from the following example. 

\begin{ex} \label{SS-interval} For $\k_{[a,b)} \in \Sh(\k_{\R})$, 
\[ SS(\k_{[a,b)}) = 0_{[a,b]} \cup (\{a\} \times \R_{\geq 0}) \cup (\{b\} \times \R_{\geq 0}). \]
In fact, at $x = b$, we can take $\phi(x) = x-b$ (so $\phi(b) = 0$ and $d\phi(b) = 1$). On the one hand, 
\[ H^*(\{\phi<0\}; \k_{[a,b)}) = H^*((-\infty, b); \k_{[a,b)}) = \k \]
but 
\[ \varinjlim_{x \in U} H^*(\{\phi<0\} \cup U; \F) = \varinjlim_{\ep \to 0} H^*((-\infty, b+ \ep), \k_{[a,b)}) = 0. \]
In other words, at point $(b,1) \in T^*\R$ sheaf cohomology changes. Thus $\k_{[a,b)}$ does {\it not} propagate at $(b, 1)$ (hence for $(b,\xi)$ for $\xi > 0$). Similar argument works for $x=a$. For $x \in (a,b)$, it is trivial to see that sheaf cohomology will not change for any co-vector $\xi >0$, so we are left to the discussion on $(x,0)$ where co-vector being $0$ for $x \in (a,b)$. In fact, take $\phi(x) \equiv 0$, one has 
\[ H^*(\{\phi<0\}; \k_{[a,b)}) = H^*(\emptyset; \k_{[a,b)}) = 0 \]
but
\[ \varinjlim_{x \in U} H^*(\{\phi<0\} \cup U; \F) = \varinjlim_{\ep \to 0} H^*((x- \ep, x + \ep), \k_{[a,b)}) = \k\,\, (= \mbox{stalk}). \]
Thus $[a,b) \times \{0\}$ is also included in the singular support. 
\end{ex}

\begin{exercise} Compute $SS(\k_{(a,b]})$, $SS(\k_{(a,b)})$ and $SS(\k_{[a,b]})$. \end{exercise}

Now let us update Definition \ref{dfn-SS} to be defined for $\F \in \D(\k_X)$. Recall if $U \subset X$ is an open subset and functor $\Gamma_U: \Sh(\k_X) \to \mathcal {\rm Mod}_\k$ (category of $\k$-module) is a left exact functor defined as $\Gamma_U (\F) = \F(U)$. This is a (local) generalization of (1) in Example \ref{ex-le}. Its derived functor is denoted as $R\Gamma_U$ and $i$-th derived functor is 
\begin{equation} \label{1}
R^i \Gamma_U (\F) = h^i(R\Gamma_U \F) = H^i(U; \F).
\end{equation}

\begin{dfn} \label{dfn-SS-1}
Let $\F \in \D(\k_X)$. We say $\F$ {\it propagates} at $(x, \xi) \in T^* X$ if for any function $\phi: X \to \R$ such that $\phi(x) = 0$ and $d\phi(x) = \xi$, we have 
\begin{equation} \label{dfn-SS-1-1}
R\Gamma_{\{\phi<0\} \cup B_{\ep}(x)} \F \simeq R \Gamma_{\{\phi<0\}} \F. 
\end{equation}
Here ``$\simeq$'' means quasi-isomorphic. Then as in Definition \ref{dfn-SS}, 
\[ SS(\F) = \overline{\left\{(x, \xi) \in T^*X \,|\, \mbox{$\F$ does {\it not} propagate at $(x, \xi)$}\right\}}. \]
\end{dfn}

\begin{exercise} Prove (\ref{dfn-SS-1-1}) is equivalent to $(R\Gamma_{\{\phi \geq 0\}} \F)_{x} \simeq 0$. \end{exercise}

\begin{prop} \label{SS-triangle}
Given $\F_1, \F_2, \F_3 \in \D(\k_X)$ and distinguished triangle $\F_1 \to \F_2 \to \F_3 \xrightarrow{+1}$, one has for $\{i,j,k\} = \{1, 2, 3\}$, 
\begin{equation} \label{tri-ss}
SS(\F_k) \subset SS(\F_i) \cup SS(\F_j) \,\,\,\,\mbox{and}\,\,\,\, SS(\F_i) \Delta \SS(\F_j) \subset SS(\F_k).
\end{equation}
\end{prop}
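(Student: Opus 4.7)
The plan is to apply the derived functor $R\Gamma_{\{\phi \geq 0\}}$ followed by the stalk functor at $x$ to the given distinguished triangle. Since $R\Gamma_{\{\phi \geq 0\}}$ is a derived functor of the left exact functor $\Gamma_{\{\phi \geq 0\}}$, it is exact in the triangulated sense (as explained in the remark following Proposition \ref{fact2}), and the stalk functor $\F \mapsto \F_x$ is likewise exact. By the criterion in the exercise just preceding Proposition \ref{SS-triangle}, $\F$ propagates at $(x,\xi)$ if and only if $(R\Gamma_{\{\phi \geq 0\}}\F)_x \simeq 0$ for every $\phi$ with $\phi(x)=0$, $d\phi(x)=\xi$. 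Combining these facts yields a distinguished triangle of stalks
\[
(R\Gamma_{\{\phi\ge 0\}}\F_1)_x \to (R\Gamma_{\{\phi\ge 0\}}\F_2)_x \to (R\Gamma_{\{\phi\ge 0\}}\F_3)_x \xrightarrow{+1},
\]
whose long exact sequence in cohomology immediately gives a ``two-out-of-three'' principle: if two of the three stalks vanish, so does the third.

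For the first inclusion, I would pick a point $(x,\xi)$ outside the closed set $SS(\F_i)\cup SS(\F_j)$ and choose an open neighborhood $V \subset T^*X$ of $(x,\xi)$ disjoint from both singular supports. At every $(y,\eta)\in V$ both $\F_i$ and $\F_j$ propagate, so by the two-out-of-three principle above $\F_k$ propagates at $(y,\eta)$ as well. Therefore the non-propagation set of $\F_k$ avoids $V$, and taking closures gives $(x,\xi) \notin SS(\F_k)$. The passage to an open neighborhood (rather than arguing at the single point $(x,\xi)$) is the small but essential step that accounts for the closure in the definition of $SS$.

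For the symmetric difference inclusion, I would first upgrade the first inclusion so that it holds for every permutation $\{i,j,k\}=\{1,2,3\}$, by rotating the distinguished triangle and using $SS(\F[1])=SS(\F)$. Then, given $(x,\xi)\in SS(\F_i)\setminus SS(\F_j)$, I argue by contradiction: if $(x,\xi)\notin SS(\F_k)$, the rotated inclusion $SS(\F_i)\subset SS(\F_j)\cup SS(\F_k)$ forces $(x,\xi)\in SS(\F_j)$, contradicting the hypothesis. Hence $SS(\F_i)\triangle SS(\F_j) \subset SS(\F_k)$. The main (mild) obstacle is bookkeeping around the closure in the definition of $SS$; the homological content is entirely carried by the elementary long exact sequence of stalks.
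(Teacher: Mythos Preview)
Your proposal is correct and is essentially the same argument the paper intends: the paper's one-word proof ``Five Lemma'' refers to exactly the long exact sequence you extract from the distinguished triangle after applying the exact functors $R\Gamma_{\{\phi\ge 0\}}$ and $(\,\cdot\,)_x$. The only cosmetic difference is that you use the vanishing criterion $(R\Gamma_{\{\phi\ge 0\}}\F)_x\simeq 0$ from the exercise (yielding a two-out-of-three vanishing), whereas invoking the Five Lemma literally would use Definition~\ref{dfn-SS-1} in its original form as an isomorphism $R\Gamma_{\{\phi<0\}\cup B_\ep(x)}\F \xrightarrow{\sim} R\Gamma_{\{\phi<0\}}\F$ and compare two long exact sequences; these are equivalent, and your handling of the closure via an open neighborhood is the right bookkeeping either way.
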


\begin{proof} Five Lemma. \end{proof}

\begin{ex} If $a<b<c$, we have a distinguished triangle $\k_{[a,b)} \to \k_{[a,c)} \to \k_{[b,c)} \xrightarrow{+1}$. Then Proposition \ref{SS-triangle} says 
\[ SS(\k_{[a,c)}) \subset SS(\k_{[a,b)}) \cup SS(\k_{[b,c)}). \]
Indeed, based on Example \ref{SS-interval}, we can see this from Figure \ref{i31}. 
\begin{figure}[h]
\begin{center}
\includegraphics[scale=0.35]{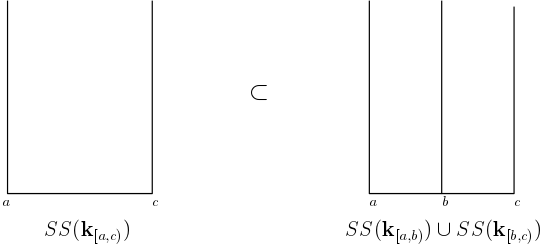}
\end{center}
\caption{triangle inequality of $SS$}
\label{i31}
\end{figure}
Moreover, this example also shows in general inclusions in (\ref{tri-ss}) are strict inclusions. 
\end{ex}

\subsection{Properties of singular support} \label{sec-ss-p}
Singular support enjoys many interesting properties. The standard reference of all these properties is \cite{KS90}. Here we will list (without proofs) those properties in three classes. One is for its geometric properties (see Theorem \ref{inv-ss}), one is for its functorial properties (see Proposition \ref{push}, \ref{pullback}, \ref{th}) and one is for an interesting feature that restriction of singular support can sometimes put a strong constraint on the behavior of sheaves (see Fact \ref{fact-lc}).

\subsubsection{Geometric and functorial properties}

\begin{theorem} \label{inv-ss} (Theorem 6.5.4 in \cite{KS90}) For any $\F \in \D(\k_X)$, $SS(\F)$ is coisotropic. \end{theorem}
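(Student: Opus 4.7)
The plan is to show that $SS(\F)$ is \emph{involutive} with respect to the canonical symplectic form on $T^*X$, which for a closed subset coincides with being coisotropic: at every smooth point, the tangent space contains its symplectic orthogonal. Equivalently, for every pair of smooth functions $p_1, p_2$ defined near a point of $SS(\F)$ and vanishing on $SS(\F)$, the Poisson bracket $\{p_1,p_2\}$ must also vanish on $SS(\F)$; or, in integrated form, for any single smooth $p$ vanishing on $SS(\F)$, the Hamiltonian flow $\exp(tH_p)$ must preserve $SS(\F)$.

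The heart of the argument is a microlocal propagation (non-characteristic deformation) lemma: if $\{\psi_s\}_{s\in[0,1]}$ is a smooth family of real functions on $X$ with $\psi_s(x_0)=0$, and if at every $s$ one has $(x_0, d\psi_s(x_0)) \notin SS(\F)$, then the natural restriction morphisms
\[ R\Gamma_{\{\psi_1 < 0\}}(\F)_{x_0} \longrightarrow R\Gamma_{\{\psi_0 < 0\}}(\F)_{x_0} \]
are isomorphisms. This is essentially the derived-category version of the propagation condition in Definition \ref{dfn-SS-1}, iterated along a one-parameter family of hypersurfaces.

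With this tool in hand, I would argue by contradiction. Suppose $(x_0,\xi_0) \in SS(\F)$ is a smooth point but $SS(\F)$ is not coisotropic there; then one can find a smooth $p$ vanishing on $SS(\F)$ near $(x_0,\xi_0)$ whose Hamiltonian vector field $H_p$ is \emph{not} tangent to $SS(\F)$. Integrating $H_p$, the point $(x_1,\xi_1) := \exp(H_p)(x_0,\xi_0)$ would then leave $SS(\F)$. One now constructs a smooth family of test functions $\psi_s$ on $X$ so that the covectors $(x(s), d\psi_s(x(s)))$ trace out the bicharacteristic curve $\exp(sH_p)(x_0,\xi_0)$; since this curve lies on the level set $\{p=0\} \supset SS(\F)$ only at the endpoints one perturbs, the differentials $d\psi_s$ can be arranged to avoid $SS(\F)$ for $s\in(0,1)$. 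Applying the propagation lemma along this family would force sections of $\F$ at $(x_1,\xi_1)$ to be determined by those at $(x_0,\xi_0)$ in a way incompatible with $(x_1,\xi_1) \notin SS(\F)$, contradicting the assumed non-coisotropy.

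The main obstacle will be the construction and control of the family $\{\psi_s\}$, i.e.\ realizing an abstract bicharacteristic in $T^*X$ as the family of differentials of an actual family of functions on $X$, and turning the stalkwise propagation of sections into the precise statement that $SS(\F)$ is invariant under $\exp(tH_p)$. This is exactly the content of the microlocal Bertini--Sard / non-characteristic deformation machinery in Chapter V of \cite{KS90}, and it is where the genuinely analytic input (as opposed to the formal sheaf-theoretic manipulations with distinguished triangles used, e.g., in Proposition \ref{SS-triangle}) enters.
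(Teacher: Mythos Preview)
The paper does not prove this theorem at all: it is simply stated with the citation ``Theorem 6.5.4 in \cite{KS90}'' and the subsequent Remark only explains what ``coisotropic'' means for singular sets (Definition 6.5.1 in \cite{KS90}). So there is no in-paper proof to compare against; your sketch is an attempt at the Kashiwara--Schapira argument itself.

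Your sketch has the right flavor --- involutivity via bicharacteristic propagation, driven by a non-characteristic deformation lemma --- but several steps are not right as written. First, restricting to smooth points of $SS(\F)$ is not enough: $SS(\F)$ may have no smooth points at all, which is exactly why \cite{KS90} formulates involutivity via Whitney normal cones (their Definition 6.5.1) rather than tangent spaces. Second, your contradiction paragraph is garbled: the bicharacteristic of $H_p$ stays on the level set $\{p=0\}$ for \emph{all} time (since $H_p\, p = \{p,p\}=0$), not ``only at the endpoints''; and at $s=0$ you have $(x_0,\xi_0)\in SS(\F)$ by hypothesis, so you cannot arrange $d\psi_s$ to avoid $SS(\F)$ near $s=0$ and then invoke your propagation lemma there. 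The logical direction you need is the reverse: microlocal triviality at the exit point $(x_1,\xi_1)$ should propagate \emph{back} to force triviality at $(x_0,\xi_0)$, and making that precise requires either the $\mu hom$ formalism or a quantized contact transformation to a normal form --- this is the genuine analytic content of \cite{KS90}, Chapter VI, and it is not captured by the stalkwise restriction isomorphisms you wrote down.
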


\begin{remark} (1) Since $SS(\F)$ can be very degenerate, the formal definition of coisotropic is Definition 6.5.1 in \cite{KS90} which is called {\it involutive}. For the smooth part, this coincidence with coisotropic property in symplectic manifold $(T^*M, \omega_{can})$. (2) In this note, most singular supports we meet are actually singular Lagrangian submanifold (because most sheaves we work with are constructible, i.e. there exists a stratification such that restricting on each stratum is constant). \end{remark}

Various functors, say $Rf_*, Rf_!, f^{-1}, \otimes, R\Hom$, induces functorial properties on singular supports. Here we list the resulting answers in coordinate. 

\begin{prop} \label{push} (Pushforward) Let $f: X \to Y$ be a smooth map. For any $\F \in \D(\k_X)$, one gets
\[ SS(Rf_*\F) \subset \left\{(y, \eta)  \in T^*Y \,| \, \exists (x,\xi) \in SS(\F) \,\,\mbox{s.t.} \,\, f(x) =y \,\,\mbox{and}\,\, f^*(\eta) = \xi \right\}. \]
The same inclusion also works for $Rf_!$. 
\end{prop}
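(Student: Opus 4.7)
The plan is to verify the propagation criterion of Definition~\ref{dfn-SS-1} for $Rf_*\F$. Fix $(y,\eta)\in T^*Y$ outside the displayed set; by hypothesis, for every $x\in f^{-1}(y)$, writing $\xi:=f^*\eta$, one has $(x,\xi)\notin SS(\F)$. Pick any test function $\psi\colon Y\to\R$ with $\psi(y)=0$ and $d\psi(y)=\eta$, and set $\phi:=\psi\circ f$. Then at every $x\in f^{-1}(y)$, $\phi(x)=0$ and $d\phi(x)=f^*\eta=\xi$, so the hypothesis places each such $x$ in the regime where $\F$ itself propagates.

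The first step would be to translate the propagation question from $Y$ down to $X$. For any open $V\subset Y$, the Grothendieck composition formula (\ref{comp}) applied to $\Gamma(V,-)\circ f_*$, together with the fact recalled in the example before Proposition~\ref{bsc} that $f_*$ sends flabby sheaves to flabby (hence $\Gamma$-acyclic) sheaves, gives
\[ R\Gamma_V(Rf_*\F)\;\simeq\;R\Gamma_{f^{-1}V}(\F). \]
Applying this with $V=\{\psi<0\}$ and with $V=\{\psi<0\}\cup B_\ep(y)$ converts the propagation of $Rf_*\F$ at $(y,\eta)$ into the assertion that the inclusion $\{\phi<0\}\hookrightarrow\{\phi<0\}\cup f^{-1}(B_\ep(y))$ induces a quasi-isomorphism on $R\Gamma(-;\F)$ in the direct limit $\ep\to 0$.

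The second step is to exploit the hypothesis pointwise. By the exercise following Definition~\ref{dfn-SS-1}, it suffices to check that $(R\Gamma_{\{\phi\ge 0\}}\F)_x\simeq 0$ for every $x\in f^{-1}(y)$; but each such $x$ satisfies $(x,d\phi(x))=(x,\xi)\notin SS(\F)$, so $\F$ propagates at $(x,\xi)$ and the required vanishing is immediate. The parallel statement for $Rf_!$ is in fact cleaner: one invokes the stalk formula $(Rf_!\F)_y\simeq R\Gamma_c(f^{-1}(y);\F|_{f^{-1}(y)})$ together with proper base change (Proposition~\ref{bsc}) instead of the adjunction identity above, and then runs the same pointwise propagation argument on the fiber.

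The main obstacle is the passage from pointwise propagation along the (possibly non-compact) fiber $f^{-1}(y)$ to a single quasi-isomorphism of germs at $y$. For $Rf_!$ this is automatic, because the stalk formula localises to the fiber and compactly-supported cohomology interacts well with the direct limit in $\ep$. For $Rf_*$ one must either assume $f$ is proper on $\sp(\F)$ (so that the fiber is compact and a finite cover argument glues the local vanishing) or, more generally, carry out an exhaustion/compactness bookkeeping to reduce to the proper case; this is where essentially all the technical care lives. The microlocal heart of the argument, namely that $d(\psi\circ f)=f^*d\psi$ automatically throws us into the $\F$-propagation regime, is immediate and dictates the precise form of the right-hand side in the statement. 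Finally, taking closure on both sides is harmless because the right-hand side is itself closed whenever $SS(\F)$ is.
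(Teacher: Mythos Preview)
The paper does not actually prove this proposition: it is listed explicitly ``without proofs'' as one of the standard functorial properties of singular support, with \cite{KS90} as the reference. So there is no paper argument to compare against; your sketch stands on its own, and its overall shape (pull the test function back along $f$, reduce to propagation of $\F$ along the fiber) is indeed the standard one.

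That said, two points deserve sharpening. First, the gap you flag is not a minor bookkeeping issue but the entire content of the result for $Rf_*$: knowing $(R\Gamma_{\{\phi\ge0\}}\F)_x\simeq 0$ for each $x\in f^{-1}(y)$ does \emph{not} by itself force $(Rf_*R\Gamma_{\{\phi\ge0\}}\F)_y\simeq 0$, because the stalk of a pushforward at $y$ is a limit of sections over $f^{-1}(V)$ for $V\ni y$, not a limit over stalks along the fiber. Without properness of $f$ on $\sp(\F)$ this step can genuinely fail, and in \cite{KS90} the bound for $Rf_*$ is stated under that hypothesis (Proposition~5.4.4). The paper's formulation here is informal on this point. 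Second, your final sentence is not correct as written: the right-hand side is the image of the closed set $f_d^{-1}(SS(\F))$ under the map $f_\pi\colon X\times_Y T^*Y\to T^*Y$, and images of closed sets are closed only under properness assumptions; so again one needs $f$ proper on $\sp(\F)$ (or some substitute) to know the bound is closed. For $Rf_!$ your remark is right that proper base change does the work cleanly.
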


\begin{ex} Suppose $X = S^1$ and $Y = \{\pt\}$. Take $\F$ to be a locally constant but non-constant sheaf on $S^1$. Then $Rf_*(\F) = 0$ ({\bf Exercise}) and so $SS(Rf_*(\F)) = \emptyset$. Thus this provides an example that the inclusion relation in Proposition \ref{push} is in general a strict inclusion. By Proposition 5.4.4. in \cite{KS90}, This inclusion is indeed an equality if $f:X \to Y$ is a closed embedding. \end{ex}

\begin{ex} Let $X = M \times \R, Y = \R$. Let $f = \pi$ be the projection to $\R$-component. Then it is easy to check that $f^*(\eta) = (0, \eta)$. Therefore for any $\F \in \D(\k_X)$, 
\[ SS(Rf_* \F) \subset \{(r, \eta) \in T^*\R\,|\, \exists (x,0, r, \eta) \in SS(\F) \}. \]
\end{ex}

\begin{prop} \label{pullback} (Pullback) Let $f: X \to Y$ be a smooth map. For any $\F \in \D(\k_Y)$, one gets 
\[ SS(f^{-1} \F) = \left\{(x, \xi) \in T^*X \,| \, \exists (y, \eta) \in SS(\F) \,\,\mbox{s.t.} \,\, f(x) = y \,\,\mbox{and} \,\, f^*(\eta) = \xi \right\}. \]
\end{prop}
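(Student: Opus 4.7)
I would argue directly from the propagation definition (Definition \ref{dfn-SS-1}) by matching test functions on $X$ with test functions on $Y$ under $f$. The starting point is the equivalent reformulation (the exercise just above): $(z, \zeta) \notin SS(\G)$ iff for every smooth $\phi$ with $\phi(z) = 0$ and $d\phi(z) = \zeta$, one has $(R\Gamma_{\{\phi \geq 0\}} \G)_z \simeq 0$. So the whole proof reduces to comparing these stalks for $\G = \F$ on $Y$ and $\G = f^{-1}\F$ on $X$.

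\paragraph{Key compatibility.} The technical engine is the following base-change-type identity: for any smooth $\psi: Y \to \R$, setting $\phi := \psi \circ f$, there is a natural isomorphism
\[
f^{-1}\bigl(R\Gamma_{\{\psi \geq 0\}} \F\bigr) \simeq R\Gamma_{\{\phi \geq 0\}}\bigl(f^{-1} \F\bigr).
\]
This holds because $f^{-1}$ is exact, because $\{\phi \geq 0\} = f^{-1}\{\psi \geq 0\}$, and because the functors $\Gamma_Z$ (sections supported in a closed set $Z$) are compatible with $f^{-1}$ on the nose at the sheaf level, hence also after deriving. Taking stalks at $x$ and using $f(x) = y$, the left side becomes $(R\Gamma_{\{\psi \geq 0\}}\F)_y$ and the right side $(R\Gamma_{\{\phi \geq 0\}} f^{-1}\F)_x$.

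\paragraph{The two inclusions.} For $\supset$: assume $(y,\eta) \in SS(\F)$ with $f(x) = y$ and $f^*\eta = \xi$. Choose a test function $\psi$ at $y$ witnessing non-propagation, so $(R\Gamma_{\{\psi \geq 0\}}\F)_y \not\simeq 0$. Then $\phi := \psi \circ f$ satisfies $\phi(x) = 0$ and $d\phi(x) = f^*\eta = \xi$, and the key compatibility shows $(R\Gamma_{\{\phi \geq 0\}} f^{-1}\F)_x \not\simeq 0$, placing $(x,\xi)$ in $SS(f^{-1}\F)$. For $\subset$: given $(x,\xi) \in SS(f^{-1}\F)$, we need to produce a $(y,\eta) \in SS(\F)$ with $f^*\eta = \xi$. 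Here we use that $f$ is smooth enough (in practice, non-characteristic, or a submersion, for which the stated equality is the right one) to pick local coordinates in which $f$ is a projection; then any test function $\phi$ at $x$ with $d\phi(x) = \xi$ can be replaced, without changing the $1$-jet at $x$, by one of the form $\psi \circ f$. This is legitimate because the propagation condition only sees the $1$-jet of the test function, a microlocal invariance which itself follows from general properties of $SS$.

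\paragraph{Main obstacle.} The hard step is the $\subset$ direction, specifically the reduction that replaces an arbitrary test function $\phi$ on $X$ by one pulled back from $Y$. This requires knowing that the propagation condition at $(x,\xi)$ is insensitive to the choice of $\phi$ beyond its $1$-jet, which in turn requires an independent microlocal invariance statement (essentially the content of Theorem 6.3 in \cite{KS90}, Chapter 5). Without a submersion or non-characteristic hypothesis on $f$, one only obtains the inclusion $\subset$; the equality in the statement implicitly uses this extra regularity.
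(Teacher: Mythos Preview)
The paper does not prove this proposition: it is one of the functorial properties listed explicitly \emph{without proof} at the start of Section~\ref{sec-ss-p}, with \cite{KS90} as the standing reference. So there is no argument in the paper to compare yours against.

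Your ``key compatibility'' $f^{-1}R\Gamma_{\{\psi\geq 0\}}\F \simeq R\Gamma_{\{\psi\circ f\geq 0\}}(f^{-1}\F)$ is false in general, and the justification you give (that $\Gamma_Z$ is compatible with $f^{-1}$ ``on the nose at the sheaf level'') is wrong: $\Gamma_Z$ is a right adjoint and does not commute with the left adjoint $f^{-1}$. Concretely, take $Y=\R$, $\psi(y)=y$, $X=\R$, $f\equiv 0$, and $\F=\k_\R$. Then $R\Gamma_{[0,\infty)}\k_\R\simeq\k_{(0,\infty)}$, whose stalk at $0$ vanishes, so $f^{-1}R\Gamma_{[0,\infty)}\k_\R=0$; but $f^{-1}([0,\infty))=\R$, so $R\Gamma_{\R}(f^{-1}\k_\R)=\k_\R\neq 0$. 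The obstruction is exactly the failure of base change for $Rj_*$ along the open inclusion $j\colon\{\psi<0\}\hookrightarrow Y$. The compatibility \emph{does} hold when $f$ is a submersion (locally $f$ is a projection $Y\times F\to Y$ and both sides become $R\Gamma_{\{\psi\geq 0\}}\F\boxtimes\k_F$), which is precisely the hypothesis under which \cite{KS90} proves the stated equality (Proposition~5.4.5 there); for general $f$ only an inclusion survives, under a non-characteristic assumption. Your final paragraph correctly senses that extra regularity on $f$ is required, but the gap is that your $\supset$ argument already invokes the compatibility unconditionally, before any such hypothesis is in place.
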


\begin{ex} \label{ex-sum-ss}
Let $X = \R \times \R, Y = \R$ and $f$ is summation map, i.e. $f(t_1, t_2) = t_1 + t_2$. Then it is easy to check that $f^*(\eta) = (\eta, \eta)$. Therefore, for any $\F \in \D(\k_Y)$, 
\[ SS(f^{-1} \F) = \left\{(t_1, \xi, t_2, \xi) \,|\, \exists (t_1 +t_2, \xi) \in SS(\F) \right\}. \]
\end{ex}

\begin{exercise} \label{exe-dia-ss} Let $X = \R, Y = \R \times \R$ and $f$ is diagonal embedding, i.e. $f(t) = (t, t)$. Then write down the formula for $SS(f^{-1} \F)$ for any given $\F \in \D(\k_{Y})$. \end{exercise}

\begin{prop} \label{th} (Tensor and Hom) For any $\F, \G \in \D(\k_X)$, 
\begin{itemize}
\item[(1)] if $SS(\F) \cap SS(\G)^a \subset 0_X$, then 
\[ SS(\F \otimes \G) \subset \{(x, \xi_1 + \xi_2) \in T^*X \,| \, (x, \xi_1) \in SS(\F) \,\,\mbox{and} \,\, (x, \xi_2) \in SS(\G)\}; \]
\item[(2)] if $SS(\F) \cap SS(\G) \subset 0_X$, then 
\[ SS(R\Hom(\F, \G)) \subset \{(x, - \xi_1 + \xi_2) \in T^*X \,| \, (x, \xi_1) \in SS(\F) \,\,\mbox{and} \,\, (x,\xi_2) \in SS(\G) \}\]
\end{itemize}
where in (1), ``$a$'' means negating the co-vector part.
\end{prop}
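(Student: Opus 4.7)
The plan is to deduce both inclusions from Proposition \ref{pullback} by realizing $\F \otimes \G$ and $R\Hom(\F,\G)$ as diagonal pullbacks of external constructions on $X \times X$. Let $\delta \colon X \to X \times X$ denote the diagonal embedding $\delta(x) = (x,x)$; a direct computation shows $d\delta(v) = (v,v)$, so its transpose acts on covectors by $\delta^{*}(\xi_1, \xi_2) = \xi_1 + \xi_2$.

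For part (1), I would first establish the external-product bound $SS(\F \boxtimes \G) \subset SS(\F) \times SS(\G)$ as a subset of $T^{*}(X \times X) \simeq T^{*}X \times T^{*}X$. This can be verified directly from Definition \ref{dfn-SS-1} by testing propagation at a point $(x_1, x_2)$ using functions of split form $\phi(x_1, x_2) = \phi_1(x_1) + \phi_2(x_2)$, together with a K\"unneth-type isomorphism for the relevant $R\Gamma$ computations. Once this is in hand, the identification $\F \otimes \G \simeq \delta^{-1}(\F \boxtimes \G)$ combined with Proposition \ref{pullback} immediately gives
\[ SS(\F \otimes \G) = SS(\delta^{-1}(\F \boxtimes \G)) \subset \{(x, \xi_1 + \xi_2) \,|\, (x, \xi_1) \in SS(\F), \ (x, \xi_2) \in SS(\G)\}, \]
which is the claimed inclusion.

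For part (2), the same strategy applies with a ``dualized'' external construction: one has an identification of the form $R\Hom(\F,\G) \simeq \delta^{-1}\mathcal{E}(\F,\G)$ for a suitable sheaf $\mathcal{E}(\F,\G)$ on $X \times X$ whose singular support satisfies $SS(\mathcal{E}(\F,\G)) \subset SS(\F)^a \times SS(\G)$ (the antipode appearing because $\F$ now sits in a contravariant slot). Applying Proposition \ref{pullback} again then yields the $(x, -\xi_1 + \xi_2)$ estimate. Equivalently, one may pass through Verdier duality: the isomorphism $R\Hom(\F,\G) \simeq D'(\F) \otimes \G$ combined with $SS(D'\F) = SS(\F)^a$ reduces (2) to (1).

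The main obstacle is the role played by the non-characteristic hypotheses. The conormal bundle of $\delta$ at $(x,x)$ equals $\{(\xi, -\xi) \,|\, \xi \in T_x^{*}X\}$, so the condition of $\delta$ being non-characteristic for $\F \boxtimes \G$ translates to: whenever $(x,\xi) \in SS(\F)$ and $(x,-\xi) \in SS(\G)$, one has $\xi = 0$ --- which is exactly $SS(\F) \cap SS(\G)^a \subset 0_X$ as assumed in (1). Analogously, non-characteristicity for the dualized external sheaf in (2) reduces to $SS(\F) \cap SS(\G) \subset 0_X$, matching its hypothesis. Without these conditions, the pullback formula in Proposition \ref{pullback} is not sharp enough to control the contributions from covectors $(\xi_1, \xi_2)$ with $\xi_1 + \xi_2 = 0$ that would otherwise produce spurious singular-support elements in the diagonal direction; so the technical heart of the proof is verifying that these hypotheses eliminate exactly those bad covectors.
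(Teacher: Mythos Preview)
The paper does not actually prove this proposition: it is one of the functorial properties of $SS$ that are explicitly ``listed without proofs'' in Section~\ref{sec-ss-p}, with \cite{KS90} given as the standard reference. So there is no argument in the paper to compare against.

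Your strategy is essentially the standard one from \cite{KS90}: realize $\F\otimes\G$ as $\delta^{-1}(\F\boxtimes\G)$ and invoke the pullback estimate, with the hypothesis $SS(\F)\cap SS(\G)^a\subset 0_X$ being precisely the non-characteristic condition for $\delta$ relative to $SS(\F)\times SS(\G)$. Two small cautions. First, note that in the paper Corollary~\ref{ex-t} (the external-tensor bound) is \emph{derived from} Proposition~\ref{th}, so you are right to say you would establish $SS(\F\boxtimes\G)\subset SS(\F)\times SS(\G)$ directly rather than quote that corollary; otherwise the argument would be circular. Second, for part~(2) your ``equivalently, via Verdier duality'' route is not safe in full generality: the isomorphism $R\mathcal{H}om(\F,\G)\simeq D'\F\otimes\G$ requires finiteness hypotheses on $\F$ (e.g.\ constructibility or perfectness) that are not assumed here. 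The robust version uses the external $R\mathcal{H}om$ on $X\times X$ together with $\delta^!$ rather than $\delta^{-1}$, and the bound $SS(R\mathcal{H}om(p_1^{-1}\F,p_2^!\G))\subset SS(\F)^a\times SS(\G)$; this is how \cite{KS90} handles it. Your first formulation of~(2) via a ``suitable $\mathcal{E}(\F,\G)$'' is pointing in the right direction, but you should make the object and the functor ($\delta^!$ versus $\delta^{-1}$) explicit.
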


\begin{cor} \label{ex-t} (External tensor) Let $\F \in \D(\k_X)$ and $\G \in \D(\k_Y)$. 
\[ SS(\F \boxtimes \G) \subset \{(x, \xi, y, \eta) \in T^*(X \times Y)\,| \, (x, \xi) \in T^*X, \,\, (y, \eta) \in T^*Y \}. \]
\end{cor}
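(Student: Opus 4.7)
\medskip
\noindent\textbf{Proof proposal.} The plan is to reduce the external tensor product to previously treated sheaf operations by writing
\[
\F \boxtimes \G \;=\; \pi_X^{-1}\F \;\otimes\; \pi_Y^{-1}\G,
\]
where $\pi_X: X \times Y \to X$ and $\pi_Y: X \times Y \to Y$ are the two projections. Once this is in hand, the singular support on the left is controlled by combining the pullback estimate (Proposition \ref{pullback}) with the tensor estimate (Proposition \ref{th}(1)); note that the statement as written should be read with the tacit assumption $(x,\xi)\in SS(\F)$ and $(y,\eta)\in SS(\G)$, since these are the only directions contributing nontrivially.

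First, I would compute the transpose of each projection. For $\pi_X$, the differential at $(x,y)$ is the projection $T_{(x,y)}(X\times Y)=T_xX\oplus T_yY \to T_xX$, so its transpose sends $\xi \in T_x^*X$ to $(\xi,0)\in T_{(x,y)}^*(X\times Y)$. Proposition \ref{pullback} then yields
\[
SS(\pi_X^{-1}\F) \;=\; \bigl\{\,((x,y),(\xi,0)) \,\big|\, (x,\xi)\in SS(\F),\ y\in Y\,\bigr\},
\]
and analogously
\[
SS(\pi_Y^{-1}\G) \;=\; \bigl\{\,((x,y),(0,\eta)) \,\big|\, (y,\eta)\in SS(\G),\ x\in X\,\bigr\}.
\]

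Second, I would verify the hypothesis needed to apply Proposition \ref{th}(1) to the tensor product $\pi_X^{-1}\F \otimes \pi_Y^{-1}\G$. Antipodal negation affects only the second slot, so $SS(\pi_Y^{-1}\G)^a$ consists of covectors of the shape $(0,-\eta)$; its intersection with the purely horizontal covectors $(\xi,0)$ from $SS(\pi_X^{-1}\F)$ forces $\xi=0$ and $\eta=0$, which lies in the zero section $0_{X\times Y}$. The hypothesis of Proposition \ref{th}(1) is therefore satisfied automatically, giving
\[
SS\bigl(\pi_X^{-1}\F \otimes \pi_Y^{-1}\G\bigr) \;\subset\; \bigl\{\,((x,y),(\xi,0)+(0,\eta)) \,\big|\, (x,\xi)\in SS(\F),\ (y,\eta)\in SS(\G)\,\bigr\},
\]
and the right side is exactly $SS(\F)\times SS(\G)\subset T^*(X\times Y)$.

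I do not expect a genuine obstacle here: the argument is a direct chain of two functorial estimates. The only thing that deserves some care is the bookkeeping of covector conventions, namely (i) identifying $T^*(X\times Y)$ with $T^*X\oplus T^*Y$ and correctly computing the transpose $\pi_X^*$ and $\pi_Y^*$ as inclusions into the first and second summand respectively, and (ii) checking the antipodal/transversality hypothesis of Proposition \ref{th}(1). Both are routine once the summand decomposition is fixed, and no constructibility or regularity hypothesis on $\F$ or $\G$ is needed.
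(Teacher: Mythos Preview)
Your proposal is correct and follows essentially the same route as the paper: write $\F\boxtimes\G = p_1^{-1}\F\otimes p_2^{-1}\G$, compute each pullback's singular support via Proposition~\ref{pullback}, verify the antipodal intersection lies in the zero section, and apply Proposition~\ref{th}(1). You also correctly flag that the displayed inclusion should read $(x,\xi)\in SS(\F)$ and $(y,\eta)\in SS(\G)$ rather than $T^*X$, $T^*Y$.
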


\begin{proof} By definition, $\F \boxtimes \G = (p_1^{-1} \F) \otimes (p_2^{-1} \G)$ where $p_1$ is projection from $X \times Y$ to $X$ and $p_2$ is projection to $Y$. Then by Proposition \ref{pullback} one gets
\[  SS(p_1^{-1} \F) = \{(x, \xi, y, 0) \in T^*(X \times Y) \,| \, (x, \xi) \in SS(\F)\}\]
\[ SS(p_2^{-1} \G) = \{(x, 0, y, \eta) \in T^*(X \times Y) \,| \, (y, \eta) \in SS(\G) \}.\]
Note that $SS(p_1^{-1} \F) \cap SS(p_2^{-1} \G)^a \subset 0_{X \times Y}$, therefore by (1) in Proposition \ref{th}
\[ SS((p_1^{-1} \F) \otimes (p_2^{-1} \G)) \subset \{(x, \xi, y, \eta)  \in T^*(X \times Y) \,| \, (x, \xi) \in T^*X, \,\, (y, \eta) \in T^*Y \}. \]
Thus we get the conclusion. 
\end{proof}

\subsubsection{Microlocal Morse lemma} \label{sec-MM}

Here is a useful (but non-trivial!) observation. 

\begin{fact} \label{fact-lc} If $SS(\F) \subset 0_X$, then $\F$ is a locally constant sheaf. \end{fact}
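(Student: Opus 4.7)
The plan is to exploit the microlocal Morse theorem (Theorem \ref{g-mml}) which says that if $\phi$ has no critical points whose co-differentials lie in $SS(\F)$ on a slab $\{a \leq \phi < b\}$ (with a suitable properness assumption), then $R\Gamma(\{\phi<b\};\F) \xrightarrow{\sim} R\Gamma(\{\phi<a\};\F)$. Since $SS(\F) \subset 0_X$, any nonzero co-vector $d\phi(z) \neq 0$ automatically fails to lie in $SS(\F)$, so the hypothesis of microlocal Morse is satisfied for any smooth function without critical points on the relevant slab.

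Fix $x_0 \in X$ and choose a coordinate chart identifying a neighborhood of $x_0$ with an open set in $\R^n$. For any $y$ in a small ball and any small $r>0$, consider the radial function $\phi_y(z) = |z-y|^2$. On $\{0 < \phi_y \leq r^2\}$ the differential $d\phi_y$ vanishes nowhere, so microlocal Morse gives
\[
R\Gamma(B_{\sqrt{b}}(y); \F) \xrightarrow{\sim} R\Gamma(B_{\sqrt{a}}(y); \F) \quad \text{for } 0 < a < b \leq r^2.
\]
Passing to the limit $a \to 0^+$ (and using that stalks are computed as filtered colimits of sections over a fundamental system of neighborhoods) yields $R\Gamma(B_r(y); \F) \simeq \F_y$ for all sufficiently small $r>0$.

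Next, I would show that on one small ball $B_\rho(x_0)$, the same sections compute the stalk at every point $y \in B_\rho(x_0)$. Apply microlocal Morse again with $\phi_y(z) = |z-y|^2$, but now on the full sublevel set $\{\phi_y < R^2\}$ chosen so that $B_\rho(x_0) \subset \{\phi_y < R^2\}$; on the annular slab $\{a \leq \phi_y < R^2\}$ the differential is nonzero, hence $R\Gamma(\{\phi_y < R^2\}; \F) \simeq \F_y$. Restriction to the slightly smaller $B_\rho(x_0)$ and a standard cofinality argument give $R\Gamma(B_\rho(x_0); \F) \simeq \F_y$ for every $y \in B_\rho(x_0)$.

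Set $M := R\Gamma(B_\rho(x_0); \F)$ and let $M_{B_\rho(x_0)}$ denote the constant sheaf on $B_\rho(x_0)$ with value $M$. Via the adjunction $a^{-1} \dashv Ra_*$ for $a : B_\rho(x_0) \to \mathrm{pt}$, the identity $M \to R\Gamma(B_\rho(x_0); \F|_{B_\rho(x_0)})$ corresponds to a canonical morphism $\alpha : M_{B_\rho(x_0)} \to \F|_{B_\rho(x_0)}$. At each stalk $y$, $\alpha_y$ factors as $M \simeq R\Gamma(B_\rho(x_0); \F) \to \F_y$, which is a quasi-isomorphism by the previous step. Hence $\alpha$ is a quasi-isomorphism, so $\F|_{B_\rho(x_0)}$ is quasi-isomorphic to a constant sheaf, proving local constancy. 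The main obstacle is the second step: one must carefully choose the ball $B_\rho(x_0)$ small enough that the functions $\phi_y$ for all $y \in B_\rho(x_0)$ simultaneously satisfy the properness hypothesis required in Theorem \ref{g-mml} and that the associated sublevel sets cover $B_\rho(x_0)$. Without this uniformity, one only gets stalkwise isomorphisms at isolated points rather than a single quasi-isomorphism of sheaves.
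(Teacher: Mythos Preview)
Your approach is essentially the same as the paper's: apply the microlocal Morse lemma (Theorem~\ref{g-mml}) to the squared-distance function centered at a point, so that the only critical value is $0$ and hence $R\Gamma(B(x,\epsilon);\F)\simeq R\Gamma(B(x,\epsilon');\F)$ for all $0<\epsilon'\leq\epsilon$, yielding $R\Gamma(B(x,\epsilon);\F)\simeq\F_x$.

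The paper stops there and simply declares this to be local constancy; it does not carry out your second step (varying the center $y$, building the morphism $\alpha:M_{B_\rho(x_0)}\to\F|_{B_\rho(x_0)}$ via adjunction, and checking it is a stalkwise quasi-isomorphism). Your extra step is a genuine improvement in rigor: the paper's one-line conclusion implicitly uses that ``sections over a contractible open equal the stalk at every interior point'' already forces the restriction to be constant, which is exactly what your adjunction argument makes explicit. Your concern about the compact-support/properness hypothesis in Theorem~\ref{g-mml} is also legitimate and is glossed over in the paper; it is handled in the standard way by first restricting $\F$ to a relatively compact chart (or equivalently replacing $\F$ by $\F_{\overline{B(x_0,R)}}$), after which the required properness holds uniformly for all $\phi_y$ with $y$ near $x_0$.
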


To rigorously prove this fact, we need the important {\it microlocal Morse lemma} (Corollary 5.4.19 in \cite{KS90}). One (easy) formulation goes as follows. 

\begin{theorem} \label{mml} Let $\F \in \D(\k_\R)$ with compact support. If $SS(\F)|_{[a,b]}$ only has non-positive co-vectors, then 
\[ H^*((-\infty, a); \F) \simeq H^*((-\infty, b); \F). \]
In other words, sections of cohomology can propagate from $a$ (smaller) to $b$ (bigger). 
\end{theorem}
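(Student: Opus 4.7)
The plan is a compactness/supremum argument on the parameter interval $[a,b]$ that converts the local non-propagation statement implicit in the definition of singular support into the global isomorphism of sublevel-set cohomologies. Consider the set
\[ S := \left\{s \in [a,b] \,\big|\, R\Gamma((-\infty, s); \F) \to R\Gamma((-\infty, a); \F) \,\,\mbox{is a quasi-iso}\right\}. \]
The element $a$ lies in $S$ tautologically. I will show (i) $S$ is right-open, i.e., $s \in S \cap [a,b)$ implies $s + \varepsilon \in S$ for some $\varepsilon > 0$, and (ii) $S$ is closed under taking suprema of nonempty subfamilies. Together (i) and (ii) force $\sup S = b \in S$, which is exactly the desired quasi-iso and hence the required isomorphism on $H^*$.

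For step (i), the key input is the hypothesis: since $SS(\F)|_{[a,b]}$ has only non-positive co-vectors, $(s, +1) \notin SS(\F)$ for every $s \in [a,b]$. Applying Definition \ref{dfn-SS-1} with $\phi(x) = x - s$ yields $(R\Gamma_{\{x \geq s\}} \F)_s \simeq 0$, the infinitesimal form of rightward propagation at $s$. To promote this stalk-level statement into an honest quasi-isomorphism $R\Gamma((-\infty, s+\varepsilon); \F) \simeq R\Gamma((-\infty, s); \F)$ for some fixed $\varepsilon>0$, I would use that $SS(\F)$ is closed and conical in $T^*\R$: a full product neighborhood $[s-\delta, s+\delta] \times [\tfrac{1}{2},2]$ of $(s,+1)$ avoids $SS(\F)$, so pointwise propagation holds uniformly over a horizontal slab, and chaining the infinitesimal shifts via the non-characteristic deformation technique (Proposition 2.7.2 or Corollary 5.4.19 in \cite{KS90}) yields the sought $\varepsilon$. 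Composing with the quasi-iso from $s \in S$ then gives $s+\varepsilon \in S$.

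For step (ii), given $s_n \nearrow s$ with $s_n \in S$, the compact support of $\F$ guarantees that the tower $\{R\Gamma((-\infty, s_n); \F)\}_n$ has finite-dimensional cohomology; since each transition map in the tower is itself a quasi-iso (factoring through the common $R\Gamma((-\infty, a); \F)$), the Mittag-Leffler $\varprojlim^1$ vanishes and a Milnor-type identification yields $R\Gamma((-\infty, s); \F) \simeq R\varprojlim_n R\Gamma((-\infty, s_n); \F) \simeq R\Gamma((-\infty, a); \F)$, so $s \in S$. The main obstacle I anticipate is step (i)---specifically, upgrading the colimit/stalk-level statement from Definition \ref{dfn-SS-1} to a uniform-in-$\varepsilon$ quasi-isomorphism between sublevel sets, which is the heart of the non-characteristic deformation technology and depends essentially on the closedness of $SS(\F)$ rather than merely the pointwise absence of positive co-vectors.
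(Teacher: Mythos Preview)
The paper does not prove Theorem~\ref{mml}; it states it as Corollary~5.4.19 in \cite{KS90} and, in a subsequent remark, only indicates that the complete argument is the ``Mittag-Leffler inductive procedure'' of Section~1.12 in \cite{KS90}, flagging that the subtle point is comparing $H^*((-\infty,b);\F)$ with $\varprojlim_{s<b} H^*((-\infty,s);\F)$. Your sketch is precisely an outline of that procedure: the supremum/connectedness argument, with step~(i) giving right-openness via non-characteristic deformation and step~(ii) giving closure under increasing limits via Mittag-Leffler/$\varprojlim^1$, is the standard shape of the argument in \cite{KS90}. So you and the paper are pointing at the same proof.

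Two minor corrections. In step~(i) you cite Corollary~5.4.19 of \cite{KS90} as a tool, but that \emph{is} the statement being proved; the legitimate input is only Proposition~2.7.2 (the non-characteristic deformation lemma), which is what actually upgrades the stalk condition $(R\Gamma_{\{x\geq s\}}\F)_s\simeq 0$ to a finite-$\varepsilon$ quasi-isomorphism. In step~(ii) the claim that compact support of $\F$ forces finite-dimensional cohomology of $R\Gamma((-\infty,s_n);\F)$ is unjustified---compact support alone does not give constructibility---but it is also unnecessary: since the transition maps in your tower are already quasi-isomorphisms by hypothesis, the inverse system is Mittag-Leffler for the trivial reason that it is pro-constant, and $\varprojlim^1$ vanishes without any finiteness assumption.
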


\begin{ex} Here is an example illustrating Theorem \ref{mml} for degree $\ast = 0$ (checking higher degrees can be complicated in general). For $\F = \k_{(0,1]}$, by elementary computation similar to Example \ref{SS-interval}, $SS(\F)$ satisfies assumption in Theorem \ref{mml}. Meanwhile, by definition of $\k_{(0,1]}$, 
\[ H^0((-\infty, a); \F) = H^0((-\infty, b); \F) = H^0((-\infty, c); \F) = 0\]
for any $ a < 0 < b < 1 < c$.  \end{ex}

In fact, Theorem \ref{mml} implies a geometric formulation of microlocal Morse lemma. 

\begin{theorem} \label{g-mml} (Geometric formulation of microlocal Morse lemma) Let $\F \in \D(\k_X)$ with compact support. Let $f:X \to \R$ be a differentiable function, and $H^*(X; \F) \neq 0$, then 
$SS(\F) \cap \graph(df) \neq \emptyset$. \end{theorem}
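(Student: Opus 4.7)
The plan is to argue by contradiction: assume $SS(\F) \cap \graph(df) = \emptyset$ and use the one-dimensional microlocal Morse lemma (Theorem \ref{mml}) on the pushforward $Rf_*\F$ to contradict $H^*(X;\F) \neq 0$. The idea is that pushing along $f$ converts information about $\graph(df) \subset T^*X$ into information about positive co-vectors in $T^*\R$, where Theorem \ref{mml} is already available.

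First I would form $Rf_*\F \in \D(\k_\R)$. Since $\F$ has compact support, $\supp(Rf_*\F) \subset f(\supp \F)$ is compact in $\R$, so Theorem \ref{mml} applies. Grothendieck's composition formula applied to $X \xrightarrow{f} \R \to \pt$ gives $R\Gamma(\R; Rf_*\F) \simeq R\Gamma(X;\F)$, so $H^*(\R; Rf_*\F) \simeq H^*(X;\F) \neq 0$. Now choose $a < \min f(\supp\F)$ and $b > \max f(\supp\F)$. Then $H^*((-\infty,a); Rf_*\F) = 0$ (empty intersection with the support) while $H^*((-\infty,b); Rf_*\F) \simeq H^*(\R; Rf_*\F) \simeq H^*(X;\F) \neq 0$. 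The goal then becomes to verify the hypothesis of Theorem \ref{mml} on $[a,b]$, namely that $SS(Rf_*\F)$ contains no strictly positive co-vector.

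The key step is to rule out positive co-vectors in $SS(Rf_*\F)$ under the assumption $SS(\F) \cap \graph(df) = \emptyset$. By Proposition \ref{push}, any $(y, \eta) \in SS(Rf_*\F)$ comes from some $(x, \xi) \in SS(\F)$ with $f(x) = y$ and $\xi = \eta \cdot df(x)$. Suppose $\eta > 0$. If $df(x) \neq 0$, the closed conical property of singular support gives $(x, df(x)) \in SS(\F)$, contradicting $SS(\F) \cap \graph(df) = \emptyset$. If instead $df(x) = 0$, then $\xi = 0$ and $(x,0) \in SS(\F)$ forces $x \in \supp\F$ (since $SS(\F) \cap 0_X = \supp\F$); but then $(x, df(x)) = (x, 0) \in SS(\F) \cap \graph(df)$, again a contradiction. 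Combining this with the vanishing/non-vanishing of the two cohomologies produced above, Theorem \ref{mml} yields $0 \simeq H^*(X;\F) \neq 0$, the desired contradiction.

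The main obstacle I foresee is the careful handling of critical points of $f$ in the conical argument: at points where $df(x) = 0$ the scaling trick degenerates, and one must separately invoke the identity $SS(\F) \cap 0_X = \supp\F$ to extract a point of $SS(\F) \cap \graph(df)$. Everything else — compactness of $\supp(Rf_*\F)$, the composition formula identifying the top cohomology, and the choice of interval $[a,b]$ — is essentially bookkeeping once the singular-support estimate is in hand.
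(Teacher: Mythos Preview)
Your proposal is correct and follows essentially the same route as the paper: both argue the contrapositive by pushing forward along $f$, invoking the pushforward estimate (Proposition~\ref{push}) to see that $SS(Rf_*\F)$ has no positive co-vectors, and then applying the one-dimensional microlocal Morse lemma (Theorem~\ref{mml}) together with compact support to force $H^*(X;\F)=0$. Your treatment of the case $df(x)=0$ via $SS(\F)\cap 0_X=\sp(\F)$ is slightly more explicit than the paper's, which simply absorbs it into the conical scaling, but the argument is the same.
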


\begin{ex} When $\F = \k_X$, constant sheaf on $X$, since $SS(\F) = 0_X$, Theorem \ref{g-mml} reduces to the {\it regular} Morse lemma, i.e., non-vanishing of (Morse) cohomology detects critical points. \end{ex}

\begin{proof} (Proof of Theorem \ref{g-mml} assuming Theorem \ref{mml}) We will prove counter-positive that if $SS(\F) \cap \graph(df) = \emptyset$, then $H^*(X; \F) = 0$. First of all, we know $H^*(f^{-1}(-\infty, a); \F) \simeq H^* ((-\infty, a); Rf_*\F)$ for any $a \in \R$. Meanwhile, by pushforward formula of $SS$ (see Proposition \ref{push}), 
\begin{equation} \label{ss} 
SS(Rf_* \F) \subset \left\{ (y, \eta) \in T^* \R\, \bigg| \, \begin{array}{cc} \exists x \in X \,\,s.t.\,\, f(x) = y \\ (x, f^* \eta) \in SS(\F) \end{array} \right\} (: = \Lambda_f(SS(\F))).
\end{equation}
In particular, here for $f: X \to \R$, $f^* \eta$ is explicitly written out as $\eta \cdot df(x)$ where $\eta$ is identified with a number (possibly $0$). Our assumption implies for any $\eta >0$, $(y, \eta) \notin \Lambda_f(SS(\F))$ for any $y \in [a,b]$. Then by (\ref{ss}), 
\begin{align*}
SS(Rf_*\F) \cap T^*\R|_{[a,b]} &\subset \Lambda_f(SS(\F)) \cap T^*\R|_{[a,b]} \\
& \subset \{(y, \eta) \in T^*\R|_{[a,b]} \,| \, \eta \leq 0 \}.
\end{align*}
Then Theorem \ref{mml} implies that 
\[ H^*(f^{-1}(-\infty, a); \F) \simeq H^*(f^{-1}(-\infty, b); \F). \]
Finally, take $a << 0$ and $b >>0$ such that (thanks to the condition that ${\rm supp}(\F)$ is compact) $H^*(f^{-1}(-\infty, a); \F) = 0$ and $H^*(f^{-1}(-\infty, b); \F) = H^*(X; \F)$, we get the desired conclusion. 
\end{proof}

To end this section, let us give the proof of Fact \ref{fact-lc}. 

\begin{proof} (Proof of Fact \ref{fact-lc}) For each $x \in X$, consider differentiable function on $X$ by $f(y) = d_{\rho}(y, x)^2$ under some fixed metric $\rho$ on $X$. Since the only critical point of $f$ is $x$ itself, by Theorem \ref{g-mml}, one knows for some $\ep>0$, $H^*(B(x, \ep); \F) \simeq H^*(B(x, \ep'); \F)$ for any $0 < \ep' \leq \ep$. In other words, $R\Gamma(B(x, \ep); \F) \simeq R\Gamma(\F)_{x}$, that is, $\F$ is locally constant. \end{proof}

\begin{remark} The proof of Theorem \ref{mml} is essentially more subtle than its appearance (deeply due to the fact that projective/inverse limit is not exact, so it can't commute with taking cohomology). More explicitly, we are requested to compare $H^*((-\infty, b); \F)$ and $\varprojlim_{a<b} H^*((-\infty, a); \F)$. The complete proof of Theorem \ref{mml} is called ``Mittag-Leffler inductive procedure'', see Section 1.12 in \cite{KS90}. \end{remark}

Last but not least, we have a more precise measurement of the (microlocal) Morse lemma, called microlocal {\it Morse inequality} (under some assumption of transversality). Notations first.
\begin{itemize}
\item{} For a given sheaf (of $\k$-module) $\F$ on a manifold $X$, denote ``sheaf version of Betti number'': for each $j \in \N \cup \{0\}$, $b_j(\F):= \dim_{\k} H^j(X; \F)$.
\item{} For $(x,p) \in SS(\F)$ (with testing function $\phi$ specified in concrete situations), denote $V_x(\phi):= \{(R^*\Gamma_{\{\phi \geq 0\}}(\F))_x\}_{* \in \Z}$ \footnote{This is a collection of graded vector spaces. By definition of $SS(\F)$, if $(x, p) \in SS(\F)$, $V_x(\phi)$ is non-zero for certain degrees.} and for $j \in \N \cup \{0\}$, denote $b_j(V_x(\phi)) = \dim_{\k} (R^{j}\Gamma_{\{\phi \geq 0\}} (\F))_x$. 
\end{itemize}

Then we can state the microlocal Morse inequality. 

\begin{theorem} \label{m-in} (Proposition 5.4.20 in \cite{KS90}) Let $\F$ be a sheaf of compact support on a manifold $X$ such that for any $j \in \N \cup \{0\}$, $b_j(\F) < \infty$. Suppose $f: X \to \R$ is a $C^1$-function such that
\[ \graph(df) \cap SS(\F) = \{(x_1, p_1), ..., (x_N, p_N)\}\]
and assume each $V_{x_i}(\phi_i)$ is finitely indexed with finite dimensional for each degree $j \in \N \cup \{0\}$ where $\phi_i(x) = f(x) - f(x_i)$. Then for any $l \in \N \cup\{0\}$, 
\begin{equation} \label{M-in}
\sum_{j=0}^l (-1)^{l+j} b_j(\F) \leq \sum_{i=1}^N \sum_{j=0}^{l} (-1)^{l+j} b_j(V_{x_i}(\phi_i)).
\end{equation}
In particular, for any $j \in \N \cup \{0\}$, $b_j(\F) \leq \sum_{i=1}^N b_j(V_{x_i}(\phi_i))$. \footnote{Recall the classical version of Morse inequality. For a Morse function $f: X \to \R$. Denote $c_j (f, X) = \#\{\mbox{critical points of index $j$} \}$ and $b_j (X) = \dim_{\k} H^j(X)$. Then for any $l \in \N \cup \{0\}$, 
\[ \sum_{j = 0}^l (-1)^{l+j} b_j(X) \leq \sum_{j=0}^l (-1)^{l+j} c_j(f,X). \]
In particular, for any $j \in \N \cup \{0\}$, $b_j(\F) \leq c_j(f,X)$ (this is usually called {\it weak} Morse inequality).}\end{theorem}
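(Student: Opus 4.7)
The plan is to reduce the microlocal Morse inequality to the standard algebraic Morse inequality for a filtered complex, with the filtration coming from sublevel sets of $f$ and its graded pieces identified with the local contributions $V_{x_i}(\phi_i)$ via Theorem \ref{mml}.

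First I would push forward $\F$ along $f$ and consider $\G := Rf_*\F \in \D(\k_\R)$. By Proposition \ref{push},
\[ SS(\G) \subset \{(y,\eta) \in T^*\R \,|\, \exists\, x \in X \text{ with } f(x) = y \text{ and } (x, \eta\,df(x)) \in SS(\F)\}. \]
Any $(y,\eta) \in SS(\G)$ with $\eta \neq 0$ forces, after using that $SS$ is conical, $(x,df(x)) \in \graph(df) \cap SS(\F) = \{(x_i, p_i)\}$, so $y = c_i := f(x_i)$. Hence apart from the zero section, $SS(\G)$ sits above the finite set of critical values. Since $\F$ has compact support, $R\Gamma(X;\F) \simeq R\Gamma(\R; \G)$, and we may work on $\R$.

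Second, I would pick real numbers $a_0 < a_1 < \ldots < a_N$ interleaved with the critical values so each interval $[a_{k-1}, a_k]$ contains exactly one critical value $c_k = f(x_k)$ (grouping critical points at the same level is routine), and so that $a_0 < \inf_{\mathrm{supp}(\F)} f$ and $a_N > \sup_{\mathrm{supp}(\F)} f$. Set $U_k := f^{-1}(-\infty, a_k)$, giving an ascending chain with $R\Gamma(U_0;\F) \simeq 0$ and $R\Gamma(U_N;\F) \simeq R\Gamma(X;\F)$. For each $k$, set $Z_k := f^{-1}[a_{k-1}, a_k)$, a closed subset of $U_k$. The standard closed-open triangle yields
\[ R\Gamma_{Z_k}(U_k;\F) \to R\Gamma(U_k;\F) \to R\Gamma(U_{k-1};\F) \xrightarrow{+1}. \]

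Third, I would identify the ``graded piece'' $R\Gamma_{Z_k}(U_k;\F)$ with $V_{x_k}(\phi_k)$. By Theorem \ref{mml} applied to $\G$, $H^*((-\infty,t); \G)$ is locally constant in $t$ away from critical values, so shrinking $[a_{k-1}, a_k)$ toward $\{c_k\}$ does not change $R\Gamma_{Z_k}$. Meanwhile, on $X \setminus \{x_k\}$ the set $\graph(d\phi_k) \cap SS(\F)$ is empty, and a Fact \ref{fact-lc}-style argument (via Theorem \ref{g-mml} applied locally) shows that the spatial support of the relevant cohomology collapses to a neighborhood of $x_k$, giving
\[ R\Gamma_{Z_k}(U_k;\F) \simeq (R\Gamma_{\{f \geq c_k\}}\F)_{x_k} = (R\Gamma_{\{\phi_k \geq 0\}}\F)_{x_k} = V_{x_k}(\phi_k). \]

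Finally, I would invoke the algebraic Morse inequality for a filtered complex: if $0 = W_0 \subset W_1 \subset \ldots \subset W_N$ is a filtration of complexes with each graded piece having finite-dimensional cohomology in each degree, then for every $l \geq 0$,
\[ \sum_{j=0}^l (-1)^{l+j}\, b_j(W_N) \;\leq\; \sum_{k=1}^N \sum_{j=0}^l (-1)^{l+j}\, b_j(W_k/W_{k-1}). \]
This is the standard $E_1$-to-$E_\infty$ estimate for the spectral sequence of a filtered complex; concretely one proves it by induction on $N$ using the long exact sequence of each triangle $W_k/W_{k-1} \to W_k \to W_{k-1} \to$. Setting $W_k := R\Gamma(U_k;\F)$ and using the identification of step three gives (\ref{M-in}). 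The weak inequality $b_j(\F) \leq \sum_i b_j(V_{x_i}(\phi_i))$ follows by combining the estimates at $l = j$ and $l = j-1$, or equivalently from $\dim E_\infty^j \leq \dim E_1^j$.

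The main obstacle will be step three: rigorously identifying $R\Gamma_{Z_k}(U_k;\F)$ with $V_{x_k}(\phi_k)$. The intuition (only $x_k$ contributes) is transparent, but the argument must combine two propagation statements: the ``vertical'' one (Theorem \ref{mml}) to shrink the $f$-range of $Z_k$ down to the single level $c_k$, and the ``horizontal'' one (a Fact \ref{fact-lc}-type application to $R\Gamma_{\{f \geq c_k\}}\F$ on $X \setminus \{x_k\}$) to shrink the spatial support to $\{x_k\}$. The finiteness hypothesis on each $b_j(V_{x_i}(\phi_i))$ is precisely what ensures the long exact sequences and spectral sequence arguments remain numerically well-posed in each degree.
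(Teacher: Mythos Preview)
The paper does not prove this theorem at all: Theorem \ref{m-in} is stated with a citation to Proposition 5.4.20 in \cite{KS90} and is immediately followed by an example, with no argument given. So there is no ``paper's own proof'' to compare against.

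Your sketch is the standard proof, essentially the one in \cite{KS90}. One small sharpening of step three: the ``horizontal'' localization to $x_k$ does not really need a Fact~\ref{fact-lc}-style argument. The Exercise immediately after Definition~\ref{dfn-SS-1} gives the equivalent characterization $(x,\xi)\notin SS(\F) \iff (R\Gamma_{\{\phi\ge 0\}}\F)_x \simeq 0$. Since for $x\ne x_k$ on the level set $\{f=c_k\}$ you have $(x,df(x))\notin SS(\F)$ by hypothesis, the stalks of $R\Gamma_{\{f\ge c_k\}}\F$ vanish away from $x_k$ directly, which combined with the vertical shrinking (Theorem~\ref{mml}) gives $R\Gamma_{Z_k}(U_k;\F)\simeq (R\Gamma_{\{\phi_k\ge 0\}}\F)_{x_k}$ cleanly. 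Also, be slightly careful with the phrase ``grouping critical points at the same level is routine'': if several $x_i$ share a critical value, the graded piece is the \emph{direct sum} of the corresponding $V_{x_i}(\phi_i)$ (the stalks at distinct points), which is indeed routine but should be said.
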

 
\begin{ex} Suppose $\F = \k_X$, then $b_j(\F) = b_j(X)$ (classical Betti number) and we can check ({\bf Exercise}) 
\[ b_j(V_{x_i}(\phi_i)) = \left\{ \begin{array}{cc} i \,\,\,\,\,\,\,&\mbox{$j$ = Morse index of $x_i$} \\ 0 \,\,\,\,\,\,\,\, & \mbox{otherwise} \end{array} \right..\]
Therefore, this recovers the classical Morse inequality. More concretely, we can take $X = \R^2$, $\F = \k_{X}$ and $f(x,y) = x^2 - y^2$. We can compute that $b_1 (V_{(0,0)}(f)) = 1$ and $0$ for degree $0$ and $2$. \end{ex}

\section{Theory of Tamarkin category}
\subsection{Categorical orthogonal complement}
In this section, we will introduce orthogonality concept in the set-up of categories. This turns out to be crucial in the construction of Tamarkin category. Recall in linear algebra, due to naturally defined inner product, we can define an orthogonal complement of a subspace in a fixed vector space. In the case of a category, with the help of $\Hom(-,-)$, we can define a certain orthogonal complement of a subcategory in a fixed category. 

\begin{dfn} \label{dfn-or} Let $\mathcal C$ be a category and $\mathcal C'$ be a (full) subcategory of $\mathcal C$. Define {\it left orthogonal complement} of $\mathcal C'$ in $\mathcal C$ by 
\[ (\mathcal C')^{\perp, l} = \{x \in \mathcal C \,| \, \Hom_{\mathcal C} (x,y) = 0 \,\,\mbox{for any $y \in \mathcal C'$} \}\]
and right orthogonal complement of $\mathcal C'$ in $\mathcal C$ by 
\[ (\mathcal C')^{\perp, r} = \{x \in \mathcal C \,| \, \Hom_{\mathcal C} (y,x) = 0 \,\,\mbox{for any $y \in \mathcal C'$} \}.\]
\end{dfn}

\begin{remark} Note that both $(\mathcal C')^{\perp, l}$ and $(\mathcal C')^{\perp, r}$ are also subcategories of $\mathcal C$. \end{remark}

\begin{ex} \label{ex-fg} Let $\mathcal A$ be the category of finitely generated abelian groups and subcategory $\mathcal A'$ consisting of finitely generated abelian torsion groups. Then 
\[ (\mathcal A')^{\perp, l} = \{A \in \mathcal A \,|\, \Hom(A, B) = 0 \,\,\mbox{for any $B \in \mathcal A'$}\} = \{0\}\]
because from a non-zero free group to torsion group, there always exist non-trivial morphisms. However, the only morphism from a torsion group to any free group is just zero map. Therefore, one gets 
\[ (\mathcal A')^{\perp, r} = \{A \in \mathcal A \,|\, \Hom(B, A) = 0 \,\,\mbox{for any $B \in \mathcal A'$}\} = \{A \in \mathcal A \,| \, \mbox{$A$ is free}\}.\]
Note that this example also shows the left orthogonal complement and the right orthogonal complement are in general not the same.
\end{ex}

\begin{ex} Let $\mathcal P$ be the category of persistence $\k$-modules and a subcategory $\mathcal P'$ consisting of ``torsion'' persistence $\k$-modules, i.e., the barcodes only have finite length bars. Then similar to Example \ref{ex-fg},
\[ (\mathcal P')^{\perp, l} = \{V \in \mathcal P \,|\, \Hom(V, W) = 0 \,\,\mbox{for any $W \in \mathcal P'$}\} = \{0\} \]
because for instance there exists non-trivial morphisms from $\I_{[0, \infty)}$ to $\I_{[0,1)}$. However, any morphism from $\I_{[a,b)}$ with $b< \infty$ to $\I_{[c, \infty)}$ is a zero map. Therefore, we get
\begin{align*}
(\mathcal P')^{\perp, r} & = \{V \in \mathcal P \,|\, \Hom(W, V) = 0 \,\,\mbox{for any $W \in \mathcal P'$}\} \\
& = \{V \in \mathcal P \,| \, \mbox{ $\mathcal B(V)$ consists of only infinite length bars} \}.
\end{align*}
\end{ex}

\begin{remark} ({\bf Exercise}) If $\mathcal T'$ is a triangulated subcategory of a triangulated category $\mathcal T$, then its left/right orthogonal complement is also a triangulated subcategory. Because of this exercise, it seems plausible that both $\mathcal A$ and $\mathcal P$ can {\it not} be viewed as triangulated categories. In fact, for instance, $\Z \xrightarrow{\times 2} \Z$ can not be completed as a distinguished triangle inside $(\mathcal A')^{\perp, r}$. However, since $\mathcal P$ satisfies a special property that it has homological dimension $\leq 1$, there is a chance to upgrade $\mathcal P$ to be a triangulated category (see Section \ref{sec-inter}) once flavor of ``derived category'' is added. \end{remark}

Note that orthogonal complement is very closely related with adjoint functors. 

\begin{prop} \label{adj-orth} Let $\mathcal C$ be a derived category. inclusion of subcategory $i: \mathcal C' \to \mathcal C$ has a left adjoin $p: \mathcal C \to \mathcal C'$ if and only if for any $x \in \mathcal C$, there exists a distinguished triangle $z \to x \to y \xrightarrow{+1}$ such that $z \in (\mathcal C')^{\perp, l}$ and $y \in \mathcal C'$. \end{prop}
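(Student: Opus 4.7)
The plan is to extract the object $y$ as the value $p(x)$ of the hoped-for left adjoint, with the arrow $x \to y$ in the triangle playing the role of the unit of adjunction. The bridge between the two directions is the long exact sequence obtained by applying the cohomological functor $\Hom_{\mathcal C}(-, w)$ (for $w \in \mathcal C'$) to the distinguished triangle $z \to x \to y \xrightarrow{+1}$, together with the fact that $\mathcal C'$ is closed under shifts (since it is assumed to be a triangulated subcategory).

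For the forward direction, assume $p$ is a left adjoint of $i$ and denote by $\eta_x \colon x \to i(p(x))$ the unit at $x$. Using part (0) of Proposition \ref{fact2}, complete $\eta_x$ to a distinguished triangle and rotate into the form $z \to x \xrightarrow{\eta_x} i(p(x)) \xrightarrow{+1}$; set $y := i(p(x)) \in \mathcal C'$. To show $z \in (\mathcal C')^{\perp, l}$, pick $w \in \mathcal C'$ and apply $\Hom_{\mathcal C}(-, w)$ to obtain
\[ \cdots \to \Hom(y, w) \to \Hom(x, w) \to \Hom(z, w) \to \Hom(y[-1], w) \to \Hom(x[-1], w) \to \cdots . \]
Since $i$ is fully faithful, the adjunction identifies $\Hom_{\mathcal C}(y, w)$ with $\Hom_{\mathcal C}(x, w)$ via precomposition with $\eta_x$, so the first displayed arrow is an isomorphism. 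Applying the same identification with $w$ replaced by $w[1] \in \mathcal C'$ shows the arrow $\Hom(y[-1], w) \to \Hom(x[-1], w)$ is an isomorphism as well, and exactness sandwiches $\Hom(z, w)$ between two isos, forcing it to vanish.

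For the backward direction, for each $x \in \mathcal C$ choose a triangle $z_x \to x \xrightarrow{\eta_x} y_x \xrightarrow{+1}$ with $z_x \in (\mathcal C')^{\perp, l}$ and $y_x \in \mathcal C'$, and set $p(x) := y_x$. Applying $\Hom_{\mathcal C}(-, w)$ to this triangle, both $\Hom(z_x, w) = 0$ and $\Hom(z_x[1], w) = \Hom(z_x, w[-1]) = 0$ (the latter using $w[-1] \in \mathcal C'$), so the long exact sequence pinches to produce an isomorphism
\[ \Hom_{\mathcal C}(y_x, w) \xrightarrow{\sim} \Hom_{\mathcal C}(x, w) \]
given by precomposition with $\eta_x$. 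Identifying the left-hand side with $\Hom_{\mathcal C'}(p(x), w)$ via the fullness of $i$ yields the desired adjunction $\Hom_{\mathcal C'}(p(x), w) \cong \Hom_{\mathcal C}(x, i(w))$, with unit $\eta_x$.

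What remains is to promote $p$ from an assignment on objects to an actual functor and to confirm that $p(x)$ is well-defined up to canonical isomorphism. Given $f \colon x \to x'$, the pinched isomorphism $\Hom(y_x, y_{x'}) \cong \Hom(x, y_{x'})$ pulls the composite $\eta_{x'} \circ f$ back to a unique morphism $p(f) \colon y_x \to y_{x'}$, and the same uniqueness immediately yields $p(\mathrm{id}) = \mathrm{id}$, $p(g \circ f) = p(g) \circ p(f)$, and independence (up to unique iso) of the choice of triangle. The main delicate point is precisely this uniqueness-driven functoriality bookkeeping, but since it reduces cleanly to the same $\Hom$-vanishing already used in both directions, no additional ingredient is required.
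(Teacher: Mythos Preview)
Your proof is correct and follows essentially the same approach as the paper: in both directions one sets $y = p(x)$ (respectively $p(x) = y$), completes to a triangle, and applies the cohomological functor $\Hom_{\mathcal C}(-,w)$ to extract the orthogonality (respectively the adjunction). You are somewhat more careful than the paper on two points: you explicitly identify the arrow $\Hom(y,w) \to \Hom(x,w)$ with precomposition by the unit (hence an isomorphism, not merely an abstract equality of groups), and you invoke closure of $\mathcal C'$ under shifts to kill the adjacent term $\Hom(z[1],w)$ as well; you also sketch the functoriality and well-definedness of $p$, which the paper omits.
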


\begin{proof} For any $x \in \mathcal C$, define $y = p(x)$. Then by (0) of Proposition \ref{fact2} , there exists a distinguished triangle 
\[ x \to y \to z[-1] \xrightarrow{+1} \,\,\,\,\Rightarrow \,\,\,\, z \to x \to y \xrightarrow{+1}. \]
We just need to check that $z \in (\mathcal C')^{\perp, l}$. In fact, for any $w \in \mathcal C'$, applying $\Hom(- ,w)$, by Lemma \ref{exa-hom}, one gets a long exact sequence 
\[ \Hom(y, w)( = \Hom(p(x) ,w)) \to \Hom(x, w) \to \Hom(z, w) \xrightarrow{+1}. \]
By adjoint relation, $\Hom(p(x), w) = \Hom(x, i(w)) = \Hom(x,w)$, which implies $\Hom(z, w) =0$. Conversely, assume the existence of distinguished triangle, define $p: \mathcal C \to \mathcal C'$ by $p(x) = y$. Then for any $w \in \mathcal C'$, applying $\Hom(- ,w)$, one gets the long exact sequence as above. Since $\Hom(z,w) =0$, we get an adjoint relation $\Hom(p(x), w) = \Hom(x, w) = \Hom(x,i(w))$. \end{proof}

\begin{remark} The same argument works for the right orthogonal complement, i.e. existence of right adjoint of inclusion $i: \mathcal C' \to \mathcal C$ is equivalent to the existence of distinguished triangle $y \to x \to z \xrightarrow{+1}$ such $y \in \mathcal C'$ and $z \in (\mathcal C')^{\perp, r}$. \end{remark}

\begin{cor} \label{orth-proj} If inclusion $i: \mathcal C' \to \mathcal C$ admits a left adjoint $p: \mathcal C \to \mathcal C'$, then $p(v) =0$ for any $v \in (\mathcal C')^{\perp, l}$. \end{cor}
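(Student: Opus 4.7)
The plan is to exploit the adjunction identity together with the defining vanishing property of the left orthogonal complement, and then to conclude by testing with $w = p(v)$.

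More concretely, first I would fix an arbitrary $v \in (\mathcal{C}')^{\perp, l}$ and an arbitrary test object $w \in \mathcal{C}'$. The adjunction hypothesis gives
\[
\Hom_{\mathcal{C}'}(p(v), w) \;\simeq\; \Hom_{\mathcal{C}}(v, i(w)).
\]
Since $i(w)$ is the image in $\mathcal{C}$ of an object of $\mathcal{C}'$, and $v$ lies in the left orthogonal complement of $\mathcal{C}'$ by hypothesis, the right-hand side vanishes by the very definition of $(\mathcal{C}')^{\perp, l}$ in Definition \ref{dfn-or}. Hence $\Hom_{\mathcal{C}'}(p(v), w) = 0$ for every $w \in \mathcal{C}'$.

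Second, I would specialize this to $w = p(v)$ itself, which is a legitimate object of $\mathcal{C}'$. The vanishing $\Hom_{\mathcal{C}'}(p(v), p(v)) = 0$ forces the identity morphism $\mathrm{id}_{p(v)}$ to be the zero morphism, and a standard categorical fact then yields $p(v) = 0$.

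I do not expect a real obstacle here: the argument is a two-line diagram chase, and the only thing to be careful about is applying the adjunction in the correct direction (left adjoint of inclusion means $\Hom(p-, -) \simeq \Hom(-, i-)$, which is exactly the direction we need so that the $\mathcal{C}'$-side appears as the second argument, matching the definition of left orthogonal complement). The corollary is essentially the standard observation that a Bousfield-type reflection annihilates the corresponding orthogonal subcategory, and the result should appear immediately after Proposition \ref{adj-orth} as a direct consequence.
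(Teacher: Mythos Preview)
Your proof is correct and is in fact more direct than the paper's. The paper argues by contradiction via the distinguished triangle $z \to v \to p(v) \xrightarrow{+1}$ produced in Proposition \ref{adj-orth}: applying $\Hom(-,p(v))$ and using $\Hom(z,p(v))=0$ yields $\Hom(v,p(v)) \simeq \Hom(p(v),p(v))$, whence a nonzero identity on $p(v)$ would give a nonzero element of $\Hom(v,p(v))$, contradicting $v \in (\mathcal C')^{\perp,l}$. Your argument bypasses the triangle entirely and reads off $\Hom(p(v),p(v)) \simeq \Hom(v,i(p(v))) = 0$ straight from the adjunction identity; this is shorter, does not invoke the triangulated structure at all, and in effect shows that the isomorphism the paper extracts from the long exact sequence is nothing other than the adjunction isomorphism specialized at $w = p(v)$. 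The paper's route has the mild advantage of reinforcing the orthogonal-decomposition picture set up in Proposition \ref{adj-orth}, but your version is the cleaner proof of the bare statement.
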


\begin{proof} Suppose there exists some $v \in (\mathcal C')^{\perp, l}$ such that $p(v) \neq 0$. By Proposition \ref{adj-orth}, there exists a distinguished triangle, $z \to v \to p(v) \xrightarrow{+1}$
such that $z \in (\mathcal C')^{\perp, l}$ and $p(v) \in \mathcal C'$. Applying $\Hom(-, p(v))$, one gets a long exact sequence 
\[ \Hom(p(v), p(v)) \to \Hom(v, p(v)) \to \Hom(z, p(v)) \xrightarrow{+1}. \]
Note that $\Hom(z, p(v)) = 0$ implies $\Hom(v, p(v)) \simeq \Hom(p(v), p(v))$. Since $p(v) \neq 0$, the identity map $\I_{p(v)}$ corresponds to some {\it non-zero} map in $\Hom(v, p(v))$ which is a contradiction because $\Hom(v, p(v)) = 0$ since $v \in (\mathcal C')^{\perp, l}$. \end{proof}

\subsection{Definition of Tamarkin category} 
In this section, we will give the definition of Tamarkin category. Recall the main category we work on is 
\[ \D(\k_X) (= \D(\Sh(\k_X))) := \mbox{derived category of sheaves of $\k$-modules over $X$} \]
where an object inside is in general a complex of sheaves of $\k$-modules, denoted as $\F$. Recall by definition of a derived category, quasi-morphisms are invertible and, in particular, it is a triangulated category. Let $X = M \times \R$ with coordinate on $\R$ labelled as $t$ and $\tau$ as its co-vector coordinate. Denote a full subcategory of $\D(\k_X)$ as 
\[ \D_{\{\tau \leq 0\}} (\k_{M \times \R}) := \{\F \in \D(\k_{M \times \R}) \,| \, SS(\F) \subset \{\tau \leq 0\} \} \]
where $\{\tau \leq 0\}$ denotes the subset of $T^*(M \times \R)$ where $\tau$-part non-positive. Similarly denote $\{\tau>0\}$ denotes the subset where $\tau$-part is positive. Importantly, $\D_{\{\tau \leq 0\}} (\k_{M \times \R})$ is indeed a triangulated subcategory. In fact, for $\F \to \G$, completed into a distinguished triangle 
\[ \F \to \G \to \mathcal H \xrightarrow{+1}. \]
Then by the property of singular support Proposition \ref{SS-triangle}, $SS(\mathcal H) \subset SS(\F) \cup SS(\G) \subset \{\tau \leq 0\}$. In other words, a distinguished triangle can be completed inside $\D_{\{\tau \leq 0\}} (\k_{M \times \R})$. \\

The role of the extra variable $\R$ is mysterious at the first sight but we will see in the following few sections that $\R$ plays an absolutely important role in Tamarkin category's theory. For now, note that there exists a well-defined reduction map $\rho: T_{\{\tau>0\}}^*(M \times \R) \to T^*M$ by 
\begin{equation} \label{red}
\rho(x, \xi, t, \tau) = (x, \xi/\tau)
\end{equation}

\begin{dfn} \label{dfn-tarc} (Definition of Tamarkin category) There are two versions of Tamarkin category. 
\begin{itemize}
\item[(1)] (free version) Denote 
\[ \mathcal T(M) : = \D_{\{\tau \leq 0\}} (\k_{M \times \R})^{\perp, l}. \]
\item[(2)] (restricted version) For a given closed subset $A \subset T^*M$, denote 
\[ \mathcal T_A(M) : = \left\{\F \in \mathcal T(M)\, | \, SS(\F) \subset \overline{\rho^{-1}(A)}\right\} \]
where the closure is taken in $T^*(M\times \R)$. 
\end{itemize}
\end{dfn}

\begin{remark} Here we defined $\mathcal T(M)$ by using {\it left} orthogonal complement. In fact, $\T(M)$ can be also defined as $\D_{\{\tau \leq 0\}} (\k_{M \times \R})^{\perp, r}$ by using {\it right} orthogonal complement. This is closely related with an operator called {\it adjoint sheaf} defined in Section \ref{adj} and Corollary \ref{q-proj}. \end{remark}

\begin{ex} The simplest example for $\mathcal T(M)$ is when $M = \{{\rm pt}\}$, that is, the objects are complexes of sheaves of $\R$. For our convenience, we will only consider constructible sheaves. For a decomposition theorem (where we use our hypothesis of constructibility) in \cite{KS17}, the element in $\D_{\{\tau\leq 0\}}(\k_{\R})$ is $\bigoplus\k_{(a,b]}$. Then a typical element in $\T(\pt)$ is $\bigoplus \k_{[c,d)}$ with which a persistence $\k$-modules can be identified. Please see Appendix \ref{app-1} for a detailed explanation of this identification. \end{ex}

\begin{ex} \label{ex-0sec} The restricted version $\T_A(M)$ can be roughly divided into two cases. One is when $A$ is a Lagrangian of $T^*M$, e.g. a Lagrangian admitting a generating function; the other is when $A$ is a domain of $T^*M$, e.g. the (complement of) standard open ball in $T^*\R^{n} (\simeq \R^{2n})$. The general philosophy is when $A$ is a Lagrangian, $\T_A(M)$ encodes the information of Lagrangian Floer homology; when $A$ is a domain, $\T_A(M)$ encodes the information of symplectic homology.

 Here we give a concrete example of the first case. Let $A = 0_M$, zero-section of $T^*M$. Then 
 \[ \overline{\rho^{-1}(0_M)} = \{(m, 0, t, \tau) \,| \, m \in M, \, \tau \geq 0\}. \]
 Now we claim $\k_{M \times [0, \infty)} \in \T_{0_M} (M)$. First, 
 \[ SS(\k_{M \times [0, \infty)}) = 0_M \times \left(\{(0, \tau) \,| \, \tau \geq 0\} \cup \{(t, 0) \,| \, t \geq 0\} \right) \subset \overline{\rho^{-1}(0_M)}. \]
The non-trivial part is to confirm that for any $\G \in \D_{\{\tau \leq 0\}}(\k_{M \times \R})$, $\Hom(\k_{M \times [0, \infty)}, \G) =0$. In fact, by the exact triangle $\k_{M \times (-\infty, 0)} \to \k_{M \times \R} \to \k_{M \times [0, \infty)}$, we have the following computation 
\begin{align*}
R\Hom(\k_{M \times [0, \infty)}, \G) & = Cone(R\Hom(\k_{M \times \R}, \G) \to R\Hom(\k_{M \times (-\infty, 0)}, \G))\\
& = Cone(R\Gamma(M \times \R, \G) \to R\Gamma(M \times (-\infty, 0), \G)) = 0
\end{align*}
where the final step comes from microlocal Morse lemma (thanks to the singular support hypothesis on $\G$). 

\begin{remark} \label{boxtimes} Due to the functorial property of singular support Corollary \ref{ex-t}, for any $\F \in \T(pt)$, $\k_M \boxtimes \F \in \T_{0_M} M$. It seems every element in $\T_{0_M} M$ should be in this form. \end{remark} 
 \end{ex}
 
\subsection{Sheaf convolution and composition}
\subsubsection{Definitions of operators}
For any $\F, \G \in \D(\k_{M \times \R})$, consider the following procedure 
\begin{equation} \label{conv}
\xymatrix{
& M \times M \times \R \times \R \ar[rr]^-{s} \ar[ld]_-{\pi_1} \ar[rd]^-{\pi_2} & & M \times M \times \R \ar[r]^-{\delta^{-1}} & M \times \R \\
M \times \R &&  M \times \R}
\end{equation}
where 
\begin{itemize}
\item{} $\pi_1(m_1, m_2, t_1, t_2) = (m_1, t_1)$;
\item{} $\pi_2(m_1, m_2, t_1, t_2) = (m_2, t_2)$;
\item{} $s(m_1, m_2, t_1, t_2) = (m_1, m_2, t_1+t_2)$;
\item{} $\delta(m, t) = (m, m, t)$. 
\end{itemize}
Then 
\begin{dfn} \label{dfn-sh-conv} (Sheaf convolution)
\[ \F \ast \G : = \delta^{-1} Rs_! (\pi_1^{-1} \F \otimes \pi_2^{-1} \G) ( = \delta^{-1} Rs_! (\F \boxtimes \G)).\]
\end{dfn}

\begin{ex} Let $\F \in \D(\k_{M \times \R})$, then $\F \ast \k_{M \times \{0\}} = \F$. \end{ex}

\begin{exercise} \label{ex-conv} $\k_{(a,b)} \ast \k_{[0, \infty)} = \k_{[b, \infty)}[-1]$. \end{exercise}

\begin{ex} \label{ex-int-1}
\[ \k_{[a,b)} \ast \k_{[c,d)} \simeq \left\{ \begin{array}{cc} \k_{[a+c, b+c)}\oplus \k_{[a+d, b+d)}[-1] \,\,\,\,&\mbox{if $b+c < a+d$} \\ \k_{[a+c, a+d)} \oplus \k_{[b+c, b+d)}[-1] \,\,\,\, &\mbox{if $b+c \geq a+d$} \end{array} \right.. \]
\end{ex}

\begin{remark} (1) Later for some technical reason, we will also need another {\it non-proper} convolution defined as $\F \ast_{np} \G = \delta^{-1} Rs_* (\pi_1^{-1} \F \otimes \pi_2^{-1} \G)$. Sometimes this will change the result of computation. For instance, in the Exercise \ref{ex-conv}, $\k_{(-\infty,b)} \ast \k_{[0, \infty)} = \k_{[b, \infty)}[-1]$. If we change to non-proper convolution, 
\[ \k_{(-\infty,b)} \ast_{np} \k_{[0, \infty)} = \k_{(-\infty, b)}. \]
(2) The Example \ref{ex-int-1} shows an interesting similarity to the formula of tensor product of persistence $\k$-modules. \end{remark}

A similar (but easier) sheaf operator is composition. 

\begin{dfn} \label{dfn-sh-comp} (sheaf composition) For any $\F \in \D(\k_{X \times Y})$ and $\G \in \D(\k_{Y \times \Z})$, define 
\[ \F \circ \G = {R\pi_3}_!(\pi_1^{-1} \F \otimes \pi_2^{-1} \G) \]
where $\pi_1: X\times Y \times Z \to X \times Y$, $\pi_2: X \times Y \times Z \to Y \times Z$ and $\pi_3: X \times Y \times Z \to X \times Z$ are projections. 
\end{dfn}

\begin{ex} \label{pt-comp} When $X = \{\rm pt\}$, any fixed $\G \in \D(\k_{Y \times Z})$ will serves as an operator (usually called kernel) $\circ Z: \D(\k_Y) \to \D(\k_Z)$. \end{ex}

Sometimes we mix convolution and composition. The most general definition is given as follows. 

\begin{dfn} \label{dfn-cc} For any $\F \in \D(\k_{X \times Y \times \R})$ and $\G \in \D(\k_{Y \times Z \times \R})$, define
\[ \F \bullet_Y \G = R{p_{13}}_! (p_{12}^{-1} \F \otimes p_{23}^{-1} \G) \]
where $p_{13}: X \times Y \times Z \times \R^2 \to X \times Z \times \R$ by $(x,y,z,t_1, t_2) = (x,z, t_1+ t_2)$ and $p_{12}: X \times Y \times Z \times \R^2 \to X \times Y \times \R_1$ and $p_{23}: X \times Y \times Z \times \R^2 \to Y \times Z \times \R_2$ are projections. We call $\bullet_Y$ {\it comp-convolution} with respect to $Y$. \end{dfn}

\subsubsection{Characterize elements in $\T(M)$} \label{sec-tam-elm}

Convolution operator introduced above helps us to characterize/define elements in $\T(M)$, that is, we have the following important property. 

\begin{theorem} \label{tam-elm} $\F \in \T(M)$ if and only if $\F \ast \k_{M \times [0, \infty)} = \F$ if and only if $\F \ast \k_{M \times (0, \infty)} =0$. \end{theorem}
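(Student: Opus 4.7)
The plan is to construct a single canonical distinguished triangle decomposing every $\F \in \D(\k_{M\times\R})$ into a $\T(M)$-part and a $\D_{\{\tau\leq 0\}}$-part, and then deduce the three equivalences from it. Starting from the elementary distinguished triangle $\k_{(0,\infty)} \to \k_{[0,\infty)} \to \k_{\{0\}} \xrightarrow{+1}$ on $\R$ (from the closed/open decomposition $[0,\infty) = \{0\} \sqcup (0,\infty)$), pulling back to $M \times \R$, convolving with $\F$, and using $\F \ast \k_{M \times \{0\}} \simeq \F$, one obtains after rotation
\[
\F \ast \k_{M\times[0,\infty)} \to \F \to \F \ast \k_{M\times(0,\infty)}[1] \xrightarrow{+1}.
\]

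By Example \ref{ex-ss-oc}, $SS(\k_{M\times(0,\infty)}) \subset \{\tau \leq 0\}$ (outward conormal along $M\times\{0\}$ plus zero-section). Tracing the definition $\F \ast \G = \delta^{-1} Rs_!(\F \boxtimes \G)$ through Propositions \ref{push}, \ref{pullback}, and \ref{th}, and noting that $s^\ast \tau = (\tau,\tau)$ couples the $\tau$-coordinates of the two factors, every element of $SS(\F \ast \G)$ has its $\tau$-coordinate in $SS(\F)_\tau \cap SS(\G)_\tau$. Applied to $\G = \k_{M\times(0,\infty)}$ this gives $SS(\F \ast \k_{M\times(0,\infty)}) \subset \{\tau \leq 0\}$, so the right-hand term of the triangle sits in $\D_{\{\tau\leq 0\}}(\k_{M\times\R})$.

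The heart of the argument is to show $\F \ast \k_{M\times[0,\infty)} \in \T(M)$, i.e.\ that $\Hom(\F \ast \k_{M\times[0,\infty)}, \G) = 0$ whenever $SS(\G) \subset \{\tau\leq 0\}$. I would use the adjunction between convolution in the $\R$-direction and its right adjoint $\HOM$,
\[
\Hom(\F \ast \H, \G) \simeq \Hom(\F, \HOM(\H, \G)),
\]
together with the microlocal vanishing $\HOM(\k_{M\times[0,\infty)}, \G) \simeq 0$ when $SS(\G) \subset \{\tau \leq 0\}$. This vanishing is a Morse-type statement (Theorem \ref{mml}): the propagation directions available to $\G$ (only $\tau \leq 0$) are incompatible with the inward-pointing conormal $\tau \geq 0$ carried by $SS(\k_{M\times[0,\infty)})$, so every local section extends trivially and the adjoint Hom-complex collapses stalk-wise.

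With these pieces in hand the equivalences follow formally. (iii)$\Rightarrow$(ii) is immediate: if the third term of the triangle vanishes, the first arrow is an isomorphism. (ii)$\Rightarrow$(iii) uses the annihilation identity $\k_{M\times[0,\infty)} \ast \k_{M\times(0,\infty)} \simeq 0$ (a short stalk computation via $R\Gamma_c$ of a half-open interval) together with associativity of $\ast$. (ii)$\Rightarrow$(i) is the content of the previous paragraph. For (i)$\Rightarrow$(iii), if $\F \in \T(M)$ then the middle arrow $\F \to \F \ast \k_{M\times(0,\infty)}[1]$ vanishes (its target lies in $\D_{\{\tau\leq 0\}}$), so the triangle splits and $\F \ast \k_{M\times(0,\infty)}[1]$ becomes a direct summand of $\F \in \T(M)$ that also lies in $\D_{\{\tau\leq 0\}}$; by orthogonality every object of $\T(M) \cap \D_{\{\tau\leq 0\}}$ has vanishing identity morphism, so this summand is zero. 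The main obstacle is the microlocal vanishing of $\HOM(\k_{M\times[0,\infty)}, \G)$ in the third paragraph; everything else is formal bookkeeping inside the triangulated structure.
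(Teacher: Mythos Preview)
Your overall strategy matches the paper's exactly: the same distinguished triangle, the third term landing in $\D_{\{\tau\leq 0\}}$ by a singular-support estimate, the first term landing in $\T(M)$ as the key lemma (Lemma~\ref{q-proj-0}), after which the equivalences are formal. The one substantive difference is how you prove that key lemma. The paper avoids $\HOM$ entirely (it is only introduced later, in Definition~\ref{dfn-hom}) and instead argues on generators: using $\k_{U\times(a,b)} \ast \k_{M\times[0,\infty)} = \k_{U\times[b,\infty)}[-1]$ (Exercise~\ref{ex-conv}), it computes $R\Hom(\k_{U\times[b,\infty)},\G)$ via the triangle $\k_{U\times(-\infty,b)} \to \k_{U\times\R} \to \k_{U\times[b,\infty)}$ and sees the cone vanish by microlocal Morse lemma. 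Your route through the $\ast$/$\HOM$ adjunction is legitimate, but the vanishing $\HOM(\k_{M\times[0,\infty)},\G)\simeq 0$ for $SS(\G)\subset\{\tau\le 0\}$, while true, is not established by the sentence you wrote (``propagation directions incompatible''); turning that intuition into a proof essentially unwinds to the same microlocal Morse computation the paper does directly, so you have not bought any economy. Your (i)$\Rightarrow$(iii) via splitting the triangle is correct and is exactly what the paper packages as Corollary~\ref{orth-proj} (applied in Remark~\ref{p-proj}).
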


The proof of this theorem starts from the following observation. From exact triangle $\k_{[0, \infty)} \to \k_{\{0\}} \to \k_{(0, \infty)}[1] \xrightarrow{+1}$, we get a decomposition
\begin{equation} \label{decomp}
\F \ast \k_{M \times [0, \infty)} \to \F \to \F \ast \k_{M \times (0, \infty)}[1] \xrightarrow{+1}
\end{equation}

\begin{lemma} \label{q-proj-0} $\F \ast \k_{M \times [0, \infty)} \in \T(M)$. \end{lemma}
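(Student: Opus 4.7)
The plan is to prove that for every $\G \in \D_{\{\tau \leq 0\}}(\k_{M \times \R})$, $R\Hom(\F \ast \K, \G) = 0$, where $\K := \k_{M \times [0, \infty)}$; this is exactly the defining condition for $\F \ast \K$ to lie in $\T(M) = \D_{\{\tau \leq 0\}}(\k_{M \times \R})^{\perp, l}$. The strategy is an iterated adjunction argument that reduces to a vanishing of local cohomology, to be handled by microlocal Morse lemma.

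First I would unfold $\F \ast \K = \delta^{-1} Rs_!(\pi_1^{-1}\F \otimes \pi_2^{-1}\K)$ from (\ref{conv}) and chain, in order, the adjunctions $(\delta^{-1}, R\delta_*)$, $(Rs_!, s^!)$, the tensor-$R\HOM$ adjunction, and $(\pi_1^{-1}, R\pi_{1*})$ to obtain a natural isomorphism
\[
R\Hom_{M \times \R}(\F \ast \K, \G) \;\simeq\; R\Hom_{M \times \R}\bigl(\F,\, \Psi(\G)\bigr),
\]
where $\Psi(\G) := R\pi_{1*}\, R\HOM(\pi_2^{-1}\K,\, s^! R\delta_* \G)$ depends only on $\G$. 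Since this holds for arbitrary $\F$, it suffices to show $\Psi(\G) = 0$.

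Next I would compute $\Psi(\G)$ stalkwise. Using $s^! = s^{-1}[1]$ (valid since $s$ is a submersion with oriented one-dimensional fibre) together with the fact that $R\delta_* \G$ is concentrated on the diagonal of $M \times M$, the fibre of $\pi_1$ over $(m, t) \in M \times \R$ localizes onto the line $\{m\} \times \R_{t_2}$. Invoking the identity $R\HOM(\k_Z, -) = R\Gamma_Z(-)$ for the closed subset $Z = M \times [0, \infty)_{t_2}$ and making the change of variable $u = t + t_2$, the stalk becomes
\[
\Psi(\G)_{(m,t)} \;\simeq\; R\Gamma_{[t, \infty)}\bigl(\R_u;\, \G|_{\{m\} \times \R_u}\bigr)[1].
\]

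Finally, $SS(\G) \subset \{\tau \leq 0\}$ restricts to $SS(\G|_{\{m\} \times \R}) \subset \{\tau \leq 0\}$ (the closed embedding $\{m\} \times \R \hookrightarrow M \times \R$ is non-characteristic away from $\tau = 0$). By Theorem \ref{mml} applied with test function $\phi(u) = -u$, sections of $\G|_{(m, \cdot)}$ propagate freely in the direction of decreasing $u$, so the restriction $R\Gamma(\R; \G|_{(m, \cdot)}) \to R\Gamma((-\infty, t); \G|_{(m, \cdot)})$ is an isomorphism; equivalently, $R\Gamma_{[t, \infty)}(\R_u; \G|_{(m, \cdot)}) = 0$. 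Therefore $\Psi(\G) = 0$ and the lemma follows. The main obstacle is the stalkwise identification in the third paragraph: the multiple adjunctions across the auxiliary spaces $M$, $M \times M$, and $M \times M \times \R \times \R$ require careful bookkeeping, but since $\delta$ acts only on the $M$-factors and $s$ only on the $\R$-factors, a base change (Proposition \ref{bsc}) along the diagonal in $M \times M$ decouples the two and reduces matters to the one-dimensional microlocal problem that Theorem \ref{mml} handles directly.
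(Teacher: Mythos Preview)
Your argument is correct and reaches the same microlocal endpoint as the paper, but the route is genuinely different. The paper tests the left orthogonality condition against a generating family on the $\F$ side: writing an arbitrary $\F$ (``up to a limit'') in terms of sheaves $\k_{U\times(a,b)}$, it uses the explicit computation $\k_{U\times(a,b)} \ast \k_{M\times[0,\infty)} \simeq \k_{U\times[b,\infty)}[-1]$ (Exercise~\ref{ex-conv}) and then shows $R\Hom(\k_{U\times[b,\infty)},\G)$ is the cone of $R\Gamma(U\times\R;\G)\to R\Gamma(U\times(-\infty,b);\G)$, which vanishes by microlocal Morse. You instead pass entirely to the $\G$ side via the chain of adjunctions, producing a right adjoint $\Psi(\G)$ and showing it vanishes stalkwise; this is essentially a preview of the $(\ast,\HOM)$ adjunction the paper develops only later (Section~\ref{sec-psp}), and your $\Psi$ is a close cousin of $\HOM(\k_{M\times[0,\infty)},-)$.

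The paper's approach is shorter and avoids the bookkeeping you flag as the ``main obstacle'': once the convolution with a generator is computed, only one application of microlocal Morse over $U\times\R$ is needed, with no stalk formula for $R\pi_{1*}$ and no restriction of $\G$ to slices $\{m\}\times\R$. Your approach is more structural --- it makes transparent that the lemma is really the statement that $\ast\,\k_{M\times[0,\infty)}$ has a right adjoint which kills $\D_{\{\tau\le 0\}}$ --- and it works uniformly in $\F$ without invoking generators. Two small comments: your non-characteristic remark for the slice is not quite the right justification; what you actually need is the general pullback bound for $SS$ under a closed embedding (Proposition 5.4.5 or 6.2.4 in \cite{KS90}), which immediately gives $SS(\G|_{\{m\}\times\R})\subset\{\tau\le 0\}$. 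Also, Theorem~\ref{mml} is stated with a compact-support hypothesis, but both you and the paper in fact invoke the unrestricted form of Corollary 5.4.19 in \cite{KS90}; this is harmless since the paper does the same (e.g.\ in Example~\ref{ex-0sec}).
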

\begin{proof} For any $\G \in \D_{\{\tau \leq 0\}}(\k_{M \times \R})$, up to a limit, 
\begin{align*}
R\Hom(\k_{U \times (a,b)} \ast \k_{M \times [0, \infty)}, \G) & = R\Hom(\k_{U \times [b, \infty)}[-1],\G) \\
& = Cone(R\Gamma(U \times \R, \G) \to R\Gamma(U \times (-\infty, b), \G))\\
& = 0 \,\,\,\,\,\,\,\,\,\,\,\,\,\mbox{(by microlocal Morse lemma)}.
\end{align*} 
Therefore, $\F \ast \k_{M \times [0, \infty)} \in \D_{\{\tau \leq 0\}}(\k_{M \times \R})^{\perp, l} = \T(M)$. \end{proof}
It is easy to check that $SS(\F \ast \k_{M \times (0, \infty)}[1]) \subset \D_{\{\tau \leq 0\}}(\k_{M \times \R})$ (or see geometric meaning of $\ast$ in Section \ref{sec-geo-conv}). Thus (\ref{decomp}) actually gives an ``orthogonal'' decomposition. 

\begin{remark} \label{p-proj} Recall Proposition \ref{adj-orth}, orthogonal decomposition (\ref{decomp}) is equivalent to the fact that inclusion $\D_{\{\tau \leq 0\}}(\k_{M \times \R}) \hookrightarrow \D(\k_{M \times \R})$ has a left adjoint $p: \D(\k_{M \times \R}) \to \D_{\{\tau \leq 0\}}(\k_{M \times \R})$. Therefore, by Corollary \ref{orth-proj}, for any $\F \in \T(M)$, $p(\F) =0$. More accurately, $p$ is realized by $\ast \k_{M \times (0, \infty)}[1]$. Symmetrically, for any $\G \in \D_{\{\tau \leq 0\}}(\k_{M \times \R})$, $\G \ast \k_{M \times [0, \infty)} = 0$. \end{remark}

\begin{proof} (Proof Theorem \ref{tam-elm}) ``$\Leftarrow$'', by Lemma \ref{q-proj-0}. ``$\Rightarrow$'', by Remark \ref{p-proj}. \end{proof}

\begin{cor} \label{cor-ast-wd} Sheaf convolution $\ast$ is well-defined in $\T(M)$. \end{cor}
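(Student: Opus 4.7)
The plan is to leverage the characterization of $\T(M)$ given by Theorem \ref{tam-elm}: it suffices to prove that if $\F, \G \in \T(M)$, then $(\F \ast \G) \ast \k_{M \times [0,\infty)} \simeq \F \ast \G$. The natural way to get this is associativity of sheaf convolution, which would give
\[
(\F \ast \G) \ast \k_{M \times [0,\infty)} \simeq \F \ast \bigl(\G \ast \k_{M \times [0,\infty)}\bigr) \simeq \F \ast \G,
\]
where the last step uses $\G \in \T(M)$ and another application of Theorem \ref{tam-elm}. So the whole problem is reduced to proving that $\ast$ is associative on $\D(\k_{M\times \R})$.

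For associativity, I would set up the diagram with the triple product $M^3 \times \R^3$, its various partial sum maps and diagonals, and trace through the definition. Concretely, let $s\co \R^2\to\R$ be addition, and observe that $s\circ(s\times\mathrm{id}) = s\circ(\mathrm{id}\times s)\co \R^3\to \R$ because addition is associative; similarly the iterated diagonal $M\to M^3$ factors through either two-step diagonal. The standard toolkit of base change (Proposition \ref{bsc}), the projection formula, compatibility of $\otimes$ with $p^{-1}$, and the composition formula $R(f\circ g)_! = Rf_!\circ Rg_!$ then lets me identify both iterated convolutions $(\F\ast\G)\ast\H$ and $\F\ast(\G\ast\H)$ with a single expression of the form
\[
\delta_3^{-1}\, R\sigma_{!}\bigl(\F\boxtimes\G\boxtimes\H\bigr),
\]
where $\delta_3\co M\times\R\to M^3\times\R$ is the triple diagonal and $\sigma\co M^3\times\R^3 \to M^3\times\R$ is the sum along the $\R$-factors. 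The isomorphism between each parenthesization and this symmetric object is by a diagram chase that is entirely formal once the cartesian squares are identified correctly.

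Once associativity is in hand the corollary is a one-line consequence: taking $\H = \k_{M\times[0,\infty)}$ and using Theorem \ref{tam-elm} for $\G$ gives $(\F\ast\G)\ast\k_{M\times[0,\infty)} \simeq \F\ast\G$, so $\F\ast\G$ also satisfies the characterization and hence lies in $\T(M)$. I expect the main obstacle not to be any conceptual subtlety but the bookkeeping in the associativity diagram — in particular making sure that the base change squares used to commute $Rs_!$ past the various $\pi_i^{-1}$ are genuinely cartesian (which they are, since projections and addition on the $\R$-factor interact transversally), and that the projection formula is being applied with the correct variables held fixed. Everything else is formal manipulation of the functors $R{(-)}_!$, ${(-)}^{-1}$, and $\otimes$, so no new input beyond the properties already recorded in the preliminary sections is required.
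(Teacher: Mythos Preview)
Your approach is correct and essentially the same as the paper's: both invoke Theorem \ref{tam-elm} and associativity of $\ast$ to show $(\F\ast\G)\ast\k_{M\times[0,\infty)}\simeq \F\ast\G$. The paper's proof takes associativity (and commutativity) of $\ast$ for granted without comment, whereas you sketch the standard base-change/projection-formula justification; your version also makes explicit that only one factor needs to lie in $\T(M)$, a point the paper records in the remark immediately following the corollary.
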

\begin{proof} For any $\F, \G \in \T(M)$, 
\[ \F \ast \G = \F \ast \k_{M \times [0, \infty)} \ast \G \ast \k_{M \times [0,\infty)} = \F \ast \G \ast \k_{M \times [0, \infty)}.\]
Therefore, $\F \ast \G \in \T(M)$.
\end{proof}

\begin{remark} (Remark by F. Zapolsky) The same argument as in the proof of Corollary \ref{cor-ast-wd} above implies for any $\F \in \T(M)$, $\F \ast \G \in \T(M)$ for any $\G \in \D(\k_{M \times \R})$, therefore, $\T(M)$ is a two-sided ideal in $\D(\k_{M \times \R})$ under sheaf convolution $\ast$. \end{remark}

\subsection{Geometry of convolution} \label{sec-geo-conv}

Whenever we talk about geometry of a (complex of) sheaf, we always focus on or mean the behavior of its singular support. Moreover, explained/proved in Section \ref{sec-ss-p}, different sheaf operators intertwine with various subset operators on the associated singular supports. Meanwhile, by (\ref{conv}), $\ast$ (or $\ast_{np}$) is a combination of three operators $\boxtimes$, $Rs_!$ (or $Rs_*$) and $\delta^{-1}$. Therefore, Corollary \ref{ex-t}, Example \ref{ex-sum-ss} and Exercise \ref{exe-dia-ss} tell us (i) $\boxtimes$ simply corresponds to product, (ii) summation map $s: \R \times \R \to \R$ induces diagonal embedding on co-vector space $s^*: \R^* \to \R^* \times \R^*$, that is, 
\[ s^*(\tau) = (\tau, \tau) \]
and (iii) diagonal embedding $\delta: M \to M \times M$ induces summation on co-vector space $\delta^*: \R^n \times \R^n \to \R^n$,
\[ \delta^*(\xi_1, \xi_2) =\xi_1 + \xi_2. \]
Once we combine all these three operators together, the following definition is motivated.

\begin{dfn} Let $X, Y \subset T^*(M \times \R)$ two subsets. Define set convolution operator $\hat{\ast}$ as
\[ X \, \hat{\ast}\, Y = \left\{(m, \xi, t, \tau) \,\bigg| \, \begin{array}{cc} \mbox{$m \in X_M \cap Y_M$} \\ \tau \in X_{\R^*} \cap Y_{\R^*} \\ \xi = \xi_X + \xi_Y \\ t = t_X + t_Y \end{array} \right\} \]
where $X_M$ and $Y_M$ are $M$-component of $X$ and $Y$ respectively, similar to $X_{\R^*}$ and $Y_{\R^*}$.
\end{dfn}
The following result is Proposition 3.13 in \cite{GS14} which confirms the ``geometry'' of sheaf convolution - the right hand side of (\ref{geo-conv}).
\begin{prop} \label{prop-geo-conv} For $\F, \G \in \D(\k_{M \times \R})$, 
\begin{equation} \label{geo-conv}
SS(\F \ast \G) \subset SS(\F)\, \hat{\ast} \,SS(\G).
\end{equation}
\end{prop}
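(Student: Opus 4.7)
The plan is to unwind the definition $\F \ast \G = \delta^{-1} R s_! (\F \boxtimes \G)$ and track the singular support through each of the three constituent operations, using precisely the functorial bounds established in Section \ref{sec-ss-p}: Corollary \ref{ex-t} for $\boxtimes$, Proposition \ref{push} for $Rs_!$, and Proposition \ref{pullback} for $\delta^{-1}$. The final step will be to verify that the composite set-theoretic description matches the definition of $\hat{\ast}$.

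First, by Corollary \ref{ex-t},
\[ SS(\F \boxtimes \G) \subset \bigl\{\bigl((m_1,t_1,m_2,t_2),(\xi_1,\tau_1,\xi_2,\tau_2)\bigr) : (m_i,\xi_i,t_i,\tau_i) \in SS(\F), SS(\G) \text{ respectively}\bigr\}. \]
Next, for the summation map $s(m_1,m_2,t_1,t_2)=(m_1,m_2,t_1+t_2)$, a direct computation gives $s^*(\xi_1,\xi_2,\tau) = (\xi_1,\xi_2,\tau,\tau)$, so Proposition \ref{push} (applied with $Rs_!$) yields that $SS(Rs_!(\F \boxtimes \G))$ is contained in the set of covectors $((m_1,m_2,t),(\xi_1,\xi_2,\tau))$ for which there exist $t_1+t_2=t$ with $(m_1,\xi_1,t_1,\tau) \in SS(\F)$ and $(m_2,\xi_2,t_2,\tau) \in SS(\G)$. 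In particular, the $\tau$-coordinates are forced to coincide, which precisely produces the condition $\tau \in SS(\F)_{\R^*} \cap SS(\G)_{\R^*}$ in $\hat{\ast}$.

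Finally, the diagonal $\delta(m,t) = (m,m,t)$ has cotangent map $\delta^*(\xi_1,\xi_2,\tau) = (\xi_1+\xi_2,\tau)$, and Proposition \ref{pullback} gives
\[ SS(\delta^{-1} Rs_!(\F \boxtimes \G)) \subset \bigl\{(m,\xi,t,\tau) : \exists\,\xi_1,\xi_2 \text{ with } \xi=\xi_1+\xi_2,\;((m,m),(\xi_1,\xi_2),t,\tau) \in SS(Rs_!(\F\boxtimes\G))\bigr\}. \]
Combining the two preceding bounds, the base points in both $SS(\F)$ and $SS(\G)$ share the same $m$-component, the $\tau$'s agree, the $t$'s add, and the $\xi$'s add. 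This is exactly the definition of $SS(\F) \,\hat{\ast}\, SS(\G)$.

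The only step requiring care is the middle one, since Proposition \ref{pullback} asserts equality while Proposition \ref{push} only gives an inclusion (and the inclusion for $Rf_!$ is the same as for $Rf_*$ by the statement of Proposition \ref{push}). No properness or transversality hypothesis is needed for the inclusion direction, so the argument proceeds without subtlety; the main conceptual content is the translation of the three cotangent maps $\pi_i^*$, $s^*$, $\delta^*$ into the combinatorial recipe defining $\hat{\ast}$.
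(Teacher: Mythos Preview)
Your proof is correct and follows exactly the approach the paper itself sketches in the paragraph immediately preceding the proposition: decompose $\ast$ as $\delta^{-1} \circ Rs_! \circ \boxtimes$ and apply Corollary \ref{ex-t}, Proposition \ref{push}, and Proposition \ref{pullback} in succession to recover the combinatorial recipe defining $\hat{\ast}$. The paper does not supply its own detailed proof but simply cites Proposition 3.13 of \cite{GS14}; your write-up is precisely the computation the paper's motivating discussion invites the reader to carry out.
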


\begin{ex} Let Let $M = \R$. $X = \{(0, \tau) \,| \, \tau \leq 0\} \cup \{(t,0) \,| \, 0\leq t \leq 1\} \cup \{(1, \tau)\,| \, \tau \geq 0\}$ and $Y = \{(0, \tau) \,| \, \tau \geq 0\} \cup \{(t, 0) \,| \, t \geq 0\}$. Then by definition above 
\[ X \, \hat{\ast} \, Y = \{(1, \tau) \,| \, \tau \geq 0\} \cup \{(t, 0) \,| \, t \geq 0\}. \]
In terms of pictures, see Figure \ref{f-1}.
\begin{figure}[h]
\centering
\includegraphics[scale=0.4]{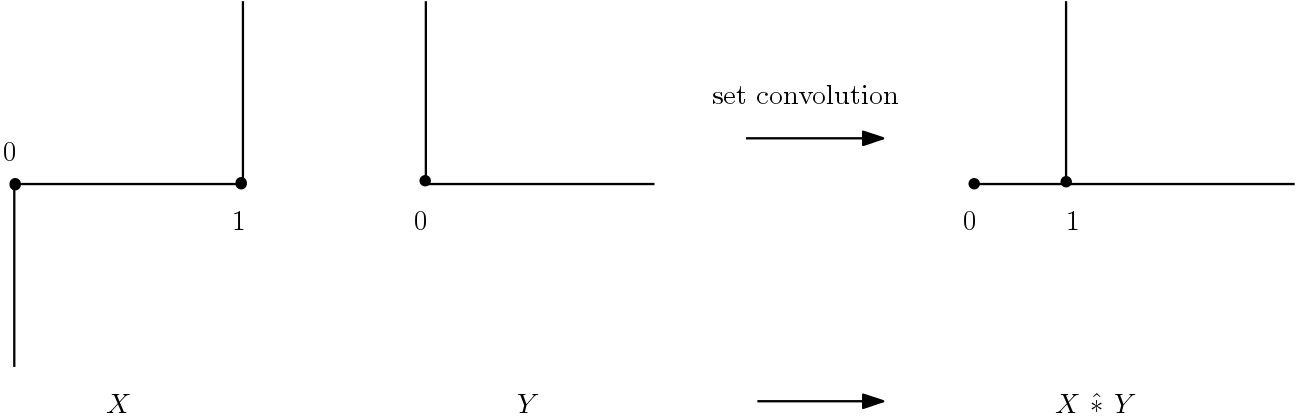}
\caption{Example of set convolution}
\label{f-1}
\end{figure}
In fact, each $X$ and $Y$ can be realized as singular supports of sheaves. $X = SS(\k_{(0,1)})$ and $Y= SS(\k_{[0, \infty)})$. Then by Exercise \ref{ex-conv}, $\k_{(0,1)} \ast \k_{[0, \infty)} = \k_{[1, \infty)}[-1]$ where 
\[ SS(\k_{[1, \infty)}[-1]) = \{(1, \tau) \,| \, \tau \geq 0\} \cup \{(t, 0) \,| \, t \geq 1\} \subsetneqq X \, \hat{\ast} \, Y. \]
This supports Proposition \ref{prop-geo-conv}. This also shows (\ref{geo-conv}) is in general a strict inclusion. 
\end{ex}

\begin{remark} When summation map $s$ is proper on the support of the sheaf acted, $SS(\F \ast_{np} \G)$ also satisfies conclusion of Proposition \ref{prop-geo-conv}.\end{remark}

\begin{remark} \label{rmk-langcor} It is well-known that geometric meaning of sheaf composition is called {\it Lagrangian correspondence}, which comes from the formula
\begin{equation} \label{geo-comp}
SS(\F \circ G) \subset SS(\F) \circ SS(\G)
\end{equation}
where the geometric composition operator on the right hand side of (\ref{geo-comp}) is defined as follows. Given two Lagrangian submanifolds $\Lambda_{12} \subset T^*X_1 \times T^*X_2$ and $\Lambda_{23} \subset T^*X_2 \times T^*X_3$, we can define their {\it composition} 
\begin{equation} \label{comp}
\Lambda_{13} : = \Lambda_{12} \circ \Lambda_{23} : = \left\{ ((q_1, p_1), (q_3, p_3)) \,\bigg| \, \begin{array}{cc} \mbox{there exists $(q_2, p_2) \in T^*X_2$ s.t.} \\ ((q_1, p_1), (q_2, p_2)) \in \Lambda_{12} \\ ((q_2, -p_2), (q_3, p_3)) \in \Lambda_{23} \end{array} \right\}
\end{equation}
which will be a Lagrangian submanifold in $T^*X_1 \times T^* X_3$. Note that by denoting projections $q_{ij}: T^*X_1 \times T^*X_2 \times T^*X_3 \to T^*X_i \times T^*X_j$ (where $1 \leq i , j  \leq 3$), we can rewrite (\ref{comp}) in the following way 
\begin{equation} \label{comp2}
\Lambda_{13} = q_{13} (q_{12}^{-1}(\Lambda_{12}) \cap q_{23}^{-1}(\Lambda_{23})^{a_2}) 
\end{equation} 
where the upper-index $a_2$ means we put a negative sign on the co-vectors in the second component. \end{remark}

\begin{exercise} \label{exe-cc} Check comp-convolution operator $\bullet_Y$ (defined in Definition \ref{dfn-cc}) has the following geometric meaning, for any $\F \in \D(\k_{X \times Y \times \R})$ and $\G \in \D(\k_{Y \times Z \times \R})$, 
\[ SS(\F \bullet_{Y} \G) \subset \left\{(x, \xi, z, \theta, t_1 + t_2, \tau) \bigg| \, \mbox{there exists some $(y, \eta)$ such that ($\ast \ast$)} \right\} \]
where condition $(\ast \ast)$ is (i) $(x,\xi, z, \theta)$ Lagrangian corresponds through $(y, \eta) \in T^*\R_1^n$ and (ii) over common $\tau$, sum the base $t_1 + t_2$.
\end{exercise}

\subsection{Lagrangian Tamarkin category} \label{sec-LT}

In this section, we will list many examples of Tamarkin category with restriction $A$ being a Lagrangian. This is a generalization of the case $A = 0_M$ demonstrated in Example \ref{ex-0sec}. Examples provided in this section will be the main resources for us to digest and test upcoming abstract propositions and conclusions in Tamarkin category's theory. 

\subsubsection{Examples in $\T_A(M)$ with $A = \mbox{Lag.}$}

\begin{ex} \label{ex-graph} ({\bf graphic case})  Let $A = \graph(df)$ for some differentiable $f: M \to \R$. We will construct a canonical element $\F_f$ in $\T_A(M)$. Here canonical means reduction $\rho(SS(\F_f)) = \graph(df)$. Note that this is a remarkable step since we have the following useful translation 
\[ \mbox{function $f$} \,\longleftrightarrow \, \mbox{sheaf $\F_f$} \, \longleftrightarrow \, \mbox{conical subset $SS(\F_f)$}. \]
Now let us construct such $\F_f$. Consider the following subset 
\[ N_f = \{(m,t) \subset M \times \R \,| \, f(m) + t \geq 0\}. \]
Due to the following standard fact (cf. Example \ref{ex-ss-oc}).
\begin{fact} \label{fact-ss} Let $\phi$ be a smooth function on $X$ and $d\phi(x) \neq 0$ whenever $\phi(x) =0$. Let 
\[ U = \{x \in X \,| \, \phi(x) >0\}  \,\,\,\,\mbox{and} \,\,\,\, Z = \{x \in X \, |\, \phi(x) \geq 0\}.\]
Then 
\begin{itemize}
\item{} $SS(\k_U) = 0_U \cup \{(x, \tau d\phi(x)) \,| \, \tau \leq 0, \phi(x) = 0\}$;
\item{} $SS(\k_Z) = 0_Z \cup \{(x, \tau d\phi(x)) \,| \, \tau \geq 0, \phi(x) = 0\}$.
\end{itemize}
\end{fact}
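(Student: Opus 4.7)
The plan is to reduce the claim to a one-variable model via a coordinate change and then run the definition of singular support directly in that model. The assertion is local, so I will analyze $SS(\k_U)$ and $SS(\k_Z)$ at each point $x_0 \in X$ separately. If $\phi(x_0) > 0$, then a whole neighborhood of $x_0$ lies in both $U$ and $Z$, so both sheaves agree with $\k_X$ near $x_0$, giving $(x_0, \xi) \in SS$ only for $\xi = 0$; if $\phi(x_0) < 0$, both sheaves vanish on a neighborhood of $x_0$ and the singular support is empty there. This accounts for the $0_U$ and $0_Z$ parts of the formulae, and reduces the problem to points $x_0$ on the hypersurface $\{\phi = 0\}$.

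Near such an $x_0$, the nondegeneracy hypothesis $d\phi(x_0) \neq 0$ together with the implicit function theorem produces local coordinates $(x_1, y)$ identifying a neighborhood of $x_0$ with $\R_{x_1} \times Y$ and sending $\phi$ to the coordinate function $x_1$. In these straightened coordinates $\k_U \cong \k_{(0,\infty)} \boxtimes \k_Y$ and $\k_Z \cong \k_{[0,\infty)} \boxtimes \k_Y$. It therefore suffices to pin down $SS(\k_{(0,\infty)})$ and $SS(\k_{[0,\infty)})$ on $\R$ and then transport the answer upstairs. For the one-dimensional computation, the only interesting point is $x = 0$; for a test function $\psi$ with $\psi(0) = 0$ and $d\psi(0) = \tau$, I exploit the distinguished triangle
\begin{equation*}
R\Gamma_{\{\psi \geq 0\}} \F \longrightarrow \F \longrightarrow Rj_* j^{-1} \F \xrightarrow{+1}
\end{equation*}
with $j \colon \{\psi < 0\} \hookrightarrow \R$, and pass to stalks at $0$. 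For $\F = \k_{(0,\infty)}$ the pullback $j^{-1}\F$ is either zero (when $\tau > 0$) or the constant sheaf on a half-line with stalk $\k$ at $0$ (when $\tau < 0$); the resulting stalk triangle shows $(R\Gamma_{\{\psi \geq 0\}} \k_{(0,\infty)})_0$ vanishes precisely when $\tau > 0$. The symmetric computation for $\k_{[0,\infty)}$ shows vanishing precisely when $\tau < 0$. Passing to closures then yields $SS(\k_{(0,\infty)}) = 0_{(0,\infty)} \cup (\{0\} \times \R_{\leq 0})$ and $SS(\k_{[0,\infty)}) = 0_{[0,\infty)} \cup (\{0\} \times \R_{\geq 0})$.

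To finish, I transport these one-dimensional answers via functoriality. Corollary \ref{ex-t} applied to $\k_{(0,\infty)} \boxtimes \k_Y$, together with $SS(\k_Y) = 0_Y$, gives, in the straightened coordinates, $SS(\k_U) = 0_{U} \cup \{(0, y, \tau, 0) : y \in Y,\ \tau \leq 0\}$; under the coordinate change this is precisely $\{(x, \tau\, d\phi(x)) : \phi(x) = 0,\ \tau \leq 0\}$ together with the zero section over $U$, and analogously for $Z$. The main obstacle, and the step that must be executed carefully, is the sign bookkeeping in the stalk computation: propagation is phrased via the asymmetric pair $(\{\psi < 0\}, \{\psi \geq 0\})$, so matching the sign of $\tau$ that produces a nontrivial local cohomology class to the correct half-ray of conormals in the final formula requires consistent conventions throughout. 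Once this model case is fixed, the reduction via the implicit function theorem and the external product are routine.
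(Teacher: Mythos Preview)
The paper does not actually prove Fact~\ref{fact-ss}; it is presented as a ``standard fact'' with a cross-reference to Example~\ref{ex-ss-oc}, which is itself stated without proof. Your approach---localize, straighten the hypersurface via the implicit function theorem to reduce to $\k_{(0,\infty)}$ and $\k_{[0,\infty)}$ on $\R$, and then compute directly from the definition---is the standard route and is correct.

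One small point deserves care. Corollary~\ref{ex-t} as stated in the paper gives only the inclusion $SS(\F \boxtimes \G) \subset SS(\F) \times SS(\G)$, not equality. You use it to conclude equality for $\k_{(0,\infty)} \boxtimes \k_Y$. The inclusion $\subset$ suffices to rule out covectors with nonzero $Y$-component; for the reverse inclusion you should either cite the equality version (Proposition~5.4.1 in \cite{KS90}) or simply note that your one-dimensional stalk computation at $(0,y_0)$, using the test function $\psi = \pm x_1$, goes through verbatim in $\R \times Y$ because the relevant local cohomology stalks factor as a product with $R\Gamma$ of a contractible neighborhood in $Y$. Either way closes the gap with no new ideas.
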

(applying $X= M \times \R$ and $\phi(m,t) = f(m) + t$), we know if define $\F_f : = \k_{N_f}$, then one has the following property that 
\begin{equation} \label{ss-f}
 SS(\F_f) = \{(m, \tau df(m), -f(m), \tau) \,| \, \tau \geq 0\} \cup 0_{N_f}
 \end{equation}
which has reduction equal to $\graph(df)$. Finally we need to check that $\F_f$ is indeed in $\T(M)$ where Theorem \ref{tam-elm} will be used. Let us compute $\F_f \ast \k_{M \times (0, \infty)}$. 
\begin{align*}
\F_f \ast \k_{M \times (0, \infty)} & = \k_{N_f} \ast \k_{M \times (0, \infty)} \\
& = \delta^{-1} Rs_! \k_{\{(m_1, m_2, t_1, t_2) \,| \, f(m_1) + t_1 \geq 0, \, t_2 >0\}}.
\end{align*}
At each stalk $(m, t) \in M \times \R$, we have the Figure \ref{3} showing the computation picture on fibers. 
\begin{figure}[h]
\centering 
 \includegraphics[scale=0.4]{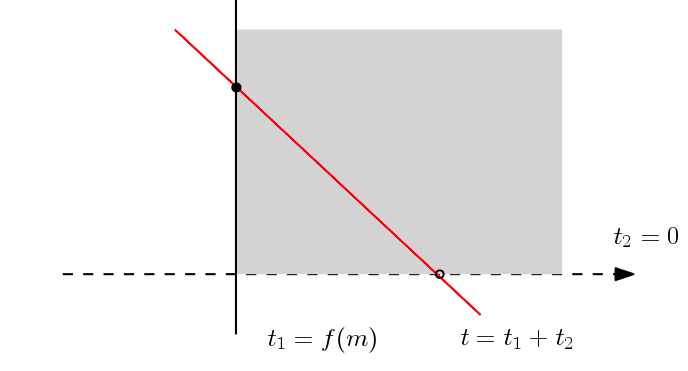}
\caption{Picture of fiber over $(m,t)$}
\label{3}
\end{figure}
Since it always cuts out a (finite) half open and half closed interval, the compactly supported cohomology vanishes. Therefore, $\F_f \ast \k_{M \times (0, \infty)}=0$. 
\end{ex}

\begin{ex} ({\bf generating function}) \label{gen-fnc} We can generalize the construction from the previous example. If $L \subset T^*M$ admits a generating function, i.e., there exists some $S: M \times \R^{N} \to \R$ for some $N$ such that 
\[ L = \{ (m, \partial_m S(m, \xi)) \in T^*M \,| \, \partial_{\xi} S (m,\xi) = 0\}. \]
Consider $N_S = \{(m, \xi, t) \,| \, S(m,\xi)  +t \geq 0\}$. Let $p: M \times \R^N \times \R \to M \times \R$ be the canonical projection (and for brevity we do not specify the dependence of $p$ on dimension $N$). It has been checked in \cite{Vit11}, Subsection 1.2. on Page 112, that $\rho(SS(Rp_! \k_{N_S})) = L$. In other words, $\F_S: = Rp_! \k_{N_S}$ is the canonical sheaf associated to this $L$. For reader's convenience, we provide a concrete example in this set-up. \\

Let $M = \R$ and $L = 0_\R$. It has a generating function $S: \R \times\R \to \R$ by 
\[ S(m,\xi) = \xi^2 \,\,\,\,\,\,\,\,\mbox{usually called quadratic at infinity}.\]
Indeed this is a generating function for $0_{\R}$ because when $\partial_{\xi} S(m,\xi)= 0$, $\xi=0$. So $(m, \partial_m S) = (m, 0)$. By our construction 
\[ N_S = \{(m,\xi, t) \in \R^3 \,| \, \xi^2 + t \geq 0\}. \]
For $\F_S$, check stalks. For any $(m,t)$, 
\[ (Rp_! \k_{N_S})_{(m,t)} = H_c^*(\R, \k_{N_S}|_{p^{-1}(m,t)}). \]
There are two cases of fibers $p^{-1}(m,t)$, see Figure \ref{i2}.
\begin{figure}[h]
\centering
 \includegraphics[scale=0.48]{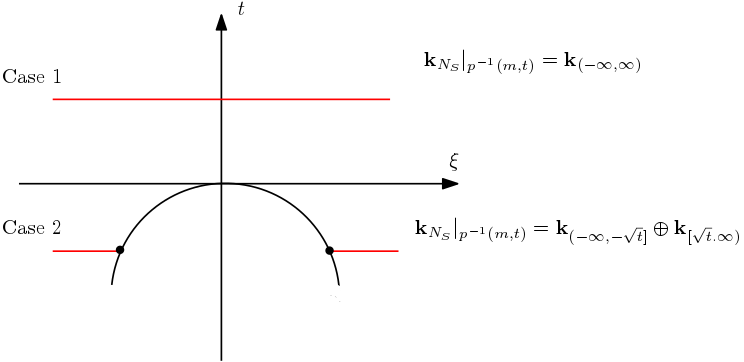}
\caption{Two different fibers over $(m,t)$}
\label{i2}
\end{figure}
In Case 1, we know 
\[ H_c^*(\R, \k_{N_S}|_{p^{-1}(m,t)}) = H_c^*(\R, \k_{\R}) = \k[-1]. \]
In Case 2, we know 
\begin{align*}
H_c^*(\R, \k_{N_S}|_{p^{-1}(m,t)}) & = H_c^*(\R, \k_{(-\infty, -\sqrt{t}]} \oplus \k_{[\sqrt{t}, \infty)})\\
& = H_c^*(\R, \k_{(-\infty, -\sqrt{t}]}) \oplus H_c^*(\R, \k_{[\sqrt{t}, \infty)}) =0. 
\end{align*}
Therefore, $Rp_! \k_{N_S} = \k_{\{(m,t) \,| \, t \geq 0\}} = \k_{\R \times [0, \infty)}$, whose reduction is just $0_\R$. 
\end{ex}

The next example motives Separation Theorem, see Theorem \ref{sep-thm}, in Section \ref{adj}. 

\begin{ex} (Disjoint reductions) Let $M = \R$ and $A = 0_\R$, $B = \R \times \{1\} \subset T^*\R$. Note that $A \cap B = \emptyset$. Since both $A$ and $B$ can be realized as graphs, more specifically, $A = \graph(df)$ with $f \equiv 0$ and $B = \graph(dg)$ where $g(m) = m$, we can pass to the relations between their canonical sheaves. By our construction, $\F_f = \k_{M \times [0, \infty)} \in \T_A(M)$ and $\F_g = \k_{\{(m,t) \,| \, m + t \geq 0\}} \in \T_B(M)$. It can be checked that $R\Hom(\F_f, \F_g) = 0$. Since we are working over field $\k$, it is a standard fact (see Definition 13.1.18 and Example 13.1.19 in \cite{KS06}) that at most two terms from $R\Hom(\F_f, \F_g)$ are non-trivial, that is $R^0\Hom(\F_f, \F_g) = \Hom(\F_f, \F_g)$ and $R^1\Hom(\F_f, \F_g)$ (usually denoted as ${\rm Ext}^1(\F_f, \F_g)$). The term $R^0\Hom(\F_f, \F_g) = \Hom(\F_f, \F_g) =0$ is easily seen from the following handy result while ${\rm Ext}^1$ term is harder to check.  

\begin{exercise} \label{intersection} Suppose $I$ and $J$ are locally closed subsets of $\R^{n}$, then 
\[ \Hom(\k_I, \k_J) = \left\{ \begin{array}{cc} \k \,\,& \mbox{if $I \cap J \neq \emptyset$ and closed in $I$ open in $J$} \\ 0 \,\, & \mbox{otherwise} \end{array} \right..\]
In particular if $I$ and $J$ are closed, $\Hom(\k_I, \k_J)$ is non-trivial if and only if $J \subset I$.

\begin{remark} \label{warn} ({\bf Warning!}) \label{higher-order} The exercise above could be misleading. Note that from this exercise $\Hom(\k_{[0, \infty)}, \k_{(-\infty, 0)}) = 0$, which gives an impression that one gets no information from $\k_{[0, \infty)}$ and $\k_{(-\infty, 0)}$ at all. However, we emphasize that $R\Hom(\k_I, \k_J)$ actually contains more information than $\Hom(\k_I, \k_J)$ exactly from higher $i$-th derived functors. In fact, $R\Hom(\k_{[0, \infty)}, \k_{(-\infty, 0)}) = \k[-1]$, non-trivial at degree $1$. In general, we have the following result (see Appendix \ref{app-hom})
\begin{theorem} \label{hom-compute-2}
Let $a<b$ and $c<d$ in $\R$. Then 
\[ R\Hom(\k_{[a,b)}, \k_{[c, d)}) = \left\{\begin{array}{lcl} \k & \mbox{for} & a\leq c < b \leq d \\ \k[-1] & \mbox{for} &  c<a\leq d<b \\ 0 & \mbox{for} & \mbox{otherwise} \end{array}\right.. \] 
\end{theorem}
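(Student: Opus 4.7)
The plan is to reduce the computation of $R\Hom(\k_{[a,b)}, \k_{[c,d)})$ to Ext groups between constant sheaves on closed half-lines via two successive applications of the long exact sequence in a distinguished triangle. The two short exact sequences of sheaves on $\R$,
\[ 0 \to \k_{[a,b)} \to \k_{[a,\infty)} \to \k_{[b,\infty)} \to 0 \quad\text{and}\quad 0 \to \k_{[c,d)} \to \k_{[c,\infty)} \to \k_{[d,\infty)} \to 0, \]
(expressing each half-open interval as the kernel of the quotient onto the closed half-line) give rise to the distinguished triangles I will exploit, and the whole proof becomes a bookkeeping exercise based on them.

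First I would establish the base computation
\[ R\Hom(\k_{[x,\infty)}, \k_{[y,\infty)}) \simeq \begin{cases} \k \text{ (in degree } 0\text{)} & \text{if } x \leq y, \\ 0 & \text{if } x > y. \end{cases} \]
The degree-zero part is Exercise \ref{intersection}. To control higher Ext, I would apply $R\Hom(-, \k_{[y,\infty)})$ to the triangle $\k_{(x,\infty)} \to \k_{[x,\infty)} \to \k_{\{x\}} \xrightarrow{+1}$ arising from the decomposition of $[x,\infty)$ into its interior and the boundary point. The outer terms are computed directly: $R\Hom(\k_{(x,\infty)}, \k_{[y,\infty)}) = R\Gamma((x,\infty), \k_{[y,\infty)}|_{(x,\infty)}) \simeq \k$ in degree $0$ by contractibility, while $R\Hom(\k_{\{x\}}, \k_{[y,\infty)})$ is the costalk $(i_x^! \k_{[y,\infty)})_x$, which by the standard triangle $i_{x*}i_x^! \to \id \to Rj_*j^{-1} \xrightarrow{+1}$ (with $j : \R \setminus \{x\} \hookrightarrow \R$) vanishes for $y \geq x$ and equals $\k[-1]$ for $y < x$. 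Feeding these into the long exact sequence, and pinning down the connecting map in the $y<x$ case by the known vanishing $\Hom(\k_{[x,\infty)}, \k_{[y,\infty)}) = 0$, yields the claim.

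Next I would apply $R\Hom(\k_{[x,\infty)}, -)$ for $x \in \{a,b\}$ to the second distinguished triangle $\k_{[c,d)} \to \k_{[c,\infty)} \to \k_{[d,\infty)} \xrightarrow{+1}$. Using the base computation with a three-way case split on the position of $x$ relative to $c$ and $d$ gives
\[ R\Hom(\k_{[x,\infty)}, \k_{[c,d)}) \simeq \begin{cases} 0 & \text{if } x \leq c \text{ or } x > d, \\ \k[-1] & \text{if } c < x \leq d. \end{cases} \]
The only subtlety is that in the case $x \leq c$ one must verify that the induced map $\k \simeq R\Hom(\k_{[x,\infty)}, \k_{[c,\infty)}) \to R\Hom(\k_{[x,\infty)}, \k_{[d,\infty)}) \simeq \k$ is an isomorphism, which is immediate once one checks that both one-dimensional groups are generated by the canonical morphisms induced from inclusions of closed half-lines and these compose canonically.

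Finally I would apply $R\Hom(-, \k_{[c,d)})$ to the triangle $\k_{[a,b)} \to \k_{[a,\infty)} \to \k_{[b,\infty)} \xrightarrow{+1}$ and run through the orderings of $a, b, c, d$. In the case $a \leq c < b \leq d$ the $\k_{[a,\infty)}$-term vanishes and the $\k_{[b,\infty)}$-term contributes $\k[-1]$, so the long exact sequence forces $R\Hom(\k_{[a,b)}, \k_{[c,d)}) \simeq \k$; symmetrically in the case $c < a \leq d < b$ one gets $\k[-1]$; in all remaining cases either both outer terms vanish or the natural map between two nonzero outer terms is an isomorphism, so the middle term is zero. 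The main obstacle throughout is purely mechanical: one must identify each connecting morphism correctly, which reduces to checking that the canonical generators of the one-dimensional $\Hom$-groups between sheaves of the form $\k_{[\ast, \infty)}$ behave functorially with respect to composition, but there is no analytic or geometric input beyond Exercise \ref{intersection} and contractibility of half-lines.
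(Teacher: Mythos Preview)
Your proof is correct and uses the same two short exact sequences as the paper, but the execution differs in a way worth noting. The paper (Appendix~\ref{app-hom}) works at the level of the \emph{sheaf} $R\mathcal{H}om$: it identifies $R\mathcal{H}om(\k_{[a,b)},-)$ with $R\Gamma_{[a,b)}(-)$, computes the base case $R\Gamma_{[x,\infty)}\k_{[y,\infty)}$ as a sheaf (Proposition~\ref{prop1}) via the triangle $R\Gamma_{[x,\infty)}\k_\R \to \k_\R \to R\Gamma_{(-\infty,x)}\k_\R$, obtains the full sheaf $R\mathcal{H}om(\k_{[a,b)},\k_{[c,d)})$ (Corollary~\ref{cor1}), and only then takes $R\Gamma(\R,-)$. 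You instead stay at the global level throughout and obtain the base case $R\Hom(\k_{[x,\infty)},\k_{[y,\infty)})$ via the open/closed decomposition $\k_{(x,\infty)} \to \k_{[x,\infty)} \to \k_{\{x\}}$ together with a costalk computation. Your route is a bit more direct for the stated goal, while the paper's yields the stronger intermediate result (the sheaf $R\mathcal{H}om$), which is itself used elsewhere; both need the same kind of bookkeeping to identify the connecting maps, and neither requires input beyond Exercise~\ref{intersection} and elementary adjunctions.
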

\end{remark}
\begin{ex} \label{ex-hom-right} $R\Hom(\k_{[a,b)}, \k_{[T, \infty)}) = \k$ when $T \in [a,b)$ and zero otherwise.\end{ex}
\begin{ex} \label{ex-hom-left} $R\Hom(\k_{[0, \infty)}, \k_{[c, d)}) = \k[-1]$ when $c< 0 \leq d < +\infty$ and zero otherwise. Here $c$ can take $-\infty$. When $d = \infty$, $R\Hom(\k_{[0, \infty)}, \k_{[c, \infty)}) = \k$ when $c \geq 0$ and zero otherwise. \end{ex}
\end{exercise}
In fact, by Separation Theorem (Theorem \ref{sep-thm}), for any $\F \in \T_A(M)$ and $\G \in \T_B(M)$, $R\Hom(\F, \G) =0$. To prove this general case we need some further development. 
\end{ex}

\subsubsection{Convolution and Lagrangians}
The next example reveals the geometric meaning of $\ast$ in term of Lagrangians. 

\begin{ex} Let $f, g: M \to \R$ be two smooth functions. Then 
\[ SS(\F_f) = \{(m, \tau df(m), -f(m), \tau) \,| \, \tau \geq 0\} \cup 0_{N_f}\]
and 
\[ SS(\F_g) = \{(m, \tau dg(m), -g(m), \tau) \,| \, \tau \geq 0\} \cup 0_{N_g}.\]
Then by Proposition \ref{prop-geo-conv}, 
\begin{align*}
SS(\F_f \ast \F_g) & \subset SS(\F_f) \, \hat{\ast} \, SS(\F_g) \\
& = \{(m, \tau (df(m) + dg(m)), -f(m) - g(m), \tau) \,| \, \tau \geq 0\} \cup \{\mbox{some 0-section}\}\\
& = \{(m, \tau d(f+g)(m), - (f+g)(m), \tau) \, | \, \tau \geq 0\} \cup \{\mbox{some 0-section}\}. 
\end{align*}
Then the reduction $\rho(SS(\F_f) \, \hat{\ast} \, SS(\F_g)) = \graph(f+g)$. This can be regarded as {\it fiberwise summation} of $\graph(df)$ and $\graph(dg)$. In fact, it can checked that $\F_f \ast \F_g = \F_{f+g}$ (see proof of Proposition \ref{f-sum} below). \\

In the spirit of Example \ref{gen-fnc}, the observation above can be generalized into generating function case. Let $L_1, L_2$ be two Lagrangians of $T^*M$ admitting generating function $S_1: M \times \R^{\ell_1} \to \R$ and $S_2: M \times \R^{\ell_2} \to \R$ respectively. Recall notations $\F_{S_1}$ and $\F_{S_2}$ are the associated canonical sheaves constructed in Example \ref{gen-fnc}. Then we have (answer to F. Zapolsky's question)

\begin{prop} \label{f-sum} Convolution $\F_{S_1} \ast \F_{S_2}$ is a canonical sheaf of fiberwise summation of $L_1$ and $L_2$, denoted as $L_1 +_b L_2$. \end{prop}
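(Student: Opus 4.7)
The plan is to prove $\F_{S_1} \ast \F_{S_2} \simeq \F_S$, where $S(m,\xi_1,\xi_2) := S_1(m,\xi_1) + S_2(m,\xi_2)$ on $M \times \R^{\ell_1+\ell_2}$. A direct critical-point computation shows that $S$ is a generating function for $L_1 +_b L_2$: the equations $\partial_{\xi_i} S = 0$ decouple into the critical conditions for $S_1$ and $S_2$, and then $\partial_m S = \partial_m S_1 + \partial_m S_2$ recovers the fiberwise momentum sum. So the claim reduces to this sheaf-level identification.

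First I would unpack everything into one big pushforward. Since $\F_{S_i} = Rp_{i!}\k_{N_{S_i}}$ for the projection $p_i : M \times \R^{\ell_i} \times \R \to M \times \R$, the K\"unneth-type identity $R(p_1 \times p_2)_! \bigl(\k_{N_{S_1}} \boxtimes \k_{N_{S_2}}\bigr) \simeq \F_{S_1} \boxtimes \F_{S_2}$ gives
$$
\F_{S_1} \ast \F_{S_2} \;\simeq\; \delta^{-1} Rs_! R(p_1\times p_2)_! \, \k_{N_{S_1} \times N_{S_2}}.
$$
Composing the pushforwards into one map $h := s \circ (p_1 \times p_2) : A \to M \times M \times \R$, with $A := M \times \R^{\ell_1} \times \R \times M \times \R^{\ell_2} \times \R$, and taking the cartesian pullback of $h$ along the diagonal $\delta : M \times \R \to M \times M \times \R$, the base change formula (Proposition \ref{bsc}) yields
$$
\F_{S_1} \ast \F_{S_2} \;\simeq\; Rq_! \, \k_{\tilde N},
$$
where $\tilde N = \{(m,\xi_1,\xi_2,t_1,t_2) : S_i(m,\xi_i)+t_i \geq 0, \, i=1,2\}$ sits inside $\tilde A := M \times \R^{\ell_1} \times \R^{\ell_2} \times \R^2$ and $q(m,\xi_1,\xi_2,t_1,t_2) = (m, t_1+t_2)$.

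Next I would factor $q = p' \circ r$, where $r(m,\xi_1,\xi_2,t_1,t_2) = (m,\xi_1,\xi_2,t_1+t_2)$ and $p' : M \times \R^{\ell_1+\ell_2} \times \R \to M \times \R$ is the canonical projection (so that $Rp'_! \k_{N_S} = \F_S$ by the construction of Example \ref{gen-fnc}). Everything then hinges on computing $Rr_! \k_{\tilde N}$ and identifying it with $\k_{N_S}$. The map $r$ is not proper on $\tilde A$, but the closed inclusion $i : \tilde N \hookrightarrow \tilde A$ reduces us to $R(r \circ i)_! \k_{\tilde N}$, and $r\circ i$ \emph{is} proper because its fiber over $(m,\xi_1,\xi_2,t)\in N_S$ is the closed bounded interval $\{t_1 : -S_1(m,\xi_1) \leq t_1 \leq t + S_2(m,\xi_2)\}$ (and is empty off $N_S$). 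Hence $R(r\circ i)_! = R(r\circ i)_*$, and proper base change gives stalks $\k$ in degree zero on $N_S$ and $0$ elsewhere; combined with the local triviality of the interval fibration over the interior of $N_S$ and a continuity argument at the boundary $\{t=-S\}$, this upgrades to the sheaf-level identification $Rr_! \k_{\tilde N} \simeq \k_{N_S}$. Applying $Rp'_!$ then yields $\F_{S_1} \ast \F_{S_2} \simeq Rp'_! \k_{N_S} = \F_S$, as required.

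The main obstacle I anticipate is precisely the last upgrade from a stalkwise equality to an honest sheaf-level isomorphism $Rr_! \k_{\tilde N} \simeq \k_{N_S}$, since the interval bundle degenerates to a point along the boundary $\{t = -S\}$. To do it cleanly one should either construct the natural adjunction map $\k_{N_S} \to Rr_! \k_{\tilde N}$ explicitly and check it is a quasi-isomorphism on stalks, or invoke a semi-algebraic trivialization statement for the family of closed intervals. A secondary care point is to write out the cartesian square for the base change step so that the various $R$'s and lower shrieks line up as invoked; this is routine once the coordinates on $A$, $\tilde A$, and $M \times M \times \R$ are laid out, but worth doing carefully to avoid sign or direction slips.
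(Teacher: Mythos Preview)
Your proof is correct and follows essentially the same strategy as the paper: reduce to $\F_{S_1} \ast \F_{S_2} \simeq \F_S$ via K\"unneth and proper base change, then compute the pushforward along the sum map directly. The paper merely organizes the diagram chase in a different order (commuting the auxiliary-fiber projections outward past $s$ and $\delta$ via two commutative squares before the final step) and labels your last step a ``direct computation''; your concern about upgrading the stalk calculation dissolves after the affine change of variables $(t_1,t_2)\mapsto (t_1+S_1(m,\xi_1),\,t_2+S_2(m,\xi_2))$, which reduces it fiberwise to the standard identity $\k_{[0,\infty)} \ast \k_{[0,\infty)} = \k_{[0,\infty)}$.
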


In order to prove Proposition \ref{f-sum}, we introduce the following ``fiberwise-sum'' function, $S: M \times \R^{\ell_1 + \ell_2} \to \R$ by 
\begin{equation} \label{sumfnc}
S(m, \xi_1, \xi_2) = S_1(m, \xi_1) + S_2(m, \xi_2).
\end{equation}
Note that this $S$ is a generating function of $L_1 +_b L_2$. In fact, For $\xi = (\xi_1, \xi_2)$, $\partial_{\xi} S(x, \xi) = 0$ is equivalent to $\partial_{\xi_1} S(x, \xi) = \partial_{\xi_2} S(x, \xi) =0$. Then for these $(x, \xi)$, $\partial_x S(x, \xi) = \partial_x S_1(x, \xi_1) + \partial_x S_2(x, \xi_2)$ which is exactly the summation of co-vectors of $L_1$ and $L_2$. Similarly, we can consider the canonical sheaf $\F_S$ associated to $S$ (with the projection map $p: M \times \R^{\ell_1 + \ell_2} \times \R \to M \times \R$). 

\begin{proof} (Proof of Proposition \ref{f-sum}) We will show $\F_{S_1} \ast \F_{S_2} = \F_{S}$ which then confirms the desired conclusion. Note that the projection $p': M \times M \times \R^{\ell_1 + \ell_2} \times \R \times \R \to M \times M \times \R \times \R$ can be thought as a product $p_1 \times p_2$ where $p_i: M \times \R^{\ell_i} \times \R$ for $i =1,2$. Then for the following diagram 
\[ \xymatrix{ 
M \times \R^{\ell_1} \times \R \ar[d]_-{p_1} & M \times M \times \R^{\ell_1 + \ell_2} \times \R \times \R \ar[l]_-{\pi_1} \ar[r]^-{\pi_2} \ar[d]^-{p'} & M \times \R^{\ell_2} \times \R \ar[d]^-{p_2}\\
M \times \R & M \times M \times \R \times \R \ar[l]_-{\tilde{\pi}_1} \ar[r]^-{\tilde{\pi}_2} & M \times \R}\]
we know, by Exercise II.18 (i) in \cite{KS90}, 
\[ (\tilde{\pi}_1^{-1} {Rp_1}_! \k_{\{S_1+ t_1 \geq 0\}}) \otimes (\tilde{\pi}_2^{-1} {Rp_2}_! \k_{\{S_2+ t_2 \geq 0\}}) = Rp'_! (\pi_1^{-1} \k_{\{S_1+ t_1 \geq 0\}} \otimes \pi_2^{-1} \k_{\{S_2+ t_2 \geq 0\}}). \]
Next, by the following commutative diagram, 
\[ \xymatrix{ 
M \times \R^{\ell_1 + \ell_2} \times \R \ar[r]^-{\delta} \ar[d]_-{p} & M \times M \times \R^{\ell_1 + \ell_2} \times \R \ar[d]^-{p''} & M \times M \times \R^{\ell_1 + \ell_2} \times \R \times \R \ar[l]_-{s} \ar[d]^-{p'}\\
M \times \R \ar[r]_-{\delta} & M \times M \times \R & M \times M \times \R \times \R \ar[l]^-{s}} \]
we know 
\begin{align*}
\F_{S_1} \ast \F_{S_2} & = \delta^{-1} Rs_! (\tilde{\pi}_1^{-1} {Rp_1}_! \k_{\{S_1+ t_1 \geq 0\}}) \otimes (\tilde{\pi}_2^{-1} {Rp_2}_! \k_{\{S_2+ t_2 \geq 0\}}) \\
& = \delta^{-1} Rs_! Rp'_! (\pi_1^{-1} \k_{\{S_1+ t_1 \geq 0\}} \otimes \pi_2^{-1} \k_{\{S_2+ t_2 \geq 0\}})\\
& = \delta^{-1} Rp''_! Rs_! (\pi_1^{-1} \k_{\{S_1+ t_1 \geq 0\}} \otimes \pi_2^{-1} \k_{\{S_2+ t_2 \geq 0\}})\\
& = Rp_! \delta^{-1} Rs_! (\pi_1^{-1} \k_{\{S_1+ t_1 \geq 0\}} \otimes \pi_2^{-1} \k_{\{S_2+ t_2 \geq 0\}})\\
& = Rp_! \k_{\{S + t \geq 0\}} = \F_{S}.
\end{align*}
The third equality comes from the commutativity of right square in the diagram above. The fourth equality comes from base change formula from left square in the diagram above. The fifth equality comes from a direct computation.
\end{proof}
\end{ex}

\subsection{Shift functor and torsion element}
One of the advantages of extra variable $\R$ in Tamarkin category is that this $\R$ provides a 1-dimensional ruler for filtration where object can be shift. For multi-dimensional ruler, see a parallel theory developed in \cite{GS14}.

\begin{dfn} For any $a \in \R$, $T_a: M \times \R \to M \times \R$ by $T_a(m,t) = (m, t+a)$. Then it induces a map on $\D(\k_{M \times \R})$, denoted as ${T_a}_*$. Note that on the level of stalks, $({T_a}_* \F)_{(m,t)} = \F_{m, t-a}$. \end{dfn}

\begin{exercise} For any $\F \in \D(\k_{M \times \R})$, ${T_a}_* \F = \F \ast \k_{M \times \{a\}}$. \end{exercise}

\begin{lemma} (1) ${T_a}_*$ is well-defined over $\T(M)$. (2) For any $a \leq b$, there exists a natural transformation $\tau_{a,b}: {T_a}_* \to {T_b}_*$. In particular, for any $c \geq 0$, there exists a natural transformation $\tau_c: \I \to {T_c}_*$. \end{lemma}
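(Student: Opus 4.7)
The plan is to leverage the characterization Theorem \ref{tam-elm} of elements of $\T(M)$ together with the identity ${T_a}_* \F = \F \ast \k_{M \times \{a\}}$ supplied by the exercise immediately preceding the lemma. Both parts then reduce to purely formal manipulations inside $\D(\k_{M \times \R})$.

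For (1), I would invoke Zapolsky's remark after Corollary \ref{cor-ast-wd}: $\T(M)$ is a two-sided ideal in $\D(\k_{M \times \R})$ under $\ast$. Since $\k_{M \times \{a\}} \in \D(\k_{M \times \R})$, any $\F \in \T(M)$ gives ${T_a}_* \F = \F \ast \k_{M \times \{a\}} \in \T(M)$. (If one prefers a direct check, compute $\k_{M \times \{a\}} \ast \k_{M \times (0, \infty)} = \k_{M \times (a, \infty)} = \k_{M \times (0, \infty)} \ast \k_{M \times \{a\}}$, so by associativity and commutativity of $\ast$, $({T_a}_*\F) \ast \k_{M \times (0, \infty)} = \F \ast \k_{M \times (0, \infty)} \ast \k_{M \times \{a\}} = 0$, and Theorem \ref{tam-elm} finishes the argument.)

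For (2), the key observation is that inside $\T(M)$ the shift functor has a second convolution realization: for $\F \in \T(M)$,
\[ {T_a}_* \F \;=\; \F \ast \k_{M \times \{a\}} \;=\; \F \ast \k_{M \times [0, \infty)} \ast \k_{M \times \{a\}} \;=\; \F \ast \k_{M \times [a, \infty)}. \]
(The last equality is the same kind of elementary convolution computation as Exercise \ref{ex-conv}, carried out in Section \ref{sec-geo-conv}.) Whenever $a \leq b$ we have the closed inclusion $M \times [b, \infty) \hookrightarrow M \times [a, \infty)$, which yields a canonical restriction morphism of sheaves $r_{a,b} \co \k_{M \times [a, \infty)} \to \k_{M \times [b, \infty)}$. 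Convolving on the left with $\F$ produces
\[ \tau_{a,b}(\F) \co \F \ast \k_{M \times [a, \infty)} \longrightarrow \F \ast \k_{M \times [b, \infty)}, \qquad \text{i.e.}\qquad \tau_{a,b}(\F) \co {T_a}_*\F \to {T_b}_*\F. \]
Naturality in $\F$ follows at once from the bifunctoriality of $\ast$: any morphism $\phi \co \F \to \G$ in $\T(M)$ fits into a commuting square since $(\phi \ast \I_{\k_{M \times [b, \infty)}}) \circ (\I_\F \ast r_{a,b}) = (\I_\G \ast r_{a,b}) \circ (\phi \ast \I_{\k_{M \times [a, \infty)}})$ by interchange in the two convolution slots. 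For the special case $c \geq 0$, take $a=0$ and $b=c$: since ${T_0}_* = \I$ (as $\k_{M \times \{0\}}$ is the unit for $\ast$), $\tau_{0,c}$ gives the desired $\tau_c \co \I \to {T_c}_*$.

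The only mildly non-obvious step is the reformulation ${T_a}_* \F = \F \ast \k_{M \times [a, \infty)}$ on $\T(M)$; without it there is no obvious morphism between the kernels $\k_{M \times \{a\}}$ and $\k_{M \times \{b\}}$, and the natural transformation would seem to fail to exist. The $\R$-component, which at first glance is purely cosmetic, is precisely what enables the restriction morphism between half-lines and hence lets $\T(M)$ carry a filtered/persistence-type structure; this is the conceptual content hinted at in Subsection \ref{ssec-dTc}.
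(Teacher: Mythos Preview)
Your proof is correct and essentially identical to the paper's: both reduce (1) to the criterion of Theorem~\ref{tam-elm} via ${T_a}_*\F = \F \ast \k_{M \times \{a\}}$ (the paper verifies ${T_a}_*\F \ast \k_{M\times[0,\infty)} = {T_a}_*\F$ while you check the equivalent vanishing against $\k_{M\times(0,\infty)}$), and both obtain (2) from the identification ${T_a}_*\F = \F \ast \k_{M\times[a,\infty)}$ together with the restriction morphism $\k_{M\times[a,\infty)} \to \k_{M\times[b,\infty)}$.
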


\begin{proof} (1) For any $\F \in \T(M)$, then 
\begin{align*}
{T_a}_*\F \ast \k_{M \times [0, \infty)} & = \F \ast \k_{M \times \{a\}} \ast \k_{M \times [0, \infty)} \\
& = \F \ast \k_{M \times [0, \infty)} \ast \k_{M \times \{a\}} \\
&= \F \ast \k_{M \times \{a\}} = {T_a}_*\F. 
\end{align*}
In particular, for any $\F \in \T(M)$, ${T_a}_*\F = \F \ast \k_{M \times [a,\infty)}$. (2) The natural transformation $\tau_{a,b}$ is given by restriction $\k_{M \times [a, \infty)} \to \k_{M \times [b, \infty)}$. \end{proof}

\begin{dfn} We call $\F \in \T(M)$ a $c$-torsion if $\tau_c(\F): \F \to {T_c}_*\F$ is zero. \end{dfn}

\begin{remark} \label{rmk-big-tor} Note that torsion element can be defined in a bigger category, that is $\D_{\{\tau \geq 0\}}(M \times \R)$. Recall (Proposition 4.1 in \cite{AI17}) $\F \in \D_{\{\tau \geq 0\}}(M \times \R)$ if and only if $\F \ast_{np} \k_{M \times [0, \infty)}$. Then again, we have a well-defined map (still denoted as) $\tau_c(\F)$ induced by restriction map $\k_{[0,\infty)} \to \k_{[c,\infty)}$. \end{remark}

\begin{ex} Here are some examples of torsion or non-torsion elements. 
\begin{itemize}
\item[(1)] When $M = \{\rm pt\}$, $\k_{[a,b)}$ with $b<\infty$ is a $(b-a)$-torsion.\\
\item[(2)] For $\k_{M \times [0,\infty)} \in \T_{0_M} M$, it is non-torsion because 
\[ \tau_c(\F): \k_{M \times [0,\infty)} \to  {T_c}_*(\k_{M \times [0,\infty)}) ( = \k_{M \times [c,\infty)})\]
is a non-trivial morphism for any $c \geq 0$.
\end{itemize}
\end{ex}

The following example is a particular case in symplectic topology that is difficult to study by classical tools but can be easily handled in the language of sheaves, which according to the argument in \cite{AI17} provides an essentially important example for the proof of positivity of displacement energy. 

\begin{ex} \label{ex-lag-eye} (Lagrangian-torsion in \cite{AI17}) Let $M = \R$ and consider the following immersed Lagrangian $L$ in $T^*\R$ and it can be generalized to higher dimensional space, called {\it Whitney immersion}, see Figure \ref{f-4}. 
\begin{figure}[h]
\centering
\includegraphics[scale=0.55]{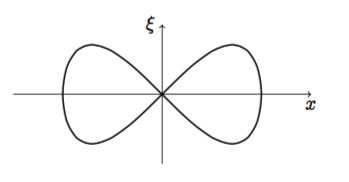}
\caption{Whitney immersion}
\label{f-4}
\end{figure}
Consider an intersection of two domains as in Figure \ref{f-5}.
\begin{figure}[h]
 \centering
 \includegraphics[scale=0.35]{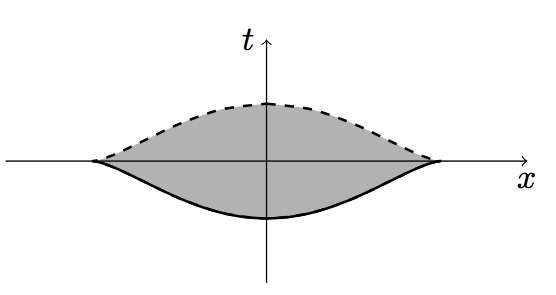}
 \caption{Front projection and torsion sheaf}
 \label{f-5}
 \end{figure}
Denote the shaded region as $Z$. Then by Section \ref{sec-LT}, we know $\rho(SS(\k_{Z})) = L$ simply by tracking pointwise slopes of the boundary. More importantly, $\k_{Z}$ is certainly a torsion element because its support is a finite region and then shifting on $t$-direction for sufficient amount, we get $\tau_c(\k_{Z}) = 0$. 
\begin{remark} Since every Darboux ball contains a rescaled $L$, for $\T_B(M)$ when $B$ is a Darboux ball, there always exist elements in $\T_B(M)$ constructed in the spirit of Example \ref{ex-lag-eye}. On the other hand, if the shift $c$ is not big enough, say $c < 2 \max \{t \,| \, (x,t) \in Z\}$, then we will have 
\[ \tau_c(\k_Z): \k_Z \to \k_{Z + c} \,\,\,\,\,\mbox{where $Z+c$ is shift in $t$-direction}. \]
which is non-trivial by Exercise \ref{intersection}. \end{remark}
\end{ex}

\subsection{Separation Theorem and adjoint sheaf} \label{adj} 

\subsubsection{Restatement of Separation Theorem}

One of the main results in Tamarkin category theory is the following Separation Theorem, which generalizes the obvious fact that $\Hom(\F, \G) = 0$ if ${\rm supp}(\F) \cap {\rm supp}(\G) = \emptyset$.

\begin{theorem} \label{sep-thm} Let $A, B$ be two closed subsets in $T^*M$. If $A \cap B = \emptyset$, then for any $\F \in \T_A(M)$ and $\G \in \T_B(M)$, $R\Hom (\F, \G) =0$. \end{theorem}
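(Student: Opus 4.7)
The plan is to repackage $R\Hom(\F, \G)$ as the sections of a single sheaf on $\R$ whose singular support encodes the disjointness hypothesis $A \cap B = \emptyset$, and then conclude by microlocal Morse.

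The first step is to introduce the ``adjoint sheaf'' $\HOM(\F, \G) \in \D(\k_{M \times \R})$ of the section title (the ``dual convolution'' listed in the introductory table as corresponding to fiberwise subtraction of Lagrangians). It should be constructed so as to realize the right adjoint of sheaf convolution in the second slot, i.e.\ so that
\[ R\Hom(\mathcal H \ast \F, \G) \simeq R\Hom(\mathcal H, \HOM(\F, \G)) \quad \text{for all } \mathcal H \in \D(\k_{M \times \R}). \]
Specializing $\mathcal H = \k_{M \times \{c\}}$ gives $R\Hom(\F, {T_c}_* \G) \simeq (\HOM(\F, \G))_c$ (after integrating over the $M$-fiber), so that the quantity of interest $R\Hom(\F, \G)$ corresponds to the value at $c = 0$. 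Using the functorial properties of singular support (pushforward, pullback, and $R\mathcal Hom$ from Propositions \ref{push}, \ref{pullback}, \ref{th}), one computes the singular support of $\HOM(\F, \G)$ as a fiberwise difference of $SS(\F)$ and $SS(\G)$, so that after reduction via (\ref{red}),
\[ \rho(SS(\HOM(\F, \G))) \subset \overline{A - B}, \qquad A - B := \{(x, \xi - \eta) \in T^*M : (x, \xi) \in A,\, (x, \eta) \in B\}. \]
The hypothesis $A \cap B = \emptyset$ is exactly the statement $0_M \cap (A - B) = \emptyset$, which translates through $\rho$ to the fact that $SS(\HOM(\F, \G))$ avoids the cone $\{\xi = 0,\, \tau > 0\}$ inside $T^*(M \times \R)$.

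Next, push forward to the $\R$-factor: set $\mathcal K := Rq_* \HOM(\F, \G)$ where $q : M \times \R \to \R$ is the projection. The singular support bound above combined with Proposition \ref{push} yields $SS(\mathcal K) \subset \{\tau \leq 0\}$, so that $\mathcal K \in \D_{\{\tau \leq 0\}}(\k_\R)$. The microlocal Morse lemma (Theorem \ref{mml}) then forces sections of $\mathcal K$ to propagate along the $t$-axis: $R\Gamma((-\infty, c); \mathcal K)$ is independent of $c \in \R$. Identifying these sections with the various $R\Hom(\F, {T_c}_* \G)$ via the adjunction of Step~1, and checking that one end of the family vanishes (using $\F = \F \ast \k_{M \times [0, \infty)}$ from Theorem \ref{tam-elm} together with support considerations in the appropriate direction of $c$), forces the whole family, and in particular $R\Hom(\F, \G)$, to be zero.

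The main obstacle is Step~1: constructing $\HOM(\F, \G)$ so that (a) the adjunction holds and its stalks cleanly compute $R\Hom(\F, {T_c}_* \G)$, and (b) its singular support admits the fiberwise-subtraction description above. The delicate points are verifying the disjointness hypothesis in Proposition \ref{th}(2) that gates the $R\mathcal Hom$ singular support formula, and correctly handling the interplay between $Rq_!$ and $Rq_*$ (which is precisely why the non-proper convolution $\ast_{np}$ had to be singled out in Section 3.3). Once $\HOM(\F, \G)$ is in place, the rest of the argument runs mechanically through the functorial package of Section 2.9 and the microlocal Morse lemma, so the whole proof is really a test of whether the disjointness condition $A \cap B = \emptyset$ can be translated into a singular support constraint visible to microlocal Morse.
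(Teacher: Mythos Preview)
Your strategy is the paper's strategy: rewrite $R\Hom(\F,\G)$ through the internal Hom $\HOM(\F,\G)$, push forward to $\R$, and use the hypothesis $A\cap B=\emptyset$ as a singular-support constraint to force the resulting sheaf on $\R$ to be trivial. Claim~\ref{claim-magic} is exactly your adjunction rewrite, and Lemma~\ref{lem-cons} is your ``propagation'' step. So the architecture is correct.

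Two points where your sketch diverges from what actually happens and would cause trouble if left as written:

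\textbf{(1) The singular-support conclusion on $\R$ is stronger than $\{\tau\le 0\}$.} Since $\HOM(\F,\G)\in\D_{\{\tau\ge 0\}}(\k_{M\times\R})$ (Corollary~\ref{cor-hom-wd}), your avoidance of $\{\xi=0,\tau>0\}$ combined with $\tau\ge 0$ forces $SS(\mathcal K)\subset 0_\R$, i.e.\ $\mathcal K$ is \emph{constant}, not merely in $\D_{\{\tau\le 0\}}(\k_\R)$. The paper uses this (Lemma~\ref{lem-cons}) rather than a one-sided propagation statement.

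\textbf{(2) ``One end vanishes by support considerations'' is the real gap.} There is no compactness here, so no limit $c\to\pm\infty$ kills $R\Hom(\F,{T_c}_*\G)$ on its own; if $\mathcal K$ were a nonzero constant sheaf, every $R\Hom(\k_{[c,\infty)},\mathcal K)$ would be the same nonzero module. The vanishing is not a support argument but an \emph{algebraic} one: once $\mathcal K$ is constant, its value is $R\Gamma(\R,\mathcal K)=R\Hom(\k_{M\times\R},\HOM(\F,\G))=R\Hom(\k_{M\times\R}\ast\F,\G)$, and the identity $\k_{M\times\R}\ast\k_{M\times[0,\infty)}=0$ (a one-line stalk check) kills it via $\F=\F\ast\k_{M\times[0,\infty)}$. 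This is precisely where Theorem~\ref{tam-elm} enters, but it enters through the global sections of $\mathcal K$, not through a limit in $c$.

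A minor correction: specializing $\mathcal H=\k_{M\times\{c\}}$ in the adjunction does \emph{not} give the stalk $(\mathcal K)_c$ after integrating out $M$; $R\Hom(\k_{\{c\}},-)$ computes local cohomology (a costalk), not a stalk. The paper sidesteps this by never computing pointwise values, working instead with $R\Hom(\k_{[0,\infty)},\mathcal K)$ and $R\Hom(\k_\R,\mathcal K)$ directly.
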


\begin{remark} \label{rek-sep} This theorem is harder to be proved than it appears since the supports of $\F$ and $\G$ are actually in $\overline{\rho^{-1}(A)}$ and $\overline{\rho^{-1}(B)}$ respectively. Note that $A \cap B = \emptyset$ does {\it not} guarantee $\overline{\rho^{-1}(A)} \cap \overline{\rho^{-1}(B)} = \emptyset$. In fact, there are two cases if we make it explicit. One, if there is no common $m \in M$ for $A$ and $B$, then indeed $\overline{\rho^{-1}(A)} \cap \overline{\rho^{-1}(B)} = \emptyset$, where in this case, Separation Theorem holds trivially. The other is $A$ and $B$ do have some common $m \in M$ with co-linear co-vectors, that is there exists $(m, \xi_1) \in A$ and $(m, \xi_2) \in B$ such that $\xi_2 = \lambda \xi_1$ for some $\lambda > 0$. Then the entire (non-negative) fiber is contained both in $\overline{\rho^{-1}(A)}$ and $\overline{\rho^{-1}(B)}$ by definition of $\rho^{-1}$. Figure \ref{i6} shows a general picture where a serious proof is needed.
\begin{figure}[h]
\centering
\includegraphics[scale=0.6]{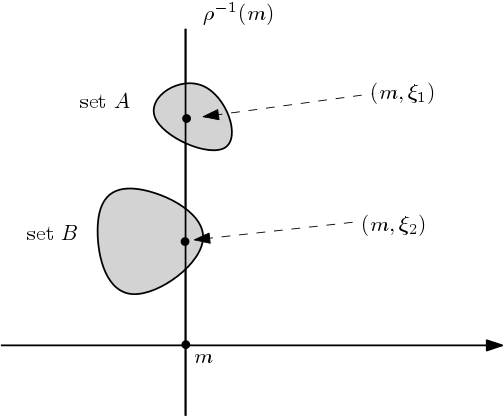}
\caption{Non-trivial case requiring Separation Theorem}
\label{i6}
\end{figure}
\end{remark}

Instead of computing $R\Hom(\F, \G)$ directly, we will rewrite $R\Hom(\cdot, \cdot)$ in terms of a new sheaf (where people usually called it {\it internal hom in $\T(M)$}), denoted as $\HOM(\F, \G)$. Here is the precise statement. 

\begin{claim} \label{claim-magic} For any $\F, \G \in \T(M)$, 
\begin{align} \label{magic}
R\Hom(\F, \G) & = R\Hom(\k_{[0, \infty)}, R\pi_* \HOM(\F, \G)))
\end{align}
where $\pi: M \times \R \to \R$ is projection. \end{claim}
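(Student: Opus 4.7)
The plan is to derive (\ref{magic}) by chaining together three adjunction isomorphisms, anchored at the ``absorption'' property of $\k_{M \times [0, \infty)}$ on $\T(M)$ given by Theorem \ref{tam-elm}. Informally: $\k_{M \times [0, \infty)}$ acts as a unit for convolution on $\T(M)$, convolution admits an internal Hom $\HOM$, and that internal Hom interacts with the pushforward $R\pi_{*}$ via the pullback--pushforward adjunction for the projection $\pi \colon M \times \R \to \R$.

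Concretely, I would begin from Theorem \ref{tam-elm}, which gives $\F \simeq \F \ast \k_{M \times [0, \infty)}$ for $\F \in \T(M)$, and (using commutativity of $\ast$, inherited from commutativity of addition on $\R$) also write $\F \simeq \k_{M \times [0, \infty)} \ast \F$. Substituting into the left-hand side of (\ref{magic}),
\[
R\Hom(\F, \G) \;\simeq\; R\Hom\bigl(\k_{M \times [0, \infty)} \ast \F,\, \G\bigr).
\]
Next I would invoke the defining adjunction of $\HOM(\F, -)$ as right adjoint to $(-) \ast \F$ (which is how the internal hom is to be introduced in Definition \ref{dfn-hom}), obtaining
\[
R\Hom\bigl(\k_{M \times [0, \infty)} \ast \F,\, \G\bigr) \;\simeq\; R\Hom\bigl(\k_{M \times [0, \infty)},\, \HOM(\F, \G)\bigr).
\]
Finally, since $\k_{M \times [0, \infty)} = \pi^{-1} \k_{[0, \infty)}$, the $(\pi^{-1}, R\pi_{*})$ adjunction at the level of $R\Hom$ yields
\[
R\Hom\bigl(\pi^{-1} \k_{[0, \infty)},\, \HOM(\F, \G)\bigr) \;\simeq\; R\Hom\bigl(\k_{[0, \infty)},\, R\pi_{*} \HOM(\F, \G)\bigr),
\]
which is the right-hand side of (\ref{magic}). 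Composing the three isomorphisms gives the claim.

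The technical core of the argument, and the step that requires real work, is the second one: producing $\HOM$ and verifying that $\bigl((-) \ast \F,\, \HOM(\F, -)\bigr)$ is an adjoint pair. Since by Definition \ref{dfn-sh-conv} the convolution $\F \ast \G = \delta^{-1} Rs_{!}(\pi_{1}^{-1} \F \otimes \pi_{2}^{-1} \G)$ is a composition of five functors, the right adjoint should be assembled by composing the standard right adjoints $R\delta_{*}$, $s^{!}$, $R\mathcal{H}om$, and $R\pi_{i*}$ in reverse order, yielding a formula of the shape
\[
\HOM(\F, \G) \;=\; R\pi_{1*}\, R\mathcal{H}om\bigl(\pi_{2}^{-1} \F,\; s^{!} R\delta_{*} \G\bigr),
\]
in which the presence of $s^{!}$ encodes the ``fiberwise subtraction'' description of $\HOM$ advertised in the introduction. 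Once this internal-hom adjunction is in hand, the rest of the proof is essentially bookkeeping: Step 1 is Theorem \ref{tam-elm} combined with commutativity of $\ast$, and Step 3 is the standard $(\pi^{-1}, R\pi_{*})$ adjunction applied to $\k_{[0, \infty)}$.
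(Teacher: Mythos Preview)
Your proposal is correct and follows essentially the same route as the paper's proof: first replace $\F$ by $\k_{M \times [0,\infty)} \ast \F$ via Theorem \ref{tam-elm}, then apply the $(\ast\, \F, \HOM(\F,-))$ adjunction, then the $(\pi^{-1}, R\pi_*)$ adjunction. The only cosmetic difference is that the paper's Definition \ref{dfn-hom} writes $\HOM$ via the adjoint sheaf $\overline{\F} \ast_{np} \G$ rather than your composed-right-adjoint formula, but these are known to be equivalent (the paper itself records this in the remark comparing with (3.5) of \cite{AI17} and cites Lemma 3.10 of \cite{GS14}).
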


Note that $R\pi_* \HOM(\F, \G)$ on the right hand side of (\ref{magic}) is a (complex of) sheaf over the simple space $\R$, which considerably reduces the difficulty of discussion and computation. Moreover, under constructible assumption, $R\pi_* \HOM(\F, \G) = \bigoplus \k_{[a_i, b_i)}[n_i]$ where the type of intervals is (obviously) promised by singular support of $\HOM$ once its definition (Definition \ref{dfn-hom}) is given. Then we are able to compute the right hand side of (\ref{magic}) based on Theorem \ref{hom-compute-2} or Example \ref{ex-hom-left}. This will be particularly helpful in Section \ref{sec-pb-sb}. Last but not least, we want to emphasize that what Separation Theorem really proves is sheaf $R\pi_* \HOM(\F, \G) = 0$. 

\subsubsection{Adjoint sheaf} To motivate the definition of $\HOM$, we will try to explain the computational result from Example \ref{ex-int-1}, i.e. similarity between convolution of two sheaves and tensor product of persistence $\k$-modules, from another perspective. On the one hand, we will view any persistence $\k$-module coming from an abstract filtered complex $C_{\bullet} = (C, \partial_C, \ell_C)$ where $\ell_C$ is the associated filtration. On the other hand, by the following table, 
\[ \begin{tabu} to 0.6\textwidth { | X[c] | X[c] | }
 \hline
sheaves & filtered complex  \\
 \hline
$\F \ast \G$  & $C_{\bullet} \otimes D_{\bullet}$ \\
\hline
$\HOM(\F, \G) : \approx \overline{\F} \ast \G$ & $\Hom(C_{\bullet}, D_{\bullet}) \simeq C_{\bullet}^* \otimes D_{\bullet}$\\
\hline
\end{tabu}\]
where the second row and third row represent adjoint relations, any proposed adjoint functor of convolution $\ast$ requires a well-defined ``dual'' sheaf $\overline{\F}$ (here ``$-$'' is used for our dual because there already exists a terminology called dual sheaf). In order not to confuse the name, we call $\overline{\F}$ the {\it adjoint sheaf} (of $\F$). Recall filtered dual complex $C_{\bullet}^*$ is defined as follows, 
\[ x \xrightarrow{\partial_C} \partial_C x \,\,\,\,\,\Longleftrightarrow \,\,\,\,\, y \xrightarrow{\partial_C^*} \partial_C^*y \]
where $y$ is the dual of $\partial_C x$. The filtration $\ell_{C^*}(y) = - \ell_{C}(\partial x)$ and $\ell_{C^*}(\partial y) = - \ell_C(x)$. In terms of sheaves, we have 
\[ \k_{[a,b)} \to \k_{(-b, -a]} = i^{-1} \k_{[a,b)} \,\,\,\,\,\mbox{$i: \R \to \R$ by $i(t) = -t$}. \]
However, note that $\k_{(-b,-a]}$ is not an element in $\T(pt)$ (because its singular support belongs to the wrong half-plane), we can use the following fact to correct the open-closed relation at the endpoints, 

\begin{fact} ({\bf Exercise}) $R{\mathcal Hom}(\k_{[a,b)}, \k_{\R}) = \k_{(a,b]}$ and $R{\mathcal Hom}(\k_{(a,b]}, \k_{\R}) = \k_{[a,b)}$. \end{fact}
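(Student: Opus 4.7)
The plan is to reduce the computation of $R\mathcal{H}om(\k_{[a,b)}, \k_\R)$ to building blocks whose internal Hom into the constant sheaf is standard, by applying the contravariant triangulated functor $R\mathcal{H}om(-, \k_\R)$ to the short exact sequence $0 \to \k_{(a,b)} \to \k_{[a,b)} \to \k_{\{a\}} \to 0$. This expresses the half-open interval sheaf as an extension of a skyscraper by an open-interval sheaf; applying $R\mathcal{H}om(-, \k_\R)$ yields a distinguished triangle whose two outer terms I can compute directly.

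For the open-interval piece I would use $\k_{(a,b)} = j_! \k$ with $j \colon (a,b) \hookrightarrow \R$ open, together with the adjunction $R\mathcal{H}om(j_! F, G) = Rj_* R\mathcal{H}om(F, j^{-1} G)$, to rewrite the term as $Rj_* \k_{(a,b)}$. A direct stalk check gives $Rj_* \k_{(a,b)} = \k_{[a,b]}$: at interior points the stalk is $\k$, at an endpoint the stalk is $\varinjlim_\epsilon R\Gamma((a, a+\epsilon); \k) = \k$ since a half-open interval is contractible, and outside $[a,b]$ the stalk vanishes (and $R^ij_*=0$ for $i\ge 1$ by the same contractibility). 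For the skyscraper piece I would use $\k_{\{a\}} = i_* \k$ with $i \colon \{a\} \hookrightarrow \R$ closed together with the adjunction $R\mathcal{H}om(i_* F, G) = i_* R\mathcal{H}om(F, i^! G)$; since the dualizing complexes are $\omega_\R = \k_\R[1]$ and $\omega_{\{a\}} = \k$, functoriality gives $i^! \k_\R = \k[-1]$ and hence $R\mathcal{H}om(\k_{\{a\}}, \k_\R) = \k_{\{a\}}[-1]$. Combining, I obtain the distinguished triangle
\[ \k_{\{a\}}[-1] \to R\mathcal{H}om(\k_{[a,b)}, \k_\R) \to \k_{[a,b]} \xrightarrow{+1}. \]

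The final step is to take the long exact sequence of cohomology sheaves associated to this triangle; provided the connecting morphism $\k_{[a,b]} \to \k_{\{a\}}$ is the canonical surjective restriction, $\mathcal{H}^0$ of the middle term is $\ker(\k_{[a,b]} \twoheadrightarrow \k_{\{a\}}) = \k_{(a,b]}$ and all higher cohomology vanishes, yielding the first identification. The second formula $R\mathcal{H}om(\k_{(a,b]}, \k_\R) = \k_{[a,b)}$ follows by an identical argument applied to the symmetric short exact sequence $0 \to \k_{(a,b)} \to \k_{(a,b]} \to \k_{\{b\}} \to 0$. The main technical obstacle is verifying that the connecting morphism in the triangle is indeed the canonical (non-zero) restriction rather than zero. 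I would resolve this by directly computing the stalk of $R\mathcal{H}om(\k_{[a,b)}, \k_\R)$ at $a$: using $R\Hom(\k_Z, F) = R\Gamma_Z(F)$ for the locally closed subset $Z = [a, a+\epsilon)$ in $V = (a-\epsilon, a+\epsilon)$ and the excision triangle $R\Gamma_Z \to R\Gamma \to R\Gamma_{V \setminus Z}$, the restriction map on global sections $R\Gamma(V;\k_V) \to R\Gamma((a-\epsilon,a);\k_V)$ is an isomorphism of one-dimensional vector spaces (constant sections restrict to constants), forcing the stalk at $a$ to vanish. This vanishing is only consistent with the connecting morphism being the canonical surjection, completing the argument.
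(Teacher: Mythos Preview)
Your proof is correct. The paper states this fact as an exercise without providing a proof, but its Appendix (Section on ``Computation of $R\mathcal{H}om$'') develops a toolkit that points toward a different argument: identify $R\mathcal{H}om(\k_{[a,b)},-) = R\Gamma_{[a,b)}(-)$ via Proposition~2.3.10 in \cite{KS90}, then use the distinguished triangle $R\Gamma_{[b,\infty)} \to R\Gamma_{[a,\infty)} \to R\Gamma_{[a,b)} \xrightarrow{+1}$ together with the computation $R\Gamma_{[x,\infty)}\k_\R \simeq \k_{(x,\infty)}$ (the paper's Proposition labeled \texttt{prop1}). This immediately yields $\k_{(b,\infty)} \hookrightarrow \k_{(a,\infty)} \to R\Gamma_{[a,b)}\k_\R \xrightarrow{+1}$, hence $R\mathcal{H}om(\k_{[a,b)},\k_\R) = \k_{(a,b]}$; indeed case~(v) of the paper's Theorem labeled \texttt{thm1} is essentially this computation with $\k_{[c,\infty)}$, $c<a$, in place of $\k_\R$.

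Your route is genuinely different: you decompose $[a,b)$ as the open $(a,b)$ union the closed point $\{a\}$ and invoke the six-functor adjunctions $(j_!,j^{-1})$ and $(i_*,i^!)$ to compute the two pieces, then control the extension via a direct stalk check at $a$. The paper's approach is more elementary in that it never leaves the $R\Gamma_Z$ formalism and avoids appealing to $i^!$ or the dualizing complex; your approach is more structural and makes the Verdier-duality nature of the statement transparent. Both require identifying a connecting morphism with a canonical map, and your stalk verification at $a$ handles that cleanly.
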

Then we propose the following definition of adjoint sheaf. 
\begin{dfn} \label{dfn-as} 
Define adjoint sheaf of $\F$ by, 
\[ \overline{\F} = R{\mathcal Hom}(i^{-1} \F, \k_{M \times \R})[1].\]
Here degree shift $[1]$ corresponds to dimension of $\R$-factor in total space $M \times \R$. In other words, if we consider space $M \times \R^m$ (to define Tamarkin category), then we need to modify our definition of $\HOM$ to has degree shift $[m]$. 
\end{dfn}

Then our proposed adjoint functor of $\ast$ is defined as follows. 

\begin{dfn} \label{dfn-hom} For any $\F, \G \in \D(\k_{M \times \R})$, define
\begin{align*}
\HOM(\F, \G) : & = \bar{\F} \ast_{np} \G \\
& = \delta^{-1} Rs_* (q_2^{-1} R{\mathcal Hom}(i^{-1} \F, \k_{M \times \R}) \otimes q_1^{-1} \G)[1].
\end{align*}
\end{dfn}

\begin{exercise} Check if both $\F, \G \in \D_{\{\tau \geq 0\}}(M \times \R)$, then $\HOM(\F, \G) \in \D_{\{\tau \geq 0\}}(M \times \R)$. See Corollary \ref{cor-hom-wd} for a stronger statement. \end{exercise}

\begin{ex} Let $M = \{\rm pt\}$, then 
\[ \HOM(\k_{[a,b)}, \k_{[c,d)}) = \k_{[c-b, \min\{d-b, c-a\})} [1] \oplus \k_{[\max\{d-b, c-a\}, d-a)}. \]
\end{ex}

\begin{ex} \label{hom-compute}
$\HOM(\k_{M \times [0, \infty)}, \k_{M \times [0, \infty)}) = \k_{M \times (-\infty, 0)}[1]$. For a general result like this, see (59) in \cite{GS14}. \end{ex}

\begin{ex} \label{subtract} (Geometry of $\HOM$) Let $f, g: M^n \to \R$ be two differentiable functions. Consider the canonical sheaves associated to $\graph(df)$ and $\graph(dg)$, that is $\F = \k_{\{(m,t) \,| \, f(m) + t \geq 0\}}$ and $\G = \k_{\{(m,t) \,| \, g(m) + t \geq 0\}}$ respectively. First note that 
\[ \bar{\F} = \k_{\{(m,t) \,| \, f(m) - t > 0\}}[1]. \]
Then by Fact \ref{fact-ss} (open domain case), we know 
\begin{align*}
SS(\bar{\F}) & = \{(m, \tau df(m), f(m), -\tau) \,| \, \tau \leq 0\} \cup 0_{\{(m,t) \,| \, f(m) - t > 0\}}\\
& = \{(m, \tau d(-f)(m), -(-f)(m), \tau) \,| \, \tau \geq 0\} \cup 0_{\{(m,t) \,| \, f(m) - t > 0\}}\end{align*}
which has its reduction to be simply $\graph(d(-f))$. \footnote{Note that the canonical sheaf for $\graph(d(-f))$, that is, $\k_{\{-f(m) + t \geq 0\}}$ does not have the same singular support as $\bar{\F}$. Luckily they are only differed in the part of $0$-section.}  In other words, the geometric meaning of adjoint sheaf is just fiberwise negating the Lagrangian.  Second, by definition,
\[ \HOM(\F, \G) = \delta^{-1} Rs_* \k_{\{(m_1, m_2, t_1, t_2) \,| \, f(m_1) - t_1 > 0 , g(m_2) + t_2 \geq 0\}}[1]: =\delta^{-1} Rs_* \k_N[1]. \]
On each stalk $(m,t)$, Figure \ref{i7} provides a computational picture for $(\delta^{-1} Rs_* \k_N)_{(m,t)} = (Rs_* \k_N)_{(m, m, t)} = H^*(\R; \k_N|_{s^{-1}(m,m,t)})$. 
\begin{figure}[h]
\centering
\includegraphics[scale=0.5]{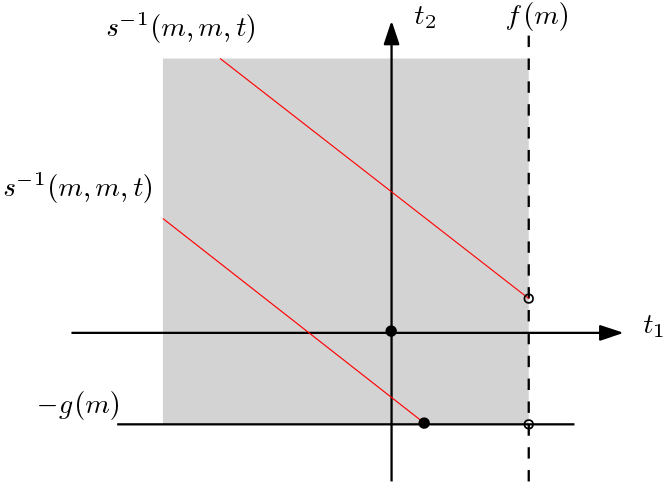}
\caption{Computation of $\HOM(\F, \G)$}
\label{i7}
\end{figure}
Therefore, whenever $t < f(m) - g(m)$, we have $(\delta^{-1} Rs_* \k_N)_{(m,t)} =\k$ (at degree $0$) and zero otherwise. Hence we get 

\begin{equation} \label{exe-fnc} 
\HOM(\F, \G) = \HOM(\F_f, \F_g) = \k_{\{(m,t) \in M \times \R \,| \, f(m) - g(m) >t \}}[1]. 
\end{equation}

Roughly speaking, geometric meaning of $\HOM$ is fiberwise subtraction of Lagrangians. This is actually confirmed by Proposition 3.13 in \cite{GS14}, that is, 
\[ SS(\HOM(\F, \G)) \subset SS(\F) \, \hat\ast\, SS(G)^a\]
where $-^a$ means negating the corresponding co-vectors. \end{ex}

\begin{remark}
We want to address a comparison between the definition of $\HOM$ given in Definition \ref{dfn-hom} and the usual definition in the literature, for instance (3.5) in \cite{AI17}, 
\begin{equation} \label{hom-2}
\HOM(\F, \G)_{AI} : = Rs_*R{\mathcal Hom}(q_2^{-1} i^{-1} \F, q_1^{!} \G).
\end{equation}
where $q_i$ is just projection from $M \times \R_1 \times \R_2$ to $M \times \R_i$ (with dimension of fiber being $1$). We claim that $\HOM(\F, \G)_{AI} = \HOM(\F, \G)$. In fact, for their notation, $s$ is just our $\delta^{-1} s$. Then 
\[ (q_2^{-1} R{\mathcal Hom}(i^{-1} \F, \k_{M \times \R}) \otimes q_1^{-1} \G)[1] = D(i^{-1} \F) \boxtimes \G \]
where $D(-)$ is dual sheaf defined by using $R{\mathcal Hom}$ (see (ii) in Definition 3.1.16 in \cite{KS90}). By Proposition 3.4.4 in \cite{KS90}, $D(i^{-1} \F) \boxtimes \G = R{\mathcal Hom}(q_2^{-1} i^{-1} \F, q_1^!\G)$. Therefore we get the identification. 
\end{remark}


\subsection{Proof of Separation Theorem} \label{sec-psp}
In this section, we will give a proof of Separation Theorem. The first step is to prove Claim \ref{claim-magic} which requires to confirm $\HOM$ is indeed a (right) adjoint of $\ast$, that is, 
\begin{prop} For any $\F_1, \F_2$ and $\F_3$ in $\D(\k_{M \times \R})$, we have
\[ R\Hom(\F_1 \ast \F_2, \F_3) = R\Hom(\F_1, \HOM(\F_2, \F_3)). \]
\end{prop}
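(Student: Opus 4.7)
The plan is to establish the $(\ast,\HOM)$ adjunction by chaining the standard six-functor adjunctions with a single change-of-variables on $M\times \R_1\times \R_2$. As a preliminary reduction, I would first rewrite the convolution as a single-$M$ pushforward: base change applied to the cartesian square associated to $\delta\co M\times \R \to M\times M\times \R$ gives
\[\F_1\ast \F_2 \;\simeq\; R\sigma_!\bigl(q_1^{-1}\F_1 \otimes q_2^{-1}\F_2\bigr),\]
where $q_1,q_2,\sigma\co M\times \R_1\times \R_2 \to M\times \R$ denote projection onto the first factor, projection onto the second factor, and addition $(m,t_1,t_2)\mapsto (m,t_1+t_2)$.

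With this in hand, I would iterate three well-known adjunctions in derived sheaf theory: $(R\sigma_!\dashv \sigma^!)$, tensor-Hom $(\otimes \dashv R{\mathcal Hom})$, and $(q_1^{-1}\dashv Rq_{1*})$. Concretely,
\begin{align*}
R\Hom(\F_1\ast \F_2,\F_3)
&= R\Hom\bigl(R\sigma_!(q_1^{-1}\F_1 \otimes q_2^{-1}\F_2),\,\F_3\bigr) \\
&= R\Hom\bigl(q_1^{-1}\F_1 \otimes q_2^{-1}\F_2,\, \sigma^!\F_3\bigr) \\
&= R\Hom\bigl(q_1^{-1}\F_1,\, R{\mathcal Hom}(q_2^{-1}\F_2,\sigma^!\F_3)\bigr) \\
&= R\Hom\bigl(\F_1,\, Rq_{1*}R{\mathcal Hom}(q_2^{-1}\F_2,\sigma^!\F_3)\bigr).
\end{align*}

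The remaining task is to identify the inner expression with $\HOM(\F_2,\F_3)$. Since $\sigma$ is a smooth submersion with $1$-dimensional fibers of trivial orientation, $\sigma^!\F_3\simeq \sigma^{-1}\F_3[1]$. To exchange the pushforward along $q_1$ for a pushforward along $\sigma$ (and at the same time pick up the reflection $i$ on $\R$), I would invoke the self-inverse diffeomorphism $\psi\co M\times \R_1\times \R_2 \to M\times \R_1\times \R_2$, $\psi(m,t_1,t_2)=(m,t_1+t_2,-t_2)$. A direct check yields $\sigma\circ \psi = q_1$, $q_1\circ \psi = \sigma$, and $q_2\circ \psi = i\circ q_2$, so conjugating by $\psi$ (and using that $\psi$ is an exact auto-equivalence commuting with $R{\mathcal Hom}$) gives
\[Rq_{1*}R{\mathcal Hom}(q_2^{-1}\F_2,\sigma^{-1}\F_3)[1] \;\simeq\; R\sigma_* R{\mathcal Hom}(q_2^{-1}i^{-1}\F_2,\, q_1^{-1}\F_3)[1],\]
which is precisely the $AI$-form (\ref{hom-2}) of $\HOM(\F_2,\F_3)$, already noted in the paper to coincide with Definition \ref{dfn-hom}.

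The main obstacle is the last step: carefully justifying $\sigma^!\simeq \sigma^{-1}[1]$ (i.e.\ fixing an orientation of the fibers of $\sigma$) and tracking how $\psi$ inserts the reflection $i$ into the correct slot. Once these compatibilities are verified, the result reduces to a formal composition of standard adjunctions.
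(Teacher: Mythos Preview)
Your argument is correct. The paper does not actually prove this proposition: it defers to Lemma~3.10 in \cite{GS14} and only offers a supporting example. What you have written is precisely the standard six-functor derivation underlying that reference---proper base change to rewrite $\ast$ over a single copy of $M$, then the chain $(R\sigma_!\dashv\sigma^!)$, tensor--Hom, $(q_1^{-1}\dashv Rq_{1*})$, followed by the change of variables $\psi$ to land on the form (\ref{hom-2}). So your route is essentially the content of the cited lemma, made explicit.

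One minor streamlining: you can bypass the ``main obstacle'' you flag entirely. Since $\psi$ is a diffeomorphism, $\psi^{-1}=\psi^!$, hence $\psi^{-1}\sigma^! = (\sigma\circ\psi)^! = q_1^!$ directly. Conjugating $Rq_{1*}R{\mathcal Hom}(q_2^{-1}\F_2,\sigma^!\F_3)$ by $\psi$ then gives $R\sigma_*R{\mathcal Hom}(q_2^{-1}i^{-1}\F_2,\,q_1^!\F_3)$ on the nose, matching (\ref{hom-2}) without ever trivializing $\sigma^!\simeq\sigma^{-1}[1]$ and then re-trivializing $q_1^{-1}[1]\simeq q_1^!$. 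This avoids any orientation bookkeeping.
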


This has been proved essentially by Lemma 3.10 in \cite{GS14} where it provides a third expression of $\HOM(\F, \G)$ which is equivalent to both Definition \ref{dfn-hom} and (\ref{hom-2}). Here we give an elementary example trying to support this proposition. 

\begin{ex} Let $\F_1 = \F_2 = \F_3 = \k_{[0, \infty)}$. Then we know $\F_1 \ast \F_2 = \k_{[0, \infty)}$ and $\HOM(\F_2, \F_3) = \k_{(-\infty, 0)}[1]$ (see Example \ref{hom-compute}). Then 
\[ R\Hom(\F_1 \ast \F_2, \F_3) =R \Hom(\k_{[0, \infty)}, \k_{[0, \infty)}) = \k \]
and 
\begin{align*}
R\Hom(\F_1, \HOM(\F_2, \F_3)) & = R\Hom(\k_{[0, \infty)},  \k_{(-\infty, 0)}[1]) \\
& = R\Hom(\k_{[0, \infty)},  \k_{(-\infty, 0)})[1] = \k[-1][1] = \k.
\end{align*}
\end{ex}

\begin{cor} \label{q-proj} For any $\F \in \D(\k_{M \times \R})$, $\HOM(\k_{M \times [0, \infty)}, \F) \in \D_{\{\tau \leq 0\}}(\k_{M\times [0, \infty)})^{\perp, r}$. \end{cor}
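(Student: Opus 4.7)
The plan is to exploit the adjunction between convolution $\ast$ and internal hom $\HOM$ that has just been asserted. To show that $\HOM(\k_{M \times [0, \infty)}, \F)$ lies in the right orthogonal complement $\D_{\{\tau \leq 0\}}(\k_{M \times \R})^{\perp, r}$, I need to verify that for every $\G \in \D_{\{\tau \leq 0\}}(\k_{M \times \R})$ one has
\[ R\Hom\bigl(\G, \, \HOM(\k_{M \times [0, \infty)}, \F)\bigr) = 0. \]

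First I would apply the adjunction $R\Hom(\F_1 \ast \F_2, \F_3) = R\Hom(\F_1, \HOM(\F_2, \F_3))$ with $\F_1 = \G$, $\F_2 = \k_{M \times [0, \infty)}$ and $\F_3 = \F$ to rewrite the right-hand side as
\[ R\Hom\bigl(\G \ast \k_{M \times [0, \infty)}, \, \F\bigr). \]
Then I would invoke the symmetric statement in Remark \ref{p-proj}: for any $\G \in \D_{\{\tau \leq 0\}}(\k_{M \times \R})$, the convolution $\G \ast \k_{M \times [0, \infty)}$ vanishes, since $\ast \, \k_{M \times [0,\infty)}$ realises the projection to $\T(M)$ which, by Corollary \ref{orth-proj}, kills every object of $\D_{\{\tau\leq 0\}}(\k_{M \times \R})$. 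Substituting $\G \ast \k_{M \times [0,\infty)} = 0$ gives $R\Hom(0, \F) = 0$, which is the desired vanishing.

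There is no substantive obstacle here once the adjunction $\ast \dashv \HOM$ is taken as input; the only point requiring care is matching the sidedness conventions, namely that we are after the \emph{right} orthogonal complement and the adjunction is used with $\G$ in the first slot of $R\Hom$, so that it is precisely the fact $\G \ast \k_{M\times[0,\infty)} = 0$ (as opposed to the dual statement $\k_{M\times[0,\infty)} \ast \G = 0$, which would arise if we were aiming at the left orthogonal complement) that closes the argument.
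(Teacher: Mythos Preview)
Your proof is correct and follows exactly the same approach as the paper: apply the adjunction $R\Hom(\G \ast \k_{M\times[0,\infty)}, \F) = R\Hom(\G, \HOM(\k_{M\times[0,\infty)}, \F))$ and then invoke Remark \ref{p-proj} to conclude $\G \ast \k_{M\times[0,\infty)} = 0$. Your closing remark about sidedness is slightly off, since $\ast$ is commutative here (it is built from $\boxtimes$ and the symmetric summation map), so $\G \ast \k_{M\times[0,\infty)} = \k_{M\times[0,\infty)} \ast \G$; but this does not affect the argument.
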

\begin{proof} For any $\G \in \D_{\{\tau \leq 0\}}(\k_{M\times [0, \infty)})$, 
\[ R\Hom(\G, \HOM(\k_{M \times [0, \infty)}, \F) ) = R\Hom(\G \ast \k_{M \times [0, \infty)}, \F) = 0\]
where the last step some from Remark \ref{p-proj}. 
\end{proof}

Because of this corollary, we can also view/define $\T(M)$ as $\D_{\{\tau \leq 0\}}(\k_{M\times [0, \infty)})^{\perp, r}$ because $\HOM(\k_{M \times [0, \infty)}, -)$ also provides an orthogonal decomposition. 

\begin{cor} \label{cor-hom-wd} $\HOM$ is well-defined in $\T(M)$ (here we should view $\T(M) = \D_{\{\tau \leq 0\}}(\k_{M\times [0, \infty)})^{\perp, r}$). \end{cor}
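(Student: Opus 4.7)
The plan is to exploit the adjunction $R\Hom(\F_1 \ast \F_2, \F_3) \simeq R\Hom(\F_1, \HOM(\F_2, \F_3))$ just established, together with the convolution characterization of $\T(M)$ from Theorem \ref{tam-elm}. Concretely, I want to show that if $\F, \G \in \T(M)$ then $\HOM(\F, \G)$ lies in the right orthogonal complement $\D_{\{\tau \leq 0\}}(\k_{M \times \R})^{\perp, r}$, i.e.\ that $R\Hom(\H, \HOM(\F, \G)) = 0$ for every $\H \in \D_{\{\tau \leq 0\}}(\k_{M \times \R})$.

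The key move is to transport the question from $\HOM$ to a convolution: by the adjunction,
\[
R\Hom(\H, \HOM(\F, \G)) \;\simeq\; R\Hom(\H \ast \F, \G).
\]
Then I would use that $\F \in \T(M)$, so by Theorem \ref{tam-elm} we may replace $\F$ by $\F \ast \k_{M \times [0, \infty)}$. Commuting the convolution factors gives $\H \ast \F \simeq (\H \ast \k_{M \times [0, \infty)}) \ast \F$, and Remark \ref{p-proj} tells us that $\H \ast \k_{M \times [0, \infty)} = 0$ because $\H \in \D_{\{\tau \leq 0\}}(\k_{M \times \R})$. Hence $\H \ast \F = 0$, so $R\Hom(\H, \HOM(\F, \G)) = 0$, which is exactly what is needed.

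There is essentially no obstacle beyond assembling the pieces: commutativity and associativity of $\ast$ are used freely, the adjunction has just been proved, and the vanishing $\H \ast \k_{M \times [0, \infty)} = 0$ is already in place. The one point to be careful about is that $\T(M)$ is being redefined as the right orthogonal complement here, so Theorem \ref{tam-elm} should be applied only after noting that the characterization $\F \simeq \F \ast \k_{M \times [0, \infty)}$ is still valid — which follows since both definitions produce the same orthogonal decomposition via convolution with $\k_{M \times [0, \infty)}$. Note also that the argument is symmetric in spirit to Corollary \ref{cor-ast-wd}, which showed that $\ast$ itself is well-defined on $\T(M)$.
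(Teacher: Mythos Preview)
Your proposal is correct and follows essentially the same argument as the paper: apply the adjunction to rewrite $R\Hom(\H, \HOM(\F, \G))$ as $R\Hom(\H \ast \F, \G)$, insert $\k_{M \times [0,\infty)}$ using $\F \simeq \F \ast \k_{M \times [0,\infty)}$, commute, and invoke $\H \ast \k_{M \times [0,\infty)} = 0$ from Remark~\ref{p-proj}. The paper's proof is exactly this chain of equalities, with less commentary.
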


\begin{proof} For any $\F, \G \in \T(M)$, for any $\H \in \D_{\{\tau \leq 0\}}(\k_{M\times [0, \infty)})$, 
\begin{align*}
R\Hom(\H, \HOM(\F, \G)) & = R\Hom(\H \ast \F, \G) \\
& = R\Hom(\H \ast \k_{M \times [0, \infty)} \ast \F, \G) =0. 
\end{align*}
where the final step comes from $\H \ast \k_{M \times [0, \infty)} = 0$ from Remark \ref{p-proj}.\end{proof}

Now we are ready to prove Claim \ref{claim-magic}. 

\begin{proof} (Proof of Claim \ref{claim-magic})
For any $\F, \G \in \T(M)$, denote $\pi: M \times \R \to \R$ as projection, then we have 
\begin{align*}
R\Hom(\F, \G) & = R\Hom(\k_{M \times [0, \infty)} \ast \F, \G)\\
& = R\Hom(\k_{M \times [0, \infty)}, \HOM(\F, \G))\\
& =R\Hom(\pi^{-1} \k_{[0, \infty)}, \HOM(\F, \G)))\\
& =R\Hom(\k_{[0, \infty)}, R\pi_* \HOM(\F, \G))).
\end{align*}
Besides all the functorial properties used in this argument, the only non-trivial step is the first equality which uses the criterion Theorem \ref{tam-elm}. 
\end{proof}

Finally, we can give a proof of Separation Theorem. This is a perfect example showing behavior of singular support can post a strong restriction of the behavior the sheaf. Recall the following useful result (Corollary 1.7 in \cite{GKS12} and cf. Proposition \ref{zero-2}).

\begin{lemma} \label{lem-cons} For any $\F \in \D(\k_{M \times \R})$, if $SS(\F) \cap (0_M \times T^*\R) \subset 0_{M \times \R}$, then $R\pi_*\F$ is a constant sheaf. Here $\pi: M \times \R \to \R$ is projection. \end{lemma}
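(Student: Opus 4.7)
The plan is to translate the hypothesis on $SS(\F)$ into a statement about $SS(R\pi_*\F)$ via the functorial pushforward bound, and then conclude from the microlocal characterization of (locally) constant sheaves.

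First I would apply Proposition \ref{push} to $\pi: M \times \R \to \R$. Since $\pi(x,t)=t$, the differential is $\pi^*(\tau) = (0,\tau) \in T^*_{(x,t)}(M \times \R)$, so the pushforward bound becomes
\[ SS(R\pi_*\F) \;\subset\; \left\{(t,\tau) \in T^*\R \,\big|\, \exists\, x \in M \text{ with } (x,0,t,\tau) \in SS(\F)\right\}. \]
But the hypothesis $SS(\F) \cap (0_M \times T^*\R) \subset 0_{M \times \R}$ says exactly that every such point $(x,0,t,\tau) \in SS(\F)$ must satisfy $\tau = 0$. Therefore $SS(R\pi_*\F) \subset 0_\R$.

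Second, I would invoke Fact \ref{fact-lc} applied cohomology-sheaf-wise: since $SS(R\pi_*\F) \subset 0_\R$ controls $SS(h^i(R\pi_*\F))$ via truncation triangles together with Proposition \ref{SS-triangle}, each $h^i(R\pi_*\F)$ is locally constant on $\R$. Because $\R$ is (simply) connected, locally constant sheaves on $\R$ are constant, so $R\pi_*\F$ is quasi-isomorphic to a constant complex of sheaves, as claimed.

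The main obstacle is the rigorous justification of the pushforward bound in Step one, since $\pi$ is not proper. In the generality in which the lemma is stated, the bound on $SS(Rf_*\F)$ in microlocal sheaf theory typically requires $f$ to be proper on $\mathrm{supp}(\F)$ (or some analogous finiteness/constructibility hypothesis on $\F$); in the applications of this lemma within the paper, $\F$ does satisfy such conditions. Alternatively, one can bypass Proposition \ref{push} and argue directly with the geometric microlocal Morse lemma (Theorem \ref{g-mml}) applied to $\psi(x,t) = t$: the hypothesis implies $\graph(d\psi)$ and its positive rescalings meet $SS(\F)$ only in the zero section, so $R\Gamma(\{t<a\};\F) \simeq R\Gamma(\{t<b\};\F)$ for all $a \leq b$, which (after localizing in $t$) gives the constancy of the stalks of $R\pi_*\F$ and hence the conclusion.
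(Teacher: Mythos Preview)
Your proposal is correct and matches the paper's approach. The paper does not prove Lemma~\ref{lem-cons} directly (it cites \cite{GKS12}, Corollary~1.7), but it proves the essentially identical Proposition~\ref{zero-2} by exactly your Step~1 argument: apply the pushforward bound Proposition~\ref{push} to $q$ (your $\pi$), observe $q^*(\tau)=(0,\tau)$ forces $\xi=0$ and hence $\tau=0$ by hypothesis, so $SS(Rq_*\F)\subset 0_I$, and then invoke Fact~\ref{fact-lc} plus simple-connectedness of the base. Your caveat about properness of $\pi$ is legitimate and is indeed glossed over in the paper's proof of Proposition~\ref{zero-2} as well; in the actual applications (e.g.\ $\F=\HOM(\F_f,\F_g)$ in the proof of the Separation Theorem, or sheaves coming from compactly supported isotopies) the required finiteness/properness conditions on $\mathrm{supp}(\F)$ hold.
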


\begin{proof} (Proof of Theorem \ref{sep-thm}) By Claim \ref{claim-magic} and Lemma \ref{lem-cons}, we aim to show if $\F, \G$ are chosen satisfying the hypothesis, then 
\[ SS(\HOM(\F, \G)) \cap  (0_M \times T^*\R) \subset 0_{M \times \R}. \]
By geometric meaning of convolution, we will focus on $SS(\bar{\F}) \,\hat{\ast} \, SS(\G)$. Due to Remark \ref{rek-sep}, we can assume $\F$ and $\G$ has some common $M$-component with co-linear co-vectors. Then 
\begin{align*}
SS(\bar{\F}) \,\hat{\ast} \, SS(\G) & \subset \{(m, \tau (\xi_\G - \xi_\F), t_\F + t_\G, \tau \,| \, \mbox{common $m$ and $\tau$}\}.
\end{align*} 
After intersected with $0_M \times T^*\R$, we get $\tau(\xi_{\G} - \xi_{\F}) = 0$. But $A \cap B = \emptyset$ implies over this common $m$, $\xi_{\G} \neq \xi_{\F}$, so we are left that $\tau =0$. Thus by Lemma \ref{lem-cons}, $R\pi_*\HOM(\F, \G)$ is just a constant sheaf. Finally, we will check $\Gamma(\R, R\pi_*\HOM(\F, \G)) =0$ (hence indeed $R\pi_*\HOM(\F, \G)=0$). We have the following computation 
\begin{align*}
R\Gamma(\R, R\pi_*\HOM(\F, \G)) & = R{\Hom}(\k_{\R}, R\pi_*\HOM(\F, \G)) \\
& = R{\Hom}(\k_{M \times \R} \ast \F, \G) \\
& = R{\Hom}(\k_{M \times \R} \ast \k_{M \times [0, \infty)} \ast \F, \G) =0
\end{align*}
where the final step comes from an easy computation that $\k_{M \times \R} \ast \k_{M \times [0, \infty)} = 0$. 
\end{proof}

\subsection{Sheaf barcode from generating function} \label{sec-pb-sb}

This is a special section trying to understand better the topological meaning of sheaf $\H : = R\pi_* \HOM(\F, \G)$ which plays a crucial role in Separation Theorem. The highlight of this section is Theorem \ref{id-barcode}. We will start from the concrete example as in Example \ref{subtract}, that is, $\F = \F_f$ and $\G = \F_g$ for some functions on $M$. Recall, with such given $\F$ and $\G$,
\[ \HOM(\F, \G) = \HOM(\F_f, \F_g) = \k_{\{(m,t) \in M \times \R \,| \, f(m) - g(m) >t \}}[1]. \]

{\bf Topological stalk of $\H$}. We can investigate this $\H$ by first looking at its stalks. Assume $f-g$ is Morse on $M$, for any $t \in \R \backslash \{\mbox{critical value of $f-g$}\}$, up to a degree shift,
\begin{equation} \label{eq1}
\H_t = (R\pi_* \k_{\{f-g >t\}})_t = H^*(M; \k_{\{f-g >t\}}). 
\end{equation}
Since $\{f-g>t\}$ is open, we will refer to the following exact sequence 
\[ 0 \to \k_{\{f-g >t\}} \to \k_{M} \to \k_{\{f-g \leq t\}} \to 0\]
which implies a long exact sequence after applying $H^*(M, \cdot)$, that is 
\[ H^*(M; \k_{\{f-g>t\}}) \to H^*(M; \k_{M}) \to H^*(M; \k_{\{f-g\leq t\}}) \xrightarrow{+1} \]
which is equal to   
\[ H^*(M; \k_{\{f-g>t\}}) \to H^*(M; \k) \to H^*(\{f-g \leq t\}; \k) \xrightarrow{+1} \]
because of the well-known fact that $H^*(M; \k_N) = H^*(N; \k)$ whenever $N$ is a closed submanifold of $M$. Then by Five Lemma, $H^*(M; \k_{\{f-g >t\}})  \simeq H^*(M, \{f-g \leq t\}; \k)$. On the other hand, 
\begin{align*} 
H^*(M, \{f-g \leq t\}; \k) & = H^*(\{f-g \geq t\}, \partial \{f-g \geq t\}; \k) \,\,\,\,\,\mbox{Excision}\\
&= H_{n-*}(\{f -g \geq t\}; \k) \,\,\,\,\,\mbox{Lefschetz duality}
\end{align*}
The version of Lefschetz duality we used here is the following: for any pair $(M, \partial M)$, $H^*(M, \partial M;\k) = H_{n-*}(M; \k)$. The resulting homology is similar to the generating function homology (GH-homology) defined in \cite{ST13}. To some extent, we can regard $\H$ as a generalization of GH-homology in the language of sheaves. 

\begin{remark} \label{no-ep} The computation above only tells us the stalks of $\H$ at all but the critical values. In other words, viewing $H_{n-*}(\{f-g \geq t\}; \k)$ as a package giving a persistence $\k$-module, so far we don't know the open-closedness of the endpoints for each corresponding bar in its barcode. Hence we are free to choose how to ``complete'' the endpoints (cf. Theorem \ref{id-barcode}). \end{remark}

{\bf Constructibility of $\H$.} By (\ref{exe-fnc}), $\HOM(\F, \G)$ is constructible, so by Proposition 8.4.8 in \cite{KS90}, $\H$ is also constructible. Then (up to degrees) $\H = \bigoplus \k_{[a,b)}$ (here we know the open-closedness of endpoints due to the working circumstance in Tamarkin category). The collection of intervals in this decomposition is called the {\it sheaf barcode of $\H$, denoted as $\mathcal B(\H)$}. 

Even without referring to Proposition 8.4.8 in \cite{KS90}, by the standard Morse theory, we know $\H$ is constructible since $\H_s \simeq \H_t$ if there is no critical values between $s$ and $t$, which implies the endpoints $a,b$ in bars of $\mathcal B(\H)$ are only lying in the critical values of $f-g$. Equivalently, the change of the stalks only happens at critical values of $f-g$, denoted as $\lambda_1 < \lambda_2 < ... < \lambda_n$. Finally, by microlocal Morse lemma, for any $s<t$, there exists a well-defined map $\tau_{t,s}: \H_t \to \H_s$ ({\bf Exercise}). Thus one gets Figure \ref{i10} 
\begin{figure}[h]
\centering
\includegraphics[scale=0.5]{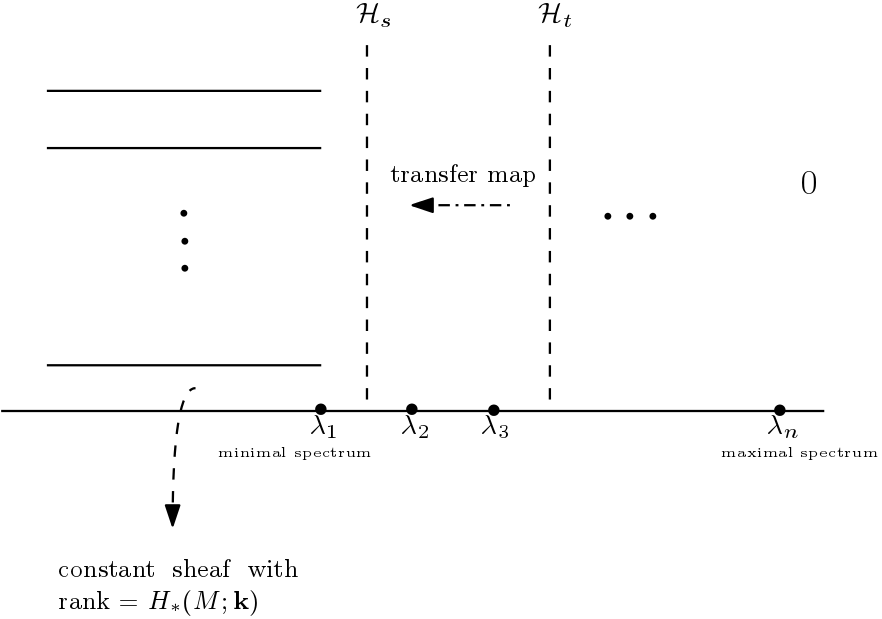}
\caption{sheaf barcode derived from generating function}
\label{i10}
\end{figure}
where before $\lambda_1$ (minimal critical value), it is a constant sheaf with rank equal to $H_*(M; \k)$ and after $\lambda_n$ (maximal critical value), it is just zero. These data {\it uniquely} determines the decomposition of $\H$ by the following lemma. 

\begin{lemma} \label{quiver} \footnote{This lemma arise from a claim in \cite{Gui16}, Section 7. The original claim differs from here on the third condition that to determine a general constructible sheaf $\F$ over $\R$, we need to know morphisms $\F(\lambda_{i}, \lambda_{i+1}) \leftarrow \F(\lambda_{i}, \lambda_{i+2}) \rightarrow \F(\lambda_{i+1}, \lambda_{i+2})$ for each $i$. However, since our sheaf $\F$ satisfies $SS(\F) \subset \{\tau \geq 0\}$, by microlocal Morse lemma, $\F(\lambda_i, \lambda_{i+2}) \simeq \F(\lambda_{i+1}, \lambda_{i+2})$ where $\lambda_{i+1}$ propagates to $\lambda_{i}$. Therefore, we are reduced to the case $\F(\lambda_{i+1}, \lambda_{i+2}) \to \F(\lambda_{i}, \lambda_{i+1})$ which is of course equivalent to $\F_{a_{i+1}} \to \F_{a_{i}}$.} Any constructible sheaf $\F$ over $\R$ with $SS(\F) \subset \{\tau \geq 0\}$ is uniquely determined by the following data,
\begin{itemize}
\item{} a list of numbers $\{\lambda_0, \lambda_1, ..., \lambda_n\} \subset \R$ where $\lambda_0 = -\infty$;
\item{} non-negative integers $N_0, ...N_{n}$ such that $\F|_{(\lambda_i, \lambda_{i+1})} = \k^{N_i}$;
\item{} morphisms $\tau_{a_{i+1}, a_{i}}: \F_{a_{i+1}} \to \F_{a_{i}}$ for $a_i \in (\lambda_i, \lambda_{i+1})$ and $i = 0, ..., n$.
\end{itemize}
\end{lemma}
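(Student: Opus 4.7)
The plan is to reconstruct $\F$ stratum by stratum from the given data, using the constructibility to handle open strata and the microlocal constraint $SS(\F)\subset\{\tau\geq 0\}$ to pin down the behaviour at the jumping points $\lambda_i$. First, I would observe that constructibility with prescribed jump locus $\{\lambda_0,\lambda_1,\dots,\lambda_n\}$ forces $\F|_{(\lambda_i,\lambda_{i+1})}$ to be locally constant, hence (since each interval is simply connected) canonically isomorphic to the constant sheaf $\k^{N_i}$. This accounts for the data $(N_0,\dots,N_n)$ on the open strata and shows $\F_{a_i}\cong\k^{N_i}$ for every generic point $a_i\in(\lambda_i,\lambda_{i+1})$.

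Next, I would determine the stalks at the singular points $\lambda_i$ and show that there is no residual freedom there. Applying the microlocal Morse lemma (Theorem \ref{mml}) to the function $\phi(t)=-(t-\lambda_i)$, whose codifferential lies in $\{\tau\leq 0\}$, the hypothesis $SS(\F)\subset\{\tau\geq 0\}$ forces propagation in the direction of decreasing $t$; concretely, the generization map $\F_{\lambda_i}\to\F_{a_i}$ is an isomorphism for any $a_i\in(\lambda_i,\lambda_{i+1})$. Under this canonical identification, the remaining generization map $\F_{\lambda_i}\to\F_{a_{i-1}}$ becomes exactly the prescribed arrow $\tau_{a_i,a_{i-1}}\colon\F_{a_i}\to\F_{a_{i-1}}$. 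So the stalks and all specialization maps of $\F$ are completely determined by the given data.

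The last step is to argue that a constructible sheaf on $\R$ with a fixed stratification is determined up to canonical isomorphism by its stalks together with the specialization maps at each stratum boundary. The cleanest route is to invoke the decomposition theorem for constructible sheaves on $\R$ (Theorem 1.15 in \cite{KS17}), which combined with the condition $SS(\F)\subset\{\tau\geq 0\}$ forces a decomposition
\[
\F\;\simeq\;\bigoplus_{0\leq p\leq q\leq n+1}\k_{[\lambda_p,\lambda_q)}^{\,m_{p,q}}
\]
(with the convention $\lambda_{n+1}=+\infty$), and then to read off the multiplicities $m_{p,q}$ from the ranks of the composed transfer maps $\tau_{a_p,a_{p-1}}\circ\cdots\circ\tau_{a_{q-1},a_{q-2}}$. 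Equivalently, one may identify this subcategory of constructible sheaves with the category of representations of the $A_{n+1}$-quiver whose arrows all point toward $V_0$, sending $\F$ to $(V_i=\F_{a_i},\,\phi_i=\tau_{a_i,a_{i-1}})$, and then use Krull--Schmidt for this finite-type quiver to conclude.

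The main obstacle is the last step: existence of the interval decomposition gives isomorphism classes, but pinning down the multiplicities $m_{p,q}$ uniquely from the quiver data requires careful bookkeeping. The cleanest way around this is to check directly that the proposed functor from sheaves to quiver representations is an equivalence of categories (fully faithful on the indecomposables $\k_{[\lambda_p,\lambda_q)}$, which map to the obvious interval representations, and essentially surjective by direct construction), after which uniqueness of the data in the lemma is an immediate consequence of Krull--Schmidt on the quiver side.
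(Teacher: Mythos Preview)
Your proposal is correct and follows essentially the same line as the paper's justification (given only in the footnote to the lemma): both use the singular support constraint $SS(\F)\subset\{\tau\geq 0\}$ together with the microlocal Morse lemma to collapse the a priori zig-zag gluing data for a constructible sheaf on $\R$ down to the one-directional arrows $\F_{a_{i+1}}\to\F_{a_i}$, and then appeal to a classification result. The paper simply cites Guillermou's statement for general constructible sheaves on $\R$ and observes the simplification, whereas you spell out the endgame via the interval decomposition of \cite{KS17} (equivalently, the $A_{n+1}$-quiver equivalence and Krull--Schmidt); this makes your argument somewhat more self-contained, but the underlying idea is the same.
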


Be cautious that, without knowing the information of $\tau_{a_i,a_{i+1}}$, $\H$ will not be uniquely determined even if we know the rank on each connected component $N_i$ and all the critical values $\lambda_i$'s. Meanwhile, up to all the isomorphisms demonstrated in the previous subsection, $\tau_{t,s}$ is induced by $H_{n-*}(\{f-g >t\}; \k) \to H_{n-*}(\{f-g>s\}; \k)$ which is induced by inclusion $\{f-g>t\} \hookrightarrow \{f-g >s\}$. Therefore, $\tau_{a_{i+1},a_{i}} \simeq \iota_{a_{i+1}, a_{i}}$ where $\iota_{a_{i+1}, a_i}$ is the transfer map for persistence $\k$-module by package $H_{n-*}(\{f-g \geq t\}; \k)$. Since they give the same quiver representation, the upshot is then 

\begin{theorem} \label{id-barcode} There are two ways to generate barcodes of a Morse system $(M, h)$ where $h: M \to \R$ is a Morse function on $M$. 
\begin{itemize}
\item[(1)] (Morse-persistence) Compute (anti)-persistence $\k$-module  
\[ V: =  \left\{\{H_*(\{h \geq t\}; \k)\}_{t \in \R}; \iota_{t,s}\right\}_{s \leq t} \]
for regular values $t$ and complete its barcode in $[-,-)$-type. 
\item[(2)] (Sheaf-constructible) Compute sheaf (over $\R$), where $\pi: M \times \R \to \R$ is projection.
\[ \H : = R\pi_*\HOM(\F_f, \F_g) \,\,\mbox{where $f-g =h$}. \]
\end{itemize}
Moreover, after individual's decomposition theorem, $\mathcal B(V) = \mathcal B(\H)$. \end{theorem}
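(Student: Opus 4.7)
The plan is to apply Lemma \ref{quiver} to $\H$ and to show that it encodes the same quiver data as the persistence module $V$, so that both decompose into the same multiset of $[-,-)$-type intervals. The key point is that every ingredient for such a matching has effectively been prepared in the discussion preceding the theorem, and the proof amounts to packaging these ingredients into a naturality statement.

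First, I would verify that the stalks of $\H$ at regular values of $h = f-g$ agree with those of $V$. This is exactly what the computation preceding the statement carries out: combining (\ref{eq1}) with the long exact sequence induced by $0 \to \k_{\{f-g>t\}} \to \k_M \to \k_{\{f-g\leq t\}} \to 0$, then excision, then Lefschetz duality, yields $\H_t \cong H_{n-*}(\{f-g \geq t\}; \k) = V_t$ (up to the cohomological degree shift dictated by Definition \ref{dfn-hom}). Consequently the ranks $N_i$ on the components of the complement of the critical values agree on both sides, and the set of jumping points of $\H$ is exactly the set of critical values $\lambda_1 < \cdots < \lambda_n$ of $h$.

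Next, I would match the transition morphisms. On the sheaf side, for regular values $s \leq t$, microlocal Morse lemma (Theorem \ref{mml}) applied to $\H$, whose singular support lies in $\{\tau \geq 0\}$ thanks to $SS(\HOM(\F_f,\F_g)) \subset SS(\F_f)\,\hat\ast\, SS(\F_g)^a$, produces a map $\tau_{t,s}\co \H_t \to \H_s$; on the persistence side, $\iota_{t,s}\co V_t \to V_s$ is induced by the inclusion $\{h \geq t\} \hookrightarrow \{h \geq s\}$. The plan is to observe that each step of the stalk computation is natural in $t$: the short exact sequence above is functorial under enlargement of $\{f-g>t\}$, the excision isomorphism $H^*(M,\{f-g\leq t\};\k) \simeq H^*(\{f-g\geq t\},\partial;\k)$ commutes with the inclusion of pairs, and Poincaré–Lefschetz duality is natural with respect to proper inclusions of manifolds with boundary. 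Hence the isomorphism $\H_t \cong V_t$ intertwines $\tau_{t,s}$ with $\iota_{t,s}$.

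With both ingredients in place, Lemma \ref{quiver} says $\H$ is uniquely determined by the quiver data $(\{\lambda_i\}, \{N_i\}, \{\tau_{a_{i+1}, a_i}\})$, and the argument above identifies this quiver representation with the one carried by $V$. Decomposing the common quiver representation into indecomposables and reading off the intervals in $[-, -)$-form, which on the sheaf side is forced by $SS(\H) \subset \{\tau \geq 0\}$ and on the persistence side is fixed by convention as in Remark \ref{no-ep}, yields $\mathcal B(V) = \mathcal B(\H)$. The main obstacle is the naturality verification of step two: while the stalk-level isomorphism is morally clear, confirming the commutativity of the relevant squares through the chain of long exact sequence, excision, and Lefschetz duality, and checking that $\tau_{t,s}$ obtained from microlocal Morse lemma corresponds on stalks to the expected restriction map on $H^*(M;\k_{\{f-g>\cdot\}})$, is the one place where bookkeeping is unavoidable. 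A secondary point deserving care is the endpoint convention, since Remark \ref{no-ep} shows the stalk data alone cannot distinguish $[a,b)$ from $(a,b]$; here the Tamarkin constraint $SS(\H) \subset \{\tau \geq 0\}$ is what forces the $[-,-)$ form on the sheaf side.
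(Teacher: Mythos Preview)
Your proposal is correct and follows essentially the same route as the paper: the argument in Section~\ref{sec-pb-sb} computes the stalks of $\H$ via the chain (\ref{eq1})--long exact sequence--excision--Lefschetz duality, observes that the transfer maps $\tau_{t,s}$ are induced by the inclusions $\{f-g>t\}\hookrightarrow\{f-g>s\}$ and hence match $\iota_{t,s}$, and then invokes Lemma~\ref{quiver} to conclude that the two quiver representations, and hence the barcodes, coincide. If anything, you are more explicit than the paper about the naturality of each step in the stalk identification, which the text simply asserts with the phrase ``up to all the isomorphisms demonstrated in the previous subsection''.
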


\begin{ex} \label{ex-0-gf} When $f = g$, then $\{f- g >0\} = \emptyset$ and $\{f-g>-\ep\} = M$ for any $\ep >0$. Therefore, $\H : = R\pi_*\HOM(\F_f, \F_f) = \bigoplus _{{\tiny {\mbox{$N$ copies}}}}\k_{(-\infty, 0)}$, where $N = \sum_{i} b_i(M)$. Therefore, $R\Hom(\F_f, \F_f) = \k^N$. Here we ignore the degrees. \end{ex}

\begin{remark} This coincidence from Theorem \ref{id-barcode} is an example of a bigger framework transferring persistence $\k$-modules to constructible sheaves over $\R$ (and vice versa). For interested reader, Appendix \ref{app-1} provides a detailed explanation. \end{remark} 

\begin{exercise} \label{exe-tru-0}
Prove $R\Hom(\F_f, \F_g) = \k^{N_0}$ where $N_0$ counts the number of $\k_{(-\infty, b)}$ with $b \geq 0$ and $\k_{[a,b)}$ with $b \geq 0, 0 >a > -\infty$ in the decomposition of $\H: = R\pi_*\HOM(\F_f, \F_g)$. This corresponds to the number of generators {\it at} the level set $\{f -g =-\ep\}$ for an arbitrarily small $\ep>0$. More precisely, 
\begin{itemize}
\item{} $\k_{(-\infty, b)}$ contributes to homologically essential generators;
\item{} $\k_{[a, b)}$ contributes to non-homologically essential generator.
\end{itemize}
\end{exercise}

{\bf Filtration shift}. In the Claim \ref{claim-magic}, coupled with $\k_{[0, \infty)}$ with starting point $0$ is not special at all. By the same computation as in Exercise \ref{exe-tru-0}, it is possible to investigate any level $c \in \R$ by shift functor ${T_c}_*$. Therefore, all the Morse information (including the intermediate born-killing relations) can be recovered. Explicitly, for any $\F, \G \in \T(M)$, for any $c \in \R$, we have the following filtered-shifted version of computation of $R\Hom(\F, \G)$.
\begin{align*}
R\Hom(\F, {T_c}_* \G) & = R\Hom({T_{-c}}_* \F, \G) \\
& = R\Hom(\k_{M \times [-c, \infty)} \ast \F, \G) \\
& = R\Hom(\k_{M \times [-c, \infty)}, \HOM(\F, \G)) \\
& = R\Hom(\pi^{-1} \k_{[-c,\infty)}, R\pi_* \HOM(\F, \G))\\
& = R\Hom(\k_{[-c, \infty)}, R\pi_* \HOM(\F, \G))\\
& = R\Hom(\k_{[0, \infty)}, {T_c}_* R\pi_*\HOM(\F, \G)).
\end{align*}
In the example from functions as above, we will get $\k^{N_c}$ where $N_c$ counts generators appearing at the level set $\{f - g = -c - \ep\}$ for an arbitrarily small $\ep>0$. $N_c$ changes only at the critical value $c$. 

\begin{remark} \label{third-barcode} Compared with persistent homology, $\{(c, N_c)\}$ exactly corresponds to the persistence (sum of) Betti number $\beta_c$. Moreover, for any $c<d$, well-defined map $\tau_{c,d}: {T_c}_*\G \to {T_d}_*\G$ induces a well-defined map $\iota_{c,d}: R\Hom(\F, {T_c}_*\G) \to R\Hom(\F, {T_d}_* \G)$. Thus direct system 
\[ W : = \{\{R\Hom(\F, {T_c}_*\G)\}_{c \in \R}, \iota_{c,d}\} \]
forms a persistence $\k$-module where $\mathcal B(W) = \mathcal B(V) = \mathcal B(\H)$ in Theorem \ref{id-barcode} (modulo the endpoints because $\mathcal B(W)$ gives opposite open-closedness bars compared with $\mathcal B(V)$ or $\mathcal B(\H)$). Later we will see how ``torsion'' of $\H$, which is equivalent to the lengths of (finite length) bars, captures the displacement energy. \end{remark}
 
\subsection{Interleaving distance in Tamarkin category} \label{sec-inter}

In this section, we will define interleaving relation between two objects in Tamarkin category and also prove a criterion to check when two objects are interleaved. 

\subsubsection{Definition of sheaf interleaving}

Recall in $\T(M)$, for any $c \geq 0$, there exists a natural transformation $\tau_c: \I \to {T_c}_*$. 

\begin{dfn} \label{dfn-interleaving} For any $\F, \G \in \T(M)$, we call they are {\it $c$-interleaved} ($c \geq 0$) if there exist (two pairs of maps) $\F \xrightarrow{ \alpha, \delta} {T_c}_* \G$ and $\G \xrightarrow{ \beta, \gamma} {T_c}_* \F$ such that the following two diagrams commute 
\[ \xymatrixcolsep{5pc} \xymatrix{
\F \ar[r]^-{\alpha} \ar@/_1.5pc/[rr]_{\tau_{2c}(\F)} & {T_c}_*\G \ar[r]^-{{T_c}_*\beta} & {T_{2c}}_*\F}
\]
and
\[ \xymatrixcolsep{5pc} \xymatrix{
\G \ar[r]^-{\gamma} \ar@/_1.5pc/[rr]_{\tau_{2c}(\G)} & {T_c}_*\F \ar[r]^-{{T_c}_*\delta} & {T_{2c}}_*\G}.
\]
Then we can define a distance 
\begin{equation} \label{d-sheaf}
d_{\T(M)} (\F, \G) = \inf\{c \geq 0\,| \, \mbox{$\F$ and $\G$ are $c$-interleaved}\}.
\end{equation}
\end{dfn}

\begin{remark} (1) $\F$ is $2c$-torsion if and only if $\F$ and $0$ are $c$-interleaved. (2) When $M = \{pt\}$, this gives a similar distance to $d_{int}$ for persistence $\k$-modules, see Section \ref{sec-per-int}. However, due to the non-symmetric pairs, (\ref{d-sheaf}) defines a weaker distance than the actual interleaving distance.  (3) In \cite{AI17}, it defines an $(a,b)$-isometry relation which is a non-balanced version of our definition given above. Obviously $(a,b)$-isometry implies $(a+b)$-interleaved.  \end{remark}

\begin{ex} $\k_{[0, \infty)}$ and $\k_{[2, \infty)}$ are $2$-interleaved.\end{ex}

\begin{ex} Let $M = S^1 = \R/\Z$ and $f: S^1 \to \R$ is defined by Figure \ref{8}.
\begin{figure}[h]
\centering
 \includegraphics[scale=0.45]{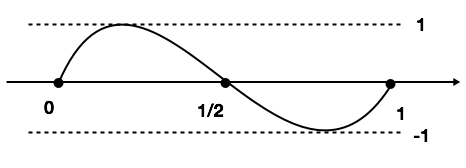}
 \caption{Differentiable function over a compact manifold}
 \label{8}
 \end{figure}
Consider canonical sheaf $\F_f$ of $\graph(df)$. It is easy to see $\F_f$ and $\k_{M \times [0, \infty)}$ is $1$-interleaved. This is represented by Figure \ref{9}.
\begin{figure}
 \centering
 \includegraphics[scale=0.45]{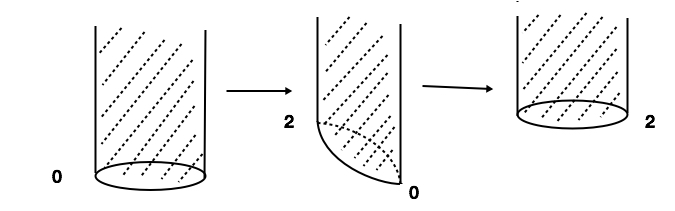}
 \caption{An example of sheaf interleaving}
 \label{9}
 \end{figure}
In fact, in this case, $d_{\T(S^1)} (\F_f, \k_{M \times [0, \infty)}) = 1$. 
\end{ex}

\begin{ex} \label{ex-c0}
Generalized the example above, for any $f, g: M \to \R$ (assuming $M$ is compact) with mean values being $0$, $d_{\T(M)} (\F_f, \F_g) = |\max_M f - \min_M g|$. \end{ex}
 
\begin{prop} \label{prop-dis} (Exercise) Here we list some basic properties of $d_{\T(M)}$.
\begin{itemize}
\item[(1)] $d_{\T(M)} (\cdot,\cdot)$ is a pseudo-metric on $\T(M)$.
\item[(2)] $d_{\T(M)} (\F_1, \F_2) = d_{\T(M)} (\bar\F_1, \bar\F_2)$. 
\item[(3)] $d_{\T(M)}(\F_1 \ast \G_1, \F_2 \ast \G_2) \leq d_{\T(M)} (\F_1, \F_2) + d_{\T(M)} (\G_1, \G_2)$.
\item[(4)] $d_{\T(M)}(\F_1 \ast_{np} \G_1, \F_2 \ast_{np} \G_2) \leq d_{\T(M)} (\F_1, \F_2) + d_{\T(M)} (\G_1, \G_2)$.
\item[(5)] $d_{\T(M)}(\HOM(\F_1, \G_1), \HOM(\F_2,\G_2)) \leq d_{\T(M)} (\F_1, \F_2) + d_{\T(M)} (\G_1, \G_2)$. 
\item[(6)] $d_{\T(M)} (R\pi_* \F_1, R\pi_*\F_2) \leq d_{\T(M)} (\F_1, \F_2)$ where $\pi: M \times \R \to \R$ is projection.
\end{itemize}
\end{prop}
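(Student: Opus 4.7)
My plan is to unpack each of the six items, exploiting that every operator in sight is (co)functorial in a way that is compatible with the shift $T_{c}{}_{*}$ and the transformation $\tau_c:\mathbb{I}\to T_c{}_{*}$.

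For (1), non-negativity and symmetry are immediate from Definition \ref{dfn-interleaving} (the roles of $(\alpha,\delta)$ and $(\gamma,\beta)$ are interchangeable). For the triangle inequality, suppose $\F_1,\F_2$ are $c_1$-interleaved via $\alpha_1,\beta_1,\gamma_1,\delta_1$ and $\F_2,\F_3$ are $c_2$-interleaved via $\alpha_2,\beta_2,\gamma_2,\delta_2$. Composing yields $\alpha:=T_{c_1}{}_{*}\alpha_2\circ\alpha_1:\F_1\to T_{c_1+c_2}{}_{*}\F_3$ (and similarly $\delta,\beta,\gamma$). That these form a $(c_1+c_2)$-interleaving reduces, after applying $T_{c_i}{}_{*}$ appropriately, to the defining pentagons for the two given interleavings together with the equality $T_a{}_{*}\tau_b=\tau_b\circ T_a{}_{*}=$ shift of the natural transformation, which follows from the trivial identity $T_a{}_{*}\circ T_b{}_{*}=T_{a+b}{}_{*}$.

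For (2), the key point is that the adjoint sheaf $\bar{(\,\cdot\,)}=R\mathcal{H}om(i^{-1}(\,\cdot\,),\k_{M\times\R})[1]$ satisfies $\overline{T_c{}_{*}\F}\simeq T_c{}_{*}\bar{\F}$ (since $i^{-1}\circ T_c{}_{*}=T_{-c}{}_{*}\circ i^{-1}$ and $R\mathcal{H}om$ reverses the direction of the shift again by pulling back along $T_{-c}$), and that $\tau_c$ is carried to $\tau_c$ under this duality. Consequently, a $c$-interleaving $\alpha,\beta,\gamma,\delta$ for $(\F_1,\F_2)$ produces, by applying $\overline{(\cdot)}$, a $c$-interleaving for $(\bar\F_2,\bar\F_1)$, and by the already proven symmetry in (1) this gives the required inequality in both directions.

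For (3) and (4), a $c_1$-interleaving of $(\F_1,\F_2)$ and a $c_2$-interleaving of $(\G_1,\G_2)$ can be convolved termwise, using the fundamental identity $T_a{}_{*}\F\ast T_b{}_{*}\G\simeq T_{a+b}{}_{*}(\F\ast\G)$ (immediate from $\k_{M\times\{a\}}\ast\k_{M\times\{b\}}=\k_{M\times\{a+b\}}$, which holds identically for $\ast$ and for $\ast_{np}$). Explicitly, take $\alpha_1\ast\alpha_2:\F_1\ast\G_1\to T_{c_1+c_2}{}_{*}(\F_2\ast\G_2)$, and analogously for the three other maps; the interleaving diagrams then factor as the $\ast$-product (or $\ast_{np}$-product) of the two given interleaving diagrams, using again that $\tau_{2(c_1+c_2)}(\F\ast\G)=\tau_{2c_1}(\F)\ast\tau_{2c_2}(\G)$.

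For (5), the main care is the variance: $\HOM$ is contravariant in the first slot, so a map $\F_1\xrightarrow{\alpha}T_{c_1}{}_{*}\F_2$ produces, via the adjunction used to prove Claim \ref{claim-magic}, a map $\HOM(\F_2,\G)\to T_{c_1}{}_{*}\HOM(\F_1,\G)$. Combining this with the (covariant) effect of a $c_2$-interleaving on the second slot yields the four required maps between $\HOM(\F_1,\G_1)$ and $\HOM(\F_2,\G_2)$, shifted by $c_1+c_2$, and the pentagons follow from the functoriality of the adjunction together with the shift identity above. For (6), since $\pi:M\times\R\to\R$ commutes with translations $T_c$ acting on the $\R$-factor, one has $R\pi_{*}\circ T_c{}_{*}\simeq T_c{}_{*}\circ R\pi_{*}$ and $R\pi_{*}\tau_c=\tau_c R\pi_{*}$. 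Thus applying $R\pi_{*}$ to every arrow of a $c$-interleaving between $\F_1$ and $\F_2$ produces a $c$-interleaving between $R\pi_{*}\F_1$ and $R\pi_{*}\F_2$. The only delicate point anywhere in this proposal is verifying the contravariant step in (5): one must check that the adjunction $R\Hom(\F\ast\G,\H)=R\Hom(\F,\HOM(\G,\H))$ is natural not just in the abstract sense but is intertwined with the natural transformations $\tau_c$, which amounts to chasing the identification $T_c{}_{*}\F=\F\ast\k_{M\times\{c\}}$ through Corollary \ref{cor-hom-wd} and Definition \ref{dfn-hom}.
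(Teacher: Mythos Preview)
The paper leaves this proposition as an exercise and gives no proof, so there is nothing to compare your approach against; your outline is the intended one, namely that each operator commutes with $T_c{}_*$ and intertwines $\tau_c$, so interleavings push through.

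There is, however, a concrete sign slip in your treatment of (2). You assert $\overline{T_c{}_*\F}\simeq T_c{}_*\bar\F$, but in fact
\[
\overline{T_c{}_*\F}
= R\mathcal{H}om\bigl(i^{-1}(T_c)_*\F,\k_{M\times\R}\bigr)[1]
= R\mathcal{H}om\bigl((T_{-c})_* i^{-1}\F,\k_{M\times\R}\bigr)[1]
= (T_{-c})_*\,\bar\F,
\]
because $R\mathcal{H}om((T_{-c})_*G,\k_{M\times\R})=(T_{-c})_*R\mathcal{H}om(G,\k_{M\times\R})$ (the target $\k_{M\times\R}$ is translation-invariant, so there is \emph{no} second reversal from $R\mathcal{H}om$). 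This does not break the argument: since $\bar{(\cdot)}$ is contravariant, $\alpha:\F_1\to T_c{}_*\F_2$ yields $\bar\alpha:T_{-c}{}_*\bar\F_2\to\bar\F_1$, which after applying $T_c{}_*$ becomes a map $\bar\F_2\to T_c{}_*\bar\F_1$, exactly what is needed. You should also check explicitly that $\overline{\tau_c(\F)}$, after this shift, agrees with $\tau_c(\bar\F)$; this comes down to the fact that both arise from the single restriction $\k_{M\times[0,\infty)}\to\k_{M\times[c,\infty)}$. Note that your argument for (5) implicitly uses the \emph{correct} identity $\HOM(T_c{}_*\F,\G)=T_{-c}{}_*\HOM(\F,\G)$, so the two parts of your write-up are at present inconsistent with each other.
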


\begin{remark} Example \ref{ex-c0} is an easy but enlightening in the following sense. It shows pseudo-metric $d_{\T(M)}$ of sheaves is related with $C^0$-distance of generating functions (defining Lagrangians). Not every Lagrangian admits generating function, but sometimes it can be $C^0$-approximated by a family of Lagrangians which admit generating functions (hence we can construct corresponding sheaves). Figure \ref{41} provides a standard example on $T^*\R$, 
\begin{figure}
 \centering
 \includegraphics[scale=0.45]{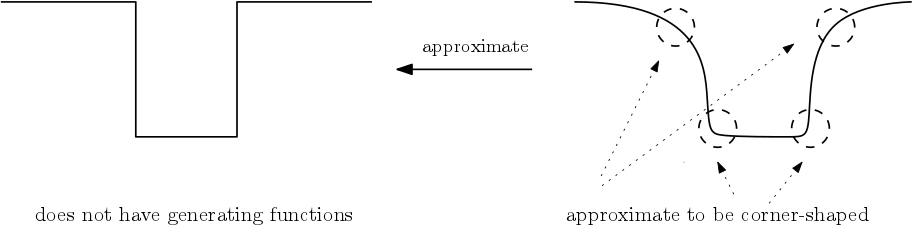}
 \caption{$C^0$-approximation of a bad Lagrangian}
 \label{41}
 \end{figure}
As suggested by L. Polterovich, sheaf method can be used to study this $C^0$-phenomenon in symplectic geometry. Our first intuition comes from the following heuristic deduction that, depending on the metric property of space $(\T(M), d_{\T(M)})$ where we conjecture it is a complete metric space, such $C^0$-approximation of Lagrangians provide a Cauchy sequence in $\T(M)$ under $d_{\T(M)}$. Then one obtains a {\it limit sheaf} in $\T(M)$ representing (via $SS$) this ``bad'' Lagrangian like in Figure \ref{41}, and, importantly, it is {\it not} from any of those constructed from generating functions! Then instead of studying this Lagrangian directly (which might be hard or impossible for some classical methods), we can switch to explore algebraic properties of its associated sheaf and hence microlocal sheaf theory is applicable. Note that in \cite{Gui13} microlocal sheaf method has been successfully used in $C^0$-symplectic geometry reproving Gromov-Eliashberg theorem. 
\end{remark}

\subsubsection{Torsion criterion}
We will start this subsection with the following practical question: Given $\F, \G \in \T(M)$, how do we check they are $(c$-)interleaved? Here is a useful criterion which we call {\it torsion criterion}. Recall by Remark \ref{rmk-big-tor}, torsion element can be defined in (sub)category $\D_{\{\tau \geq 0\}}(M \times \R)$, let's state this criterion in this bigger (than $\T(M)$) set-up.

\begin{theorem} \label{tor} Let $g: \G \to \H$ be a morphism. If $\G \xrightarrow{g} \H$ completes to be a distinguish triangle $\F \xrightarrow{f} \G \xrightarrow{g} \H \xrightarrow{+1}$ in $\D_{\{\tau \geq 0\}}(M \times \R)$ such that $\F$ is $c$-torsion, then $\G$ and $\H$ are $c$-interleaved. \end{theorem}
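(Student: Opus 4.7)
The plan is to construct, out of the distinguished triangle $\F \xrightarrow{f} \G \xrightarrow{g} \H \xrightarrow{h} \F[1]$ together with the vanishing $\tau_c(\F) = 0$, the two pairs of shift maps required by Definition \ref{dfn-interleaving}. The obvious candidate for $\G \to {T_c}_*\H$ is the map $\alpha := \tau_c(\H) \circ g$, which by naturality of $\tau_c$ also equals $({T_c}_* g) \circ \tau_c(\G)$. The nontrivial ingredient is to produce morphisms $\beta, \gamma \colon \H \to {T_c}_*\G$ going in the ``backward'' direction; these will be manufactured as lifts through the triangle, and their existence is precisely where the torsion hypothesis enters.

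First I would construct $\beta \colon \H \to {T_c}_*\G$ satisfying $\beta \circ g = \tau_c(\G)$. Applying the cohomological functor $\Hom(-, {T_c}_*\G)$ to the triangle gives the exact sequence
\[
\Hom(\H, {T_c}_*\G) \xrightarrow{\,g^*\,} \Hom(\G, {T_c}_*\G) \xrightarrow{\,f^*\,} \Hom(\F, {T_c}_*\G),
\]
so $\tau_c(\G)$ lifts to some $\beta$ iff $\tau_c(\G) \circ f = 0$. By naturality of $\tau_c$, $\tau_c(\G) \circ f = ({T_c}_*f) \circ \tau_c(\F)$, and this vanishes because $\tau_c(\F) = 0$ by the $c$-torsion assumption. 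Symmetrically, applying $\Hom(\H, -)$ to the shifted triangle ${T_c}_*\F \to {T_c}_*\G \to {T_c}_*\H \to {T_c}_*\F[1]$ yields
\[
\Hom(\H, {T_c}_*\G) \xrightarrow{({T_c}_*g)_*} \Hom(\H, {T_c}_*\H) \xrightarrow{({T_c}_*h)_*} \Hom(\H, {T_c}_*\F[1]),
\]
and $({T_c}_*h) \circ \tau_c(\H) = \tau_c(\F)[1] \circ h = 0$ by naturality and torsion, giving a lift $\gamma \colon \H \to {T_c}_*\G$ with $({T_c}_*g) \circ \gamma = \tau_c(\H)$.

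Setting $\alpha = \delta := \tau_c(\H) \circ g$, the two interleaving identities are then immediate from naturality of the transformation $\tau_c$ and the composition formula $\tau_{2c} = \tau_c({T_c}_*(-)) \circ \tau_c(-)$. Indeed
\[
({T_c}_*\beta) \circ \alpha \;=\; ({T_c}_*\beta) \circ \tau_c(\H) \circ g \;=\; \tau_c({T_c}_*\G) \circ \beta \circ g \;=\; \tau_c({T_c}_*\G) \circ \tau_c(\G) \;=\; \tau_{2c}(\G),
\]
and likewise
\[
({T_c}_*\delta) \circ \gamma \;=\; ({T_c}_*\tau_c(\H)) \circ ({T_c}_*g) \circ \gamma \;=\; \tau_c({T_c}_*\H) \circ \tau_c(\H) \;=\; \tau_{2c}(\H).
\]

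The main (and essentially only) obstacle is the existence of the lifts $\beta$ and $\gamma$, which requires verifying that the relevant obstruction classes in $\Hom(\F, {T_c}_*\G)$ and $\Hom(\H, {T_c}_*\F[1])$ vanish; both reduce, via naturality of $\tau_c$, to the single vanishing $\tau_c(\F) = 0$. Everything else is formal manipulation inside the triangulated category $\D_{\{\tau \geq 0\}}(M \times \R)$, using only that $\tau_c$ is a natural transformation and that shifted distinguished triangles remain distinguished, so no further analytic or microlocal input is needed.
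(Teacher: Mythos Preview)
Your proof is correct and follows essentially the same argument as the paper: both use the cohomological functors $\Hom(\H,-)$ and $\Hom(-,{T_c}_*\G)$ applied to the (shifted) triangle to produce the lifts, with the torsion hypothesis $\tau_c(\F)=0$ killing the obstruction in each case. The only cosmetic difference is that your $\beta$ and $\gamma$ are swapped relative to the paper's labeling, and you observe $\alpha=\delta$ from the outset (the paper notes this equality in a remark after the proof).
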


{\bf Motivation.} Let us work on $\D(\mathcal P)$, (bounded) derived category of persistence $\k$-modules. We can identify each $V \in \mathcal P$ with (bounded) complex $V_{\bullet} : = (\ldots 0 \to V \to 0 \ldots)$. Obviously any morphism $f: V \to W$ is then identified with a chain map $f_{\bullet} = (\ldots, 0,f, 0, \ldots)$. $\D(\mathcal P)$ is naturally a triangulated category and hence $f_{\bullet}: V_{\bullet} \to W_{\bullet}$ can be always completed by adding $Cone(f)_{\bullet}$ where 
\[ Cone(f)_1 = W, \,\,\,\,Cone(f)_{0} = V \,\,\,\,\mbox{and} \,\,\,\, \partial_{co}= f: Cone(f)_0 \to Cone(f)_1. \]
Now since $\mathcal P$ has homological dimension $1$, by Exercise I.18 in \cite{KS90}, any complex $X_{\bullet} \in \D(\mathcal P)$ has the property that $X_{\bullet} \simeq \bigoplus_i H^i(X)[-i]$. Here in particular, 
\[ Cone(f)_{\bullet} \simeq {\rm coker}(f)[-1] \oplus \ker(f) \]
Therefore, we get a distinguished triangle in $\D(\mathcal P)$, 
\[ V \xrightarrow{f} W \to {\rm coker}(f)[-1] \oplus \ker(f) \xrightarrow{+1}. \]
Finally we remark the same construction also works for the category of finite dimensional vector spaces.

\begin{ex} Let $V = \I_{[5, \infty)} \oplus \I_{[2,8)}$ and $W = \I_{[3, \infty)} \oplus \I_{[0,6)}$. Then the natural ``identity map'' $f$, i.e. identity on non-trivial part, gives 
\[ \ker(f) =  \I_{[6,8)} \,\,\,\,\mbox{and}\,\,\,\, \coker(f) = \I_{[3,5)} \oplus \I_{[0,2)}. \]
Note that ${\rm coker}(f)[-1] \oplus \ker(f)$ is $2$-torsion, which corresponds to the fact that $V$ and $W$ are $2$-interleaved. In general, ${\rm coker}(f)[-1] \oplus \ker(f)$ being (finite) torsion shapes the barcodes of $V$ and $W$ to have the same number of infinite length bars and have almost the same finite length bars up to some shifts at endpoints depending on the torsion. \end{ex}

The proof of Proposition \ref{tor} heavily relies on the following well-known fact. 

\begin{lemma} \label{exa-hom} In derived category, $\Hom(A, \cdot)$ and $\Hom(\cdot, B)$ are cohomological functors, i.e. for any distinguished triangle $X \to Y \to Z \xrightarrow{+1}$, 
\[ \Hom(A,X) \to \Hom(A,Y) \to \Hom(A,Z) \xrightarrow{+1} \]
and 
\[ \Hom(X,B) \to \Hom(Y,B) \to \Hom(Z,B) \xrightarrow{+1} \]
are long exact sequences.\end{lemma}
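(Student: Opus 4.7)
The plan is to work directly from the axioms of a triangulated category (of which $\D(\A)$ is a primary example), namely the rotation axiom (TR2) and the axiom (TR3) allowing completion of commutative squares. First I would reduce the claim for $\Hom(-,B)$ to the claim for $\Hom(A,-)$ by passing to the opposite category, since reversing arrows turns a distinguished triangle into a distinguished triangle (up to a shift) in the opposite triangulated category. So only the first sequence needs to be proved.

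Next I would establish exactness at the middle term $\Hom(A,Y)$ for the triangle $X \xrightarrow{u} Y \xrightarrow{v} Z \xrightarrow{w} X[1]$. The inclusion $\im(u_*) \subset \ker(v_*)$ is immediate from $v \circ u = 0$, which is one of the standard consequences of the axioms (any two consecutive maps in a distinguished triangle compose to zero; this follows from (TR1), (TR3) and the fact that the identity triangle $X \xrightarrow{\mathrm{id}} X \to 0 \xrightarrow{+1}$ is distinguished). For the reverse inclusion, suppose $f \colon A \to Y$ satisfies $v \circ f = 0$. The key step is to build a commutative diagram
\[
\xymatrix{
A \ar[r]^-{\mathrm{id}} \ar@{-->}[d]_-{g} & A \ar[r] \ar[d]^-{f} & 0 \ar[r]^-{+1} \ar[d] & A[1] \ar@{-->}[d]^-{g[1]} \\
X \ar[r]_-{u} & Y \ar[r]_-{v} & Z \ar[r]_-{w} & X[1]
}
\]
whose top row is the distinguished triangle attached to $\mathrm{id}_A$. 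By axiom (TR3), since the middle square commutes (both compositions equal $v\circ f = 0$), there exists a fill-in $g \colon A \to X$ making the whole diagram commute, and in particular $u \circ g = f$. This exhibits $f \in \im(u_*)$.

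Once exactness at $\Hom(A,Y)$ is proved, the exactness at $\Hom(A,Z)$ and $\Hom(A,X[1])$ follows by invoking the rotation axiom (TR2): the rotated sequences
\[
Y \xrightarrow{v} Z \xrightarrow{w} X[1] \xrightarrow{-u[1]} Y[1], \qquad Z \xrightarrow{w} X[1] \xrightarrow{-u[1]} Y[1] \xrightarrow{-v[1]} Z[1]
\]
are again distinguished triangles, so applying the already-established exactness at their middle positions gives exactness of $\Hom(A,-)$ at all positions of the long sequence. Concatenating and using the identification $\Hom(A,X[n]) = \Hom(A,X)[n]$ (i.e., shifts on $A$ and on the target commute with $\Hom$) yields the desired long exact sequence. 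I expect the main conceptual point to be the use of (TR3) to produce the fill-in $g$; this is really the axiom that encodes the homological content of distinguished triangles, and everything else is bookkeeping via rotation. Since the excerpt already appeals to triangulated categories having been developed (see Section~1.5 of \cite{KS90}), all the axioms invoked here are available.
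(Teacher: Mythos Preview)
Your proof is correct and is precisely the argument the paper has in mind: the paper does not actually give a proof but refers to \cite{KS90}, Proposition~1.5.3(ii), and explicitly leaves it ``as a good exercise to practice the familiarity to various axioms of a triangulated category,'' which is exactly what you have carried out via (TR2) and (TR3). One very minor remark: to produce the fill-in $g$ at the \emph{first} vertex from the commuting square in positions two and three, you implicitly rotate once before invoking (TR3); since you already use (TR2) freely this is harmless, but it is worth being aware of.
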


Instead of giving the proof of Lemma \ref{exa-hom} (which can be checked from (ii) in Proposition 1.5.3 in \cite{KS90}), we leave it as a good exercise to practice the familiarity to various axioms of a triangulated category. Moreover, we provide an example to demonstrate this lemma. 

\begin{ex} Let $V, W$ be two vector spaces. Let $f: V \to W$ be an injective map. Complete to be a distinguished triangle in the derived category of vector spaces,
\[ \ldots \to V \xrightarrow{f} W \xrightarrow{i} \coker(f) \xrightarrow{0} 0 \to \ldots. \]
Applying $\Hom(V, \cdot)$, we get
\[  \ldots \to \Hom(V, V) \xrightarrow{f \circ} \Hom(V, W) \xrightarrow{i \circ} \Hom(V, \coker(f)) \xrightarrow{0} 0 \to \ldots. \]
Use an elementary argument, we can see the long sequence above is exact. For instance, for any $\phi \in \Hom(V, W)$, $i \circ \phi =0$ is equivalent to $\phi(V) \subset {\rm Im}(f)$. For any $v \in V$, by injectivity of $f$, there exists a unique $v' \in V$ such that $\phi(v) = f(v')$. Define $\psi \in \Hom(V,V)$ by $\psi(v) = v'$. Then $f \circ \psi = \phi$. \end{ex}

\begin{proof} (Proof of Theorem \ref{tor}) By definition of a torsion element, we have the following diagram, 
\[ \xymatrix{
\F \ar[r]^-{f} \ar[d]^-{0}& \G \ar[r]^-{g} \ar[d]^-{\tau_c(\G)} & \H \ar[r]^{h} \ar[d]^-{\tau_c(\H)} & \F[1] \ar[d]^-{0}  \\
{T_c}_* \F \ar[r]^-{{T_c}_*f} & {T_c}_* \G \ar[r]^-{{T_c}_*g} & {T_c}_* \H \ar[r]^{{T_c}_*h} & {T_c}_* \F[1] }.\]
Applying $\Hom(\H, \cdot)$, we get a long exact sequence 
\[ \ldots \to \Hom(\H, {T_c}_* \G) \xrightarrow{{T_c}_*g \circ} \Hom(\H, {T_c}_* \H) \xrightarrow{{T_c}_*h \circ} \Hom(\H, {T_c}_* \F[1]) \rightarrow \ldots \]
Since ${T_c}_* h \circ\tau_c(\H) = 0$, there exists some $\beta: \H \to {T_c}_*\G$ such that ${T_c}_*g \circ \beta  = \tau_c(\H)$. Hence 
\[ \xymatrixcolsep{5pc} \xymatrix{
\H \ar[r]^-{\beta} \ar@/_1.5pc/[rrr]_{\tau_{2c}(\H)} & {T_c}_*\G \ar[r]^-{{T_c}_*g} & {T_{c}}_*\H \ar[r]^-{\tau_{c, 2c}(\H)} & {T_{2c}}_*\H.}
\]
Similarly, applying $\Hom(\cdot, {T_c}_*\G)$, we will get a long exact sequence 
\[ \ldots \to \Hom(\H, {T_c}_* \G) \xrightarrow{ \circ g} \Hom(\G, {T_c}_* \G) \xrightarrow{\circ f} \Hom(\F, {T_c}_* \G) \to \ldots.\]
Since $\tau_c(\G) \circ f =0$, there exists some $\gamma: \H \to {T_c}_*\G$ such that $\gamma \circ g = \tau_c(\G)$. Hence ${T_c}_*(\gamma \circ g) = {T_c}_*\gamma \circ {T_c}_* g= \tau_{c, 2c}(\G)$. Then 
\[ \xymatrixcolsep{5pc} \xymatrix{
\G \ar[r]^-{\tau_c(\G)} \ar@/_1.5pc/[rrr]_{\tau_{2c}(\G)} & {T_c}_*\G \ar[r]^-{{T_c}_*g} & {T_{c}}_*\H \ar[r]^-{{T_c}_* \gamma} & {T_{2c}}_*\G.}
\]
Compared with Definition \ref{dfn-interleaving}, $\alpha = \tau_c(\H) \circ g$ and $\delta= {T_c}_*g \circ \tau_c(\G)$ are morphisms from $\G$ to ${T_c}_*\H$; $\beta, \gamma$ coming from exactness of long exact sequences above are morphisms from $\H$ to ${T_c}_* \G$. 
\end{proof}

\begin{remark} Note that in the proof of Theorem \ref{tor}, the morphisms forming interleaving relation satisfy $\alpha = \delta$. This enable us to upgrade the definition of Definition \ref{dfn-interleaving} from two pairs of maps to ``1.5-pair'', i.e., there exists morphism $\F \xrightarrow{\alpha} {T_c}_*\G$ and $\G \xrightarrow{\beta, \gamma} {T_c}_*\F$ such that interleaving relations are satisfied. Note, however, the distance $d_{\T(M)}$ defined in this way seems not obviously symmetric (and it might not be symmetric). The reason why we mention this is that in the case of persistence modules (equivalently $\T(pt)$ plus constructible condition), a careful examine of the proof of isometry theorem (Theorem \ref{per-iso-thm}), i.e. $d_{int} = d_{bottle}$, shows this ``1.5-pair'' is sufficient to get the isometry theorem. Moreover, interestingly, this also implies, in the set-up of persistence modules, the weaker interleaving distance from ``1.5-pair'' morphisms is then symmetric because $d_{bottle}$ is symmetric. \end{remark}

\subsection{Examples of interleaving from torsion criterion} \label{sec-ex-tor}
In this section, we will demonstrate how to use criterion Theorem \ref{tor} via an easy but new (new to this note so far) example. \\

Denote the coordinate of $\R^2$ by $(t,s)$ and its co-vector by $(\tau, \sigma)$. Let $\F = \k_T$ where $T = \{(t,s) \in \R^2 \,| \, - t \leq s \leq t, t \geq 0\}$, see Figure \ref{i12}.
\begin{figure}[h]
\centering
\includegraphics[scale=0.4]{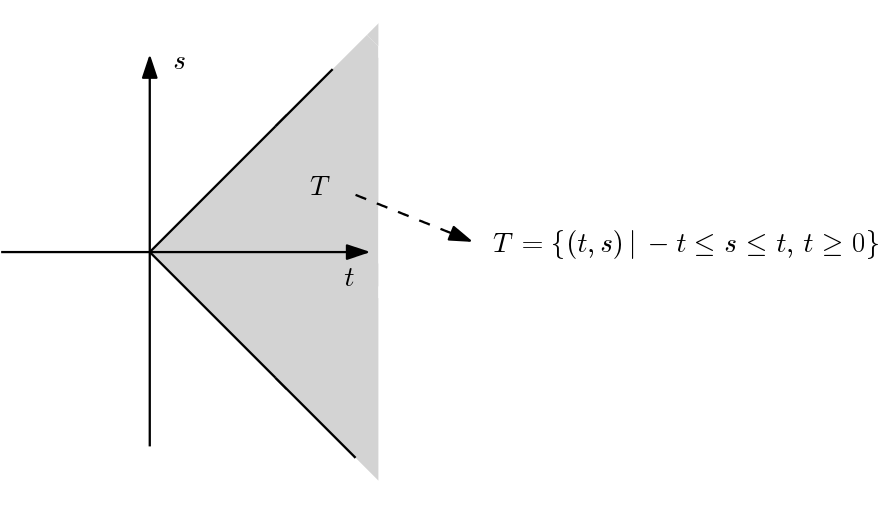}
\caption{An example of constant sheaf over a cone}
\label{i12}
\end{figure}
We can compute $SS(\F)$. There are four cases. Denote $SS(\F)_{(t,s)}$ as the fiber of $SS(\F)$ over $(t,s)$.
\begin{itemize}
\item{} when $(t,s) \in {\rm int}(T)$, $SS(\F)_{(t,s)} = 0$;
\item{} when $ t = s (\neq 0)$, $SS(\F)_{(t,s)} = \{(\tau, \sigma) \,| \, \tau = -\sigma, \tau \geq 0\};$
\item{} when $t = -s (\neq 0)$, $SS(\F)_{(t,s)} = \{(\tau, \sigma) \,| \, \tau = \sigma, \tau \geq 0\};$
\item{} when $t = s= 0$, $SS(\F)_{(t,s)} = \{(\tau, \sigma) \,|\, - \tau \leq \sigma \leq \tau, \tau \geq 0\}.$
\end{itemize}
The second and third item come from Fact \ref{fact-ss}, where $\phi: \R^2 \to \R$ is defined either $\phi(t,s) = t-s$ and $\phi(t,s) = t+s$ respectively. The fourth item comes from Proposition 5.3.1 in \cite{KS90} saying $SS(\k_{T})_{(0,0)} = T^{\circ}$ where, viewing $T$ as a closed cone in $\R^2$, $T^{\circ}$ is its polar cone defined as $\{(\tau, \sigma) \,| \, t \tau + s \sigma \geq 0,\, \,\forall (t,s) \in T\}$. The corresponding picture is Figure \ref{i13}. 
\begin{figure}[h]
\centering
\includegraphics[scale=0.55]{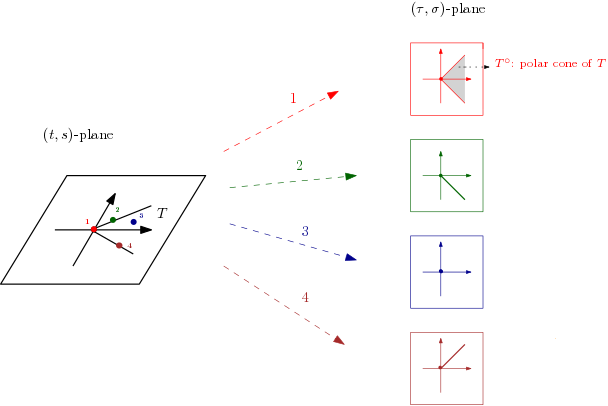}
\caption{Computation of singular support of $\k_{{\rm cone}}$}
\label{i13}
\end{figure}
Let us point out the following feature of $\F$, that is,
\begin{equation} \label{cone-s}
SS(\F) \subset  \{(t,s, \tau, \sigma) \,|\, - \tau \leq \sigma \leq \tau, \tau \geq 0\} \subset \D_{\{\tau \geq 0\}}(\R \times \R).
\end{equation}
With this $\F$, we have the following claim, 
\begin{claim} \label{lr-inter}
For any $s_- < s_+$ in $\R$ (in coordinate $s$), $\F|_{\R \times \{s_-\}}$ and $\F|_{\R \times\{s_+\}}$ are $2(s_+ - s_-)$-interleaved. \end{claim}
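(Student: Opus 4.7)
The aim is to reduce Claim~\ref{lr-inter} to a one-step application of the torsion criterion (Theorem~\ref{tor}) after making both restrictions completely explicit.

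First I would compute $\iota_{s_\pm}^{-1} \F$, where $\iota_{s_0}\colon \R \to \R^2$, $t \mapsto (t, s_0)$, is the horizontal closed embedding at height $s_0$. Since $T \cap \{s = s_0\} = [|s_0|, \infty)$ in the $t$-coordinate, and since pullback of the constant sheaf on a closed subset along a closed embedding is the constant sheaf on the preimage, one gets
\[
\iota_{s_\pm}^{-1} \F \;=\; \k_{[|s_\pm|,\infty)}
\]
as sheaves on $\R$. Both restrictions have singular support inside $\{\tau \geq 0\}$, placing them in $\D_{\{\tau \geq 0\}}(\k_\R)$, which is the ambient category in which Theorem~\ref{tor} is phrased (see Remark~\ref{rmk-big-tor}).

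Next I would set $a = \min(|s_-|, |s_+|)$ and $b = \max(|s_-|, |s_+|)$, so that, after possibly swapping the roles of $\pm$, the two restrictions are $\k_{[a,\infty)}$ and $\k_{[b,\infty)}$. The tautological short exact sequence of sheaves on $\R$,
\[
0 \to \k_{[a,b)} \to \k_{[a,\infty)} \to \k_{[b,\infty)} \to 0,
\]
promotes to a distinguished triangle $\k_{[a,b)} \to \k_{[a,\infty)} \to \k_{[b,\infty)} \xrightarrow{+1}$ in $\D_{\{\tau \geq 0\}}(\k_\R)$. The leftmost term $\k_{[a,b)}$ is $(b-a)$-torsion, as recorded in the earlier example of torsion elements over a point (its support translates off itself under ${T_{b-a}}_*$, so $\tau_{b-a}$ vanishes by Exercise~\ref{intersection}). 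Theorem~\ref{tor} then applies verbatim to yield that $\k_{[a,\infty)}$ and $\k_{[b,\infty)}$, i.e.\ $\F|_{\R\times\{s_-\}}$ and $\F|_{\R\times\{s_+\}}$, are $(b-a)$-interleaved.

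To finish, I would record the elementary estimate
\[
b - a \;=\; \bigl| |s_+| - |s_-| \bigr| \;\leq\; |s_+ - s_-| \;=\; s_+ - s_-,
\]
which is already \emph{stronger} than the bound $2(s_+ - s_-)$ claimed. \textbf{Main obstacle.} There is no genuinely hard step: the only care required is in the middle step, namely orienting the short exact sequence so that the torsion sheaf appears on the left of the triangle --- this is the specific form in which Theorem~\ref{tor} produces interleavings. The factor of $2$ in the statement is slack, and the above argument in fact delivers the sharper constant $|s_+ - s_-|$; one might speculate this slack is left in the claim for compatibility with a coarser propagation argument using directly the cone estimate \eqref{cone-s} on $SS(\F)$, which would bypass the explicit computation of the restrictions.
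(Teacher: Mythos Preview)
Your argument is correct and in fact sharper than the claim, but it takes a genuinely different route from the paper's proof. You compute the two restrictions explicitly as $\k_{[|s_\pm|,\infty)}$, write down the single distinguished triangle with torsion kernel $\k_{[a,b)}$, and apply Theorem~\ref{tor} once to obtain a $(b-a)$-interleaving with $b-a\leq s_+-s_-$. The paper, by contrast, deliberately avoids using the explicit form of the restrictions: it restricts $\F$ to the closed strip $\R\times[s_-,s_+]$, pushes forward by $q\colon\R^2\to\R$, and shows via two distinguished triangles that $Rq_*(\F_{\R\times[s_-,s_+]})$ is $(s_+-s_-)$-interleaved with each endpoint restriction (the ``error'' terms $Rq_*(\F_{\R\times[s_-,s_+)})$ and $Rq_*(\F_{\R\times(s_-,s_+]})$ being $(s_+-s_-)$-torsion, checked on stalks). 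The triangle inequality through this intermediate object is what produces the factor~$2$.

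Your speculation at the end is exactly on target: the paper's proof is written so that it generalizes verbatim to Proposition~\ref{cone-tor}, where $\F$ is an arbitrary object satisfying the cone condition~\eqref{cone-s} and the restrictions cannot be computed explicitly. Indeed, the paper explicitly remarks before its proof that the direct computation you give yields the better constant $s_+-s_-$, but that the pushforward-of-strip argument is ``more enlightening to deal with general cases.'' So what you gain is simplicity and a sharp constant for this particular $\F$; what the paper's approach buys is a template that survives when $\F$ is unknown.
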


\begin{remark} In fact, since we know $\F = \k_{T}$, $\F|_{\R \times \{s_-\}} = \k_{[|s_-|, \infty)}$ and $\F|_{\R \times \{s_+\}} = \k_{[|s_+|, \infty)}$. Of course they are at most $(s_+ - s_-)$-interleaved (which is better than the claim above). However, we give a proof of this claim based on Theorem \ref{tor}, which is more enlightening to deal with general cases. \end{remark}

\begin{proof} First, we have a short exact sequence
\[ 0 \to \F_{\R \times [s_-,s_+)} \to \F_{\R \times [s_-, s_+]} \to \F_{\R \times \{s_+\}} \to 0. \]
Applying functor $Rq_*$ where $q: \R^2 \to \R$ by $(t,s) \to t$, we get a distinguished triangle
\[ Rq_*(\F_{\R \times [s_-,s_+)}) \to Rq_*(\F_{\R \times [s_-, s_+]}) \to Rq_*(\F_{\R \times \{s_+\}}) \xrightarrow{+1}. \]
\begin{exercise} Check that $Rq_*(\F_{\R \times \{s_+\}}) \simeq \F|_{\R \times \{s_+\}}$. \end{exercise}
Similarly, we get a distinguished triangle, 
\[ Rq_*(\F_{\R \times (s_-,s_+]}) \to Rq_*(\F_{\R \times [s_-, s_+]}) \to  \F|_{\R \times \{s_-\}}\xrightarrow{+1}.\]
We will show $Rq_*(\F_{\R \times [s_-,s_+)})$ and $Rq_*(\F_{\R \times (s_-,s_+]})$ are $(s_+ - s_-)$-torsion. Then by Theorem \ref{tor}, we know $Rq_*(\F_{\R \times [s_-, s_+]})$ are $(s_+-s_-)$-interleaved with both $\F|_{\R \times \{s_+\}}$ and $\F|_{\R \times \{s_-\}}$ which implies the desired conclusion. We will only prove $Rq_*(\F_{\R \times [s_-,s_+)})$ is $(s_+ - s_-)$-torsion and proof of the other one is the same. 

In fact, we will see $Rq_*(\F_{\R \times [s_-,s_+)})$ is supported in an interval of length at most $s_+ - s_-$, then of course it is $(s_+ - s_-)$-torsion. This can be done by explicit computation of the stalks. For instance, If $0 \leq s_- < s_+$, 
\[ (Rq_*(\F_{\R \times [s_-,s_+)}))_t = \left\{ \begin{array}{lcr} H^*(\R, \k_{[s_-, s_+)}) = 0 & \mbox{for} & t \geq s_+\\ H^*(\R, \k_{[s_-, t]}) = \k & \mbox{for} & s_- \leq t < s_+ \\ 0 & \mbox{for} & \mbox{otherwise} \end{array} \right.. \]
This computation is shown in Figure \ref{i14}. 
\begin{figure}[h]
\centering
\includegraphics[scale=0.3]{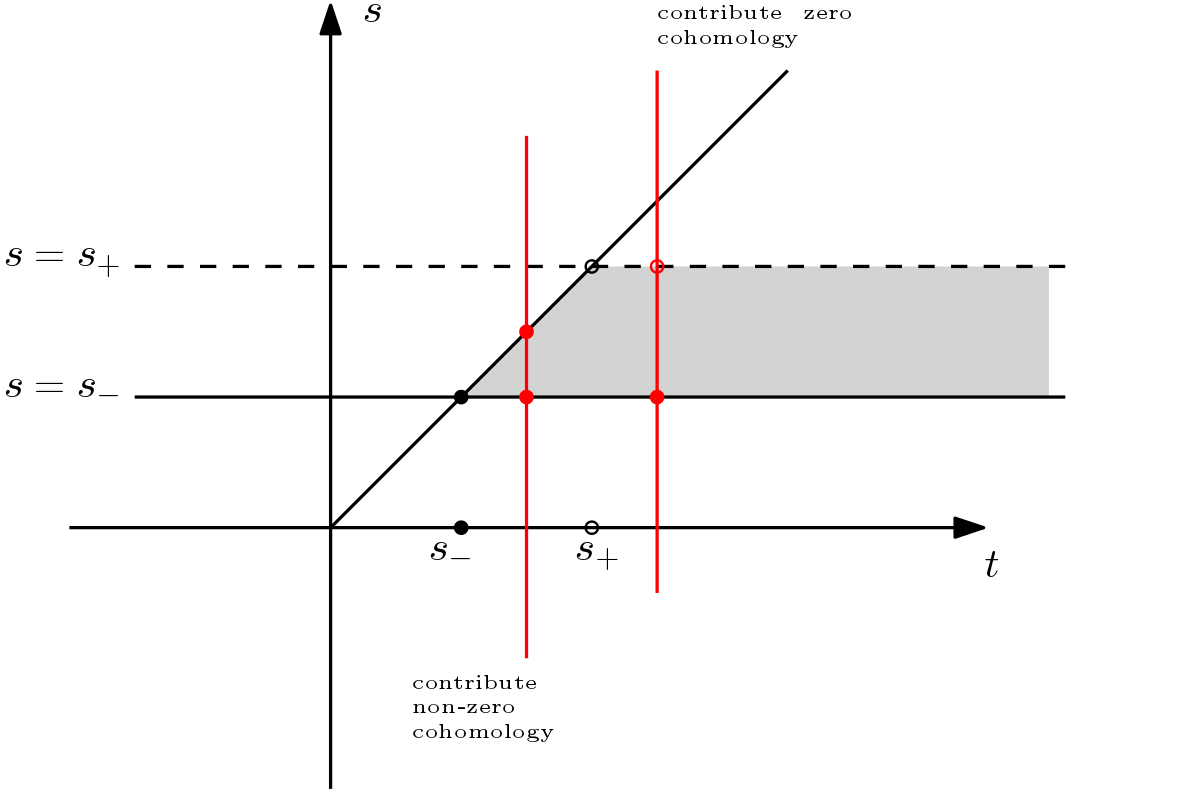}
\caption{Computation of compactly supported cohomology I}
\label{i14}
\end{figure}
For the other two cases, that is $s_- < 0 \leq s_+$ and $s_-< s_+ <0$, we get similar results as above by Figure \ref{i15}. 
\begin{figure}[h]
\centering
 \includegraphics[scale=0.35]{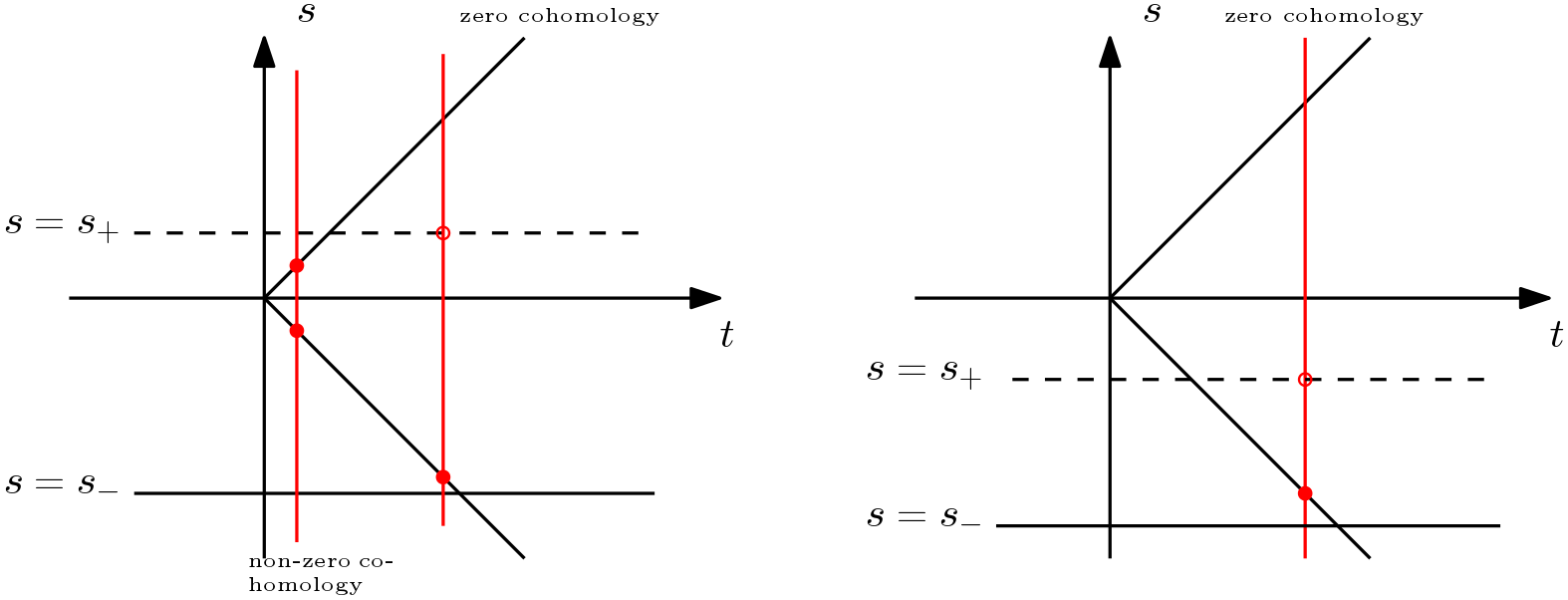}
\caption{Computation of compactly supported cohomology II}
\label{i15}
\end{figure}
Thus we know the support of $Rq_*(\F_{\R \times [s_-,s_+)})$ in each case which confirms its bounded support conclusion.
\end{proof}

Note that in the proof above, the explicit expression of $\F$ is only used at the final step for explicit computation. In general, we have a stronger claim 

\begin{prop} \label{cone-tor} For $\F \in \D(\k_{\R^2})$ satisfying (\ref{cone-s}), For any $s_- < s_+$ in $\R$ (in coordinate $s$), $\F|_{\R \times \{s_-\}}$ and $\F|_{\R \times\{s_+\}}$ are $2(s_+ - s_-)$-interleaved. \end{prop}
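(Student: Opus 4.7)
The strategy mirrors the three-step template of Claim \ref{lr-inter} verbatim, but replaces the explicit stalk-by-stalk computation (which exploited $\F = \k_T$) with a purely microlocal argument that invokes only the cone hypothesis (\ref{cone-s}). First, form the two short exact sequences
\[ 0 \to \F_{\R \times [s_-, s_+)} \to \F_{\R \times [s_-, s_+]} \to \F_{\R \times \{s_+\}} \to 0 \]
and
\[ 0 \to \F_{\R \times (s_-, s_+]} \to \F_{\R \times [s_-, s_+]} \to \F_{\R \times \{s_-\}} \to 0, \]
and apply $Rq_*$ with $q : \R^2 \to \R$, $(t,s) \mapsto t$. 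Since $q$ restricts to a homeomorphism on each slice $\R \times \{s_\pm\}$, $Rq_*(\F_{\R \times \{s_\pm\}}) \simeq \F|_{\R \times \{s_\pm\}}$, and the two resulting distinguished triangles are
\[ Rq_*(\F_{\R \times [s_-, s_+)}) \to Rq_*(\F_{\R \times [s_-, s_+]}) \to \F|_{\R \times \{s_+\}} \xrightarrow{+1} \]
and
\[ Rq_*(\F_{\R \times (s_-, s_+]}) \to Rq_*(\F_{\R \times [s_-, s_+]}) \to \F|_{\R \times \{s_-\}} \xrightarrow{+1}. \]

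The crucial step is to show that $\G_+ := Rq_*(\F_{\R \times [s_-, s_+)})$ and $\G_- := Rq_*(\F_{\R \times (s_-, s_+]})$ are both $(s_+ - s_-)$-torsion in $\D_{\{\tau \geq 0\}}(\R)$. The transversality hypothesis of Proposition \ref{th} is immediate: $SS(\F)$ lies in the half-cone $\{\tau \geq 0,\ |\sigma| \leq \tau\}$, while $SS(\k_{\R \times [s_-, s_+)})^a$ sits on the negative-$\sigma$ axis above $\{s = s_\pm\}$, so their intersection is contained in the zero section. One thereby controls $SS(\F_{\R \times [s_-, s_+)})$ via Proposition \ref{th}, and Proposition \ref{push} then shows $SS(\G_\pm) \subset \{\tau \geq 0\}$, so $\G_\pm \in \D_{\{\tau \geq 0\}}(\R)$ and the shift morphism $\tau_{s_+ - s_-}(\G_\pm) : \G_\pm \to {T_{s_+ - s_-}}_*\G_\pm$ is well-defined. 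The slope inequality $|\sigma| \leq \tau$ is the microlocal incarnation of a unit-speed propagation bound: a unit of $s$-change costs at most a unit of $t$-change. Because the support of $\F_{\R \times [s_-, s_+)}$ is confined to an $s$-strip of width $s_+ - s_-$, every finite bar in the constructible decomposition of $\G_\pm$ has length at most $s_+ - s_-$, while any a priori infinite bar is cancelled against $\F|_{\R \times \{s_\pm\}}$ in the distinguished triangle above. Concretely, one verifies $\tau_{s_+ - s_-}(\G_\pm) = 0$ by combining the singular-support bound on $\G_\pm$ with the microlocal Morse lemma (Theorem \ref{mml}) applied to a suitable affine $t$-function, which propagates the required vanishing of stalks from the boundary of the strip inward.

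Granted this torsion, the torsion criterion Theorem \ref{tor} applied to each of the two distinguished triangles above produces $(s_+ - s_-)$-interleavings of $Rq_*(\F_{\R \times [s_-, s_+]})$ with each of $\F|_{\R \times \{s_+\}}$ and $\F|_{\R \times \{s_-\}}$. The triangle inequality for the pseudo-metric $d_{\T(\text{pt})}$ (item (1) of Proposition \ref{prop-dis}) then yields the desired $2(s_+ - s_-)$-interleaving between $\F|_{\R \times \{s_-\}}$ and $\F|_{\R \times \{s_+\}}$. The principal obstacle is precisely the torsion step: in the special case $\F = \k_T$ of Claim \ref{lr-inter}, the stalk computation directly exhibited $\G_\pm$ as supported on an interval of length at most $s_+ - s_-$, making torsion obvious; in general, $\G_\pm$ may have unbounded support, so the bar-length bound must be distilled purely from the cone hypothesis on $SS(\F)$ via a genuine microlocal propagation estimate rather than being read off support data.
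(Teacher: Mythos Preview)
Your overall architecture is identical to the paper's: form the two distinguished triangles via $Rq_*$, reduce to showing that $\G_\pm$ is $(s_+-s_-)$-torsion, then invoke Theorem~\ref{tor} twice and the triangle inequality for $d_{\T(\pt)}$. The paper likewise isolates the torsion step as the only substantive point and refers to \cite{GS14} for a fully rigorous treatment.

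The gap is in your torsion argument. Once you push forward by $q$, the cone hypothesis~(\ref{cone-s}) collapses to $SS(\G_\pm)\subset\{\tau\geq 0\}$, and that alone carries no bar-length information: every $\k_{[a,b)}$ with $b-a$ arbitrary already satisfies it. Consequently the microlocal Morse lemma applied to an affine $t$-function on $\G_\pm\in\D(\k_\R)$ has no leverage---there is no test function on $\R$ whose differential avoids $SS(\G_\pm)$ in a way that forces sections to stop propagating after $s_+-s_-$. Your remark that ``infinite bars are cancelled against $\F|_{\R\times\{s_\pm\}}$ in the distinguished triangle'' is circular: that cancellation is a consequence of the interleaving you are trying to establish, not an input to it.

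The paper's (heuristic) argument avoids this by staying in $\R^2$ and working stalkwise \emph{before} the pushforward, where the full cone condition is still visible. One computes $(\G_+)_t = H^*(\R;\F_{\R\times[s_-,s_+)}|_{\{t\}\times\R})$ and observes that this is nonzero only when the vertical slice is a closed or an open interval. The cone condition $|\sigma|\leq\tau$ translates, along the boundary of the locus where $\F$ is nontrivial, into a slope constraint $|dt/ds|\leq 1$; the paper then lists the admissible local evolutions of such an interval as $t$ varies and argues that the slice is forced to ``escape'' the strip $[s_-,s_+)$ within a $t$-window of length at most $s_+-s_-$. Thus the propagation estimate is a two-dimensional one, extracted from~(\ref{cone-s}) directly; your sketch places it on the one-dimensional pushforward, where the needed information has already been lost.
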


Based on the proof of Claim \ref{lr-inter} which uses Theorem \ref{tor}, we only need to prove: $Rq_*(\F_{\R \times [s_-, s_+)})$ is $(s_+ - s_-)$-torsion as long as $\F$ satisfies condition on singular support (\ref{cone-s}). This is another perfect example showing that condition on singular support posts a strong restriction for the behavior a sheaf (cf. Proof of Separation Theorem in Section \ref{sec-psp}). We will give a heuristic proof of this conclusion where interested reader can check Proposition 5.9 in \cite{GS14} for rigorous and detailed proof. 

\begin{proof} (not rigorous) For any $t \in \R$, stalk at $t$ is 
\[ (Rq_*(\F_{\R \times [s_-,s_+)}))_t = H^*(\R, \F_{\R \times [s_-, s_+)}|_{\{t\} \times \R}). \]
The only case that this is non-zero is when 
\[ \F_{\R \times [s_-, s_+)}|_{\{t\} \times \R} = \k_{[\lambda_-, \lambda_+]} \,\,\mbox{or}\,\, \k_{(\lambda_-, \lambda_+)} \]
for some closed or open interval $[\lambda_-, \lambda_+]$ or $(\lambda_-, \lambda_+)$. For brevity, we only consider the closed interval case. Also without loss of generality, assume $\lambda_- = s_-$, then it corresponds to Figure \ref{i16}. 
\begin{figure}[h]
\centering
\includegraphics[scale=0.4]{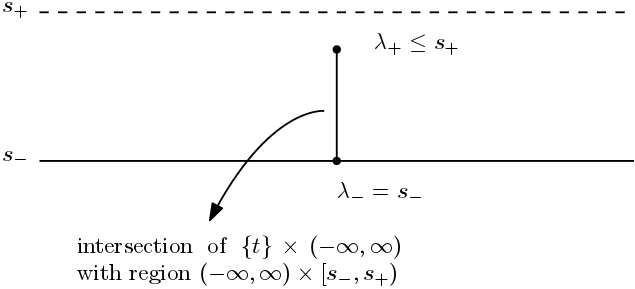}
\caption{Starting figure of a sheaf (closed interval)}
\label{i16}
\end{figure} 
Now we want to move around $t$ and investigate how the sheaf can be nearby. We list the following four possibilities, see Figure \ref{i17}.
\begin{figure}[h]
\centering
\includegraphics[width=14cm, height=8cm]{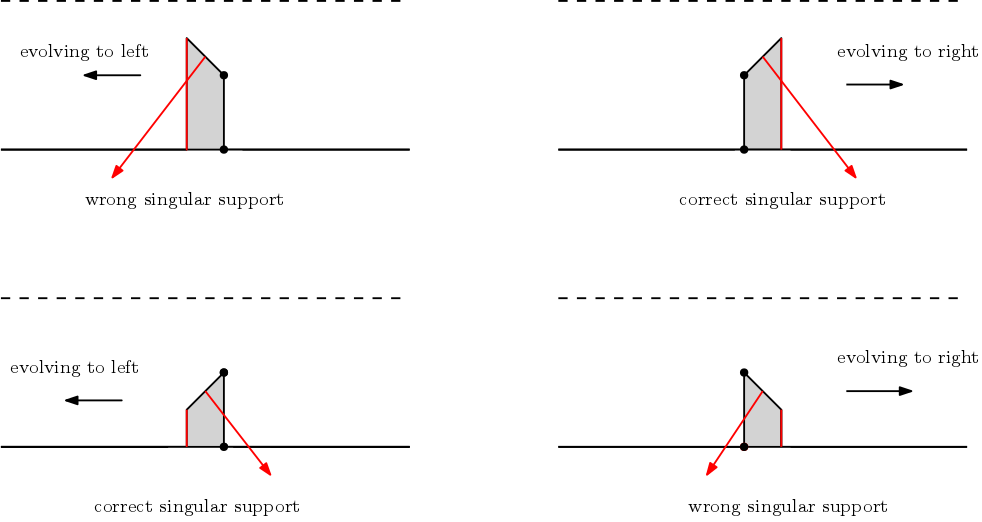}
\caption{Sheaf's evolution constrained by singular support}
\label{i17}
\end{figure}
The first and last ones are {\it not} allowed due to the restriction on singular support (\ref{cone-s}). For the other two admissible cases, repeatedly using this argument, one knows $\F$ has to ``escape'' from the strip region, see Figure \ref{i18}.
\begin{figure}[h]
\centering
 \includegraphics[scale=0.45]{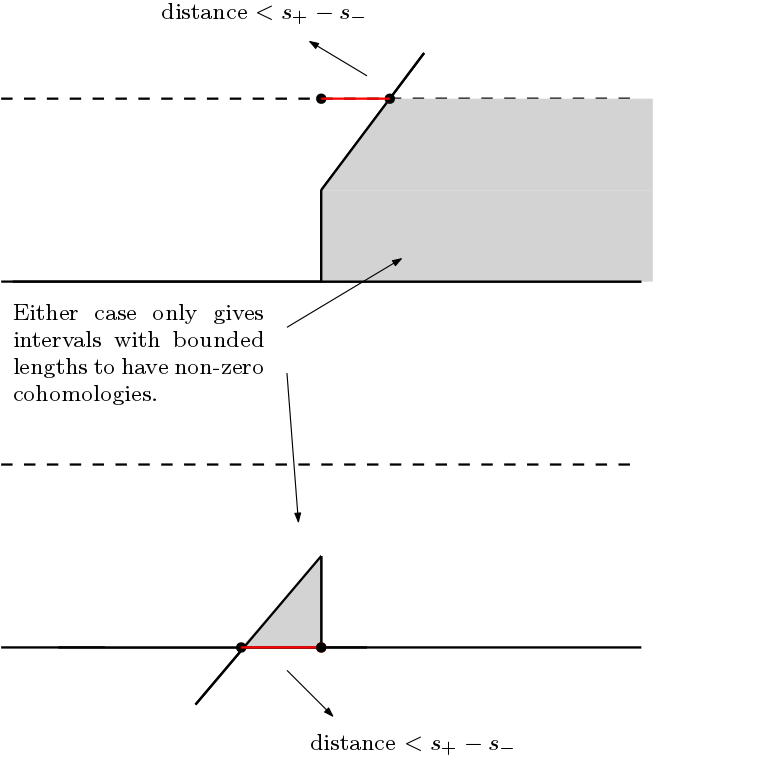}
 \caption{Escaping region for sheaf's evolution}
 \label{i18}
 \end{figure}
Therefore, view $Rq_*(\F_{\R \times [s_-,s_+)})$ as a collection of bars (equivalently assuming it is constructible), it does not have any bar with length greater than $s_+ - s_-$, which means it is $(s_+ - s_-)$-torsion.
\end{proof}

\begin{remark} Note that proof above just provides a rough picture because (i) there might exist various similar pictures depending on the change of slopes along the evolution process; (ii) since $H^*(\R, \k_{(\lambda_-, \lambda_+)})$ is also non-zero, there might exist evolutions with mixed closed and open interval types. But a careful list of four cases with the open intervals illustrates the same escaping-behavior of the shape of sheaves.\end{remark}
 
 Use the same argument, we can prove a general statement. 
\begin{exercise} \label{slope-cone} Modify the condition (\ref{cone-s}) to be 
\begin{equation} \label{cone-s1}
SS(\F) \subset  \{(t,s, \tau, \sigma) \,|\, - b\cdot \tau \leq \sigma \leq a \cdot \tau, \tau \geq 0\}
\end{equation}
for some $a, b >0$. Then for any $s_- < s_+$ in $\R$ (in coordinate $s$), $\F|_{\R \times \{s_-\}}$ and $\F|_{\R \times\{s_+\}}$ are $(a+b)(s_+ - s_-)$-interleaved. \end{exercise}

\section{Applications in symplectic topology}

\subsection{GKS's sheaf quantization} \label{sec-gks}
\subsubsection{Statement and corollaries}
The main result in \cite{GKS12}, roughly speaking, transfers the study of a (homogenous) Hamiltonian isotopy on $\dot{T}^*M$ (here ``$\dot{-}$'' means deleting the zero-section) into the study of sheaves. Explicitly, first let us consider the following well-known geometric construction. 
\begin{dfn} Let $\Phi = \{\phi_t\}_{t \in I = [0,1]}$ be a homogeneous Hamiltonian isotopy on $\dot{T}^*M$ generated by a (homogeneous) Hamiltonian function $F: I \times \dot T^*M \to \R$. Consider 
\[ \Lambda_{\Phi} = \left\{ (z, - \phi_t(z), t, - F_t(\phi_t(z))) \,\big| \, z \in \dot T^*M \right\} \]
which is a conic Lagrangian submanifold of $\dot T^*M \times \dot T^*M \times T^*I$. This is called a {\it Lagrangian suspension} (associated to a Hamiltonian isotopy). 
\end{dfn}

The main result from \cite{GKS12} is (always assume base manifold $M$ is compact)

\begin{theorem} \label{gks} Let $M$ be a compact manifold and $\Phi = \{\phi_t\}_{t \in I}$ be a homogeneous Hamiltonian isotopy on $\dot T^*M$, compactly supported \footnote {Here compactly supported means ${\rm supp}(\phi_t)/\R_{+}$ is compact.}. Then there exists a (complex of) sheaf $\K \in \D(\k_{M\times M \times I})$ such that 
\begin{itemize}
\item[(i)] $SS(\K) \subset \Lambda_{\Phi} \cup 0_{M \times M \times I}$;
\item[(ii)] $\K|_{t=0} = \k_{\Delta}$ 
\end{itemize}
where $\Delta$ is the diagonal of $M \times M$. Moreover, $\K$ is unique up to isomorphism in $\D(\k_{M\times M \times I})$. Here $\K$ is called the {\it sheaf quantization} of $\Phi$ (or for more clarification sometimes denoted as $\K_{\Phi}$). \end{theorem}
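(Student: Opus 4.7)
The plan follows the two-step strategy advertised earlier in the introduction: first construct $\K$ locally on small time intervals, then glue by sheaf composition, and finally establish uniqueness by a propagation argument rooted in the microlocal Morse lemma. For existence, I would begin by partitioning $I=[0,1]$ into a subdivision $0=t_0<t_1<\cdots<t_N=1$ so fine that each sub-isotopy $\Phi^{(i)}=\{\phi_s\circ\phi_{t_i}^{-1}\}_{s\in[t_i,t_{i+1}]}$ is $C^1$-small, so that $\graph(\phi_s\circ\phi_{t_i}^{-1})$ stays in a tubular neighborhood of the diagonal on which it is the graph of a single-valued homogeneous generating function $S_i:M\times M\times(t_i,t_{i+1})\to\R$. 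Mimicking Example \ref{ex-graph}, I would set $\K_i:=\k_{Z_i}$ for a closed hypersurface $Z_i$ whose (negative) conormal is $\Lambda_{\Phi^{(i)}}$; using Fact \ref{fact-ss}, one checks directly that $SS(\K_i)\subset \Lambda_{\Phi^{(i)}}\cup 0$ and $\K_i|_{t=t_i}=\k_{\Delta}$.

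Next I would glue by setting $\K:=\K_N\circ\K_{N-1}\circ\cdots\circ\K_1$, where $\circ$ is sheaf composition in the $M$-variable (Definition \ref{dfn-sh-comp}) performed fiberwise over the concatenated time interval $I$. The singular support estimate $SS(\F\circ\G)\subset SS(\F)\circ SS(\G)$ for Lagrangian correspondence (Remark \ref{rmk-langcor}), together with the fact that composition of Lagrangian suspensions corresponds to composition of Hamiltonian flows, yields property (i). Property (ii) follows because $\k_{\Delta}$ is the unit for $\circ$: at $t=0$ only $\K_1$ contributes and $\K_1|_{t=0}=\k_\Delta$. A routine verification using base change ensures the restrictions and the global composition are compatible.

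The hard part is uniqueness, which is precisely where the time variable $I$ is indispensable. Suppose $\K,\K'$ both satisfy (i) and (ii); I would study $\H:=R\Hom(\K,\K')\in\D(\k_{M\times M\times I})$ and use Proposition \ref{th} to control $SS(\H)$. The geometric input is that $\Lambda_{\Phi}$ is cut out, fiberwise over $I$, by the graph of $\phi_t$ and the $t$-covector $-F_t(\phi_t(z))$; because $\Lambda_\Phi$ meets the zero-section of $T^*I$ only along $0_{M\times M\times I}$, one obtains the crucial constraint that $SS(\H)$ lies in a half-space with respect to the $I$-covector. The canonical isomorphism $\K|_{t=0}\simeq\k_\Delta\simeq \K'|_{t=0}$ defines a class $u_0\in H^0(M\times M\times\{0\};\H)$, and the microlocal Morse lemma (Theorem \ref{mml}, in its geometric guise Theorem \ref{g-mml}) applied to the function $t$ propagates this class uniquely to a global section $u\in H^0(M\times M\times I;\H)$. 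Symmetrically one obtains an inverse, and $u$ must be an isomorphism because its restriction at $t=0$ is. The main obstacle I foresee is verifying the one-sided restriction of $SS(\H)$ in the $I$-direction: because the supports of $\K,\K'$ need not be compact in $M\times M$ (even though $\Phi$ is compactly supported away from the zero-section), care is needed to apply the microlocal Morse lemma and to rule out escape of singular support at infinity; this is handled by exploiting the homogeneity and the explicit $\R_+$-conical structure of $\Lambda_\Phi$.
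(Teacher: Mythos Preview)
Your existence construction has a real gap. You propose to take $\K_i=\k_{Z_i}$ for a \emph{hypersurface} $Z_i\subset M\times M\times(t_i,t_{i+1})$ whose conormal bundle equals $\Lambda_{\Phi^{(i)}}$. But for a sub-isotopy $C^1$-close to the identity, $\Lambda_{\Phi^{(i)}}$ is close to the conormal of the diagonal $\Delta\times I$, which has codimension $n$ in $M\times M\times I$; its conormal fibers are $n$-dimensional. This cannot be the conormal of a hypersurface (whose conormal fibers are $1$-dimensional) when $n>1$. Equivalently, there is no generating function $S_i$ of the type $S_i:M\times M\to\R$ for a homogeneous symplectomorphism near the identity, because $\graph(\phi)$ does \emph{not} project diffeomorphically to $M\times M$ near $\phi=\I$. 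The paper circumvents exactly this obstruction via the geodesic-flow trick in Example \ref{geo-flow}: one writes $\phi^{(i)}=g_{-\ep}\circ(g_\ep\circ\phi^{(i)})$ and observes that $g_\ep\circ\phi^{(i)}$ is now $C^1$-close to $g_\ep$, whose graph \emph{is} the conormal of the hypersurface $\partial\bar U=\{d(x,y)=\ep\}$. Then $\K_i$ is built as $\k_{\overline{U(i)}}\circ\k_{U}[n]$ using the ``dual sheaf identity'' $\k_{\bar U}\circ\k_U[n]=\k_\Delta$, and the glued $\K$ is the composition (\ref{ind-sq}). Without this detour through the geodesic flow (or an analogous device moving you away from the diagonal), your Step 1 does not produce a sheaf.

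Your uniqueness argument, by contrast, is a legitimate alternative route. You study $\H=R{\mathcal Hom}(\K,\K')$ and want to propagate the identity section along $I$ via the microlocal Morse lemma. The paper instead composes with a quantization of the \emph{inverse} isotopy: it forms $\F=\K_{\Phi^{-1}}\circ_M\K_{\Phi}$, checks that $SS(\F)\subset T^*(M\times M)\times 0_I$ (the $I$-covectors cancel exactly since the Hamiltonians of $\Phi$ and $\Phi^{-1}$ are opposite), invokes Proposition \ref{zero-1} to get $\F\simeq p^{-1}Rp_*\F$, hence $\F|_t\equiv\F|_{t=0}=\k_\Delta$, and concludes by the group-like property of $\circ_M$. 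Your $R{\mathcal Hom}$ route would work too, but the correct constraint is not that $SS(\H)$ lies in a \emph{half-space} in the $I$-covector; rather, subtracting two points of $\Lambda_\Phi$ over the same base point forces the $I$-covector difference to vanish, yielding $SS(\H)\cap(0_{M\times M}\times T^*I)\subset 0_{M\times M\times I}$, after which Proposition \ref{zero-1} (or Lemma \ref{lem-cons}) applies. The paper's composition-with-inverse argument is cleaner because it avoids the hypothesis check in Proposition \ref{th} and because invertibility of $\K_{\Phi^{-1}}$ immediately converts the conclusion into an isomorphism between any two candidates.
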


\begin{remark} (1) The item (ii) (geometric constraint) in the conclusion of Theorem \ref{gks} is more important, otherwise we can simply take $\K \equiv \k_{\Delta}$. (2) Readers of \cite{GKS12} should be aware of the following point that time parameter in \cite{GKS12} is $I = \R$ while here we state it as $I = [0,1]$. This brings up an issue that in the main theorem statement of \cite{GKS12}, sheaf quantization $\K$ lies in $\D^{{\rm lb}}(\k_{M \times M \times I})$ where $\D^{{\rm lb}}$ means {\it locally bounded} derived category (see Example 3.11 in \cite{GKS12} for its necessity). Here locally bounded means for every relatively compact open subset $U \subset M$, (complex) $\K|_U$ is a bounded complex, i.e., $\K|_U \in \D^{\rm b}(\k_{M \times M \times I})$. However, if the space is compact, then locally bounded is equivalent to bounded, as easier in our statement. \end{remark}

With the help of convolution operator, for any $\F \in \D(\k_{M})$, we can consider $\K \circ \F \in \D(\k_{M \times \R})$. By geometric meaning of convolution, denoting $\dot{SS}(\F)$ as the part of $SS(\F)$ away from zero-section,
\begin{align*}
SS(\K \circ \F) &\subset SS(\K) \circ SS(\F) \\
& \subset (\Lambda_{\Phi} \cup 0_{M \times M \times I}) \circ SS(\F) \\
& \subset (\Lambda_{\Phi} \cup 0_{M \times M \times I}) \circ (\dot{SS}(\F) \cup ({\rm supp}(\F) \times \{0\})) \\
& \subset (\Lambda_{\Phi} \circ \dot{SS}(\F) ) \cup 0_{M \times I} \\
 & =  \{(\phi_t(\dot{SS}(\F)), (t, - F_t(\phi_t(\dot{SS}(\F)))))\} \cup 0_{M \times I}. 
 \end{align*}
In particular, restricting on any $t \in I$, we get $SS((\K \circ \F)|_t) = SS(\K|_t \circ \F) \subset \phi_t(\dot{SS}(\F)) \cup 0_M$. In other words, convolution with $\K|_t$ corresponds to ``moving by $\phi_t$''. The following corollary of Theorem \ref{gks} is very useful.

\begin{cor}\label{cor-deform} 
Suppose $M$ is a compact manifold and $\Phi$ is a compactly supported homogeneous Hamiltonian isotopy on $\dot{T}^*M$. Then for any $t \in I$ and any $\F \in \D(\k_{M})$, 
\[ H^*(M; (\K \circ \F)|_t) = H^*(M; \F).\]
\end{cor}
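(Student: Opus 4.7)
Write $\G := \K \circ \F \in \D(\k_{M \times I})$ and let $\pi : M \times I \to I$ be the projection. The plan is to show that $R\pi_{\ast}\G$ is a (locally) constant sheaf on $I$, then identify its value at $t=0$ using the initial condition $\K|_{t=0} = \k_{\Delta}$, and finally invoke proper base change to pass back to $H^{\ast}(M; \G|_{t})$.

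First I would compute $SS(\G)$. By the Lagrangian-correspondence property of composition (Remark \ref{rmk-langcor}),
\[ SS(\G) \subset SS(\K) \circ SS(\F) \subset (\Lambda_{\Phi} \cup 0_{M \times M \times I}) \circ SS(\F). \]
A point $(x_1, \xi_1, t, \tau) \in SS(\G)$ coming from the $\Lambda_\Phi$-piece has first entry $(x_1,\xi_1)$ lying in the source factor of $\Lambda_{\Phi}$, hence $\xi_1 \neq 0$ since $\Lambda_{\Phi} \subset \dot T^{\ast}M \times \dot T^{\ast}M \times T^{\ast}I$. A point coming from the $0$-section piece has $\tau = 0$. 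Therefore
\[ SS(\G) \cap (0_{M} \times T^{\ast}I) \subset 0_{M \times I}, \]
which means the projection $\pi$ is non-characteristic for $\G$.

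Next I would apply (the $I$-version of) Lemma \ref{lem-cons}: the non-characteristic condition above, combined with the pushforward formula for singular support (Proposition \ref{push}), yields $SS(R\pi_{\ast}\G) \subset 0_{I}$, so by Fact \ref{fact-lc} the complex $R\pi_{\ast}\G$ is locally constant on $I$. Since $I = [0,1]$ is contractible, it is in fact a constant sheaf, so all stalks $(R\pi_{\ast}\G)_{t}$ are canonically isomorphic as $t$ varies. Because $M$ is compact, $\pi$ is proper, hence $R\pi_{\ast} = R\pi_{!}$ and proper base change gives
\[ (R\pi_{\ast}\G)_{t} \simeq R\Gamma(M; \G|_{t}) = H^{\ast}(M; (\K \circ \F)|_{t}). \]
Evaluating at $t = 0$ and using the normalization $\K|_{t=0} = \k_{\Delta}$, which forces $\G|_{t=0} = \k_{\Delta} \circ \F = \F$ (Example \ref{pt-comp}), gives $(R\pi_{\ast}\G)_{0} = H^{\ast}(M; \F)$. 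Combined with the constancy of $R\pi_{\ast}\G$, this yields the desired equality for all $t \in I$.

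The only mildly delicate step is the microlocal one: verifying that the hypothesis of Lemma \ref{lem-cons} (originally stated on $M \times \R$) applies on $M \times I$. I expect this to be harmless — one may either note that the proof of Lemma \ref{lem-cons} is purely local in the $\R$-direction and hence applies verbatim on an interval, or extend $\G$ by zero through an open embedding $I \hookrightarrow \R$ since the singular-support condition is preserved. Beyond this, the argument is essentially a formal consequence of the two conditions (i) and (ii) defining the GKS sheaf quantization, which is exactly the kind of packaging the formalism of Tamarkin category provides.
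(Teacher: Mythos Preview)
Your proposal is correct and follows essentially the same approach as the paper. The paper packages the core step as Proposition \ref{zero-2} (Deformation of coefficient): if $\G \in \D(\k_{X \times I})$ satisfies $SS(\G) \cap (0_X \times T^*I) \subset 0_{X \times I}$ then $H^*(X;\G|_s) = H^*(X;\G|_t)$, proved exactly as you sketch (pushforward bound on $SS$, hence $Rq_*\G$ locally constant, hence constant on $I$, then identify stalks). So the ``mildly delicate'' point you flag about Lemma \ref{lem-cons} being stated on $M \times \R$ is already handled: the paper states and proves the $I$-version separately as Proposition \ref{zero-2}, and the proof of the corollary simply cites it after verifying the microlocal hypothesis from the $SS(\K \circ \F)$ computation done just before the statement.
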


\begin{ex} \label{ex-cor-deform} A very special case of Corollary \ref{cor-deform} is when $\F = \k_N$ where $N$ is a compact submanifold of $M$, then 
\[ H^*(M; (\K \circ \k_N)|_t) = H^*(M; \k_N) \,\,(= H^*(N; \k) \neq 0). \]
\end{ex}

A direct application of this corollary is the following non-displaceability result emphasized in \cite{GKS12}, 

\begin{cor} \label{sheaf-non-dis}
Suppose $\Phi = \{\phi_t\}_{t \in I}$ is a homogeneous Hamiltonian isotopy and $\psi: M \to \R$ is a function such that $d\psi(x) \neq 0$ for any $x \in M$. For a given $\F \in \D(\k_M)$, if $H^*(M; \F) \neq 0$, then for any $t \in I$, 
\[ \phi_t(\dot{SS}(\F)) \cap \graph(d\psi) \neq \emptyset. \]
\end{cor}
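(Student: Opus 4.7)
The plan is to deform $\F$ via the GKS sheaf quantization into a family $\F_t$ whose singular support is controlled by $\phi_t(\dot{SS}(\F))$, and then apply the geometric microlocal Morse lemma (Theorem \ref{g-mml}) to $\F_t$ with the Morse-like function $\psi$.

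Concretely, first I would set $\F_t := \K_\Phi|_t \circ \F \in \D(\k_M)$, where $\K_\Phi$ is the sheaf quantization of $\Phi$ guaranteed by Theorem \ref{gks}. By Corollary \ref{cor-deform} we have
\[ H^*(M; \F_t) = H^*(M; \F) \neq 0, \]
so $\F_t$ is cohomologically non-trivial. Moreover, the singular support estimate carried out just before Corollary \ref{cor-deform} (using $SS(\K_\Phi) \subset \Lambda_\Phi \cup 0_{M \times M \times I}$ and the Lagrangian-correspondence bound for $\circ$) gives
\[ SS(\F_t) \subset \phi_t(\dot{SS}(\F)) \cup 0_M. \]
Since $M$ is compact, $\F_t$ has compact support automatically, so the hypotheses of Theorem \ref{g-mml} are met for the differentiable function $\psi : M \to \R$.

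Next I would invoke Theorem \ref{g-mml}: because $H^*(M; \F_t) \neq 0$, we must have
\[ SS(\F_t) \cap \graph(d\psi) \neq \emptyset. \]
Using the bound on $SS(\F_t)$, the intersection point lies in $\phi_t(\dot{SS}(\F)) \cup 0_M$. But the hypothesis $d\psi(x) \neq 0$ for every $x \in M$ means $\graph(d\psi) \cap 0_M = \emptyset$, so the intersection must in fact occur inside $\phi_t(\dot{SS}(\F))$, yielding the desired conclusion
\[ \phi_t(\dot{SS}(\F)) \cap \graph(d\psi) \neq \emptyset. \]

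The hard part, as always when invoking sheaf quantization, is the underlying Theorem \ref{gks} together with the precise singular-support-of-composition inequality (\ref{geo-comp}); here they are being used as black boxes. Given those, the proof is essentially a one-line application of microlocal Morse theory combined with the trivial observation that $\graph(d\psi)$ avoids the zero section. I would also make sure to note that the proof works verbatim for arbitrary $\F$ (not just $\F = \k_N$ in Example \ref{ex-cor-deform}), as long as $H^*(M; \F) \neq 0$, which is exactly the content of the corollary.
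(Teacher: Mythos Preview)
Your proposal is correct and follows essentially the same argument as the paper: invoke Corollary \ref{cor-deform} to preserve nontrivial cohomology, use the singular support bound $SS(\F_t)\subset \phi_t(\dot{SS}(\F))\cup 0_M$, apply the microlocal Morse lemma (Theorem \ref{g-mml}), and rule out the zero-section contribution via $d\psi\neq 0$. Your added remark that compactness of $M$ guarantees compact support of $\F_t$ is a helpful clarification the paper leaves implicit.
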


\begin{proof} By Corollary \ref{cor-deform}, for any $t \in I$, $H^*(M; (\K \circ \F)|_t) = H^*(M; \F) \neq 0$. Then by microlocal Morse lemma (see Theorem \ref{g-mml}), $SS((\K \circ \F)|_t) \cap \graph(d\psi) \neq \emptyset$. Meanwhile, as we have seen earlier, 
\[ SS((\K \circ \F)|_t) \subset  \phi_t(\dot{SS}(\F))\cup 0_M.\] 
Then hypothesis on $\psi$ implies $\phi_t(\dot{SS}(\F)) \cap \graph(d\psi) \neq \emptyset$.
\end{proof}

\begin{ex} \label{ex-sheaf-non-dis} Continued from Example \ref{ex-cor-deform}, let $N \subset M$ be a compact submanifold of $M$, then Corollary \ref{sheaf-non-dis} says, 
\begin{equation} \label{nor}
\phi_t(\dot{\nu}^*N) \cap \graph(d\phi) \neq \emptyset.
\end{equation}
Actually, $N$ can be much worse than a submanifold (for instance, a submanifold with corners or singularities), then $SS(\k_N)$ is still computable. It is a promising direction to explore whether some (clever) choice of $N$ and $\psi$ can imply new rigidity result in terms of intersections in symplectic geometry. 
\end{ex}

\begin{exercise} \label{exe-arnold} (See Theorem 4.16 in \cite{GKS12}) Use Corollary \ref{sheaf-non-dis} (more precisely Example \ref{ex-sheaf-non-dis}) to prove (Lagrangian) Arnold conjecture: Suppose $\Phi = \{\phi_t\}_{t \in I}: I \times T^*M \to T^*M$ is a compactly supported Hamiltonian isotopy. Then 
\[ \phi_t(0_M) \cap 0_M \neq \emptyset.\]
In other words, zero-section $0_M$ is non-displaceable. 
\end{exercise}

\begin{remark} With some extra work based on Theorem \ref{m-in}, one is able to obtain a lower bound of cardinality of (\ref{nor}) (assuming a certain transversality), therefore Exercise \ref{exe-arnold} is able to recover a more refined formulation of Arnold conjecture. Explicitly, one gets $\#(\phi_t(0_M) \cap 0_M) \geq \sum \dim H^j(M; \k)$. \end{remark}

\subsubsection{Proof of simplified sheaf quantization} \label{subsec-cont-sq}
There is an ``oversimplified'' version of Theorem \ref{gks} by considering only time-1 map $\phi = \phi_1$ instead of the entire isotopy $\{\phi_t\}_{t \in I}$, 
\begin{theorem} \label{cheat-qt} Let $M$ be a manifold. Suppose $\phi: \dot T^*M \to \dot T^*M$ is a homogeneous Hamiltonian diffeomorphism, compactly supported, then there exists a (unique) (complex of) sheaf $\K \in \D(\k_{M \times M})$ such that 
\[ SS(\K) \subset \graph(\phi)^a \cup 0_{M \times M} \]
where ``$a$'' means negating the co-vector part. Moreover, when $\phi = \I$, we have $\K= \k_{\Delta}$. 
\end{theorem}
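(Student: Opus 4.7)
The plan is to derive Theorem~\ref{cheat-qt} as a direct corollary of the full isotopy version Theorem~\ref{gks}, by choosing any interpolating isotopy and restricting at the endpoint. First I would fix a compactly supported homogeneous Hamiltonian isotopy $\Phi = \{\phi_t\}_{t \in I}$ with $\phi_0 = \I$ and $\phi_1 = \phi$; such an isotopy always exists because $\phi$, being a compactly supported homogeneous Hamiltonian diffeomorphism, is by definition the time-one flow of its generating Hamiltonian, which may be reparametrized to live on $I = [0,1]$.

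Applying Theorem~\ref{gks} to $\Phi$ produces a sheaf $\tilde\K \in \D(\k_{M \times M \times I})$ with $SS(\tilde\K) \subset \Lambda_\Phi \cup 0_{M \times M \times I}$ and $\tilde\K|_{t=0} = \k_\Delta$. I would then set $\K := i_1^{-1}\tilde\K$, where $i_1 : M \times M \hookrightarrow M \times M \times I$ is the inclusion at $t = 1$. To verify the singular support bound, I would invoke the pullback formula from Proposition~\ref{pullback}: the codifferential of $i_1$ simply discards the $\tau$-coordinate, giving
\[ SS(\K) \subset \bigl\{(z_1, z_2, \xi_1, \xi_2) : \exists\, \tau \in \R,\ (z_1, z_2, 1, \xi_1, \xi_2, \tau) \in SS(\tilde\K)\bigr\}. \]
The slice of $\Lambda_\Phi$ at $t = 1$ is $\{(z, -\phi(z), \tau = -H_1(\phi(z))) : z \in \dot T^*M\}$, and forgetting $\tau$ yields precisely $\graph(\phi)^a$; combined with the zero-section contribution this gives $SS(\K) \subset \graph(\phi)^a \cup 0_{M \times M}$. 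For $\phi = \I$ the constant isotopy has $\Lambda_\Phi \subset 0_{M\times M\times I}$, and uniqueness in Theorem~\ref{gks} forces $\tilde\K = \k_{\Delta \times I}$, whose restriction at $t = 1$ is $\k_\Delta$.

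The main obstacle is uniqueness, which as the excerpt emphasizes genuinely requires the $t$-direction rather than data at a single time: the isotopy is precisely the mechanism by which the initial condition $\tilde\K|_{t=0} = \k_\Delta$ propagates (via a microlocal analogue of an ODE along the null directions of $\Lambda_\Phi$) to pin down $\K$ at $t = 1$. I would interpret the uniqueness clause of Theorem~\ref{cheat-qt} as the statement that $\K = i_1^{-1}\tilde\K$ is independent of the chosen interpolating isotopy. To prove this, given two isotopies $\Phi, \Phi'$ both connecting $\I$ to $\phi$, I would assemble them into a two-parameter family over a square (so that both endpoints agree at $\phi_1 = \phi_1' = \phi$) and then reapply Theorem~\ref{gks} in the auxiliary direction. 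All the hard microlocal analysis is thus packaged inside Theorem~\ref{gks}, while Theorem~\ref{cheat-qt} is the time-one shadow; a direct uniqueness proof on $M \times M$ alone would fail precisely because there is no evolution equation available without the isotopy parameter.
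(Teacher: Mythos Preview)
Your approach is correct in outline but takes a genuinely different route from the paper. You treat Theorem~\ref{cheat-qt} as a corollary of the full isotopy result Theorem~\ref{gks}: pick an isotopy $\Phi$ ending at $\phi$, apply Theorem~\ref{gks}, and restrict at $t=1$. The paper explicitly does \emph{not} do this. Instead it gives a direct, self-contained construction meant to \emph{preview} the proof of Theorem~\ref{gks}: decompose $\phi = \phi^{(N)} \circ \cdots \circ \phi^{(1)}$ into $C^1$-small pieces, write each piece as $g_{-\ep} \circ (g_\ep \circ \phi^{(i)})$ using the homogeneous geodesic flow $g_\ep$, realize each factor by an explicit constant sheaf on a (perturbed) $\ep$-neighborhood of the diagonal, and compose. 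The ``dual sheaf identity'' $\k_{\bar U} \circ \k_U[n] = \k_\Delta$ handles the case $\phi = \I$. Your approach is cleaner and shorter, but it is in a sense circular relative to the paper's exposition, since the whole point of this subsection is to exhibit the gluing-by-composition mechanism that goes into proving Theorem~\ref{gks} in the first place. The paper's construction buys a concrete formula (\ref{ind-sq}) for $\K$ and motivates the GKS argument; yours buys economy once Theorem~\ref{gks} is already in hand.

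One small slip: for the constant isotopy $\phi_t = \I$ you write $\Lambda_\Phi \subset 0_{M\times M\times I}$, which is false---$\Lambda_\Phi = \{(z,-z,t,0) : z \in \dot T^*M\}$ is the conormal to $\Delta \times I$, not the zero section. Your conclusion is still fine: $\k_{\Delta \times I}$ has exactly this conormal as its singular support away from zero, so it satisfies (i) and (ii) and uniqueness gives $\tilde\K = \k_{\Delta \times I}$. Also, your uniqueness argument via a two-parameter family tacitly assumes any two isotopies from $\I$ to $\phi$ are homotopic rel endpoints; this is the content of Proposition~\ref{sq-htp} in the paper, but you should note the homotopy hypothesis is needed, not automatic.
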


Viewing Theorem \ref{cheat-qt} as a corollary of Theorem \ref{gks}, $\graph(\phi)^a$ is just  $\Lambda_{\Phi}$ restricting on $t=1$ and $\K = \K_{\Phi}|_{t=1}$. In this subsection, we will give a proof of Theorem \ref{cheat-qt} \footnote{This version is based on Leonid Polterovich's lecture in Kazhdan's seminar held in Hebrew University of Jerusalem in Fall 2017.} which reveals the secret also for the proof of (existence part of) Theorem \ref{gks}. However, ``oversimplified'' means we can not prove the uniqueness part of Theorem \ref{gks} without referring to the isotopy version. In fact, the key to prove this uniqueness is the same as the one proving Corollary \ref{cor-deform}, for which we will leave it to the next subsection. Construction of $\K$ in Theorem \ref{cheat-qt} needs to start from somewhere and the following example (important observation by \cite{GKS12}) is a good choice. 
\begin{ex} \label{geo-flow} Choose a complete Riemannian metric on $M$ with injectivity radius at least $\ep_0$. Denote $g_t: \dot T^*M \to \dot T^*M$ as the homogeneous geodesic flow. Also let $d$ denote the distance function. Consider 
\begin{equation} \label{open-gap}
{U}= \{(x,y) \in M \times M \,| \, d(x,y) < \ep\}
\end{equation}
for some $0 \leq \ep << \ep_0$, $\bar{U}$ and $\partial \bar{U} = \{d(x,y) = \ep\}$. One has the following facts where $\Delta$ is the diagonal of $M \times M$. 

\begin{itemize}
\item[(i)] $\k_{\bar{U}} \circ \k_{{U}}[n] = \k_{{U}}[n] \circ \k_{\bar{U}} = \k_{\Delta}$.
\item[(ii)] $\k_{\Delta} \circ \k_{\Delta} = \k_{\Delta}$. 
\item[(iii)] $\k_{\Delta} \circ \k_N = \k_N \circ \k_{\Delta} = \k_N$. 
\end{itemize}
\begin{remark} The only difficult one to prove is the first equality (i) (which we call ``dual sheaf identity''). For the reader's convenience, we will provide an elementary proof of it in Subsection \ref{pdsi}. This equality is used to confirm the $\phi = \I$ case of  Theorem \ref{cheat-qt} (see proof below). \end{remark}
\begin{exercise} Check 
\[ \nu_+^*(\partial U) = \graph(g_{-\ep})^a\]
and 
\[ \nu_-^*(\partial U) = \graph(g_{\ep})^a. \]
Hint: check it first in the Euclidean case. 
\end{exercise}
Then by Example \ref{ex-ss-oc}, we know $SS(\k_{\bar{U}}) = \graph(g_{\ep})^a \cup 0_{\bar{U}}$ and $SS(\k_{{U}}) = SS(\k_{{U}[n]}) = \graph(g_{-\ep})^a \cup 0_{\bar{U}}$. Now we are ready to give the proof. 
\end{ex}

\begin{proof} (Proof of Theorem \ref{cheat-qt})
Choose a sufficiently large $N$ and decompose 
\[ \phi = \prod_{i=1}^N \phi^{(i)}\,\,\,\,\,\,\mbox{where for each $i$ \,\,$||\phi^{(i)}||_{C^1} <<1$}. \]
Then note $\phi = \prod_{i=1}^N g_{-\ep} \circ (g_{\ep} \circ \phi^{(i)})$ where ``$\circ$'' is just composition of diffeomorphisms. Since $g_{\ep} \circ \phi^{(i)}$ is a small perturbation of $g_{\ep}$. Then ({\bf Exercise}) there exists a small perturbation of $U$, denoted as $U(i)$ for each $i \in \{1, ..., N\}$, such that 
\begin{equation} \label{pert}
\graph(g_{\ep} \circ \phi^{(i)})^a = \nu_-^*(\partial \overline{{U}(i)}). 
\end{equation}
Consider the following sheaf 
\begin{equation} \label{ind-sq}
\K :=  \prod_{i=N}^1 (\k_{\overline{{U}{(i)}}} \circ \k_{{U}}[n]) \in \D(\k_{M \times M}). 
\end{equation}
Then by functorial property of singular support,  
\begin{align*}
SS(\K) & \subset \prod_{i=N}^1 \left( \graph(g_{\ep} \circ \phi^{(i)})^a \circ \graph(g_{-\ep})^a\right) \cup 0_{M \times M} \\
& = \graph({\phi})^a  \cup 0_{M \times M}.
\end{align*}
Finally, when $\phi = \I$, we can simply take $N = 1$ and $U(i) = U$ and there is no perturbation at all. Therefore, by our construction and (i) in Example \ref{geo-flow},
\[ \K = \k_{\bar{U}} \circ \k_{{U}}[n] = \k_{\Delta}. \]
This means this $\K$ is our desired sheaf and thus we finish the proof. 
\end{proof}

\begin{remark} It is easy to modify the proof above to be a time-dependent version to obtain the existence of a (desired) $\K \in \D(\k_{M \times M \times I})$, simply by starting from ${U}_t = \{(x,y,t) \in M \times M \times I \,| \, d(x,y) < t\}$ where $t$ is also viewed as a variable. \end{remark}

\subsubsection{Constraints from singular supports} \label{subsec-cont-ss}
In this subsection, we will prove two results: one implies the uniqueness of Theorem \ref{gks} and the other (deformation of coefficient) implies Corollary \ref{cor-deform}. Both of them comes from Fact \ref{fact-lc} that constraint of singular support can post a strong restriction of the sheaf itself. Let's state these two results, where related maps are packaged in the following picture,
\[ \xymatrix{
X \times I \ar[r]^-{q} \ar[d]_-{p} & I. \\
X & } \]

\begin{prop} \label{zero-1} If $\F \in \D(\k_{X \times I})$ satisfies $SS(\F) \subset T^*X \times 0_I$, then $\F \simeq p^{-1} Rp_* \F$ where $p: X \times I \to X$.\end{prop}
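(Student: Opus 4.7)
The plan is to produce the canonical adjunction morphism $\alpha: p^{-1}Rp_*\F \to \F$ and verify stalkwise that it is a quasi-isomorphism; the content of the stalk check is that the hypothesis on $SS(\F)$ forces the restriction of $\F$ to each fiber $\{x_0\}\times I$ to be a constant sheaf, so pushing forward and pulling back loses no information.

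First I would construct $\alpha$ from the $(p^{-1}, Rp_*)$-adjunction unit. Assuming $I$ compact (which is the case of interest, e.g. $I = [0,1]$), $p$ is proper, and the proper base-change formula identifies, for each $(x_0, t_0) \in X \times I$,
\[ (p^{-1}Rp_*\F)_{(x_0, t_0)} \;\simeq\; (Rp_*\F)_{x_0} \;\simeq\; R\Gamma(\{x_0\}\times I,\, i_{x_0}^{-1}\F), \]
where $i_{x_0}: \{x_0\}\times I \hookrightarrow X\times I$ is the inclusion. Next I would compute $SS(i_{x_0}^{-1}\F)$ via Proposition \ref{pullback}: the codifferential $i_{x_0}^*$ is the projection $(\xi, \tau) \mapsto \tau$, and the hypothesis $SS(\F)\subset T^*X\times 0_I$ both supplies the non-characteristic condition for $i_{x_0}$ and forces $SS(i_{x_0}^{-1}\F)\subset 0_I$. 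By Fact \ref{fact-lc}, $i_{x_0}^{-1}\F$ is locally constant on the connected space $I$, hence constant. Therefore the restriction map $R\Gamma(\{x_0\}\times I, i_{x_0}^{-1}\F) \to (i_{x_0}^{-1}\F)_{t_0} = \F_{(x_0, t_0)}$ is a quasi-isomorphism, and a diagram chase identifies it with $\alpha_{(x_0, t_0)}$.

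The main technical content, beyond the standard bookkeeping identifying the abstractly-defined $\alpha$ with the restriction-to-slice map, is really packaged inside Fact \ref{fact-lc} (equivalently the microlocal Morse lemma, Theorem \ref{mml}), which is the substantive step converting the singular-support constraint into genuine invariance of sections under sliding in the $I$-direction. If one wanted to drop compactness of $I$, the same strategy should still work after replacing the proper base change by a direct application of Theorem \ref{mml} to $i_{x_0}^{-1}\F$, exhausting $I$ by compact subintervals and passing to a (co)limit.
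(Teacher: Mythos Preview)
Your proposal is correct and follows essentially the same route as the paper: construct the adjunction morphism $p^{-1}Rp_*\F\to\F$, compute stalks via (proper) base change, and use the singular-support hypothesis together with Fact~\ref{fact-lc} to see that the restriction of $\F$ to each fiber $\{x_0\}\times I$ is constant, so the stalk map is an isomorphism. Your version is slightly more careful in explicitly invoking Proposition~\ref{pullback} and the non-characteristic condition for $i_{x_0}$, but the argument is otherwise identical to the paper's.
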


\begin{prop} \label{zero-2} (Deformation of coefficient) If $\F \in \D(\k_{X \times I})$ satisfies $SS(\F) \cap (0_X \times T^*I) \subset 0_{X \times I}$, then $H^*(X; \F|_s) = H^*(X; \F|_t)$. \end{prop}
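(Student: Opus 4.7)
The plan is to reduce the proposition to a statement on the interval $I$ itself by pushing $\F$ forward along the projection $q\colon X \times I \to I$, then exploit the singular-support constraint to force $Rq_*\F$ to be locally constant on $I$, and finally identify its stalks with the fiberwise cohomologies $H^*(X;\F|_t)$. This is the same general strategy used to prove Lemma \ref{lem-cons}, which itself is essentially the pushforward-to-a-point version of what is wanted here.

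First, I would compute $SS(Rq_*\F)$ via Proposition \ref{push}. Since the differential of $q(x,t)=t$ sends a co-vector $\tau \in T^*I$ to $(0,\tau) \in T^*X \times T^*I$, one obtains
\[
SS(Rq_*\F) \subset \{(t,\tau) \in T^*I \,|\, \exists\, x \in X \text{ with } (x,0,t,\tau) \in SS(\F)\}.
\]
The hypothesis $SS(\F) \cap (0_X \times T^*I) \subset 0_{X \times I}$ forces $\tau = 0$ in any such quadruple, so $SS(Rq_*\F) \subset 0_I$. By Fact \ref{fact-lc}, $Rq_*\F$ is then locally constant on $I$, and since $I$ is connected it is in fact a constant sheaf. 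In particular all its stalks are canonically isomorphic to one another.

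It remains to identify $(Rq_*\F)_t$ with $H^*(X;\F|_t)$. Under the implicit assumption that $X$ is compact (as in the parallel Lemma \ref{lem-cons} and Corollary 1.7 of \cite{GKS12}), this is proper base change along the cartesian square
\[
\xymatrix{
X \ar[r]^-{i_t} \ar[d] & X \times I \ar[d]^-{q} \\
\{t\} \ar[r]_-{j_t} & I
}
\]
which gives $j_t^{-1}Rq_*\F \simeq R\Gamma(X; i_t^{-1}\F) = H^*(X;\F|_t)$. In the absence of compactness one would instead invoke a non-characteristic base change theorem (Proposition 5.4.13 in \cite{KS90}), which applies precisely because our hypothesis on $SS(\F)$ is exactly the statement that $\F$ is non-characteristic with respect to the fibers of $q$. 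Combining this identification with the constancy of $Rq_*\F$ on $I$ yields the desired isomorphism $H^*(X;\F|_s) \simeq H^*(X;\F|_t)$ for all $s,t \in I$.

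The main obstacle is the final identification step. The pushforward computation and the appeal to Fact \ref{fact-lc} are straightforward microlocal bookkeeping, but the passage from $(Rq_*\F)_t$ to $H^*(X;\F|_t)$ relies crucially on base change, and it is precisely here that the assumption on $SS(\F)$ (as opposed to any weaker condition that would merely produce $SS(Rq_*\F) \subset 0_I$) earns its keep by guaranteeing non-characteristicness of the fibers of $q$. Everything else is essentially a routine application of the properties of singular support catalogued in Section \ref{sec-ss-p}.
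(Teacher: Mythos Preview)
Your proof is correct and follows essentially the same route as the paper: push forward along $q\colon X\times I\to I$, use Proposition \ref{push} together with the hypothesis to get $SS(Rq_*\F)\subset 0_I$, invoke Fact \ref{fact-lc} to conclude $Rq_*\F$ is constant, and then read off the stalks. The paper's proof simply asserts $(Rq_*\F)_t = R\Gamma(X;\F|_t)$ without comment, whereas you are more explicit that this identification is a base change statement requiring either properness of $q$ (i.e.\ compactness of $X$, which is the standing assumption in the paper's applications) or the non-characteristic condition on the fibers, which is exactly the hypothesis on $SS(\F)$.
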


Before we prove them, let us see how they imply what we promised. 

\begin{proof} (Proof of uiqueness in Theorem \ref{gks})
For $\F : = \K_{\Phi^{-1}} \circ_M \K_{\Phi} \in \D(\k_{M \times M \times I})$, it's easy to check $SS(\F) \subset T^*(M\times M) \times 0_I$. Denote $\iota_t: M \times M \times \{t\} \to M \times M \times I$. By Proposition \ref{zero-1} (where $X = M \times M$), $\F = p^{-1} Rp_* \F$. Then 
\begin{align*}
\F|_t & = \iota_t^{-1}(p^{-1} Rp_* \F) \\
& = (p \cdot \iota_t)^{-1} Rp_* \F\\
& = Rp_* \F\\
& = (p \cdot \iota_0)^{-1} Rp_* \F\\
& = \iota_0^{-1}(p^{-1} Rp_* \F) \\
& = \F|_{t=0}  = \k_{\Delta}
\end{align*}
Note that in particular $\F|_t$ is independent of $t \in I$. So $\K_{\Phi^{-1}} \circ_{M} \K_{\Phi} (= \F) = p^{-1} Rp_* \F = \k_{\Delta \times I}$. Similarly, $\K_{\Phi} \circ_M \K_{\Phi^{-1}} = \k_{\Delta \times I}$. Hence if $\K_1$ and $\K_2$ are both sheaf quantization of $\Phi$, then 
\begin{align*}
\K_1 & \simeq \K_1 \circ_M (\K_{\Phi^{-1}} \circ_M \K_2) \\
& \simeq (\K_1 \circ_M \K_{\Phi^{-1}}) \circ_M \K_2 \\
& \simeq \K_2
\end{align*}
Thus we finish the proof.
\end{proof}

\begin{remark} \label{rmk-sh-group} In original proof from \cite{GKS12}, it introduces an ``inverse quantization'' denoted as $\K_{\Phi}^{-1}$ (and more generally, we can define, over finitely dimensional manifold, inverse/dual sheaf, $\K^{-1}$ of a given sheaf $\K$). Here $\K_{\Phi}^{-1}$ justifies its name by the following property 
\[ \K^{-1}_{\Phi} \circ_M \K_{\Phi} \simeq \K_{\Phi} \circ_M \K_{\Phi}^{-1} \simeq \k_{\Delta \times I}. \]
By uniqueness, one gets $\K^{-1}_{\Phi} \simeq \K_{\Phi^{-1}}$. Moreover, the association of sheaf quantizations to Hamiltonian isotopies admits a group structure summarized in the following table
\[ \begin{tabu} to 0.6\textwidth { | X[c] | X[c] | X[c]|}
 \hline
dynamics & sheaf & geometry  \\
 \hline
$\Phi$  & $\K_{\Phi}$ & $\Lambda_{\Phi}$ \\
\hline
$\Phi \circ \Psi$ & $\K_{\Phi} \circ_M \K_{\Psi}$ & $\Lambda_{\Phi} \circ|_I \Lambda_{\Psi}$\\
\hline
$\Phi^{-1}$  & $\K_{\Phi^{-1}}$ & $\Lambda_{\Phi^{-1}}$ \\
\hline
\end{tabu}\] 
where $\Lambda_{\Phi} \circ|_I \Lambda_{\Psi}$ is a time-dependent version of Lagrangian correspondence (cf. Remark \ref{rmk-langcor}), i.e., Lagrangian correspondence on the $M$-component but summation of co-vectors on the $t$-component. From sheaf to geometry, this transformation is confirmed by the following relation (see (1.15) in \cite{GKS12}), which can be thought as a time-dependent generalization of (\ref{geo-comp}),
\[ SS(\K_{\Phi} \circ_M \K_{\Psi}) \subset SS(\K_{\Phi}) \circ|_I SS(\K_{\Psi}) \subset \Lambda_{\Phi} \circ|_I \Lambda_{\Psi}. \]
\end{remark}

\begin{proof} (Proof of Corollary \ref{cor-deform})
Viewing the given $\F = (\K \circ \F)|_{t=0} ( = \k_{\Delta} \circ \F) \in \D(\k_{M})$, we have a $I$-parametrized family of sheaves, that is, $\G = \K \circ \F \in \D(\k_{M \times I})$ with $\G|_{t=0} = \F$. We have seen 
\[ SS(\G) \subset \{(\phi_t(\dot{SS}(\F)), (t, - F_t(\phi_t(\dot{SS}(\F)))))\} \cup 0_{M \times I}. \]
Since $\phi_t$ acts on $\dot{T}^*M$, intersection with $0_M \times T^*I$ only results in zero-section part. By Proposition \ref{zero-2} where $X = M$, one gets the desired conclusion after restricting on $s=0$ and $t=1$.\end{proof}

The rest of this subsection will be devoted to the proof of Proposition \ref{zero-1} and \ref{zero-2}. 

\begin{proof} (Proof of Proposition \ref{zero-1}) Note that since $p^{-1}$ and $Rp_*$ are adjoint to each other, we know there exists a well-defined map $p^{-1} Rp_* \F \to \F$. Then we only need to check both sides on stalks. First 
\[ (p^{-1} R^jp_* \F)_{(x,t)} = (R^j p_*\F)_x = H^j(p^{-1}(x); \F|_{p^{-1}(x)}). \]
Then a key observation is that $\F|_{p^{-1}(x)}$ is a locally constant sheaf because by our assumption $SS(\F|_{p^{-1}(x)}) \subset 0_I$, which implies it is actually constant since $p^{-1}(x) = I$ is contractible. Since $(x,t ) \in p^{-1}(x)$, denote $\F_{(x,t)} : = V( = \k^m$ for some dimension $m$), then the only non-zero degree of $p^{-1} Rp_* \F$ is 
\[ H^j(p^{-1}(x);\F|_{p^{-1}(x)}) =H^0(I;V(I)) = V \]
where $V(I)$ denotes the constant sheaf over $I$ with stalk being $\k$-module $V$. Therefore, one gets $(p^{-1} R^0 p_* \F) _{(x,t)} = \F_{(x,t)}$. This implies as two elements in derived category, $p^{-1} R p_* \F$ is quasi-isomorphic to $\F$.
\end{proof}

\begin{proof} (Proof of Proposition \ref{zero-2}) Applying pushforward formula of singular support (see Proposition \ref{push}), one gets 
\[ SS(Rq_*\F) \subset \{ (t,\tau) \in T^*I \,| \, \exists (x,\xi, t, \tau) \in SS(\F) \,\,\mbox{s.t.}\,\, q(x,t) = t \,\,\mbox{and}\,\, q^*(\tau) = (\xi, \tau)\}. \]
Since $q^*(\tau) = (0, \tau)$, we know $\xi = 0$ which implies, by our assumption, $\tau = 0$. Hence $SS(Rq_*\F) \subset 0_I$ and then $Rq_*\F$ is a constant sheaf. Then 
\[ R\Gamma(M, \F|_s) = (Rq_*\F)_s = (Rq_*\F)_t = R\Gamma(M, \F|_t). \]
This is the desired conclusion. \end{proof}

So far we have seen how to associate a unique sheaf $\K_{\Phi}$ to a Hamiltonian isotopy (1-parameter family of Hamiltonian diffeomorphisms). It can be checked that the same argument can be done to associate a unique sheaf $\K_{\Theta} \in \D(\k_{M \times M \times I \times I})$ to a {\it 2-parameter} Hamiltonian diffeomorphisms $\Theta = \{\theta_{(t,s)}\}_{(t,s) \in I^2}: T^*M \times I \times I \to T^*M$, where each $\theta_{t,s}$ is a Hamiltonian diffeomorphism and $\theta_{(0,0)} = \I$, such that (i) $SS(\K_{\Theta}) \subset \Lambda_{\Theta} \cup 0_{M \times M \times I \times I}$ where 
\begin{equation} \label{2-par-lag}
\Lambda_{\Theta} = \left\{((x,\xi), - \theta_{s,t}(x,\xi), (s, -H_{t,s}(\theta_{t,s}(x, \xi))), (t, - F_{(t,s)}(\theta_{t,s}(x,\xi))) \right\} 
\end{equation}
and $H_{t,s}$ and $F_{t,s}$ are Hamiltonian functions corresponding to the vector fields in $s$-direction and $t$-direction respectively; (ii) $\K_{\Theta}|_{(0,0)} = \k_{\Delta}$. Here we consider a special 2-parameter Hamiltonian diffeomorphisms. Suppose $\Phi$ and $\Psi$ are both Hamiltonian isotopies with fixed end points, $\I$ when $t=0$ and some $\phi$ when $t=1$. Moreover, require that $\Phi$ and $\Psi$ are homotopic through Hamiltonian isotopies with fixed endpoints. With this homotopy parametrized by $s$, we get a 2-parameter Hamiltonian isotopies $\Theta$. In particular, $H_{1,s} \equiv 0$ in (\ref{2-par-lag}). Now we have the following interesting claim saying sheaf quantization is in fact unique up to homotopy through Hamiltonian isotopies. 

\begin{prop} \label{sq-htp} (Answer to L. Polterovich's question) Suppose $\Phi$ and $\Psi$ are Hamiltonian isotopies, homotopic with fixed end points, then $\mathcal K_{\Phi}|_{t=1} \simeq \mathcal K_{\Psi}|_{t=1}$. \end{prop}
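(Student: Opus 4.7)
The plan is to exploit the 2-parameter sheaf quantization $\K_{\Theta} \in \D(\k_{M \times M \times I_t \times I_s})$ already granted before the statement, and use the ``deformation of coefficient'' type argument (Proposition \ref{zero-1}) to show that its restriction to $t=1$ is locally constant along the homotopy parameter $s$. The punch line is that the hypothesis of fixed endpoints kills precisely the $s$-covector component of $SS(\K_{\Theta})|_{t=1}$.

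First I would identify the $s$-slices of $\K_{\Theta}$ with $\K_{\Phi}$ and $\K_{\Psi}$. Let $i_0, i_1 \colon M \times M \times I_t \hookrightarrow M \times M \times I_t \times I_s$ be the inclusions at $s=0$ and $s=1$. By the pullback formula for singular support (Proposition \ref{pullback}), $SS(i_0^{-1} \K_{\Theta})$ is obtained by dropping the $\sigma$-covector from $SS(\K_{\Theta})|_{s=0}$. Inspecting $(\ref{2-par-lag})$ at $s=0$, where $\theta_{t,0} = \phi_t$ and $F_{t,0}$ is the Hamiltonian of $\Phi$, this image is exactly $\Lambda_{\Phi} \cup 0$. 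Moreover, $(i_0^{-1}\K_{\Theta})|_{t=0} = \K_{\Theta}|_{(0,0)} = \k_{\Delta}$. By the uniqueness part of Theorem \ref{gks}, $i_0^{-1}\K_{\Theta} \simeq \K_{\Phi}$, and symmetrically $i_1^{-1}\K_{\Theta} \simeq \K_{\Psi}$.

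Next I would analyze the restriction to $t = 1$. Let $j \colon M \times M \times I_s \hookrightarrow M \times M \times I_t \times I_s$ be the inclusion at $t=1$. Again by Proposition \ref{pullback}, $SS(j^{-1}\K_{\Theta})$ is obtained from $SS(\K_{\Theta})|_{t=1}$ by dropping the $\tau$-covector. The crucial observation is the following: the hypothesis that $\Phi$ and $\Psi$ are homotopic through Hamiltonian isotopies \emph{with fixed endpoints} forces $\theta_{1,s} = \phi$ for every $s \in I_s$, so that $H_{1,s} \equiv 0$ in $(\ref{2-par-lag})$. Hence at $t=1$ the $\sigma$-component of $\Lambda_{\Theta}$ is identically zero, giving
\[ SS(j^{-1}\K_{\Theta}) \subset T^*(M \times M) \times 0_{I_s}. \]
Proposition \ref{zero-1} applied with $X = M \times M$ then yields $j^{-1}\K_{\Theta} \simeq p^{-1} Rp_*(j^{-1}\K_{\Theta})$, where $p\colon M \times M \times I_s \to M \times M$; in particular this sheaf is quasi-isomorphic on each $s$-slice. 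Combining with Step 1 and commuting the two restrictions,
\[ \K_{\Phi}|_{t=1} \simeq (i_0^{-1}\K_{\Theta})|_{t=1} \simeq (j^{-1}\K_{\Theta})|_{s=0} \simeq (j^{-1}\K_{\Theta})|_{s=1} \simeq (i_1^{-1}\K_{\Theta})|_{t=1} \simeq \K_{\Psi}|_{t=1}, \]
which is the desired conclusion.

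The main obstacle I expect is not conceptual but bookkeeping: tracking covectors carefully through the two pullbacks and verifying that the contributions of the zero-section piece $0_{M\times M\times I_t\times I_s}$ behave properly under restriction, so that the inclusion $SS(j^{-1}\K_\Theta) \subset T^*(M\times M)\times 0_{I_s}$ really does hold on the nose. A secondary subtlety is justifying existence and uniqueness of the 2-parameter sheaf quantization $\K_{\Theta}$ itself; this is asserted in the text as a verbatim adaptation of the construction of Theorem \ref{gks}, and would require iterating the small-piece-plus-convolution scheme of Subsection \ref{subsec-cont-sq} in both $t$ and $s$ directions while invoking Proposition \ref{zero-1} twice for uniqueness.
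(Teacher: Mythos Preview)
Your proposal is correct and follows essentially the same route as the paper's proof: both invoke the 2-parameter quantization $\K_{\Theta}$, identify the $s=0,1$ slices with $\K_{\Phi},\K_{\Psi}$ via uniqueness in Theorem~\ref{gks}, and then use the vanishing $H_{1,s}\equiv 0$ together with Proposition~\ref{zero-1} on $\K_{\Theta}|_{t=1}$ to conclude independence in $s$. Your write-up is in fact slightly more careful than the paper's, which writes $\F\in\D(\k_{M\times I})$ where it should be $\D(\k_{M\times M\times I})$; your choice $X=M\times M$ is the correct one.
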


\begin{proof} Since it's easy to check $\K_{\Theta}|_{s=0}$ is a sheaf quantization of $\Phi$ (satisfying (i) and (ii) in Theorem \ref{gks}), by uniqueness in Theorem \ref{gks}, we know $\K_{\Theta}|_{s=0} \simeq \K_{\Phi}$. Similarly, $\K_{\Theta}|_{s=1} \simeq \K_{\Psi}$. Note then
\[ \K_{\Phi}|_{t=1} = \K_{\Theta}|_{s=0,t=1} = \left(\K_{\Theta}|_{t=1}\right)|_{s=0} \]
and 
\[ \K_{\Psi}|_{t=1} = \K_{\Theta}|_{s=1,t=1} = \left(\K_{\Theta}|_{t=1}\right)|_{s=1}. \]
Consider $\F: = \K_{\Theta}|_{t=1} \in \D(\k_{M \times I})$ here $I$ is the parameter $s$. Since $H_{1,s} \equiv 0$ by our assumption, $SS(\F) \subset T^*M \times 0_I$. Then by the same argument as in the Proof of uniqueness in Theorem \ref{gks}, we know $\F|_s \simeq Rp_*\F$ where $p: M \times I \to M$, in particular, independent of $s \in I$. Therefore, $\F|_{s=0} \simeq \F|_{s=1}$. This is the desired conclusion.\end{proof}

\subsubsection{Proof of ``dual sheaf identity''} \label{pdsi}
This is a special and technical subsection \footnote{This arises from conversations with Leonid Polterovich and Yakov Varshavsky.} to prove (i) in Example \ref{geo-flow}, that is, 
\[ \k_{\bar{U}} \circ \k_{U}[n] = \k_{{U}}[n] \circ \k_{\bar{U}} = \k_{\Delta}.\]
In fact, for brevity, we will only give a proof of 1-dimensional Euclidean space version. Let us be more explicitly. First of all, it is very helpful to keep tracking the position of each factor in the product space, we will denote $\R$ by $\R_x$ (or $\R_y$ ...). For a fixed $\ep >0$, simply denote 
\[ U_{xy} = \left\{(x,y) \in \R_x \times \R_y \,| \, |x-y| < \ep \right\} \]
and 
\[ \overline{U_{yz}} = \left\{(y,z) \in \R_y \times \R_z \,| \, |y-z| \leq \ep \right\}. \]
Then we claim 
\begin{prop} \label{1} Denote diagonal $\Delta_{xz} = \left\{(x,z) \in \R_x \times \R_z \,| \, x=z \right\}$. We have the following identity
 \[ \k_{U_{xy}} \circ \k_{\overline{U_{yz}}} \simeq \k_{\Delta_{xz}}[-1] \]
where ``$\simeq$'' means quasi-isomorphism. \end{prop}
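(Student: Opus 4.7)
My plan is to reduce the composition to a proper pushforward $Rp_{xz!}\k_W$ for an explicit $W$, and then identify it with $\k_{\Delta_{xz}}[-1]$ by a stalk computation via proper base change, followed by a second base-change that trivializes the sheaf along the diagonal. Unfolding Definition \ref{dfn-sh-comp} and using $p^{-1}\k_S = \k_{p^{-1}S}$ together with $\k_A \otimes \k_B = \k_{A\cap B}$, one has
\[ \k_{U_{xy}} \circ \k_{\overline{U_{yz}}} \simeq Rp_{xz!}\,\k_W, \quad W := \{(x,y,z)\in\R^3 : |x-y|<\ep,\; |y-z|\leq \ep\}, \]
where $p_{xz}\co \R_x\times\R_y\times\R_z\to\R_x\times\R_z$ is the projection.

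Applying proper base change (Proposition \ref{bsc}), the stalk of $\mathcal{G} := Rp_{xz!}\k_W$ at $(x_0,z_0)$ equals $R\Gamma_c(F_{x_0,z_0};\k)$ where $F_{x_0,z_0} = (x_0-\ep,x_0+\ep)\cap[z_0-\ep,z_0+\ep]$. A direct case analysis handles the three relevant regimes: if $x_0 = z_0$ the fiber is the open interval $(x_0-\ep,x_0+\ep)$, giving stalk $\k[-1]$; if $0<|x_0-z_0|<2\ep$ the fiber is a nonempty half-open interval of the form $[a,b)$ whose compactly supported cohomology vanishes (the long exact sequence for the closed inclusion $\{b\}\hookrightarrow[a,b]$ with open complement $[a,b)$ identifies the restriction $H_c^0([a,b])\to H_c^0(\{b\})$ with an isomorphism $\k\to\k$, forcing $H_c^*([a,b)) = 0$); and if $|x_0-z_0|\geq 2\ep$ the fiber is empty. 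Thus $\mathcal{G}$ is stalk-supported on $\Delta_{xz}$ with stalks $\k[-1]$.

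To promote this to an actual isomorphism $\mathcal{G}\simeq\k_{\Delta_{xz}}[-1]$, I would use that a sheaf stalk-supported on the closed submanifold $\Delta_{xz}$ equals $i_{\Delta *}i_\Delta^{-1}\mathcal{G}$, where $i_\Delta\co\Delta_{xz}\hookrightarrow\R^2$, and compute $i_\Delta^{-1}\mathcal{G}$ by a second application of proper base change to the Cartesian square cut out by $p_{xz}^{-1}(\Delta_{xz})\hookrightarrow\R^3$. The preimage $W\cap\{x=z\}=\{(x,y,x):|x-y|<\ep\}$ is homeomorphic to $\R\times(-\ep,\ep)$ via $(x,y,x)\mapsto(x,y-x)$, under which the restricted projection $p'$ becomes the first projection. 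Hence $i_\Delta^{-1}\mathcal{G}=\k_\R\otimes R\Gamma_c((-\ep,\ep);\k)=\k_\R[-1]$, and $\mathcal{G}=i_{\Delta *}\k_\R[-1]=\k_{\Delta_{xz}}[-1]$.

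The main obstacle, as I see it, is precisely this last upgrade: pointwise agreement of stalks with those of $\k_{\Delta_{xz}}[-1]$ does not by itself force the global isomorphism, since in principle a sheaf could have the same stalks as $\k_{\Delta_{xz}}[-1]$ but differ by nontrivial extensions or twists along $\Delta_{xz}$. The base-change computation over the diagonal, in which the preimage trivializes as an honest product $\R\times(-\ep,\ep)$, is what actually produces the constant (as opposed to merely pointwise $\k[-1]$) structure along $\Delta_{xz}$ and closes the proof.
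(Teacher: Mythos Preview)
Your proof is correct and follows essentially the same route as the paper's: both restrict to the diagonal via proper base change, then use the coordinate change $(x,y)\mapsto(x,y-x)$ to trivialize the preimage as a product $\R\times(-\ep,\ep)$ and finish with K\"unneth. The only cosmetic differences are that the paper leaves the off-diagonal stalk vanishing as an exercise (which you carry out explicitly) and phrases the final K\"unneth step as a computation of sections over every open set rather than as a direct sheaf-level identification; one minor point is that for $z_0<x_0$ your fiber is of type $(a,b]$ rather than $[a,b)$, but the vanishing argument is of course identical.
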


The following lemma will be used frequently. 
\begin{lemma} \label{lem1} Suppose $S$ is an either closed or open subset of $X$. Let $f: Y \to X$ be a continuous map. Then $f^{-1} (\k_S) = \k_{f^{-1}(S)}$. \end{lemma}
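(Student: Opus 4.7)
The plan is to realize both sides of the asserted identity as extensions by zero from a (locally closed) subspace, and then invoke the standard base change for locally closed inclusions. Recall the convention: for a locally closed $S \subset X$ with inclusion $i_S \colon S \hookrightarrow X$, the sheaf $\k_S \in \Sh(\k_X)$ is $i_{S!}(\k)$, where $\k$ on the right is the constant sheaf on $S$ viewed as its own space. When $S$ is open, $i_{S!}$ is the usual extension by zero (i.e., $j_!$); when $S$ is closed, $i_{S!}$ coincides with $i_{S*}$. In either case, $f^{-1}(S) \subset Y$ is respectively open or closed, so $\k_{f^{-1}(S)} = i'_! (\k)$ with $i' \colon f^{-1}(S) \hookrightarrow Y$.

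The central step is to form the cartesian square
\[
\xymatrix{ f^{-1}(S) \ar[r]^-{i'} \ar[d]_{f'} & Y \ar[d]^{f} \\ S \ar[r]^-{i_S} & X}
\]
and apply the base change isomorphism $f^{-1} \circ i_{S!} \simeq i'_! \circ (f')^{-1}$, which is valid since $i_S$ is a locally closed embedding (one can derive it either as a special case of Proposition \ref{bsc} or, more elementarily, directly in each of the two sub-cases). Combining this with the trivial identity $(f')^{-1}(\k) = \k$ (inverse image of a constant sheaf is constant) yields
\[
f^{-1}(\k_S) \;=\; f^{-1}\bigl(i_{S!}\k\bigr) \;\simeq\; i'_!\bigl((f')^{-1}\k\bigr) \;=\; i'_!\k \;=\; \k_{f^{-1}(S)}.
\]

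The natural morphism $f^{-1}(\k_S) \to \k_{f^{-1}(S)}$ comes either from the base change natural transformation or, equivalently, from the $(f^{-1}, f_*)$-adjunction applied to the evident map $\k_S \to f_* \k_{f^{-1}(S)}$. The only (mild) obstacle is checking that it is an isomorphism, and this reduces to a stalk computation: for each $y \in Y$, both stalks equal $\k$ when $f(y) \in S$ (equivalently $y \in f^{-1}(S)$) and vanish otherwise. In the open case one uses that $f^{-1}$ of an open set is open (so small neighborhoods of $y$ are mapped into $S$ when $f(y) \in S$), and in the closed case one uses that $f^{-1}$ of a closed set is closed (so $y \notin f^{-1}(S)$ admits a neighborhood disjoint from $f^{-1}(S)$). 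Since each case is handled by a direct elementary argument, the lemma reduces to these two parallel neighborhood checks.
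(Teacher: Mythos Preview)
Your proof is correct and follows essentially the same route as the paper: both arguments identify $\k_S$ with $i_{S!}\k$ (using $i_! = i_*$ in the closed case), form the cartesian square, and apply the base change isomorphism $f^{-1} i_{S!} \simeq i'_! (f')^{-1}$ together with the fact that the inverse image of a constant sheaf is constant. The only difference is cosmetic: the paper treats the closed case with $i_*$ and invokes the $i_* = i_!$ identification at the end, while you work uniformly with $i_!$ and add an (optional) stalk check.
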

\begin{proof} This is basically from base change formula. Recall extension by zero can be rephrased by functors. Explicitly, when $S$ is closed, $\k_S = i_*(\k(S))$ where $\k(S)$ is the constant sheaf over $S$ and $i: S \to X$ is just inclusion; when $S$ is open, $\k_S = i_!(\k(S))$. Now assume $S$ is closed, consider the following cartesian square, 
\[ \xymatrix{ X & S \ar[l]_-{i} \\ Y \ar[u]^-{f} & f^{-1}(S) \ar[u]_-{f} \ar[l]^-{i'}}. \]
Then starting from $\k(S)$, we have 
\[ f^{-1}(\k_S) = f^{-1} i_*(\k(S)) = i_*' f^{-1}(\k(S)) = i_*' \k(f^{-1}(S)) = \k_{f^{-1}(S)}. \]
The third equality comes from rewriting constant sheaf $\k(S) = a^{-1} \k$ for some map $a: S \to \{{\rm pt}\}$ and then $f^{-1}(\k(S)) = f^{-1} (a^{-1} \k) = (a \circ f)^{-1} (\k)$. Note that usually the base change formula works for pushforward with compact support. Since here $S$ is closed, $i_* = i_!$. The same argument works for $S$ being open. 
\end{proof}

Proof of Proposition \ref{1} is quite complicated and we do some preparation as follows. 
\begin{itemize}
\item[(a)] {\bf (composition formula)} Recall the composition formula of two sheaves, 
\[ \k_{U_{xy}} \circ \k_{\overline{U_{yz}}}  = R{q_{xz}}_! (q_{xy}^{-1} \k_{U_{xy}} \otimes q_{yz}^{-1} \k_{\overline{U_{yz}}}) \]
where $q_{xy}: \R_x \times \R_y \times \R_z \to \R_x \times \R_y$ is projection and similarly to define $q_{yz}$ and $q_{xz}$. Denote $\F : = \k_{U_{xy}} \circ \k_{\overline{U_{yz}}} \in \D(\k_{\R_x \times \R_z})$. 
One can show $\F$ is only supported on $\Delta_{xz}$ ({\bf Exercise}), therefore it is sufficient to consider $\F|_{\Delta_{xz}}$. Denote inclusion $i: \Delta_{xz} \to \R_x \times \R_z$, $\F|_{\Delta_{xz}} = i^{-1} (\F)$. Consider the following cartesian square
\begin{equation} \label{2} 
\xymatrix{ \R_x \times \R_y \times \R_z \ar[d]_-{q_{xz}} & \Delta_{xz} \times \R_y \ar[l]_-{i'} \ar[d]^-{p} \\
\R_x \times \R_z & \Delta_{xz} \ar[l]^-{i}} .
\end{equation}
Base change formula tells us 
\begin{align*}
\F|_{\Delta_{xz}} = i^{-1} \F & = i^{-1} R{q_{xz}}_! (q_{xy}^{-1} \k_{U_{xy}} \otimes q_{yz}^{-1} \k_{\overline{U_{yz}}})\\
& = Rp_! {i'}^{-1} (q_{xy}^{-1} \k_{U_{xy}} \otimes q_{yz}^{-1} \k_{\overline{U_{yz}}})\\
& = Rp_! \left( (q_{xy} \circ i')^{-1} \k_{U_{xy}} \otimes (q_{yz} \circ i')^{-1} \k_{\overline{U_{yz}}} \right).
\end{align*}
Now by Lemma \ref{lem1}, we know 
\begin{equation} \label{iden1}
(q_{xy} \circ i')^{-1} \k_{U_{xy}}  = \k_{(q_{xy} \circ i')^{-1}(U_{xy})} \,\,\,\,\mbox{and} \,\,\,\,(q_{yz} \circ i')^{-1} \k_{\overline{U_{yz}}} = \k_{(q_{yz} \circ i')^{-1}(\overline{U_{yz}})}. 
\end{equation} 
\item[(b)] {\bf (change coordinate)} In order to carry out computations efficiently in the proof of Proposition \ref{1}, we will change coordinate. Introduce new coordinate $(x,s,t)$ by 
\[ x =x, \,\,\,\,\, s = y-x \,\,\,\,\,\mbox{and} \,\,\,\,\, t = y-z. \]
Then we can identify some subsets appearing earlier under changing coordinate. 
\begin{align*}
(q_{xy} \circ i')^{-1}(U_{xy}) & = \{(x,y,z) \, |\, x= z , \, |x-y| < \ep\} \\
& = \left\{ (x,s,t) \,| \, x \in \R_x, \, s=t, \, |s| < \ep \right\} \,\,\,\,\,\,\,\,\mbox{(in new coordinate)}\\
& = \R_x \times \Delta_{st}^{I_s} 
\end{align*}
where $\Delta_{st}$ is the diagonal of $\R_s \times \R_t$, $I = (-\ep, \ep)$ (where $I_s$ denotes $I$ in terms of $s$-coordinate and $I_t$ for $t$-coordinate) and $\Delta_{st}^{I_s} = \{(s,t) \in \Delta_{st} \,| \, s \in I\}$. Similarly, 
\[ (q_{yz} \circ i')^{-1}(\overline{U_{yz}}) = \{(x,s,t) \,| \, x \in \R_x, \, s=t, \,|t| \leq \ep \} = \R_x \times \Delta_{st}^{\overline{I_t}}.\]

Since $s = t$, we will identify $\Delta_{st}$ with $\R$ (denoted as $\R_{\Delta}$). Then $I_s = I$ and $\bar{I}_t = \bar{I}$. Moreover, denote projection $\pi_{\Delta}: \R_x \times \R_{\Delta} \to \R_{\Delta}$. Therefore, by Lemma \ref{lem1} again, 
\begin{equation} \label{iden2}
(q_{xy} \circ i')^{-1} \k_{U_{xy}} = \pi_{\Delta}^{-1}\k_{I} = \k_{\R_x \times I}
\end{equation}
and 
\begin{equation} \label{iden3}
(q_{yz} \circ i')^{-1} \k_{\overline{U_{yz}}}= \pi_{\Delta}^{-1}\k_{\bar{I}} = \k_{\R_x \times \bar{I}}.
\end{equation}
Moreover, the projection $p$ in square (\ref{2}) can be identified with projection $q: \R_x \times \Delta_{st} ( = \R_x \times \R_{\Delta}) \to \R_x$ by 
\[ q (\{(x,s,t)\,| \, s=t\}) = \{x\}. \]
All in all, we can rewrite 
\begin{equation} \label{iden4}
\F|_{\Delta_{xz}} = Rq_! \left(\pi_{\Delta}^{-1}\k_I \otimes \pi_{\Delta}^{-1}\k_{\bar{I}} \right) = Rq_! (\pi_x^{-1} \k_{\R_x} \otimes \pi_{\Delta}^{-1} \k_{I}). 
\end{equation}
where projection $\pi_x: \R_x \times \R_{\Delta} \to \R_x$ and $\k_{I} \otimes \k_{\bar{I}}= \k_{I}$. In fact, $\pi_x = q$. 
\item[(c)] {\bf (Kunneth formula)} In general, we have the following formula 
\[ H^*(M \times N, p_M^{-1} \F \otimes p_N^{-1} \G) = \bigoplus_{i+j = *} H^i(M, \F) \otimes H^j(N, \G) \]
where $p_M: M \times N$ is the project and similar to define $p_N$.
\end{itemize}

Now we are ready to give the proof of Proposition \ref{1}. 

\begin{proof} (Proof of Proposition \ref{1}) We will show $\F|_{\Delta_{xz}}$ satisfies the following property that, for any $U \subset \Delta_{xz}$, its cohomologies
\begin{equation} \label{4}
h^*(\F|_{\Delta_{xz}}(U)) = \left\{ \begin{array}{cc} \k \,\,\,\,\mbox{for $* =1$} \\ 0 \,\,\,\,\mbox{otherwise} \end{array} \right.
\end{equation}
i.e., viewing $\F|_{\Delta_{xz}}$ as a complex of sheaves, it has its cohomologies nonzero only at degree $*=1$. Moreover, this cohomology at degree $*=1$, {\bf as a sheaf}, is a constant sheaf over $\Delta_{xz}$. Then $\F_{\Delta_{xz}}$ is quasi-isomorphic to $\k_{\Delta_{xz}}[-1]$ \footnote{This come from the following standard fact in derived category: Let $\A$ be an abelian category, if $\G_{\bullet} \in \D(\A)$ has its cohomologies only non-zero at degree $k$, then $\G_{\bullet}$ is quasi-isomorphic to the single term complex $(0 \to h^k(\G_{\bullet}) \to 0)$ ({\bf Exercise}).}. Then we get the conclusion. \\

Now let's prove (\ref{4}). For any open subset $U \subset \Delta_{xz}$, which can be identified with $U_x \subset \R_x$ where $U_x$ is the projection of $U$ on $\R_x$. Note that 
\[ q^{-1}(U_x) =  U_x \times \R_{\Delta}. \]
Then by (b) and (c) above, $Rq_!\left(\pi_x^{-1} \k_{\R_{x}} \otimes \pi_{\Delta}^{-1} \k_{I}) \right)(U_x)$ computes cohomology (of a complex of $\k$-modules), 
\begin{align*}
H_{cv}^*(U_x \times \R_{\Delta}, \pi_x^{-1} \k_{\R_{x}} \otimes \pi_{\Delta}^{-1} \k_{I}) & = \bigoplus_{i+j = *} H^i(U_x, \k_{\R_x}) \otimes H^j_{c}(\R_{\Delta}, \k_{I})
\end{align*}
where $H^*_{cv}$ means {\it vertically} compactly support cohomology where ``vertical'' here means the direction of fiber. Here fiber of $q$ is $\R_{\Delta}$ (therefore, we compute $H_c^*$ for the second factor). Obviously, the first part 
\begin{equation} \label{5} 
H^i(U_x, \k_{\R_x}) = \left\{ \begin{array}{cc} \k \,\,\,\,\mbox{for $i =0$} \\ 0 \,\,\,\,\mbox{otherwise} \end{array} \right.
\end{equation}
Now we claim 
\begin{equation} \label{6}
H^j_{c}(\R_{\Delta}, \k_I) = \left\{ \begin{array}{cc} \k \,\,\,\,\mbox{for $j =1$} \\ 0 \,\,\,\,\mbox{otherwise} \end{array} \right.
\end{equation}
In fact, consider 
\[ I \xrightarrow{i} \R ( = \R_{\Delta}) \xrightarrow{a}  \{\rm pt\}. \]
We have the following computation 
\begin{align*}
R\Gamma_c(\R, \k_I) & = R\Gamma_c(\R, i_! \k(I))  && \mbox{$\k(I)$ is the constant sheaf over {\bf open} $I$}\\
& = R\Gamma_c(\R, Ri_! \k(I)) && \mbox{because $i_!$ is exact here}\\
& = R(\Gamma_c(\R, \cdot) \circ i_!) (\k(I)) && \mbox{by Grothendieck composition formula}\\
& = R(a_! \circ i_!) (\k(I)) && \mbox{by definition of $\Gamma_c(\R, \cdot)$}\\
& = R((a \circ i)!) (\k(I)) && \mbox{by functorial property}\\
& = R\Gamma_c(I, \k)  \Rightarrow \,\,(\ref{6}) && \mbox{by de Rham cohomology}
\end{align*}
Finally degree counting from (\ref{5}) and (\ref{6}) gives the desired conclusion. 
\end{proof}

\subsection{Stability with respect to $d_{\T(M)}$}
Recall the sheaf quantization theorem in the \cite{GKS12}, Theorem \ref{gks}, is stated in terms of homogeneous Hamiltonian isotopies. The most natural example is the following one (which is also the starting example of the constriction of $\K_{\Phi}$ in general as used in Subsection \ref{subsec-cont-sq}). 

\begin{ex} Let $\Phi: I \times \dot{T}^* \R^n \to \dot{T}^* \R^n$ by $(s, (x,\xi)) \to (x - s \xi/|\xi|, \xi)$ (this is called homogeneous geodesic), which is generated by $H_s(x,\xi) = |\xi|$. Then 
\[ \K_{\Phi}= \k_{\{(s,x,y)\,| \, d(x,y) \leq s\}}. \]
Denote $Z : =\{(s,x,y)\,| \, d(x,y) \leq s\}$. Note that $SS(\K_{\Phi}) = \nu_{\partial Z}^{*, +}$ (positive part of conormal bundle).  
\end{ex}

Given a compactly supported Hamiltonian isotopy $\phi= \{\phi_s\}_{s \in I}: I \times T^*M \to T^*M$ generated by $h_s: I \times T^*M \to \R$, there exists a (Hamiltonian) lift $\Phi: I \times T^*_{\{\tau >0\}}(M \times \R) \to T^*_{\{\tau >0\}}(M \times \R)$, defined as 
\[ \Phi(s, (m, \xi, t, \tau)) = (\tau \cdot \phi_s(m, \xi/\tau), t+ (\ast), \tau) \]
for some function $(\ast)$. Note that this lifted $\Phi$ is generated by a homogeneous Hamiltonian function
\[ H_s(m,\xi, t, \tau) = \tau \cdot h_s(m, \xi/\tau). \]
Formally, this lift/homogenization can be regarded as a trick to fit into Theorem \ref{gks}. On the other hand, in Appendix \ref{app-2} we give a dynamical explanation of this lift/homogenization. 

Denote the sheaf quantization of the lift of Hamiltonian isotopy $\phi$ simply by $\K(\phi) \in \D(\k_{I \times M \times \R \times M \times \R})$. By Example \ref{pt-comp}, we know 
\[ \K(\phi)|_{s=1} \circ: \D(\k_{M \times \R}) \to \D(\k_{M \times \R}). \]
In fact, we have a stronger result 

\begin{lemma} \label{K-Tam}
$\K(\phi)|_{s=1} \circ$ is a well-defined morphism on $\T(M)$. 
\end{lemma}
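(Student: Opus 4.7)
The plan is to use the characterization from Theorem \ref{tam-elm}: for $\F \in \T(M)$ we have $\F \simeq \F \ast \k_{M \times [0, \infty)}$, and conversely, producing an isomorphism $(\K(\phi)|_{s=1} \circ \F) \ast \k_{M \times [0, \infty)} \simeq \K(\phi)|_{s=1} \circ \F$ automatically places $\K(\phi)|_{s=1} \circ \F$ into $\T(M)$. So the task reduces to interchanging $\K(\phi)|_{s=1} \circ \;\cdot\;$ and $\;\cdot\; \ast \k_{M \times [0, \infty)}$.

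The essential input is a diagonal $t$-translation symmetry of $\K(\phi)|_{s=1}$ inherited from the homogeneity of the lift. The homogenized Hamiltonian $H_s(m, \xi, t, \tau) = \tau \cdot h_s(m, \xi/\tau)$ is independent of $t$, so the flow $\Phi_s$ commutes with the base shift $(m, t) \mapsto (m, t + c)$. Hence the Lagrangian suspension $\Lambda_\Phi$, the zero section $0_{(M \times \R)^2 \times I}$, and the diagonal $\k_\Delta$ are all invariant under the diagonal action $(t_1, t_2) \mapsto (t_1 + c, t_2 + c)$ on the base $(M \times \R)^2$. Applying the uniqueness in Theorem \ref{gks} to $\K(\phi)$ and to its diagonal translate --- both of which satisfy (i)--(ii) of Theorem \ref{gks} --- yields $(\Delta T_c)_* \K(\phi) \simeq \K(\phi)$ for every $c \in \R$, where $\Delta T_c$ denotes the diagonal shift; restricting to $s = 1$ gives $(\Delta T_c)_* \K(\phi)|_{s=1} \simeq \K(\phi)|_{s=1}$.

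From this symmetry, together with the $m$-independence of $\k_{M \times [0, \infty)}$ --- which is the pullback of $\k_{[0, \infty)}$ along $\pi_\R: M \times \R \to \R$ --- I would then derive the commutation
\[
(\K(\phi)|_{s=1} \circ \F) \ast \k_{M \times [0, \infty)} \;\simeq\; \K(\phi)|_{s=1} \circ (\F \ast \k_{M \times [0, \infty)}).
\]
Writing both sides as $R(-)_!$-pushforwards of $\K(\phi)|_{s=1} \boxtimes \F \boxtimes \k_{[0, \infty)}$ from the common product space $(M \times \R)^2 \times (M \times \R) \times \R$ down to $M \times \R$, the $\Delta T_c$-invariance of $\K(\phi)|_{s=1}$ lets one shift the $\R$-convolution sum from the output side of the kernel to the input side; associativity of $R(-)_!$ and proper base change (Proposition \ref{bsc}) applied to the cartesian squares formed by projections, sums and diagonals then identify the two parenthesizations. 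Both hypotheses are genuinely used: the translation invariance is what lets the sum pass through the kernel, and the $m$-independence of $\k_{M \times [0, \infty)}$ is what lets the convolution's $M$-diagonal align with the composition's $M$-pairing.

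Substituting $\F \simeq \F \ast \k_{M \times [0, \infty)}$ into the commutation yields
\[
\K(\phi)|_{s=1} \circ \F \;\simeq\; \K(\phi)|_{s=1} \circ (\F \ast \k_{M \times [0, \infty)}) \;\simeq\; (\K(\phi)|_{s=1} \circ \F) \ast \k_{M \times [0, \infty)},
\]
so Theorem \ref{tam-elm} places $\K(\phi)|_{s=1} \circ \F$ in $\T(M)$; functoriality on morphisms is automatic from the bilinearity of $\circ$. The main obstacle is proving the commutation cleanly. The diagonal translation symmetry of $\K(\phi)|_{s=1}$ from Step 1 is transparent, but converting it into the desired rearrangement of $R(-)_!$ and external tensor products requires a careful choice of the common roof and repeated use of proper base change. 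Once the roof and projection maps are laid out the computation is essentially formal.
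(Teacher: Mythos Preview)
Your proposal is correct and follows the same overall line as the paper: reduce to Theorem \ref{tam-elm} by establishing the commutation $(\K(\phi)|_{s=1}\circ\F)\ast\k_{M\times[0,\infty)}\simeq \K(\phi)|_{s=1}\circ(\F\ast\k_{M\times[0,\infty)})$. The paper simply asserts this commutation and relegates its verification to an exercise, with the hint that $\F\ast\k_{M\times[0,\infty)}=\F\circ_\R\delta^{-1}\k_{[0,\infty)}$ (i.e.\ convolution is composition with the kernel $\k_{\{t_2\ge t_1\}}$).

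Your contribution is to make explicit what that exercise actually requires. The commutation is \emph{not} a formal associativity --- it fails for a general kernel (e.g.\ $\K=\k_{\{t_2=-t_1\}}$ on $\R$) --- and you correctly identify the missing ingredient: the diagonal $t$-translation invariance $(\Delta T_c)_*\K(\phi)|_{s=1}\simeq\K(\phi)|_{s=1}$, which you derive from the $t$-independence of the homogenized Hamiltonian together with the uniqueness in Theorem \ref{gks}. Once one has this invariance, the hint becomes effective: rewriting $\ast\,\k_{M\times[0,\infty)}$ as $\circ_\R\,\k_{\{t'\ge t\}}$, the commutation amounts to moving this $\R$-kernel from the output $\R$-slot of $\K$ to the input $\R$-slot, which is exactly what diagonal translation invariance allows. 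So your approach and the paper's are the same argument, with your version supplying the step the paper leaves implicit.
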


\begin{proof} For any $\F \in \T(M)$, 
\[ (\K(\phi)|_{s=1} \circ \F) \ast \k_{M \times [0, \infty)} = \K(\phi)|_{s=1} \circ (\F \ast \k_{M \times [0, \infty)}) = \K(\phi)|_{s=1} \circ \F \]
by Theorem \ref{tam-elm}. \end{proof}

\begin{exercise} Check the first equality in the proof above. (Hint: $\F \ast \k_{M \times [0, \infty)} = \F \circ_\R \delta^{-1} \k_{[0, \infty)}$ where $\delta: \R^2 \to \R$ by $(x,y) \to y-x$). \end{exercise}

\begin{lemma} \label{K-move}
Let $\F \in \T_A(M)$, then $\K(\phi)|_{s=1} \circ \F \in \T_{\phi_1(A)} M$. 
\end{lemma}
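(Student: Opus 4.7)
The membership in $\T(M)$ is already handled by Lemma \ref{K-Tam}, so the only thing to verify is the singular-support constraint $SS(\K(\phi)|_{s=1}\circ\F)\subset\overline{\rho^{-1}(\phi_1(A))}$. The strategy is to transport the condition $SS(\F)\subset\overline{\rho^{-1}(A)}$ through composition with $\K(\phi)|_{s=1}$ by using (a) the bound on $SS(\K(\phi))$ coming from Theorem \ref{gks}, (b) the Lagrangian correspondence formula for sheaf composition (Remark \ref{rmk-langcor}), and (c) the compatibility $\rho\circ\Phi_s=\phi_s\circ\rho$ between the lifted homogeneous flow $\Phi$ and the reduction map $\rho$.

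The first step is to control $SS(\K(\phi)|_{s=1})$. By Theorem \ref{gks} applied to the lift $\Phi$, one has $SS(\K(\phi))\subset\Lambda_\Phi\cup 0_{I\times M\times\R\times M\times\R}$. Restricting to $s=1$ via the closed embedding $i_1:\{1\}\times M\times\R\times M\times\R\hookrightarrow I\times M\times\R\times M\times\R$ and applying the pullback formula for singular supports (Proposition \ref{pullback}) yields
\[
SS(\K(\phi)|_{s=1})\subset\graph(\Phi_1)^a\cup 0_{M\times\R\times M\times\R},
\]
where the ``$a$'' records the sign convention built into the definition of $\Lambda_\Phi$. Then, composing with $\F$ and applying the Lagrangian-correspondence inclusion for $\circ$ (of the same type as the computation immediately following Theorem \ref{gks}), one obtains
\[
SS(\K(\phi)|_{s=1}\circ\F)\subset\Phi_1\bigl(\dot{SS}(\F)\bigr)\cup 0_{M\times\R}.
\]

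The second step is the geometric compatibility of the lift with reduction. Because the homogenizing Hamiltonian is $H_s(m,\xi,t,\tau)=\tau\, h_s(m,\xi/\tau)$, a direct inspection of Hamilton's equations shows that along the flow $\tau$ is conserved, while $(m,\xi/\tau)$ evolves exactly by the Hamilton equations of $h_s$. Consequently $\rho\circ\Phi_s=\phi_s\circ\rho$ on $T^*_{\{\tau>0\}}(M\times\R)$, so $\Phi_s$ preserves the family of fibres $\rho^{-1}(\cdot)$ and sends $\rho^{-1}(A)$ bijectively onto $\rho^{-1}(\phi_s(A))$. Assuming $SS(\F)\subset\overline{\rho^{-1}(A)}$, the part of $SS(\F)$ with $\tau>0$ lies in $\rho^{-1}(A)$, hence its image under $\Phi_1$ lies in $\rho^{-1}(\phi_1(A))$; the remaining part of $SS(\F)$ belongs to $\{\tau\le 0\}$ which is contained in $\overline{\rho^{-1}(\phi_1(A))}$ by definition. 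Combined with the inclusion of the zero-section contribution, this gives $SS(\K(\phi)|_{s=1}\circ\F)\subset\overline{\rho^{-1}(\phi_1(A))}$, as required.

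The main technical obstacle is step one: namely, turning the abstract bound $SS(\K(\phi))\subset\Lambda_\Phi\cup 0$ into a genuine statement about $\Phi_1$ acting on singular supports after composition with $\F$. One has to check that the restriction to $s=1$ is non-characteristic for $SS(\K(\phi))$ (so that the pullback formula applies cleanly, with the extra co-direction in $T^*I$ eliminated) and that the set-theoretic Lagrangian composition reproduces the pointwise action of $\Phi_1$; both are standard once the homogeneity of $\Phi$ is used to discard spurious co-vectors, but they are where the argument does actual work. The verification $\rho\circ\Phi_s=\phi_s\circ\rho$ and the closure argument are then routine bookkeeping.
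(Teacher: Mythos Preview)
Your proposal is correct and follows essentially the same route as the paper: invoke Lemma \ref{K-Tam} for membership in $\T(M)$, bound $SS(\K(\phi)|_{s=1}\circ\F)$ by the Lagrangian-correspondence formula, identify the set-theoretic composition with the pointwise action of $\Phi_1$, and then use the compatibility of the lift with reduction to land in $\rho^{-1}(\phi_1(A))$. The only cosmetic difference is that you phrase the last step as the identity $\rho\circ\Phi_s=\phi_s\circ\rho$, whereas the paper verifies it by a direct coordinate computation $\Phi_1(\{(m,\tau\xi,t,\tau):(m,\xi)\in A\})=\{(\tau\phi_1(m,\xi),t+(\ast),\tau)\}\subset\rho^{-1}(\phi_1(A))$; your extra remarks on non-characteristic restriction and the $\tau\le 0$ part are points the paper simply glosses over.
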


\begin{proof} First, it is easy to check for a given subset $C \subset T^*_{\{\tau >0\}}(M \times \R)$, $\Lambda_{\Psi} \circ C = \Psi(C)$ for any Hamiltonian diffeomorphism $\Psi$ on $T^*_{\{\tau >0\}}(M \times \R)$. Therefore, for $A \subset T^*M$ and $\rho: T^*_{\{\tau >0\}}(M \times \R) \to T^*M$ the reduction map, 
\begin{align*}
\Lambda_{\Phi_1} \circ \rho^{-1}(A) & = \Phi_1(\rho^{-1}(A))\\
& = \Phi_1(\{(m, \tau \xi, t, \tau) \,| \, (m, \xi) \in A, \tau >0 \}) \\
& = \{(\tau \cdot \phi_1(m, \xi), t + (\ast), \tau) \, | \, (m, \xi) \in A, \tau>0\}\\
& \subset \rho^{-1}(\phi_1(A)). 
\end{align*}
On the other hand, by geometric meaning of sheaf composition (\ref{geo-comp}),  
\begin{align*}
SS(\K(\phi)|_{s=1} \circ \F) & \subset SS(\K(\phi)|_{s=1}) \circ SS(\F)  \\
& \subset \Lambda_{\Phi_1} \circ \rho^{-1}(A)\\
& \subset \rho^{-1}(\phi_1(A)). 
\end{align*}
Combined with Lemma \ref{K-Tam}, we get the conclusion.\end{proof}

It is worthwhile to point out for any $\F \in \T(M)$, $\K(\phi) \circ \F \in \D(\k_{I \times M \times \R})$ and 
\begin{align*} 
SS(\K(\phi) \circ \F) & \subset \left\{ (s, -\tau \cdot h_s(m, \xi/\tau)), (\tau \cdot \phi_s(m, \xi/\tau), t+ (*), \tau) \,| \, (m, \xi, t, \tau) \in SS(\F) \right\}.\\
& \subset \{(t, s, \tau, -h_s(m, \xi/\tau) \cdot \tau) \,|\, (m, \xi) \in T^*M\} \times T^*M.
\end{align*}
In other words, we have  
\begin{equation} \label{cone-ss}
SS(\K(\phi) \circ \F) \subset \left\{(t, s, \tau, \sigma) \,\bigg| \, - \max_{(m,\xi)} h_s \cdot \tau \leq \sigma \leq - \min_{(m,\xi)} h_s \cdot \tau \right\} \times T^*M.
\end{equation}
We want address readers' attention that (\ref{cone-ss}) gives a cone structure, with slopes depending on $s$, similar to (\ref{cone-s}) or (\ref{cone-s1}). This observation leads to the following key Theorem (see Theorem 4.16 in \cite{AI17}). Recall the definition  (\ref{intro-dfn-hofer}) of the Hofer norm in Subsection \ref{subsec-hofer}.

\begin{theorem} \label{stability} For any $\F \in \T(M)$, $d_{\T(M)}(\F, \K(\phi)|_{s=1} \circ \F) \leq ||\phi||_{{\rm Hofer}}$. \end{theorem}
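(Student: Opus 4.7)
The plan is to realize this inequality as a consequence of the cone-shaped singular support estimate \eqref{cone-ss} for the parametric convolution $\G := \K(\phi) \circ \F \in \D(\k_{I \times M \times \R})$, combined with the torsion criterion Theorem~\ref{tor}, following the pattern of Proposition~\ref{cone-tor} and Exercise~\ref{slope-cone}. Fix any compactly supported Hamiltonian $H = \{h_s\}_{s \in I}$ generating $\phi$, and set $\|H\| := \int_0^1 (\max h_s - \min h_s)\,ds$; since $\|\phi\|_{{\rm Hofer}} = \inf_H \|H\|$, it suffices to prove $d_{\T(M)}(\F, \K(\phi)|_{s=1} \circ \F) \le \|H\|$ for each admissible $H$ and then take the infimum. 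Because $\Phi_0 = \I$, uniqueness in Theorem~\ref{gks} identifies $\K(\phi)|_{s=0}$ with $\k_\Delta$, whence $\G|_{s=0} = \F$ while $\G|_{s=1} = \K(\phi)|_{s=1} \circ \F$ is the target sheaf.

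Next I would form the two short exact sequences in $\D(\k_{I \times M \times \R})$,
\[0 \to \G_{[0,1) \times M \times \R} \to \G \to \G|_{s=1} \to 0, \qquad 0 \to \G_{(0,1] \times M \times \R} \to \G \to \G|_{s=0} \to 0,\]
and push them forward along the proper projection $q : I \times M \times \R \to M \times \R$ (the fibre being compact, $Rq_! = Rq_*$). This yields two distinguished triangles in $\D(\k_{M \times \R})$ sharing the common middle term $Rq_* \G$. By Theorem~\ref{tor} -- applied in the asymmetric $(a,b)$-isometry form of \cite{AI17} to avoid the factor-two loss one would otherwise incur by routing through the interpolator -- the desired $\|H\|$-interleaving of $\F$ and $\K(\phi)|_{s=1} \circ \F$ follows once one proves that the two left-hand terms $Rq_* \G_{[0,1) \times M \times \R}$ and $Rq_* \G_{(0,1] \times M \times \R}$ together carry an asymmetric torsion of total size $\|H\|$, with the asymmetric split matching the bounds $\max h_s$ and $-\min h_s$ of the cone.

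The technical heart, and the main obstacle, is this torsion estimate. It requires generalizing the ``escape-shape'' argument in the proof of Proposition~\ref{cone-tor} (Figures~\ref{i16}--\ref{i18}) from a constant symmetric cone to the $s$-varying asymmetric cone of \eqref{cone-ss}, whose slopes are $-\max h_s$ and $-\min h_s$. The cone singular support constraint on $\G$ forces, at each $s$, the evolving $t$-support of the sheaf (fibrewise in the $M$-direction) to propagate with boundary-slopes controlled by these cone slopes; tracking the resulting constructible decomposition of the pushforwards along the projection to $\R$ shows that every bar in the sheaf-barcode of $Rq_* \G_{[0,1) \times M \times \R}$ or $Rq_* \G_{(0,1] \times M \times \R}$ has $t$-length bounded by the \emph{integrated} aperture $\int_0^1 (\max h_s - \min h_s)\,ds = \|H\|$. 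The fact that the bound is the integrated, rather than the supremal, cone aperture is precisely what promotes the $L^{1,\infty}$-type Hofer length from an artefact of the proof into the natural sheaf-theoretic quantity. Once this estimate is in place, taking the infimum over all $H$ generating $\phi$ produces the Hofer bound and completes the argument.
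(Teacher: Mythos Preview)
Your strategy is sound in outline but takes a harder road than the paper, and the step you flag as the ``main obstacle'' is precisely the one the paper sidesteps.

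The paper does not attempt a single global torsion estimate for the $s$-varying cone \eqref{cone-ss}. Instead it partitions $I=[0,1]$ into $n$ equal pieces $s_i=i/n$. On each subinterval $[s_i,s_{i+1}]$ the varying cone is contained in a \emph{constant} cone with slopes governed by $\max_{s\in[s_i,s_{i+1}]}(\max h_s-\min h_s)$, so Exercise~\ref{slope-cone} (the constant-cone case already established after Proposition~\ref{cone-tor}) applies directly and gives that $\K(\phi)|_{s=s_i}\circ\F$ and $\K(\phi)|_{s=s_{i+1}}\circ\F$ are $\tfrac{1}{n}\max_{s\in[s_i,s_{i+1}]}(\max h_s-\min h_s)$-interleaved. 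The triangle inequality for $d_{\T(M)}$ then bounds $d_{\T(M)}(\F,\K(\phi)|_{s=1}\circ\F)$ by the Riemann sum, and letting $n\to\infty$ yields $\int_0^1(\max h_s-\min h_s)\,ds$. Taking the infimum over $H$ finishes.

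Compared with your proposal, this buys two things. First, it avoids proving any new torsion lemma: the constant-cone result is already in hand, and subdivision plus the triangle inequality does the integration for you. Second, it bypasses the factor-of-two bookkeeping you worry about, since the triangle inequality chains the endpoint restrictions directly rather than routing both through a common interpolator $Rq_*\G$. Your approach would work if you actually proved the integrated-aperture torsion bound for the varying cone, but that is a genuinely harder statement than anything the paper needs, and your sketch of it (``tracking the resulting constructible decomposition\dots shows that every bar has $t$-length bounded by the integrated aperture'') is an assertion rather than an argument. The paper's subdivision trick is the missing idea that turns the heuristic into a two-line proof.
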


\begin{proof} We aim to show $\F$ and $\K(\phi)|_{s=1} \circ \F$ are $||h_s||_{\rm Hofer}$-interleaved. Divide $I = [0,1]$ into $n$ segments, that is, $s_i = i/n$. Then by Exercise \ref{slope-cone}, $\K(\phi)|_{s=s_i} \circ \F$ and $\K(\phi)|_{s=s_{i+1}} \circ \F$ are
\[ \mbox{ $\frac{1}{n} \max_{s \in [s_i, s_{i+1}]} \left(\max_{(m,\xi)} h_s - \min_{(m, \xi)} h_s\right)$-interleaved}. \]
Then $\F (= \K(\phi)|_{s=0} \circ \F)$ and $\K(\phi)|_{s=1} \circ \F$ are 
\[ \sum_{i=0}^n \mbox{ $\frac{1}{n} \max_{s \in [s_i, s_{i+1}]} \left(\max_{(m,\xi)} h_s - \min_{(m, \xi)} h_s\right)$-interleaved}. \]
Let $n \to \infty$ and by definition of Riemann integral, we get $\F (= \K(\phi)|_{s=0} \circ \F)$ and $\K(\phi)|_{s=s_{i+1}} \circ \F$ are $ \left(\int_0^1 \max_{(m, \xi)} h_s - \min_{(m, \xi)} h_s ds\right)$-interleaved, as required.
\end{proof}

\subsection{Energy-capacity inequality (following Asano-Ike)} \label{sec-ec}

We can use Tamarkin category to define a capacity of a domain $A \subset T^*M$. First,

\begin{dfn} \label{dfn-capacity} ({\it Capacity of a sheaf $\F \in \T(M)$})
For a sheaf $\F \in \T(M)$, we can define a capacity of $\F$. 
\[ c(\F) = \inf\{c>0 \,| \, R\pi_*\HOM(\F, \F) \,\,\mbox{is a $c$-torsion}\}.\]
\end{dfn}

\begin{remark} Note that we can also define 
\[ c'(\F) = \inf\{c>0 \,|\, R\Hom(\F,\F) \to R\Hom(\F, {T_c}_*\F) \,\,\mbox{is 0}\}.\]
Because $R\Hom(\F, {T_c}_*\F) = R\Hom(\k_{[0, \infty)}, {T_c}_* R\pi_*\HOM(\F,\F))$ (see argument above Remark \ref{third-barcode}), we know 
\[ c(\F) \geq c'(\F). \]
Though $c(\F)$ gives a potentially better estimation on capacity, it is sometimes easier to look at $c'(\F)$ directly (see Example \ref{cap-eye}). Also note that $c(\F)$, if finite, provides a lower bound for {\it boundary depth} (i.e. length of the longest finite length bar in a barcode) of sheaf barcode of $R\pi_*\HOM(\F, \F)$. \end{remark}

\begin{ex} Let $\F = \k_{[0,2)} \in \T(pt)$. Then 
\[ R\pi_*\HOM(\F, \F) = \HOM(\F, \F) = \k_{[-2, 0)}[1] \oplus \k_{[0, 2)} \]
which is a $2$-torsion. In fact, $c(\F) = c'(\F) = 2$. 
\end{ex}

\begin{ex} \label{ex-non-tor} Let $\F = \F_f$ for some differentiable function $f: M \to \R$. Then by Example \ref{ex-0-gf},  
\[ R\pi_*\HOM(\F, \F) = \bigoplus \k_{(-\infty, 0)}. \]
Therefore it is non-torsion and $c(\F) = + \infty$. 
\end{ex}

\begin{ex} \label{cap-eye} Let $\F= \k_{Z}$ where $Z$ is from Figure \ref{20}, viewed as a deformation of Example \ref{ex-lag-eye} (smooth along $x=0$). 
\begin{figure}[h]
\centering
\includegraphics[scale=0.5]{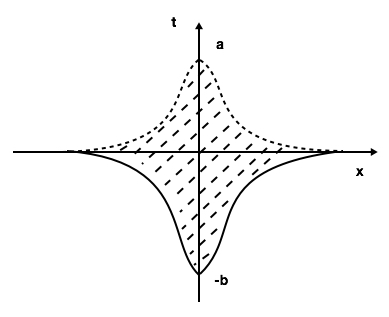}
\caption{Deformation of a torsion sheaf}
\label{20}
\end{figure}
Then directly from support of $\F$, we know once $c \geq a+b$, $\F \to {T_c}_*\F$ is $0$-map. Therefore, 
\[ c'(\F) \leq a+b. \]
In fact, here $c'(\F) = a+b$. Indeed, if $c'(\F) < a+b$, then there exists some $c'(\F) <c < a+b$ such that $R\Hom(\F, \F) \to R\Hom(\F, {T_c}_*\F)$ is zero morphism. In particular, $\I_\F$ maps to $\tau_c(\F)$ which is zero morphism. However, since $c<a+b$, there still exists non-empty intersection between $\F$ and ${T_c}_*\F$. By definition of $\tau_c(\F)$, it is induced simply by restriction, so $\tau_c(\F) \neq 0$. Contradiction. 

Due to simplicity of $\F$ in this example, we can actually compute $c(\F)$ explicitly. First of all, $R\pi_*\HOM(\F,\F) = R\pi_*(\F^a \ast_{np} \F)$ where $\F^a$ is the reflection of $\F$ with respect to $x$-axis and still keep the boundary open above and closed below. The main object we are interested in is then $\F^a \ast_{np} \F$. Moreover, for each $x \in \R_x$, 
\[ \F^a|_{\{x\} \times \R_t} = \k_{[-t^a_-(x), t^a_+(x))} \,\,\,\,\mbox{and}\,\,\,\, \F|_{\{x\} \times \R_t} = \k_{[-t_-(x), t_+(x))}.\]
By symmetry of $\F^a$ and $\F$, $t^a_-(x) = t_+(x)$ and $t^a_+(x) = t_-(x)$. Therefore, Figure \ref{i21} shows a computational picture for convolution $\ast_{np}$ (over $x$). 
\begin{figure}[h]
\centering
\includegraphics[scale=0.35]{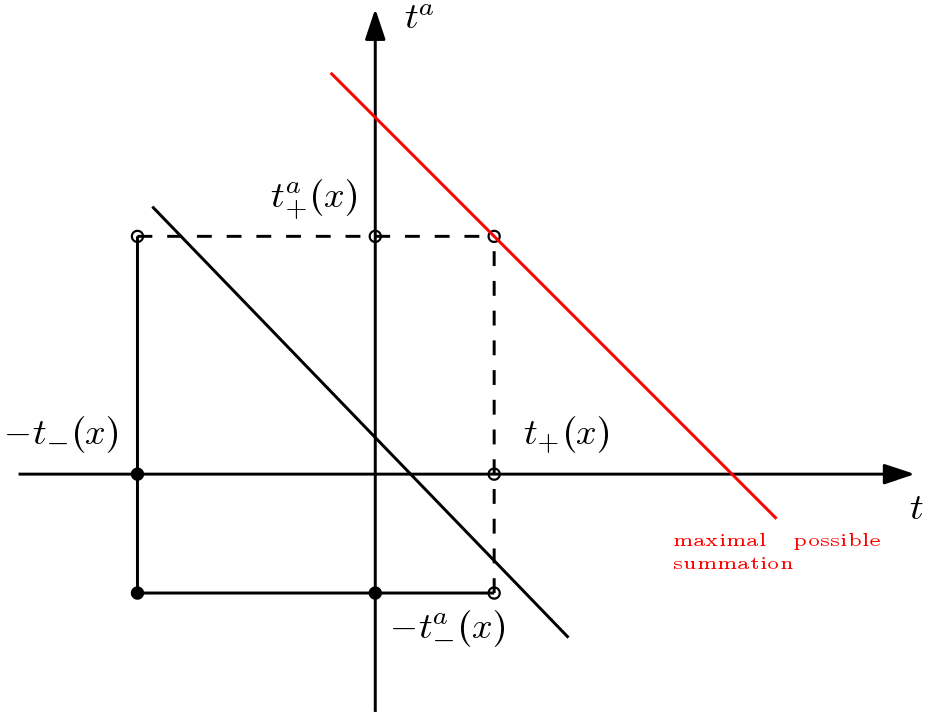}
\caption{Computation of maximal torsion}
\label{i21}
\end{figure}
Therefore, 
\[ (\F^a \ast_{np} \F)|_{\{x\} \times \R_t}  = \k_{[0, t_-(x)+ t_+(x))} \oplus \k_{[-t_-(x) - t_+(x), 0)}[-1]. \]
The maximal $t_-(x)+ t_+(x)$ we can get is $a+b$, from fiber over $x=0$. Therefore, 
\[ R\pi_*\HOM(\F, \F) =  R\pi_*(\F^a \ast_{np} \F) = \k_{[0, a+b)}[1] \oplus \k_{[-a-b, 0)}. \]
Indeed, it is a $(a+b)$-torsion. 
\end{ex}

\begin{remark} \label{rmk-que-bd} The example above is very enlightening in the following sense. View $\partial \bar{Z}$ as $xt$-projection of a Legendrian knot $K$ in $\R^3$ with contact 1-form $\alpha = dt - ydx$. Note that $y$ can be completely recovered from $\partial \bar{Z}$ via relation $y = dt/dx$ (in particular, over $(0, a)$ and $(0, -b)$, $y = 0$). Moreover, with respect to $\alpha$, the Reeb vector field is $\frac{\partial}{\partial t}$, so the Reeb chord of this $K$ is as shown in Figure \ref{22}.
\begin{figure}[h]
 \centering
\includegraphics[scale=0.5]{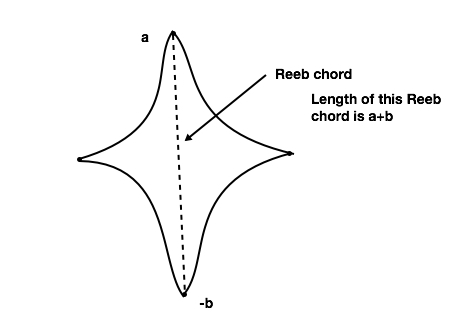}
\caption{Capacity characterized by Reeb chord}
\label{22}
\end{figure}
On the other hand, project $K$ to $xy$-plane, we will get a ``$\infty$''-figure picture as Figure \ref{23}.
\begin{figure}[h]
\centering
\includegraphics[scale=0.5]{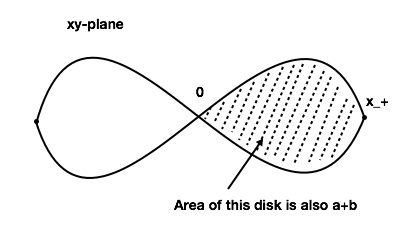}
\caption{Translation between Reeb chord and bounding disk}
\label{23}
\end{figure}
This can be viewed as an immersed Lagrangian in $\R^2$ (cf. Example \ref{ex-lag-eye}). We can check that the area of the disk shaded is also equal to $a+b$. In fact, denote ``$+$'' as the data of the upper half curve and ``$-$'' as the data of the lower half curve, then 
\begin{align*}
\mbox{Area of disk} & = \int_0^{x_+} \left(\frac{dt}{dx}^- - \frac{dt}{dx}^+ \right) dx \\
& = \int_0^{x_+} \frac{dt}{dx}^- dx - \int_0^{x_+} \frac{dt}{dx}^+ dx\\
& = (t(x_+)^-  - t(0)^-) - (t(x_+)^+ - t(0)^+)\\
& = (0 - (-b)) - (0 - a) = a+b. 
\end{align*}
Suggested by L. Polterovich, it will be interesting to generalize this observation to any immersed Lagrangian and define a capacity in terms of its bounded disks. If this is difficult from geometric method, then sheaf method might serve as the right tool for this question. 
\end{remark}

\begin{dfn} \label{dfn-capacity-do} ({\it Capacity of a domain}) For a fixed domain $A \subset T^*M$, define 
\[ c_{\rm sheaf}(A) = \sup\{c(\F) \,| \, \F \in \T_A(M)\}.  \]
\end{dfn}

\begin{ex} Note the example from Example \ref{cap-eye} can be shrink into any  open ball, therefore, for any (small) open ball $B(\ep) \subset T^*M$, we know $c_{sheaf}(B(\ep)) >0$. \end{ex}

The following theorem is one of the highlights of Tamarkin category theory in this note. 

\begin{theorem} \label{thm-ec} ({\it Energy-capacity inequality}) For any $A \in \subset T^*M$, 
\[ c_{\rm sheaf}(A) \leq 2 e(A)\]
where $e(A)$ is the displacement energy defined via the Hofer norm. \end{theorem}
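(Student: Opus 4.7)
The plan is to show that for every $\F \in \T_A(M)$ and every Hamiltonian diffeomorphism $\phi$ with $\phi(A) \cap A = \emptyset$, we have $c(\F) \leq 2\|\phi\|_{{\rm Hofer}}$; taking the infimum over $\phi$ and then supremum over $\F$ gives the result. So fix such an $\F$ and $\phi$, and set $\delta = \|\phi\|_{{\rm Hofer}}$. The key move is to introduce the ``displaced'' sheaf $\G := \K(\phi)|_{s=1} \circ \F$, which by Lemma \ref{K-move} belongs to $\T_{\phi(A)}(M)$.

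First I would invoke the stability theorem (Theorem \ref{stability}) to get $d_{\T(M)}(\F, \G) \leq \delta$, i.e.\ morphisms $\alpha \co \F \to {T_\delta}_* \G$ and $\beta \co \G \to {T_\delta}_* \F$ satisfying the interleaving relation ${T_\delta}_*\beta \circ \alpha = \tau_{2\delta}(\F)$. Next I would invoke Separation Theorem (Theorem \ref{sep-thm}): since $A \cap \phi(A) = \emptyset$, and what that theorem really proves (as noted after Claim \ref{claim-magic}) is the vanishing $R\pi_* \HOM(\F, \G) = 0$.

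Now the crucial step is to apply the functor $R\pi_*\HOM(\F, -)$ to the interleaving factorization. Using that ${T_c}_*$ commutes with $\HOM$ in the second argument and with $R\pi_*$, the shift map
\[ \tau_{2\delta} \co R\pi_*\HOM(\F, \F) \longrightarrow {T_{2\delta}}_* R\pi_*\HOM(\F, \F) \]
factors as
\[ R\pi_*\HOM(\F, \F) \xrightarrow{\HOM(\F,\alpha)} R\pi_*\HOM(\F, {T_\delta}_*\G) \xrightarrow{\HOM(\F, {T_\delta}_*\beta)} R\pi_*\HOM(\F, {T_{2\delta}}_*\F), \]
and the middle term is ${T_\delta}_* R\pi_*\HOM(\F, \G) = 0$ by the separation step. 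Hence $\tau_{2\delta}$ is the zero morphism on $R\pi_*\HOM(\F,\F)$, which exactly means $R\pi_*\HOM(\F,\F)$ is $2\delta$-torsion, and therefore $c(\F) \leq 2\delta = 2\|\phi\|_{{\rm Hofer}}$ by Definition \ref{dfn-capacity}.

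I expect the only delicate point to be the bookkeeping that $\HOM$ is genuinely functorial in its second argument within $\T(M)$ and that ${T_c}_*$ passes through both $\HOM(\F, -)$ and $R\pi_*$ compatibly with the interleaving diagrams; once this is in place the rest of the argument reduces to assembling the ingredients (stability for the interleaving, the move Lemma \ref{K-move} for placing $\G$ in $\T_{\phi(A)}(M)$, and separation for annihilating the middle term). Taking $\inf$ over displacing $\phi$ yields $c(\F) \leq 2 e(A)$ for every $\F \in \T_A(M)$, and taking $\sup$ over $\F$ gives the claimed inequality $c_{{\rm sheaf}}(A) \leq 2 e(A)$.
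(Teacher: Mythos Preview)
Your proposal is correct and follows essentially the same approach as the paper: combine Lemma~\ref{K-move}, the Separation Theorem (which gives $R\pi_*\HOM(\F,\K(\phi)|_{s=1}\circ\F)=0$), and the stability estimate Theorem~\ref{stability} to conclude that $R\pi_*\HOM(\F,\F)$ is $2\|\phi\|_{\rm Hofer}$-torsion. The only cosmetic difference is that the paper packages your ``apply $R\pi_*\HOM(\F,-)$ to the interleaving'' step via the pseudo-metric inequalities of Proposition~\ref{prop-dis} (items (5) and (6)) together with $c(\F)=2\,d_{\T(M)}(R\pi_*\HOM(\F,\F),0)$, whereas you unpack the factorization of $\tau_{2\delta}$ through zero by hand; the ``delicate point'' you flag is precisely what those items encode.
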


\begin{remark} One efficient way to get a lower bound of $c_{\rm sheaf}(A)$ is to consider sheaves in the form of Example \ref{cap-eye} and try to compute their capacities. \end{remark}

A corollary of Theorem \ref{thm-ec} is the following result proved via $J$-holomorphic curve in \cite{Pol93}. 

\begin{cor} \label{cor-pol93} For any (small) open ball $B(\ep) \subset T^*M$, $e(B(\ep))>0$. \end{cor}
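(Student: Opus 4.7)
The plan is to exhibit a concrete test sheaf $\F \in \T_{B(\ep)}(M)$ whose capacity $c(\F)$ is strictly positive, and then feed this into the energy-capacity inequality (Theorem \ref{thm-ec}). Since $B(\ep) \subset T^*M$ can be viewed inside a Darboux chart, it suffices to work locally in $T^*\R^{n}$ and translate the sheaf-theoretic construction by the embedding into $T^*M$ (restricted Tamarkin categories behave well under such open inclusions, as the singular support constraint is local).

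The candidate sheaf is the constant sheaf $\k_Z$ on a shrunken Whitney-type ``eye'' region $Z$, constructed exactly as in Example \ref{cap-eye} (or Example \ref{ex-lag-eye}), rescaled so that the associated immersed Lagrangian (whose co-vector slopes are tracked by $\partial Z$) sits inside $B(\ep)$. By the singular-support calculation of Fact \ref{fact-ss}, we have $\rho(SS(\k_Z)) \subset B(\ep)$, so $\k_Z \in \T_{B(\ep)}(M)$ once we pass through the Darboux chart. Moreover, $\k_Z$ is torsion: as computed in Example \ref{cap-eye}, one obtains
\[
R\pi_*\HOM(\k_Z,\k_Z) \;\simeq\; \k_{[0,\,a+b)}[1]\oplus \k_{[-a-b,\,0)},
\]
where $a,b>0$ are the vertical extents of $Z$. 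In particular $c(\k_Z) = a+b > 0$.

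Now I invoke Theorem \ref{thm-ec}. By Definition \ref{dfn-capacity-do}, $c_{\rm sheaf}(B(\ep)) \geq c(\k_Z) = a+b$, and Theorem \ref{thm-ec} gives
\[
0 \,<\, \tfrac{a+b}{2} \,\leq\, \tfrac{1}{2}\,c_{\rm sheaf}(B(\ep)) \,\leq\, e(B(\ep)),
\]
which is exactly the conclusion.

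The main obstacle I expect is not the energy-capacity inequality itself (which is assumed as Theorem \ref{thm-ec}) but rather the verification that the ``eye'' sheaf constructed in a Darboux chart really gives an honest element of $\T_{B(\ep)}(M)$ on the global cotangent bundle, and that its capacity survives the passage from $T^*\R^n$ to the Darboux ball in $T^*M$. In other words, one must check that extension by zero preserves both the singular-support bound (so that the image lies in $\T_{B(\ep)}(M)$) and the torsion calculation of $R\pi_*\HOM(\F,\F)$. Both should follow from the local nature of singular supports and the fact that $\F$ has compact support contained in a Darboux chart, so that all the relevant convolutions and $R\pi_*\HOM$ computations reduce to the local model of Example \ref{cap-eye}.
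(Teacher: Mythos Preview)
Your proposal is correct and follows exactly the approach the paper intends: the paper states this as an immediate corollary of Theorem \ref{thm-ec}, having already noted (in the example just before Theorem \ref{thm-ec}) that the eye sheaf of Example \ref{cap-eye} can be shrunk into any Darboux ball, so $c_{\rm sheaf}(B(\ep))>0$. Your added discussion of the Darboux-chart transfer is a fair point that the paper leaves implicit, but otherwise there is nothing to add.
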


The proof of Theorem \ref{thm-ec} is a combination of various important results so far appearing in the note. 

\begin{proof} (Proof of Theorem \ref{thm-ec}) We only need to show if $\phi(A) \cap A = \emptyset$ for some $\phi \in \Ham(T^*M)$, then $c(\F) \leq ||\phi||_{\rm Hofer}$ for any $\F \in \T_A(M)$. Note that by Lemma \ref{K-move} and Separation Theorem, we know 
\[ R\pi_*\HOM(\F, \K(\phi)|_{s=1} \circ \F) = 0. \]
Then by Proposition \ref{prop-dis},
\begin{align*}
c(\F) &= 2 d_{\T(M)} (R\pi_*\HOM(\F, \F), 0) \\
&= 2 d_{\T(M)} (R\pi_*\HOM(\F, \F), R\pi_*\HOM(\F, \K(\phi)|_{s=1} \circ \F)) \\
&\leq 2 d_{\T(M)} (\HOM(\F,\F), \HOM(\F, \K(\phi)|_{s=1} \circ \F)) \\
&\leq 2 \left(d_{\T(M)} (\F,\F) + d_{\T(M)}(\F,  \K(\phi)|_{s=1} \circ \F)\right) \\
&\leq 2 d_{\T(M)}(\F,  \K(\phi)|_{s=1} \circ \F)  \leq 2||\phi||_{{\rm Hofer}}
\end{align*}
where the final step comes from stability result - Theorem \ref{stability}. 
\end{proof}
 
\begin{remark} To some extent, the way using torsion to define capacity above (see Definition \ref{dfn-capacity-do}) is similar to the way using boundary depth to define capacity, see Section 5.3 in \cite{Ush13}. In fact, Corollary 5.12 in \cite{Ush13} proves one version of energy-capacity inequality which looks comparable with the proof given here. \end{remark}

\subsection{Symplectic ball-projector (following Chiu)} \label{sec-ball-proj} 
In this section, the (ideal) goal is to describe/obtain objects in $\T_U(M)$ when $U$ is an open domain of $T^*M$. Since $U$ is open, strictly speaking $\T_U(M)$ is NOT well-defined  (cf. (2) in Definition \ref{dfn-tarc}). Formally, one defines 
\[ \T_U(M) : = (\T_{T^*M \backslash U}(M))^{\perp} \]
where orthogonality ``$\perp$'' is taken in $\T(M)$. The concrete example we will work out in this section is when $M = \R^n$ and $U= B(r) := \{q^2 + p^2 < r^2\} \subset T^*\R^n$ where $q$ is the position coordinate in $\R^n$ and $p$ is the momentum coordinate. Recall how we obtain objects in $\T(M) (= \D_{\{\tau \leq 0\}}(\k_{M \times \R})^{\perp, l})$. Starting from any object $\F \in \D(\k_{M \times \R})$, (\ref{decomp}) tells us convolution with the following distinguished triangle 
\[ \k_{M \times [0, \infty)} \to \k_{\{0\}} \to \k_{M \times (0, \infty)}[-1] \]
splits an object $\F$ into two orthogonal parts where $\F \ast \k_{M \times [0,\infty)} \in \T(M)$. We call this kind of distinguished triangle an {\it orthogonal splitting triangle}. Interestingly enough, we will obtain objects in $\T_U(M)$ also by mating with an orthogonal splitting triangle. The title - symplectic ball projector - is a building block in this distinguished triangle, serving as an analogue role of $\k_{M \times [0, \infty)}$ in the construction of $\T(M)$. Explicitly, one has the following important theorem (Theorem 3.11 in \cite{Chiu17}).

\begin{theorem} \label{ball-thm}  There exists an orthogonal splitting triangle in $\D(\k_{\R_1^n \times \R_2^n \times \R})$ 
\begin{equation} \label{ball-split}
P_{B(r)} \to \k_{\{(q_1, q_2, t) \,| \, q_1 = q_2; t \geq 0\}} \to Q_{B(r)} \xrightarrow{+1},
\end{equation}
that is, for any $\F \in \T(\R^n)$, $\F \bullet_{\R^n_1} P_{B(r)} \in \T_{B_r}(\R^n)$ and $\F \bullet_{\R^n_1} Q_{B(r)} \in \T_{T^*\R^n \backslash B_r}(\R^n)$.
\end{theorem}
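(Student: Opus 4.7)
The plan is to construct $P_{B(r)}$ explicitly via generating functions following formula (\ref{intro-pb}), and then verify the splitting by combining singular-support calculus with the Separation Theorem. Take the Hamiltonian $H(q,p) = |q|^2 + |p|^2$, so that $B(r) = \{H < r^2\}$, and let $\phi_a$ be its Hamiltonian flow. Over a small time window $\phi_a$ admits a Chekanov-type generating function $S(q_1,q_2,a)\colon \R^n_1\times\R^n_2\times\R_a \to \R$; for large time one concatenates via GKS-style sheaf composition (Theorem \ref{gks}). Set $\F_S := \k_{\{S(q_1,q_2,a)+t\geq 0\}} \in \D(\k_{\R^n_1\times\R^n_2\times\R_a\times\R_t})$, which by Example \ref{gen-fnc} is the canonical sheaf encoding $\graph(d_{q_1,q_2}S)$, i.e.\ the graph of $\phi_a$. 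Then define
\[ P_{B(r)} := \F_S \bullet_{\R_a} \k_{\{t+ab\geq 0\}}[1]\ \circ_{\R_b}\ \k_{\{b<r^2\}}. \]
The role of $\bullet_{\R_a}$ with the Laplace-type kernel $\k_{\{t+ab\geq 0\}}[1]$ is to perform a Legendre transform in $a$, replacing the time coordinate by its dual $b$ (physically the energy $-H$ evaluated along the flow). Composing with $\k_{\{b<r^2\}}$ then truncates the energy at the level $r^2$, i.e.\ restricts the kernel to trajectories lying in $B(r)$.

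The key calculation is the singular-support estimate. Using Propositions \ref{push}, \ref{pullback}, \ref{th} and Exercise \ref{exe-cc}, together with the explicit formula (\ref{ss-f}) for $SS(\F_S)$, one tracks that $SS(P_{B(r)})$ reduces (under $\rho\colon T^*_{\tau>0}(\R^n\times\R)\to T^*\R^n$ applied in each $\R^n_i$-factor) into $\overline{\rho^{-1}(\Lambda_{B(r)})}$, where $\Lambda_{B(r)}\subset T^*\R^n_1\times T^*\R^n_2$ is the diagonal Lagrangian correspondence built from Hamiltonian trajectories inside $B(r)$. The delicate points are the behavior at $\partial B(r)$ (where the Legendre transform produces conormal pieces of $\partial \bar B(r)$) and checking that the cutoff $\{b<r^2\}$ does not introduce extra directions outside $\overline{B(r)}$; both follow from Fact \ref{fact-ss} applied to the defining functions and careful analysis of the strata created by $\bullet_{\R_a}$.

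Next, build the distinguished triangle. As $r\to\infty$, the cutoff $\k_{\{b<r^2\}}$ becomes $\k_{\R_b}$ and the composition formally degenerates to $\k_{\{q_1=q_2\}\times[0,\infty)}$, the identity for comp-convolution $\bullet_{\R^n}$. Concretely, this produces a natural morphism $P_{B(r)} \to \k_{\{q_1=q_2;\,t\geq 0\}}$ induced by the inclusion of sheaves; completing via Proposition \ref{fact2}(0) yields a triangle $P_{B(r)}\to \k_{\{q_1=q_2;\,t\geq 0\}} \to Q_{B(r)} \xrightarrow{+1}$, and the singular-support triangle inequality (Proposition \ref{SS-triangle}) forces $SS(Q_{B(r)})$ to sit (after reduction) in $T^*\R^n\setminus B(r)$, since the other two terms account for the $B(r)$-part and the diagonal. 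Applying $\F \bullet_{\R^n_1} -$ for any $\F\in\T(\R^n)$ preserves this triangle, and the identification $\F \bullet \k_{\{q_1=q_2;\,t\geq 0\}} = \F$ (via Theorem \ref{tam-elm}) gives the desired decomposition $\F\bullet P_{B(r)}\to \F\to \F\bullet Q_{B(r)}\xrightarrow{+1}$, with the two outer terms in $\T_{B(r)}(\R^n)$ and $\T_{T^*\R^n\setminus B(r)}(\R^n)$ respectively by the singular-support estimates plus (\ref{geo-comp}).

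Finally, orthogonality of the splitting, i.e.\ the vanishing $R\Hom(\F\bullet P_{B(r)},\, \F\bullet Q_{B(r)})=0$ needed to qualify the triangle as a genuine orthogonal splitting, follows from the Separation Theorem (Theorem \ref{sep-thm}) applied to the reductions, which lie in the disjoint closed set $\overline{B(r)}\cap(T^*\R^n\setminus B(r))$: a small perturbation of $r$ inherent in the construction of $P_{B(r)}$ (the strict inequality $b<r^2$) ensures that the reductions are in fact separated by a positive distance, so the Separation Theorem applies. The main obstacle I anticipate is Step 2, the singular-support calculation for $P_{B(r)}$: the sheaf is a three-fold composition of non-constructible-looking pieces, and verifying that the output's wavefront lies exactly over $\overline{B(r)}$ requires a careful $\tau$-by-$\tau$ analysis of the fiber behavior along $\partial B(r)$, which is where Chiu's explicit parametrization in \cite{Chiu17} does the real work.
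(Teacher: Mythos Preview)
Your construction of $P_{B(r)}$ and the singular-support outline match the paper (Definition \ref{b-p}, Proposition \ref{ss-projector}); the paper itself defers the complete proof to \cite{Chiu17}. But you have misjudged where the real difficulty lies: the singular-support computation you flag as ``the main obstacle'' is carried out explicitly in Proposition \ref{ss-projector}, whereas the orthogonality step you dispatch in one sentence is, as the Remark after Theorem \ref{ball-thm} warns, the hard part.

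The gap is concrete. To prove $\F\bullet P_{B(r)}\in\T_{B(r)}(\R^n)=(\T_{T^*\R^n\setminus B(r)}(\R^n))^\perp$ you must show $R\Hom(\F\bullet P_{B(r)},\G)=0$ for \emph{every} $\G$ with reduced singular support in the closed complement $T^*\R^n\setminus B(r)$. But Proposition \ref{ss-projector} shows the reduced singular support of $P_{B(r)}$ genuinely contains $\partial B(r)$: part (i) reaches $q_1^2+p_1^2=r^2$ and part (ii) lives entirely there. The strict inequality $b<r^2$ does \emph{not} push the wavefront strictly inside the open ball, so your claimed ``positive distance'' separation is simply false. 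Since $\G$ may also have singular support on $\partial B(r)$, the hypotheses of the Separation Theorem (Theorem \ref{sep-thm}) fail on the sphere. Chiu's actual argument has to exploit the specific dynamical structure of the boundary piece (the characteristic-flow leaves in part (ii)), not generic disjointness; this is exactly the work you have not done.

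A smaller gap: deducing that $SS(Q_{B(r)})$ reduces into $T^*\R^n\setminus B(r)$ from the triangle inequality (Proposition \ref{SS-triangle}) does not work, because the middle term $\k_{\{q_1=q_2;\,t\geq0\}}$ reduces to the full diagonal of $T^*\R^n$, so the inclusion $SS(Q)\subset SS(P)\cup SS(\mathrm{id})$ is vacuous. One must compute $SS(Q_{B(r)})$ directly from its construction with $\k_{\{b\geq r^2\}}$, as the paper indicates in the exercise following Proposition \ref{ss-projector}.
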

\begin{remark} 
Note that $\T_{B_r}(\R^n) \subset \T_{\bar{B}_r}(\R^n)$. The difficulty part in the proof of Theorem \ref{ball-thm} lies in the proof of orthogonality. \end{remark}

Sometime, we need to modify $P_{B(r)}$ and $Q_{B(r)}$ to be ``symmetric''. Consider map $\delta: \R \times \R \to \R$ by $(t_1, t_2) \to t_2 - t_1$. Denote $P(B(r)) : = \delta^{-1} P_{B(r)}$.  Applying $\delta^{-1}$, one gets
\begin{equation} \label{ball-split-2}
P(B(r)) \to \k_{\{q_1 = q_2; t_2 \geq  t_1\}} \to Q(B(r)) \xrightarrow{+1},
\end{equation}
an orthogonal splitting triangle in $\D(\k_{\R^n_1 \times \R_1 \times \R^n_2 \times \R_2})$ due to the following exercise. 
\begin{exercise}
For any $\F \in \T(\R_1^n)$, $\F \bullet_{\R_1^n} P_{B(r)} = \F \circ_{\R^n_1 \times \R_1} P(B(r))$. Hence $P(B(r))$ or $Q(B(r))$ can be regarded as a kernel. 
\end{exercise}

\begin{dfn} $P_{B(r)}$ in (\ref{ball-split}) or $P(B(r))$ in (\ref{ball-split-2}) is called {\bf the} symplectic ball-projector. \end{dfn}

To justify this name, in Section \ref{sec-proj-prop}, we will show any such sheaf in $\D(\k_{\R_1^n \times \R_2^n \times \R})$ who fits into an orthogonal splitting triangle as (\ref{ball-split}) under $``\bullet''$ or (in $\D(\k_{\R_1^n \times \R_1 \times \R_2^n \times \R_2})$ fits into an orthogonal splitting triangle as (\ref{ball-split-2})) under $``\circ''$ is unique up to isomorphism. \\

\subsubsection{Construction of ball-projector} We will give the explicit constructions of $P_{B(r)}$ and $Q_{B(r)}$ and leave interested readers to check the original proof of Theorem \ref{ball-thm} in \cite{Chiu17}. The constructions of $P_{B(r)}$ and $Q_{B(r)}$ are very much related with Section \ref{sec-LT} involving Lagrangian submanifold. Recall how we obtain objects $\F$ in $\T_L(M)$ where $L$ is a Lagrangian submanifolds of $M$ admitting a generating function. For instance, (see Example \ref{ex-graph})
\begin{equation} \label{lag-const}
L = \graph(df) \,\,\,\,\mbox{then}\,\,\,\, \F_f = \k_{\{(m, t)\,|\, f(m) + t \geq 0\}} \in \T_L(M). 
\end{equation}
Now consider the function $H: T^*\R^n \to \R$ by $(q,p) \to q^2 + p^2$. We should respect this function $H$ from the following two points. 
\begin{itemize}
\item[(i)] It defines $B(r) = \{H<r^2\}$; 
\item[(ii)] it generates an easy dynamics $\phi_H^a$ on $T^*\R^n$. 
\end{itemize}
Here variable $a \in \R$ represents the time. For each fixed $a \in \R$, the Lagrangian submanifold $\graph(\phi_H^a)$ in $T^*\R^n \times T^*\R^n$, up to a global Weinstein neighborhood identification, can be regarded as a Lagrangian submanifold of $T^*(T^*\R^n)$, denoted as $\Lambda_a$. By Proposition 9.33 in \cite{MS98}, there exists a generating function $S_a: T^*\R^n \to \R$ such that $\Lambda_a = \graph(dS_a)$ for at least sufficiently small (non-zero) $a \in \R$. Let's temporarily ignore the extension issue on time variable $a$ (to entire $\R$) but importantly view $a$ as an extra variable of generating function, we can write out explicitly this generating function $S: \R \times T^*\R^n \to \R$, that is, for any $(q,p) \in T^*\R^n$ as starting point, 
\[ S(a, q,p) = \int_{\gamma} pdq - H da \]
where $\gamma$ is the Hamiltonian flow trajectory starting at $(q,p)$ and flow for time $a$. Then 

\begin{exercise} For $S$ defined above, check 
\begin{equation} \label{1-form} 
dS = - Hda - p dq + (\phi_H^a)^*(p dq).
\end{equation}
Note that $\graph(dS)$ is then a Lagrangian submanifold of $T^*\R \times T^*(T^*\R^n)$. 
\end{exercise}
\begin{remark}
Here we remark that viewing $a$ as a dynamical variable is absolutely important because its counterpart/co-vector $H$ (as the energy) provides a chance to algebraically control the domain. Here ``algebraically'' means use sheaf operators like composition/convolution to obtain desired restriction of $H$ (so restriction of $(q,p)$). \end{remark}

With these data, we can consider 
\[ \graph(dS) \xrightarrow{\mbox{\tiny {pre-quantize}}} (\graph(dS), -S) \xrightarrow{\mbox{\tiny conical lift}} \hat{L}\]
where $\hat{L} \subset T^*\R \times T^*(T^*\R^n) \times T^*_{\{\tau>0\}}\R$ is a homogenous Lagrangian submanifold. After a local variable change (due to a twist condition satisfied by our $H$ here, i.e., $\partial^2 H/\partial p^2 = 2>0$), $T^*\R \times T^*(T^*\R^n) \times T^*_{\{\tau>0\}}\R \simeq T^*_{\{\tau>0\}}(\R \times \R^n \times \R^n \times \R)$ where the position coordinates are labelled as $(a, q_1, q_2, t)$ and momentum coordinates are labelled as $(b, p_1, p_2, \tau)$. 
From the left hand side of (\ref{1-form}) and (\ref{lag-const}) above, we know there exists a sheaf in $\T(\R \times \R^n \times \R^n)$ 
\begin{equation} \label{dfn-FS}
\F_S: = \k_{\{(a,q_1, q_2, t)\,|\, S_a(q_1, q_2) + t \geq 0\}},
\end{equation}
satisfying $SS(\F_S) = \hat{L}$ modulo some $0$-section. 

\begin{remark} \label{rmk-cv} (1) One thing needs to be emphasized is that after changing variables from $(q,p)$ to $(q_1, q_2)$, generating function $S$ is NOT well-defined at some values of $a$ (for instance $a=0$). Then we need to extend $\F_S$ in a certain way such that $\F_S$ is well-defined over entire $\R$, see Page 619-620 in \cite{Chiu17}. A more precise formula of $\F_S$ is provided in Section \ref{sec-h-proj}. Here we just emphasize that 
\[ (\F_S)|_{a=0} : = \k_{\{q_1 = q_2; t \geq 0\}}. \]
(2) Note that for dynamics $\phi_H^a$, there are so far two ways to associate sheaves to it. One is from (\ref{dfn-FS}) and the other is from sheaf quantization from \cite{GKS12}. It will be interesting to compare these two methods more carefully. See Appendix \ref{app-2}. \end{remark}

\begin{dfn} \label{b-p} ($H$-ball-projector)
\[ P_{B(r), H} = \F_S \bullet_{a} \k_{\{t + ab \geq 0\}}[1] \circ_b \k_{\{b < r^2\}} \in  \D(\k_{\R_1^n \times \R_2^n \times \R})\]
and 
\[ Q_{B(r), H} = \F_S \bullet_{a} \k_{\{t + ab \geq 0\}}[1] \circ_b \k_{\{b \geq r^2\}} \in \D(\k_{\R_1^n \times \R_2^n \times \R}) \]
where $\bullet_a$ means comp-convolution over $\R_a$ and $\circ_b$ means composition over $\R_b$.
\end{dfn}
Note that both $P_{B(r), H}$ and $Q_{B(r); H}$ are notationally $H$-dependent. 
 Then the logic to pass from $P_{B(r), H}$ to {\it the} ball-projector $P_{B(r)}$ is 
\[ P_{B(r), H} \xrightarrow{\mbox{\small $H$-independent}} P_{B(r)} \xrightarrow{\mbox{\small unique up to isom.}} P_{B(r)} \]
where both steps will be proved in Section \ref{sec-proj-prop}.

\begin{ex} (Relation between $P_{B(r), H}$ and $Q_{B(r), H}$) Note that $P_{B(r), H}$ and $Q_{B(r), H}$ are related by the exact sequence $\k_{\{b<r^2\}} \to \k_{\R} \to \k_{\{b \geq r^2\}}$. Then in order to fit into $P_{B(r), H} \to \k_{\{q_1 = q_2; t \geq 0\}} \to Q_{B(r), H} \xrightarrow{+1}$, we need to show 
\[ \F_S \bullet_{a} \k_{\{t + ab \geq 0\}}[1] \circ_b \k_{\R} = \k_{\{q_1 = q_2; t \geq 0 \}}. \]
In fact, this comes from the following result $\k_{\{t + ab \geq 0\}} \circ_b \k_{\R} = \k_{\{a=0; t \geq 0\}}$ (then by the extension that $(\F_S)|_{a=0} = \k_{\{q_1= q_2; t \geq 0\}}$). Here we give its detailed computation. By definition, 
\[ \k_{\{t + ab \geq 0\}} \circ_b \k_{\R} = Rp_!\k_{\{t + ab \geq 0\}} \]
where $p: \R_t \times \R_b \times \R_a \to \R_t \times \R_a$ is projection. Then fixed any $(t,a) \in \R \times \R$, we can solve $b$. There are three cases. 
\begin{itemize}
\item{} When $a<0$, then $b \leq \frac{-t}{a}$, so compactly supported cohomology vanishes;
\item{} When $a>0$, then $b \geq \frac{-t}{a}$, so compactly supported cohomology vanishes;
\item{} When $a=0$, then $b \in \R$, so compactly supported cohomology equals to $\k[-1]$. 
\end{itemize}
Therefore, we get the conclusion. 
\end{ex}

\begin{exercise}  \label{exe-ph} 
\[ P_{B(r), H} = \F_S \bullet_{a} \k_{\{(a,t) \,| \, -t/r^2 \leq a \leq 0 \}}.\]
In other words, check $\k_{\{t + ab \geq 0\}}[1] \circ_b \k_{\{b < r^2\}} = \k_{\{(a,t) \,| \, -t/r^2 \leq a \leq 0 \}}$. \end{exercise}

We will end this section by clarifying the mysterious part in Definition \ref{b-p} lying in $\bullet_{a} \k_{\{t + ab \geq 0\}}$. This is called {\it Fourier-Sato transform} in our set-up. Let us use the following short subsection to get a better feeling of this operator. \\

\subsubsection{Fourier-Sato transform (brief)} This operator can be illustrated by the following basic example. For more general and similar result, see Lemma 3.7.10 in \cite{KS90}.

\begin{ex} Let $\F = \k_{[0, \infty)} \in \D(\k_{\R_a})$ where $\R_a$ denote $\R$ with coordinate $a$. Consider $\k_{\{ab \geq 0\}} \in \D(\k_{\R_a \times \R_b})$. We can compute 
\begin{align*}
\F \circ_a \k_{\{ab \geq 0\}} & = \k_{[0, \infty)} \circ_a \k_{\{ab \geq 0\}}\\
& = Rp_! (\k_{[0, \infty) \times \R} \otimes \k_{\{ab \geq 0\}})\\
& = \k_{(-\infty, 0)}
\end{align*}
where Figure \ref{i11} shows its computation
\begin{figure}[h]
 \centering
 \includegraphics[scale=0.5]{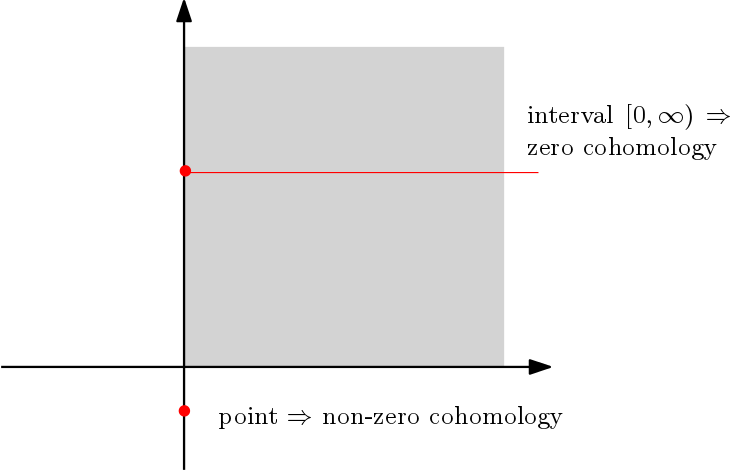}
 \caption{Computation of Sato-Fourier transform}
 \label{i11}
 \end{figure}
where $H^*_c(\R, \k_{[0, \infty)}) =0$ and $H^*_c(\R, \k_{\{\rm pt\}}) = \k$. Denote coordinate of $T^*\R_a$ by $(a, A)$ and $T^*\R_b$ by $(b,B)$. Clearly,  
\[ SS(\F) = SS(\k_{[0, \infty)}) = \{(0, A) \,|\, A \geq 0\} \cup \{(a, 0) \,|\, a \geq 0\}\]
and 
\[ SS(\F \circ \k_{\{ab \geq 0\}}) = SS(\k_{(-\infty, 0)}) = \{(b,0) \,|\, b \leq 0 \} \cup \{(0, B) \,| \, B \geq 0\}. \]
Define anti-reflection $r: \R \times \R \to \R \times \R$ by $(x,y) \to (-y, x)$, then observe $SS(\F \circ \k_{\{ab \geq 0\}}) = r(SS(\F))$. Specifically, $(0, A) \to (-A, 0) (\simeq (b,0))$ where $-A, b \leq 0$ and $(a,0) \to (0,a) (\simeq (0, B))$ where $a, B \geq 0$, see Figure \ref{i37}.
\begin{figure}[h]
 \centering
 \includegraphics[scale=0.45]{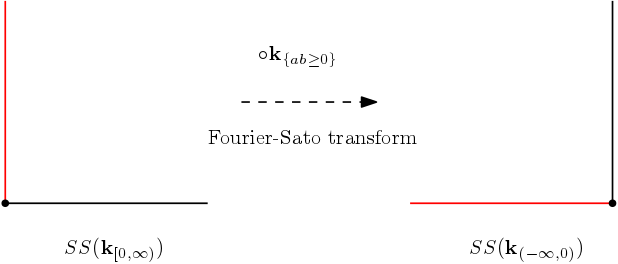}
 \caption{Geometry of Fourier-Sato transform}
 \label{i37}
 \end{figure}

\end{ex}

\begin{exercise} Let $\F = \k_{(0, \infty)}$. Then $\F \circ \k_{\{ab \geq 0\}} = \k_{[0, \infty)}[-1]$. \end{exercise}

\begin{dfn} For $\F \in \D(\k_{M \times \R_a})$, define 
\[ \hat{\F} = \F \circ_a \k_{\{ab \geq 0\}} \in \D(\k_{M \times \R_b}). \]
It is called the {\it Fourier-Sato transform of $\F$}. 
\end{dfn}


\begin{remark} We can also consider sheaf $\k_{\{ab \leq 0\}}$, then it is easy to check that $\hat{\F} \circ \k_{\{ab \leq 0\}} = \F$ up to a degree shift. Therefore, we can define {\it inverse Fourier-Sato transform} of $\F$ by using $\k_{\{ab \leq 0\}}$. In terms of singular support, $(b,a) \to (a, -b)$. \end{remark}

\subsection{Geometry of $P_{B(r), H}$ (joint with L. Polterovich)} \label{sec-h-proj}

In this section, we try to understand $P_{B(r), H}$, the key ingredient in constructing symplectic ball-projector, in more detail via singular support. Roughly speaking, from the behavior of singular support (at level $\tau=1$ or equivalently reduction), we can see how geometry of $P_{B(r), H}$ changes from $\F_S$, that is,
\[ \xymatrixcolsep{5pc}\xymatrix{
\hat{L} \ar[r]^-{\bullet_{a} \k_{\{t + ab \geq 0\}}[1]} & \mbox{$(a, -H)$ changes to $(H, a)$}\\
\ar[r]^-{\circ_b \k_{\{b < r^2\}}} & \mbox{restriction on $H < r^2 \Leftrightarrow q^2 + p^2 < r^2.$}} \]
Note that $H < r^2$ exactly provides the desired restriction on $(q,p)$ in $B(r)$. In this section, we will make this vision very precise by doing several detailed computations on the level of singular support. Moreover, we will give geometric interpretations our computations. Note that materials in this section, in particular in Subsection \ref{subsec-g-proj}, are not completely included in \cite{Chiu17}.  

\subsubsection{Singular support computations} \label{subsec-ssc}
First of all, due to the change of variables issue as mentioned in Remark \ref{rmk-cv}, we need a more precise formula of $\F_S$ (such that it is well-defined for all $a \in \R$). Given any time non-zero $a \in \R$, choose $M >>1$ such that $a/M$ is sufficiently small (hence $S({a/M}, \cdot, \cdot)$ is well-defined). Explicitly, 
\begin{equation} \label{local-S}
S(a/M,q_1, q_2) = \frac{q_1^2 + q_2^2}{2\tan(2a/M)} - \frac{q_1q_2}{\sin(2a/M)}.
\end{equation}
Then one {\it defines}
\begin{align*}
(\F_S)|_a &: = \k_{\{S(a/M, q_1, q_2) + t \geq 0\}} \bullet_{\R^n_2} \k_{\{S(a/M, q_2, q_3) + t \geq 0\}} \bullet_{\R^n_3}, ... \bullet_{\R^n_M} \k_{\{S(a/M, q_M, q_{M+1}) + t \geq 0\}}\\
& = R\rho_! \k_{\{(q_1, ..., q_{M+1}, t) \,|\, \sum S(a/M, q_i, q_{i+1}) + t \geq 0\}}
\end{align*}
where $\rho: (\R^n)^{M+1} \times \R \to \R^n \times \R^n \times \R$ by $(q_1, ..., q_{M+1}, t) \to (q_1, q_{M+1}, t)$. 

\begin{exercise} Check $\F_S$ is well-defined, that is, $(\F_S)|_a$ does not depend on $M$. \end{exercise}

Repeatedly applying geometric meaning of comp-convolution operator $\bullet$ from Exercise \ref{exe-cc} (or directly applying pushforward formula Proposition \ref{push}), one gets
\begin{equation}\label{ss-fs}
{\small SS(\F_S) \subset \left\{\left((a, - \tau H), (q_1, - \tau p_1), \tau \phi_H^a(q_1, p_1), - {\textstyle\sum} S(a/M, q_i, q_{i+1}), \tau \right) \,|\, \tau \geq 0 \right\}}
\end{equation}
where $a \in \R$ and $(q_1, ...,q_{M+1}) \in (\R^n)^{M+1}$. Second, by Exercise \ref{exe-ph}, we know 
\begin{align*}
SS(P_{B(r), H}) & = SS(\F_S \bullet_{a} \k_{\{(a,t) \,|\, -t/r^2 \leq a \leq 0\}}).
\end{align*}
Note that the set $\gamma : = \{(a,t) \,|\, -t/r^2 \leq a \leq 0\}$ is a closed cone as follows in Figure \ref{i26}, 
\begin{figure}[h]
 \centering
 \includegraphics[scale=0.5]{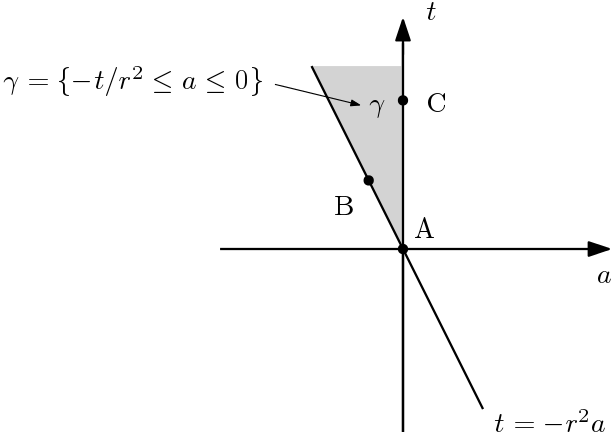}
 \caption{Closed cone from ball-projector}
 \label{i26}
 \end{figure}
By computational experience from Section \ref{sec-ex-tor}, we know $SS(\k_{\gamma})$ has three cases, ignoring the trivial case inside ${\rm int}(\gamma)$. 
\begin{itemize}
\item[A.] For $(0, 0)$, the associated singular support is polar cone $\gamma^{\circ}$ in Figure \ref{i27}. 
\item[B.] For $(a, - r^2 a)$ with $a<0$, the associated singular support is $(r^2 \tau , \tau)$ with $\tau \geq 0$.
\item[C.] For $(0,t)$ with $t >0$, the associated singular support is $(-\tau, 0)$ with $\tau \geq 0$. 
\begin{figure}[h]
 \centering
 \includegraphics[scale=0.5]{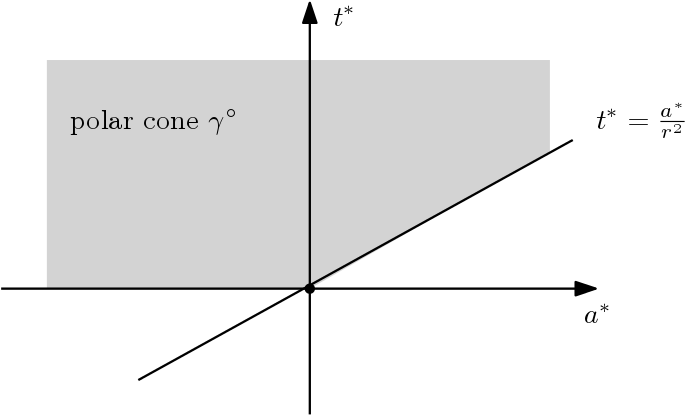}
 \caption{Polar cone of cone from ball-projector}
 \label{i27}
 \end{figure}
\end{itemize}
Then by Exercise \ref{exe-cc}, it is easy to check the following computational result. 

\begin{prop} \label{ss-projector} For the $H$-ball-projector $P_{B(r), H}$ defined in Definition \ref{b-p}, up to trivial zero-section from support of $P_{B(r), H}$, 
\begin{equation} \label{ss-proj} 
SS(P_{B(r), H}) \subset \begin{array}{l}  \{(q_1, -\tau p_1, q_1, \tau p_1, 0, \tau)\,|\, q_1^2 + p_1^2 \leq r^2 \} \,\,\cup \\ \{(q_1, - \tau p_1, \tau \phi_H^a (q_1, p_1), - S - r^2 a, \tau) \, | \, q_1^2 + p_1^2 = r^2, a <0\} \end{array}.
\end{equation}
Label the first part as part (i) and second part as part (ii). 
\end{prop}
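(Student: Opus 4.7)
The plan is to assemble the conclusion from three earlier inputs: the explicit presentation $P_{B(r), H} = \F_S \bullet_a \k_{\gamma}$ with $\gamma = \{(a,t) \,|\, -t/r^2 \leq a \leq 0\}$ from Exercise \ref{exe-ph}; the singular-support formula (\ref{ss-fs}) for $\F_S$; and the functorial estimate for $SS$ under comp-convolution from Exercise \ref{exe-cc}. That estimate says any $(q_1, \xi_1, q_2, \xi_2, t, \tau) \in SS(P_{B(r), H})$ must glue a point of $SS(\F_S)$ to a point of $SS(\k_\gamma)$ over a common base $a \in \R_a$, with opposite covectors on $\R_a$ (Lagrangian correspondence), a common $\tau$, and $t = t_1 + t_2$.

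From (\ref{ss-fs}) the $\R_a$-covector attached to $\F_S$ is $-\tau H(q_1, p_1)$, so the matched $\k_\gamma$-covector on $\R_a$ must equal $+\tau H(q_1, p_1)$. I would then run through the three strata of $SS(\k_\gamma)$ already listed in the excerpt and see which Hamiltonian levels they select. Case A (the corner $(0,0)$, covector in the polar cone $\gamma^{\circ} = \{(\alpha, \tau) \,|\, \tau \geq 0,\ \alpha \leq r^2 \tau\}$) forces $\tau H(q_1, p_1) \leq r^2 \tau$, hence $q_1^2 + p_1^2 \leq r^2$; at $a = 0$ one has $\phi_H^0 = \I$, so $q_2 = q_1$ and the $\R^n_2$-covector is $\tau p_1$, while $t_1 = t_2 = 0$ gives total $t = 0$, producing part (i). Case B (slanted edge $\{t + r^2 a = 0,\ a < 0\}$ with covector $(r^2 \tau, \tau)$) forces $\tau H(q_1, p_1) = r^2 \tau$, hence $q_1^2 + p_1^2 = r^2$; the $\R^n_2$-datum is $\tau \phi_H^a(q_1, p_1)$ by (\ref{ss-fs}), and adding $t_1 = -S$ to $t_2 = -r^2 a$ produces the total $-S - r^2 a$ of part (ii). Case C (vertical edge $\{a = 0,\ t > 0\}$ with covector $(-\tau, 0)$) has vanishing $\R_t$-covector and therefore matches $\F_S$ only at $\tau = 0$, contributing only to the zero-section that is explicitly excluded in the conclusion.

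The main obstacle I anticipate is the singularity of the local generating function $S(a/M, q_1, q_2)$ in (\ref{local-S}) as $a \to 0$: the formula (\ref{ss-fs}) is strictly valid only for $a \neq 0$, while Case A takes place precisely at $a = 0$. I would handle this by invoking the extension convention $(\F_S)|_{a = 0} := \k_{\{q_1 = q_2,\, t \geq 0\}}$ from Remark \ref{rmk-cv} and computing the singular support of that extension directly from Fact \ref{fact-ss} and Corollary \ref{ex-t}: along $\{q_1 = q_2,\ t = 0\}$ the outward conormal is exactly $\{(q_1, -\tau p_1, q_1, \tau p_1, 0, \tau) : \tau \geq 0\}$ up to the zero-section, matching the Case A output perfectly. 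Once this boundary case is absorbed, the remainder is routine bookkeeping: substitute the three $SS$-strata of $\k_\gamma$ into the comp-convolution formula, track the scalar identity $\alpha = \tau H(q_1, p_1)$ through each stratum, and collect parts (i) and (ii).
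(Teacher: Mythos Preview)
Your proposal is correct and follows essentially the paper's approach: use Exercise~\ref{exe-ph} to write $P_{B(r),H} = \F_S \bullet_a \k_\gamma$, feed in the estimate (\ref{ss-fs}) and the three strata A, B, C of $SS(\k_\gamma)$ into the comp-convolution formula of Exercise~\ref{exe-cc}, and read off parts (i) and (ii) from Cases A and B.

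The one genuine difference is how Case C is discarded. You note that the $\R_t$-covector on stratum C vanishes, so the common-$\tau$ condition in $\bullet_a$ forces $\tau = 0$ and the contribution lands in the zero-section. The paper instead allows Case C to produce a candidate set and then invokes Theorem~\ref{inv-ss}: that set has too small a dimension to be coisotropic, hence cannot lie in $SS(P_{B(r),H})$. Your argument is more direct and avoids the involutivity theorem; the paper's argument is a useful template for situations where the matching conditions alone do not obviously kill a stratum. Your extra care at $a = 0$ via the explicit extension $(\F_S)|_{a=0} = \k_{\{q_1 = q_2,\, t \geq 0\}}$ is also a point the paper leaves implicit.
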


\begin{exercise} Compute $SS(Q_{B(r), H})$. \end{exercise}

\begin{remark} In (\ref{ss-proj}), the first component of the union comes from part A of $SS(\k_{\gamma})$ and the second component comes from part B of $SS(\k_{\gamma})$. Part $C$ of $SS(\k_{\gamma})$ also provides some information but by dimension counting, it is not coisotropic, which by Theorem \ref{inv-ss}, implies they are not in $SS(P_{B(r), H})$. \end{remark}

The following example is very important (and readers are encouraged to check by themselves). 

\begin{ex} \label{ex-ss-lag} Let $\F = \k_{\{f + t \geq 0\}}$ where $f: \R^n \to \R$ be a differentiable function. Then, up to trivial zero-section from support of $\F \bullet_{\R^n_1} P_{B(r), H}$, 
\[ {\small SS(\F \bullet_{\R^{n}_1} P_{B(r), H}) \subset \begin{array}{l}  \{(q_1, -\tau p_1, q_1, \tau p_1, 0, \tau)\,|\, q_1^2 + p_1^2 \leq r^2; p_1 = df(q_1) \} \,\,\cup \\ \{(q_1, - \tau p_1, \tau \phi_H^a (q_1, p_1), - S - r^2 a - f, \tau) \, | \, \\ \,\,\,\,\,\,\,\,\,\,q_1^2 + p_1^2 = r^2, p_1 = df(q_1), a <0\}. \end{array}}\]
The right hand side above is labelled in order as part (i) and part (ii). In other words, the operator on $SS(\F)$ is (i) cut and only take the part of $SS(\F)$ inside $B(r)$ and (ii) for those part intersecting $\partial B(r)$, flow along $\phi_H^a$ for $a<0$. 
\end{ex}

\subsubsection{Geometric interaction with projectors} \label{subsec-g-proj} We will give two geometric interpretations of computational result in Example \ref{ex-ss-lag}. In fact, the only mysterious part is part (ii) happening on the boundary $\partial B(r)$. \\

The first explanation is based on generating function theory. Recall $F: M \times \R^N \to \R$ generates 
\[ L = \left\{ \left(m, \frac{\partial F}{\partial m} (m, \xi) \right) \,\bigg| \, \frac{\partial F}{\partial \xi} (m, \xi) = 0 \right\} \]
by viewing $\xi \in \R^N$ as the ``ghost variable''. 
\begin{exercise} Generically $L$ is an immersed Lagrangian submanifold in $T^*M$. \end{exercise}
Then $L$ can be lift to $\hat{L} = \{ (\tau L , - F, \tau) \,| \, \tau >0\} \subset T^*_{\{\tau>0\}}(M \times \R)$, as a homogeneous Lagrangian submanifold. Now for $\F = \k_{\{f + t \geq 0\}}$ for some differentiable function $f: \R_1^n \to \R$, let $F(a, q_1, q_2) : = S(a, q_1, q_2) + r^2 a + f(q_1): \R_a \times \R_1^n \times \R_2^n \to \R$. View $\R_a \times \R_1^n$ as ``ghost variables''. Recall the derivatives, 
\[ \frac{\partial S}{\partial a} = - H, \,\,\,\,\frac{\,\partial S}{\partial q_1} = - p_1 \,\,\,\,\mbox{and} \,\,\,\,\frac{\partial S}{\partial q_2} = p_2. \]
Then regard $\xi = (a, q_1)$, one gets $\frac{\partial F}{\partial \xi}$ equals 
\[ \frac{\partial F}{\partial{a}} = 0 \Leftrightarrow - H + r^2 = 0 \,\, \Rightarrow \,\,\mbox{restriction of $(q_1, p_1)$ on $\partial B(r)$} \]
and 
\[ \frac{\partial F}{\partial q_1} = 0 \Leftrightarrow -p_1 + df(q_1) =0 \,\, \Rightarrow \,\,\mbox{intersection with $SS(\F)$.} \]
Therefore, part (ii) in Example \ref{ex-ss-lag} is just the conical lift $\hat{L}$ given by generating function $F = S + r^2a + f$. \\

The second explanation reveals a deep geometry behind the formula of part (ii). To start, let's recall some general geometric terms. Let $\Sigma \subset (\R^{2n}, \omega_{std})$ be hypersurface. The distribution $L_z = \{ J_0 v \, | \, v \perp T_{z} \Sigma, z \in \Sigma\}$ provides a (1-dim) characteristic foliation of $\Sigma$. If , in particular, $\Sigma = H^{-1}(r)$ is a regular level set of some function $H$, then $X_H(z) \in L_z$ (so integral curve of $X_H(z)$ corresponds to characteristic foliation). More importantly, 

\begin{exercise}\label{diff-H}
Different choices of such $H$ defining $\Sigma$ only result in reparametrizations of integral curves. In other words, characteristic foliation of $\Sigma$ is independent of function defining it (if exists). Then computing characteristic foliation reduces to computing Hamiltonian flow of {\it any} preferred defining Hamiltonian. 
\end{exercise}

Moreover, observe that a hypersurface of $\R^{2n}$ will be lift to a conical hypersurface in $T^*_{\{\tau>0\}}(\R^n \times \R)$. Therefore, it is important to understand how the Hamiltonian dynamics behaves in this homogenized space. Here is some general computation. Let $H(q,p)$ be a Hamiltonian function on $T^*M$. Lift $H$ to be defined on $T^*_{\{\tau>0\}}(M \times \R)$ by
\begin{equation} \label{lift-h}
\hat{H}(q, \xi, t, \tau) = \tau H\left(q, \frac{\xi}{\tau} \right).
\end{equation}
Take symplectic form on $T^*M \times T^*\R_{>0}$ by $\omega = dq \wedge d\xi + dt \wedge d \tau$. Then we can work out the Hamiltonian equations of $\hat{H}$ under $\omega$ on $T^*_{\{\tau>0\}}(M \times \R)$, that is, 
\begin{eqsys}\label{lift-ham}
\dot{q} = \frac{\partial H}{\partial q} (p,q) \\ \dot{\xi} = - \tau \frac{\partial H}{\partial q} (p,q) \\ \dot{t} = H(q,p) - p \frac{\partial H}{\partial p}(q,p) \\ \dot{\tau} = 0. 
\end{eqsys}
\begin{ex}
For $H (q,p) = q^2 + p^2$ on $\R^2$, $\hat{H} = \tau(q^2 + (\xi/\tau)^2)$. On the boundary of ball at level $\tau =1$,  
\[ t(a) \sim O(a). \]
See Figure \ref{i28} for Hamiltonian dynamics on $T^*_{\{\tau>0\}}(\R^2 \times \R)$. Once we fix starting level $\tau$, the dynamics remains on the level $\tau$ (because $\dot\tau =0$) but more interestingly direction $t$ keeps tracking the evolving in terms of negative symplectic action! 
\begin{figure}[h]
 \centering
 \includegraphics[scale=0.45]{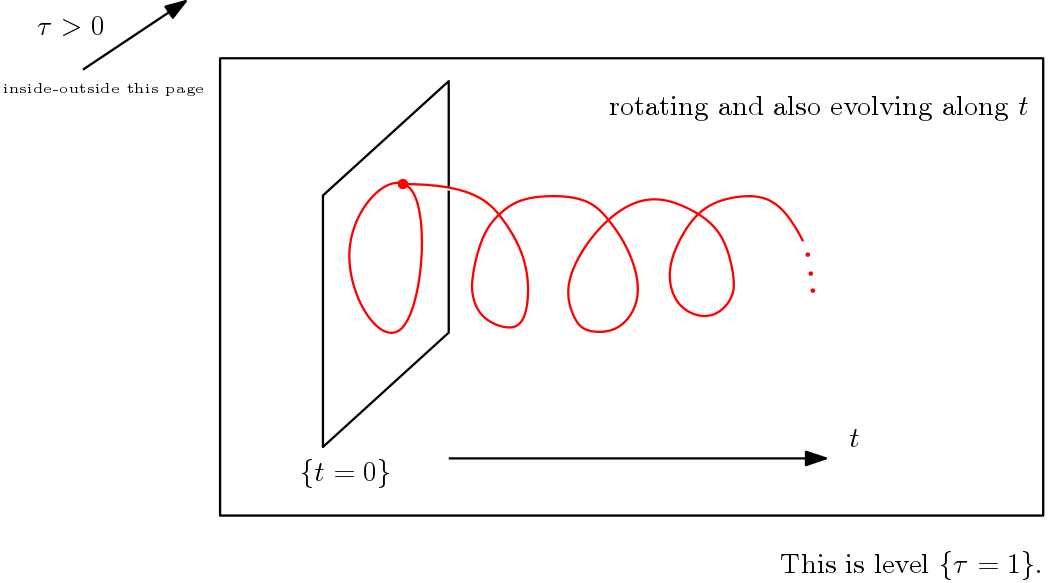}
 \caption{Hamiltonian dynamics lift in homogenized space}
 \label{i28}
 \end{figure}
\end{ex}

Then for $\partial B(r) \subset T^*\R^n$, it is lift to be a homogeneous hypersurface in $T^*_{\{\tau>0\}}(\R^n \times \R)$, that is, 
\[ \hat{\Sigma} = \{\tau(q^2 + (\xi/\tau)^2) = r^2\} = \hat{H}^{-1}(r^2). \]
Moreover, by (\ref{lift-ham}), we know its Hamiltonian vector field which, by Exercise \ref{diff-H}, implies the characteristic foliation. Therefore, back to Example \ref{ex-ss-lag}, 
\begin{equation} \label{leaf1}
\mbox{part (ii)}  \cap \{\tau = 1\} = \bigcup_{x \in \graph(df) \cap \partial B(r)} \begin{array}{cc} \mbox{(truncated) leaf of characteristic}\\ \mbox{foliation of $\partial B(r)$ containing $x$}\end{array}.
\end{equation}
This can be expressed by Figure \ref{i29}.
\begin{figure}[h]
 \centering
 \includegraphics[scale=0.4]{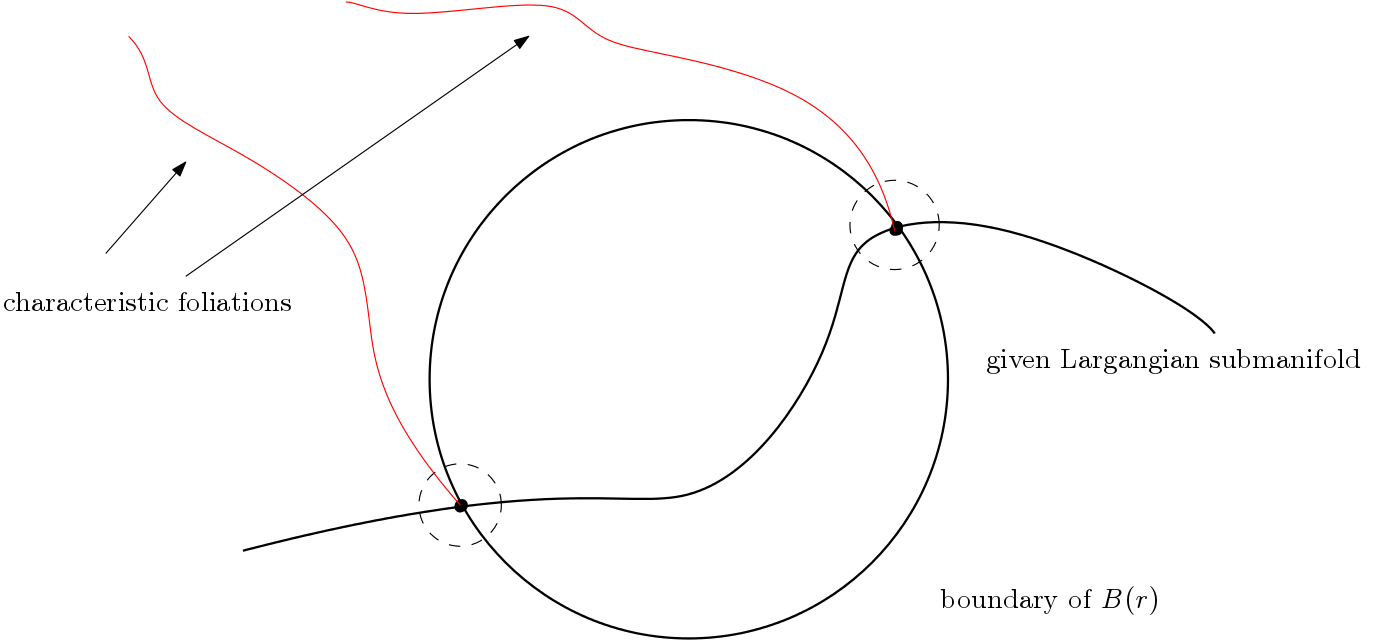}
 \caption{Union of some leaves from characteristic foliation}
 \label{i29}
 \end{figure}
Then $\mbox{part (ii)}$ is just a union of (\ref{leaf1}) along all $\tau \in \R_{>0}$ (with rescaled $\graph(df)$ and $\partial B(r)$). Note the advantage of expression of (\ref{leaf1}) is that it describes part (ii) without using any Hamiltonian function (mainly thanks to Exercise \ref{diff-H}). Moreover, this can be generalized to the following geometric operation associated to any reasonable hypersurface $\Sigma \in \R^{2n}$. 

\begin{dfn} \label{dfn-lu} Fix a hypersurface $\Sigma \subset \R^{2n}$ such that $\Sigma$ can be realized as a regular level set. Define {\it leaf-union operator} $\mathcal L_{\Sigma}$ by, for any $\hat{L}$ a conical Lagrangian submanifold in $T^*_{\{\tau>0\}}(\R^n \times \R)$, 
\begin{equation} \label{ss-U}
\mathcal L_{\Sigma}(\hat{L}) = \bigcup_{x \in \hat{L} \cap \hat{\Sigma}} \begin{array}{cc} \mbox{(truncated) leaf of characteristic}\\ \mbox{foliation of $\partial B(r)$ containing $x$.} \end{array}\end{equation}
where $\hat{\Sigma}$ is the lift of $\Sigma$ by lifting (any) defining Hamiltonian as (\ref{lift-h}). 
\end{dfn}

\begin{exercise} Prove $\mathcal L_{\Sigma}(\hat{L})$ is a proper Lagrangian submanifold in $T^*_{\{\tau>0\}}(M \times \R)$. \end{exercise}

\begin{remark} Inspiration from Definition \ref{dfn-lu} is (i) $P_{B(r), H}$ should be independent of Hamiltonian function $H$ defining $B(r)$; (ii) construction of ball-projection $P_{B(r)}$ could be generalized to any reasonable domain $U \subset T^*\R^n$ to define $U$-projector $P_U$. Moreover, geometry of $\F \bullet_{\R_1^n} P_U$ is then just cut-take inside $U$ with union of leaf-union $\mathcal L_{\partial U}(SS(\F))$. In the next section, we will see both point (i) and (ii) can be proved and formulated accurately. \end{remark}

\subsection{$U$-projector and its properties} \label{sec-proj-prop}

As promised in the previous section, we will start this section by showing the following important property (important in the sense of the method proving it). 

\begin{prop} \label{ind-H}
If $H'$ and $H$ are two functions defining $B(r)$, that is, $B(r) = \{H'<r^2\} = \{H <r^2\}$, then $P_{B(r), H'} \simeq P_{B(r), H}$. \end{prop}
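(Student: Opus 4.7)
The plan is to realize $P_{B(r),H}$ intrinsically as the ``$\T_{B(r)}(\R^n)$-part'' in an orthogonal decomposition and then invoke the categorical uniqueness of such decompositions, so that the concrete dependence on $H$ through the generating function $S$ washes out. Concretely, by Theorem~\ref{ball-thm} applied once to $H$ and once to $H'$, both
\[
P_{B(r),H} \to \k_{\{q_1=q_2;\, t\geq 0\}} \to Q_{B(r),H} \xrightarrow{+1}
\quad\text{and}\quad
P_{B(r),H'} \to \k_{\{q_1=q_2;\, t\geq 0\}} \to Q_{B(r),H'} \xrightarrow{+1}
\]
are distinguished triangles in $\D(\k_{\R_1^n\times\R_2^n\times\R})$ with the \emph{same} middle term, and with the property that convolution by the left (resp.\ right) term sends any $\F\in\T(\R^n)$ into $\T_{\overline{B(r)}}(\R^n)$ (resp.\ $\T_{T^*\R^n\setminus B(r)}(\R^n)$).

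The next step is to set up the right ambient subcategories. Define full triangulated subcategories
\[
\mathcal{L}_+ \;=\; \bigl\{\,K\in\D(\k_{\R_1^n\times\R_2^n\times\R}) \,\bigm|\, \F\bullet_{\R_1^n} K \in \T_{\overline{B(r)}}(\R^n)\ \text{for all } \F\in\T(\R^n)\,\bigr\}
\]
and $\mathcal{L}_-$ analogously with $T^*\R^n\setminus B(r)$ in place of $\overline{B(r)}$. By Theorem~\ref{ball-thm}, $P_{B(r),H}, P_{B(r),H'} \in \mathcal{L}_+$ and $Q_{B(r),H}, Q_{B(r),H'} \in \mathcal{L}_-$. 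The crux is then the $\Hom$-vanishing
\[
\Hom_{\D(\k_{\R_1^n\times\R_2^n\times\R})}(\mathcal{L}_+,\mathcal{L}_-)=0,
\]
which I plan to deduce from Separation Theorem~\ref{sep-thm} by translating morphisms of kernels into morphisms in $\T(\R^n)$ via convolution with a suitable test sheaf and adjunction (in the spirit of the adjunction used to derive Claim~\ref{claim-magic}), using that $B(r)$ and $T^*\R^n\setminus B(r)$ have disjoint interiors and that elements of $\mathcal{L}_\pm$ are characterized by the convolution landing in the appropriate Tamarkin category.

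Once this $\Hom$-vanishing is in hand, a standard triangulated-category argument (apply $\Hom(P_{B(r),H},-)$ to the $H'$-triangle and use the long exact sequence from Lemma~\ref{exa-hom}) shows that the map $P_{B(r),H}\to \k_{\{q_1=q_2;\,t\geq 0\}}$ factors uniquely through $P_{B(r),H'}$, and symmetrically in the opposite direction; these factorizations compose to the identity by the uniqueness clause, yielding an isomorphism $P_{B(r),H}\simeq P_{B(r),H'}$ (and similarly $Q_{B(r),H}\simeq Q_{B(r),H'}$). This is exactly the general principle that orthogonal decompositions of a given object with respect to a semi-orthogonal pair are unique up to unique isomorphism.

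The main obstacle, and the only nontrivial step, will be establishing the $\Hom$-vanishing between $\mathcal{L}_+$ and $\mathcal{L}_-$. The subtlety is twofold: first, the kernels $K, K'$ live on the larger product space $\R_1^n\times\R_2^n\times\R$, so one cannot apply Separation Theorem directly to them and must pass through the $\bullet_{\R_1^n}$-action on $\T(\R^n)$; second, $\overline{B(r)}$ and $T^*\R^n\setminus B(r)$ share the boundary $\partial B(r)$, so the disjointness required by Theorem~\ref{sep-thm} holds only for the interiors, which forces a careful boundary argument (likely via the precise microlocal description of $SS(P_{B(r),H})$ in Proposition~\ref{ss-projector}, whose ``part (ii)'' on $\partial B(r)$ already sees only the characteristic foliation and so is $H$-independent). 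Once the boundary issue is handled, the remainder of the proof is formal.
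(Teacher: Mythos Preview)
Your overall strategy---exploit orthogonality to pin down the projector uniquely---is exactly right, but you are making the argument much harder than necessary, and the hard step you flag (the $\Hom$-vanishing between the kernel subcategories $\mathcal L_+$ and $\mathcal L_-$, together with the boundary issue on $\partial B(r)$) is both genuinely difficult as stated and entirely avoidable.

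The paper's proof never attempts to show $\Hom(P_{B(r),H},Q_{B(r),H'})=0$ in $\D(\k_{\R_1^n\times\R_2^n\times\R})$. Instead it works with the \emph{composition of kernels} and uses only the orthogonality that is already built into the definition $\T_{B(r)}(\R^n):=(\T_{T^*\R^n\setminus B(r)}(\R^n))^{\perp}$. Concretely: apply $P_{B(r),H'}\bullet_{\R_2^n}(-)$ to the $H$-triangle to get
\[
P_{B(r),H'}\bullet P_{B(r),H}\;\to\;P_{B(r),H'}\;\to\;P_{B(r),H'}\bullet Q_{B(r),H}\xrightarrow{+1}.
\]
Now for any $\F\in\T(\R^n)$ one has $\F\bullet P_{B(r),H'}\in\T_{B(r)}(\R^n)$, and $(-)\bullet Q_{B(r),H}$ is precisely the left adjoint projector onto $\T_{T^*\R^n\setminus B(r)}(\R^n)$; by Corollary~\ref{orth-proj} the projector kills the orthogonal complement, so $(\F\bullet P_{B(r),H'})\bullet Q_{B(r),H}=0$ for all $\F$, whence $P_{B(r),H'}\bullet Q_{B(r),H}=0$ as a kernel. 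This gives $P_{B(r),H'}\bullet P_{B(r),H}\simeq P_{B(r),H'}$, and the symmetric argument (apply $(-)\bullet P_{B(r),H}$ to the $H'$-triangle) gives $P_{B(r),H'}\bullet P_{B(r),H}\simeq P_{B(r),H}$.

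Two points to take away. First, the orthogonality you need is \emph{definitional}, not a consequence of the Separation Theorem, so the ``boundary issue'' on $\partial B(r)$ never arises: you do not need $\overline{B(r)}$ and $T^*\R^n\setminus B(r)$ to be disjoint. Second, you do not need a $\Hom$-vanishing between kernels at all---vanishing of the \emph{composite kernel} $P_{B(r),H'}\bullet Q_{B(r),H}$ suffices, and this is accessible because it can be tested after acting on $\T(\R^n)$, where the orthogonality lives. Your proposed route via $\Hom$-vanishing in $\D(\k_{\R_1^n\times\R_2^n\times\R})$ would require a Morita-type statement (morphisms of kernels detected by morphisms of the induced functors) that is not supplied in the paper and is not needed.
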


\begin{proof} By construction of $P_{B(r), H'} \in \D(\k_{\R^n_1 \times \R^n_2 \times \R})$ and $P_{B(r), H} \in \D(\k_{\R^n_2 \times \R^n_3 \times \R})$, we have two orthogonal splitting triangles, 
\[ P_{B(r), H} \to \k_{\{q_1 = q_2; t \geq 0\}} \to Q_{B(r), H} \xrightarrow{+1}\]
and 
\[ P_{B(r), H'} \to \k_{\{q_1 = q_2; t \geq 0\}} \to Q_{B(r), H'} \xrightarrow{+1}. \]
Applying $P_{B(r), H'} \bullet_{\R_2^n}$ to the first distinguished triangle, one gets 
\[ P_{B(r), H'} \bullet_{\R^n_2} P_{B(r), H} \to P_{B(r), H'} \to P_{B(r), H'} \bullet_{\R_2^n} Q_{B(r), H} \xrightarrow{+1}. \]
Then 
\begin{claim} \label{orth-claim}({\bf Exercise})
For any $\F \in \T(\R^n)$, one has 
\[ \F \bullet_{\R^n_1} P_{B(r), H'} \bullet_{\R_2^n} Q_{B(r), H} = (\F \bullet_{\R^n_1} P_{B(r), H'}) \bullet_{\R_2^n} Q_{B(r), H}  = 0,\]
which implies $P_{B(r), H'} \bullet_{\R_2^n} Q_{B(r), H} =0$. (Hint: Corollary \ref{orth-proj}.)
\end{claim}
Then $P_{B(r), H'} \bullet_{\R^n_2} P_{B(r), H} \simeq P_{B(r), H'}$. Applying $\bullet_{\R_2^n} P_{B(r), H}$ to the second distinguished triangle, one gets $P_{B(r), H'} \bullet_{\R^n_2} P_{B(r), H} \simeq P_{B(r), H}$ by the same argument. Therefore, the desired conclusion follows. 
\end{proof}

Note that the only property we used in the argument above is the {\it orthogonality} from orthogonal splitting triangle defining $P_{B(r), H}$, which makes this argument quite formal. The same argument also shows the uniqueness of ball-projector (see the following exercise). 

\begin{exercise} \label{unique-proj} If both $P_{B(r)}$ and $P'_{B(r)}$ are ball-projectors, then $P_{B(r)} \simeq P'_{B(r)}$. \end{exercise}

Now we move to general domains. The following definition is taken from Definition 4.1 in \cite{Chiu17}. 

\begin{dfn} We call an open domain $U \subset \R^{2n}$ {\it admissible} if there exists an orthogonal splitting triangle in $\D(\k_{\R_1^n \times \R_2^n \times \R})$ 
\[ P_U \to \k_{\{q_1 = q_2; t \geq 0\}} \to Q_U \xrightarrow{+1}, \]
that is, for any $\F \in \T(\R^n_1)$, $\F \bullet_{\R^n_1} P_U \in \T_U(\R^n)$ and $\F \bullet_{\R^n_1} Q_U \in \T_{T^*\R^n \backslash U}(\R^n)$. This $P_U$ or $P(U) : = \delta^{-1} P_U$ is called {\it the} $U$-projector.
\end{dfn}

\begin{ex} \label{ex-add} (Examples of admissible domains) Open domain $U$ such that $U = \{H<r\}$ as a regular sublevel set (and then $\partial \bar{U} = \{H= r\}$) for some $H$ on $\R^{2n}$. For instance, ball $B(r)$ and ellipsoid $E(r_1, ..., r_n)$. Indeed, we can carry out the same procedure as in the previous sections constructing $P_{U, H}$ first by choosing a preferred defining function of $U$ and then prove $P_{U, H}$ is independent of $H$ (so denote $P_U$) by orthogonality property as in Proposition \ref{ind-H}. Finally, as Exercise \ref{unique-proj}, $P_U$ is unique up to isomorphism.  \end{ex}

The following lemma lists some crucial functorial properties of $U$-projector. 

\begin{lemma} \label{U-fp} Let $U, V$ are admissible domains of $\R^{2n}$. 
\begin{itemize}
\item[(i)] For inclusion $V \xhookrightarrow{i} U$, there exists a well-defined map $i_*: P(V) \to P(U)$. Moreover, this association is functorial. 
\item[(ii)] For any Hamiltonian isotopy $\phi = \{\phi_s\}_{s \in I}$ on $\R^{2n}$, $P(\phi_s(U)) : = \K^{-1}(\phi)|_s \circ P(U) \circ \K(\phi)|_s$ is the $\phi_s(U)$-projector where $\K(\phi)$ is the GKS's sheaf quantization associated to the homogeneous lift of Hamiltonian isotopy $\phi$. In other words, $U$ is admissible if and only if $\phi_s(U)$ is admissible.
\end{itemize}
\end{lemma}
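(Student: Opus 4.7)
For part (i), the plan is to leverage the inclusion $\T_V(\R^n) \subset \T_U(\R^n)$ which follows from $T^*\R^n \setminus U \subset T^*\R^n \setminus V$ and the orthogonality definition of restricted Tamarkin categories. Applying $P(V) \bullet_{\R^n} -$ to the $U$-triangle yields
\[ P(V) \bullet P(U) \to P(V) \to P(V) \bullet Q(U) \xrightarrow{+1}. \]
An argument identical to Claim \ref{orth-claim} shows $P(V) \bullet Q(U) = 0$: for any $\F \in \T(\R^n)$, $\F \bullet P(V) \in \T_V \subset \T_U$, and then $\bullet Q(U)$ applied to any element of $\T_U$ vanishes by Corollary \ref{orth-proj}, since $\bullet Q(U)$ is the projection onto $\T_{T^*\R^n \setminus U}$ and $\T_U$ is its orthogonal complement. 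Thus $P(V) \bullet P(U) \simeq P(V)$. Composing the natural morphism $P(V) \to \k_{\{q_1=q_2; t_2 \geq t_1\}}$ from the $V$-triangle with $\bullet P(U)$ then produces
\[ i_*: P(V) \simeq P(V) \bullet P(U) \longrightarrow \k_{\{q_1=q_2; t_2 \geq t_1\}} \bullet P(U) = P(U). \]
Functoriality for a chain $W \subset V \subset U$ follows from a direct diagram chase using associativity of $\bullet$ together with the identities $P(W) \bullet P(V) \simeq P(W)$ and $P(V) \bullet P(U) \simeq P(V)$.

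For part (ii), the idea is to check directly that $\tilde{P} := \K^{-1}(\phi)|_s \circ P(U) \circ \K(\phi)|_s$ fits into an orthogonal splitting triangle for $\phi_s(U)$ and then appeal to uniqueness of projectors (Exercise \ref{unique-proj}). Applying the conjugation $\K^{-1}(\phi)|_s \circ - \circ \K(\phi)|_s$ to the $U$-triangle produces
\[ \tilde{P} \to \K^{-1}(\phi)|_s \circ \k_{\{q_1=q_2; t_2 \geq t_1\}} \circ \K(\phi)|_s \to \tilde{Q} \xrightarrow{+1}, \]
where $\tilde{Q} := \K^{-1}(\phi)|_s \circ Q(U) \circ \K(\phi)|_s$. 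The middle term I would identify with $\k_{\{q_1=q_2; t_2 \geq t_1\}}$ by combining two facts: this sheaf is the $\circ$-identity on Tamarkin kernels (by Theorem \ref{tam-elm}, since $\F \circ \k_{\{q_1=q_2; t_2 \geq t_1\}} = \F \ast \k_{[0,\infty)} = \F$ for $\F \in \T(\R^n)$), and $\K^{-1}(\phi)|_s \circ \K(\phi)|_s$ is the identity kernel by the group structure of GKS quantization (Remark \ref{rmk-sh-group}).

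Next, for any $\F \in \T(\R^n)$, setting $\G := \F \circ \K^{-1}(\phi)|_s \in \T(\R^n)$ (well-defined by the analogue of Lemma \ref{K-Tam}), admissibility of $U$ gives $\G \circ P(U) \in \T_U(\R^n)$ and $\G \circ Q(U) \in \T_{T^*\R^n \setminus U}(\R^n)$. Applying the right-action version of Lemma \ref{K-move} --- which is proved by the same singular-support calculation using the Lagrangian suspension $\Lambda_{\Phi}$ --- the further composition with $\K(\phi)|_s$ transports these into $\T_{\phi_s(U)}(\R^n)$ and $\T_{\phi_s(T^*\R^n \setminus U)}(\R^n) = \T_{T^*\R^n \setminus \phi_s(U)}(\R^n)$ respectively. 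This confirms that $\tilde{P}$ is an orthogonal projector for $\phi_s(U)$, so $\phi_s(U)$ is admissible and $\tilde{P} \simeq P(\phi_s(U))$ by uniqueness. The converse direction (admissibility of $\phi_s(U)$ implies admissibility of $U$) follows by applying the same construction to the Hamiltonian isotopy $\phi^{-1}$.

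The main obstacle I anticipate is the careful bookkeeping in part (ii): specifically, verifying the compatibility of left and right actions by $\K(\phi)|_s$ with the orientation convention in the Lagrangian suspension $\Lambda_{\Phi}$ appearing in Theorem \ref{gks}, so that the right sandwich by $\K(\phi)|_s$ in $\K^{-1}(\phi)|_s \circ P(U) \circ \K(\phi)|_s$ indeed moves the singular-support restriction from $U$ to $\phi_s(U)$ rather than $\phi_s^{-1}(U)$. This will require a short but precise singular-support computation analogous to the proof of Lemma \ref{K-move}, carried out for the right action, together with a verification that the identity-kernel identification in the middle term respects the $t$-grading of the Tamarkin category.
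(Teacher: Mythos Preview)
Your approach to part (i) is essentially the paper's: use the orthogonality between $\T_V \subset \T_U$ and $Q(U)$ to kill the third term of the triangle, deduce $P(V) \simeq P(V) \bullet P(U)$, and then push the canonical morphism $P(V) \to \k_{\{q_1=q_2;\,t_2\geq t_1\}}$ through $\bullet\, P(U)$. The paper composes on the opposite side (it applies $\circ\, P(V)$ to the $U$-triangle and then $P(U)\circ$ to the $V$-morphism, obtaining $P(U)\circ P(V)\simeq P(V)$), but this is a cosmetic left/right swap with no substantive difference.

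For part (ii) your route is correct but slightly more laborious than the paper's. Rather than conjugating the entire triangle, identifying the middle term, and invoking a right-action analogue of Lemma~\ref{K-move} on singular supports, the paper checks orthogonality directly at the level of $R\Hom$: for $\G\in\T_{\R^n\setminus\phi_s(U)}(\R^n)$ it computes
\[
R\Hom\bigl(\F\circ\K^{-1}(\phi)|_s\circ P(U)\circ\K(\phi)|_s,\,\G\bigr)
= R\Hom\bigl(\F\circ\K^{-1}(\phi)|_s\circ P(U),\,\G\circ\K^{-1}(\phi)|_s\bigr)=0,
\]
using only that $\K(\phi)|_s\circ-$ is an autoequivalence of $\T(\R^n)$ and that $\G\circ\K^{-1}(\phi)|_s\in\T_{\R^n\setminus U}$. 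This neatly sidesteps the left/right bookkeeping you flagged as an obstacle: moving $\K(\phi)|_s$ across $R\Hom$ via adjunction avoids having to verify a separate right-action version of Lemma~\ref{K-move}. Your approach has the virtue of exhibiting the full splitting triangle explicitly, which makes the admissibility of $\phi_s(U)$ transparent; the paper's buys brevity.
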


\begin{proof} (i) We have a distinguished triangle, 
\[ P(U) \to \k_{\{q_1 = q_2; t_2 \geq t_1\}} \to Q(U) \xrightarrow{+1}.\]
Apply $\circ P(V)$ here ``$\circ$'' is short notation for $\circ_{\R^n \times \R}$ and one gets 
\[ P(U) \circ P(V) \to P(V) \to Q(U)\circ P(V) \xrightarrow{+1}. \]
Then $Q(U) \circ P(V) = 0$ which implies $P(U) \circ P(V) \simeq P(V)$. On the other hand, one has morphism $P(V) \to \k_{\{q_1 = q_2; t_2 \geq t_1\}}$. Applying $P(U) \circ$, one has $P(U) \circ P(V) \to P(U)$. Therefore, 
\[ P(V) \simeq P(U) \circ P(V) \to P(U). \]
(ii) Only need to show for any $\G \in \T_{\R^n \backslash \phi_s(U)}(\R^n)$, $R\Hom(\F \circ P(\phi_s(U)), \G) =0$. Indeed, 
\begin{align*}
R\Hom(\F \circ P(\phi_s(U)), \G)  & = R\Hom(\F \circ \K^{-1}(\phi)|_s \circ P(U) \circ \K(\phi)|_s, \G) \\
& = R\Hom(\F \circ \K^{-1}(\phi)|_s \circ P(U), \G \circ \K^{-1}(\phi)|_s) =0
\end{align*}
because $\G \circ \K^{-1}(\phi)|_s \in \T_{\R^n \backslash U}(\R^n)$ while $\F \circ \K^{-1}(\phi)|_s \circ P(U) = (\F \circ \K^{-1}(\phi)|_s) \circ P(U)  \in \T_{U}(\R^n) = \T_{\R^n \backslash U}(\R^n)^{\perp}$. 
\end{proof}

\begin{exercise} Complete the proof above by proving the functorial property of $i_*$, i.e., for any $U \xhookrightarrow{i} V \xhookrightarrow{j} W$, we have $j_* \circ i_* = (j \circ i)_*$. \end{exercise}

\begin{remark} \label{rmk-inclusion} (1) The proof of (i) in Proposition \ref{U-fp} also proves when $V = U$, $P(U) \circ P(U) \simeq P(U)$. (2) In general, for two admissible domain $V$ and $U$ with inclusion $i: V \hookrightarrow U$, the induced map $i_*: P(V) \to P(U)$ is not likely to be described explicitly. However, for some special cases, for instance $U$ is an admissible domain from Example \ref{ex-add}, and $V = cU$ with $0<c\leq 1$, a rescaling of $U$, one knows precisely what $i_*$ is. In fact, suppose $U = \{H< 1\}$ and since $P_{cU}$ is independent of choice of defining function, we can also take $H$ and then $cU = \{H<c\}$. Then by construction of $P_U$ similar to Definition \ref{b-p}, 
\[ P_U \simeq \k_{\{S+ t \geq 0\}} \bullet_a \k_{\{(a,t) \,|\, - t \leq a \leq 0\}} \,\,\,\,\mbox{and}\,\,\,\, P_{cU} \simeq \k_{\{S +t \geq 0\}} \bullet_a \k_{\{(a,t) \,|\, - t/c \leq a \leq 0\}}. \]
Note that the first part $\k_{\{S+t \geq 0\}}$ is the same for both $P_U$ and $P_{cU}$ due to the same choice of $H$ (and $S$ only depends on $H$). The difference only comes from the cone part. For inclusion $i: cU \hookrightarrow U$, $i_*: P_{cU} \to P_{U}$ is induced by a morphism 
\begin{equation} \label{mor-cone}
\k_{\{(a,t) \,|\, - t/c \leq a \leq 0\}} \to \k_{\{(a,t) \,|\, - t \leq a \leq 0\}}
\end{equation}
which is in return induced by morphism $\iota$ in the following commutative diagram guaranteed by triangulated structure
\[ \xymatrix{
\k_{\{b<c\}} \ar[r] \ar@{-->}[d]^-{\iota} & \k_{\R} \ar[r] \ar[d]^-{\I} & \k_{\{b \geq c\}} \ar[d]^-{{\rm res}} \ar[r]^-{+1} & \\
\k_{\{b<1\}} \ar[r] & \k_{\R} \ar[r] & \k_{\{b \geq 1\}} \ar[r]^-{+1}. & }\]
\begin{exercise} Prove $Cone(\iota) = \k_{[c,1)}[-1]$. \end{exercise}
\end{remark}

\subsection{Sheaf barcode from projectors}
To start this section, we first rewrite $P(U)$ in a more friendly way by considering 
\begin{equation} \label{sheaf-sh}
S_T(U) : = R\Hom(P(U), \k_{\{q_1 = q_2; t_2 - t_1 \geq T\}})
\end{equation}
for any fixed constant $T \geq 0$. This is a remarkable observation from, say Definition 4.6 in \cite{Chiu17}, which considerably reduces the computational difficulty on the level of sheaves by the following consecutive deductions. 
\begin{align*}
S_T(U) : & = R\Hom(\delta^{-1} P_U, \k_{\{q_1 = q_2; t_2 - t_1 \geq T\}}) \\
& = R\Hom(P_U, R\delta_*\k_{\{q_1 = q_2; t_2 - t_1 \geq T\}})\\
& = R\Hom(P_U, \k_{\{q_1= q_2; t \geq T\}}) \,\,\,\,\,\,\,\,\,\,\,\,\,\,\,\,\,\,\,\mbox{$t$, coordinate of $\R$}\\
& = R\Hom(P_U, R\Delta_*\k_{\R^n \times \{t \geq T\}}) \,\,\,\,\,\,\,\,\,\,\,\,\,\,\mbox{$\Delta: \R^n \to \R^n \times \R^n$, diagonal emb.}\\
& = R\Hom(\Delta^{-1} P_U,  \k_{\R^n \times \{t \geq T\}}) \\
& = R\Hom(\Delta^{-1} P_U, \pi^{-1} \k_{\{T \geq T\}}) \,\,\,\,\,\,\,\,\,\,\,\,\,\,\mbox{$\pi: \R^n \times \R \to \R$, projection} \\
& = R\Hom(\Delta^{-1} P_U, \pi^{!}\k_{\{t \geq T\}}[-n]) \\
& = R\Hom(R\pi_! \Delta^{-1} P_U, \k_{\{t \geq T\}}[-n]).
\end{align*}
{\bf Upshot:} Denote $\F(U): = R\pi_! \Delta^{-1} P_U$ and we have successfully transferred the discussion of $P(U)$ to $\F(U)$ which is simply a (complex of) constructible sheaf over $\R$. For $\F(U)$, we can then read its information from a sheaf barcode.

\begin{ex} \label{ex-ss-fu} Since singular support of a constructible sheaf is particularly easy, we can work out $SS(\F(U))$ to see when there are non-trivial fibers which is the only interesting part of a constructible sheaf over $\R$. Focus on $U = B(r)$. By (\ref{ss-proj}), we have already obtained $SS(P_U)$. Recall the diagram defining $\F(U)$, 
\[ \xymatrix{
\R^n \times \R^n \times \R & \R^n \times \R \ar[l]_-{\Delta}  \ar[d]^-{\pi}\\
& \R.}\]
Then by functorial properties of singular support - Proposition \ref{push} and \ref{pullback}, one can compute $SS(\F(U)) = SS(R\pi_! \Delta^{-1} P_U)$. Let us do this step by step. 
\[ SS(\Delta^{-1} P_U) = \left\{(q,p, t, \tau) \in T^*_{\{\tau>0\}}(\R^n \times \R) \,\bigg| \, \begin{array}{c} \exists (q_1, - \tau p_1, q_2, \tau p_2, t, \tau) \in SS(P_U) \\ \mbox{and\,\,\,\, $\Delta(q,t) = (q_1, q_2, t)$} \\ \mbox{and \,\,$\Delta^*(-\tau p_1, \tau p_2, \tau) = (p, \tau)$} \end{array} \right\}. \]
Note that 
\[ \Delta (q, t) = (q,q, t) = (q_1, q_2, t) \,\,\,\,\Rightarrow\,\,\,\, q_1 = q_2 (=q) \]
which also implies $p_1 = p_2 (=p')$. Therefore, 
\[ \Delta^*(-\tau p_1, \tau p_2, \tau) = \Delta^*(-\tau p', \tau p', \tau) = (- \tau p' + \tau p', \tau) = (0, \tau) \,\,\,\,\Rightarrow\,\,\,\, p = 0. \]
Since $SS(P_U)$ is contained into two parts (see (\ref{ss-proj})). From restriction from part (i), we should take $(q, 0, 0, \tau)$ for any $(q,0)$ such that $q^2 \leq r^2$. For restriction from part (ii), note that there are multiple time $a$ such that $(q_2, p_2) = \phi_H^a(q_1, p_1) = (q_1, p_1)$, that is $a = n \pi $ for $n \in \Z_{< 0}$. In other words, this corresponds to fixed points of flow $\phi_H^a$. Then we should take 
\[  (q, 0, - S(n\pi, q, p) - r^2 (n\pi), \tau) \,\,\,\,\mbox{s.t.} \,\,\,\, \mbox{$(q,p)$ is fixed and $q^2 + p^2 = r^2$}.\]
Now let's compute $-S(n\pi, q, p) - r^2 (n\pi)$. Recall the (global) formula of $S$, that is $S(a, p,q) = \int_{\gamma} pdq - Hda$ where $\gamma$ is the Hamiltonian trajectory starting at $(q,p)$ flowing for time $a$. Since $H$ is invariant along Hamiltonian flow, 
\[ -S(a, q, p) - r^2 a = - \int_{\gamma} p dq. \]
 Parametrize $(p,q) = (r \cos (\theta_0 + 2a), r\sin(\theta_0 + 2a))$, then 
 \begin{align*}
 - S(n\pi, q, p) - r^2 (n\pi) & = - 2r^2 \int_0^{n \pi} \cos^2(\theta_0 + 2a) da \\
 & = -2r^2 \int_0^{n \pi} \frac{1 + \cos(2 \theta_0 + 4a)}{2} da = - n \pi r^2.
\end{align*}
Therefore, 
\[ SS(\Delta^{-1}{P_U}) \subset \{(q, 0, n\pi r^2, \tau) \,| \, q^2 \leq r^2, \tau \geq 0, n \in \Z_{\geq 0}\}. \]
Furthermore, 
\[ SS(R\pi_! \Delta^{-1} P_U) \subset \left\{(t, \tau) \in T^*\R_{\geq 0}\,\bigg| \, \begin{array}{c} \exists (q, p, t, \tau) \in SS(\Delta^{-1} P_U) \\ \mbox{and \,\,$\pi(q,t) = t$} \\ \mbox{and \,\,$\pi^*(\tau) = (p, \tau)$} \end{array} \right\}. \]
Note that $\pi^*(\tau) = (0, \tau)$ and $t = n \pi r^2$ for $n \in \Z_{\geq 0}$. All in all, we get 
\begin{equation} \label{ss-fu}
SS(\F(U)) \subset \{(t, \tau) \in T^*\R_{\geq 0} \,|\, t = n\pi r^2 \} \cup 0_{\R_{\geq 0}}.
\end{equation}
In terms of picture, see Figure \ref{i32}. 
\begin{figure}[h]
 \centering
 \includegraphics[scale=0.5]{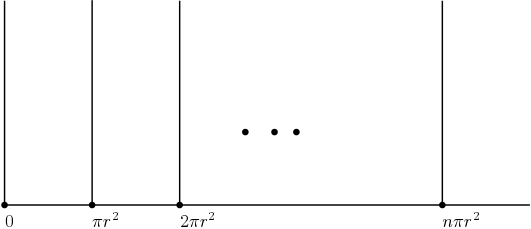}
 \caption{Singular support of $\F(B(r))$}
 \label{i32}
 \end{figure}
\end{ex}

Note that the computation in Example \ref{ex-ss-fu} is not enough to write out $\F(U)$ completely since $SS$ can not tell the degrees and multiplicities. We need some extra study to obtain precise formula of $\F(U)$. However, computation above has an enlightening observation that, for general $U$, non-trivial fibers of $SS(\F(U))$ happen at the symplectic actions of closed Hamiltonian loops. Here is a more accurate description of $\F(U)$ when $U$ is admissible as in Example \ref{ex-add} (so there exists a generating function $S$ defined from any preferred defining function for $U$). Then 

\begin{lemma} \label{lemma-fu} For any $T \geq 0$, stalk 
\[ \F(U)_T = H^*_c(Y_{T/(r^2M)}; \k) \]
where 
\[ Y_{T/(r^2M)} = \left\{(q_1, ..., q_M) \in (\R^n)^M\,\bigg|\, \sum_{i=1}^M S \left(\frac{-T}{r^2M}, q_i,q_{i+1}\right) \geq 0 \right\}, \]
where $M$ sufficiently large such that $S(-T/(r^2M), \cdot, \cdot)$ is well-defined. Also we should regard $(q_1, ..., q_M)$ as a discrete closed loop in space $(\R^n)^{M+1}$ by viewing it as $(q_1, ..., q_{M+1})$ where $q_1 = q_{M+1}$. 
\end{lemma}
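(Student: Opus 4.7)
The plan is to unfold all the derived functors defining $\F(U) = R\pi_! \Delta^{-1} P_U$ and express its stalk at $T$ as a single compactly-supported cohomology computation on an explicit semi-algebraic subset of a product of Euclidean spaces, then contract this subset onto $Y_{T/(r^2 M)}$. First, using Exercise \ref{exe-ph} I would rewrite $P_U \simeq \F_S \bullet_a \k_\gamma$ where $\gamma = \{(a, t): -t/r^2 \leq a \leq 0\}$, and then unfold $\F_S = R\tilde\rho_! \, \k_{\tilde N}$ via the $M$-step decomposition from Subsection \ref{subsec-ssc}, with $\tilde N = \{\sum_{i=1}^M S(a/M, q_i, q_{i+1}) + t \geq 0\}$ and $\tilde\rho$ forgetting the intermediate coordinates $q_2, \ldots, q_M$.

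Next, I would repeatedly apply the base change formula (Proposition \ref{bsc}) and the Grothendieck composition rule to commute $\Delta^{-1}$ and $R\pi_!$ past the proper pushforwards defining $\bullet_a$ and $\tilde\rho$. Eliminating $q_{M+1}$ via the diagonal embedding (so $q_{M+1} = q_1$) and $t_2$ via the stalk condition $t_1 + t_2 = T$ arising from $\pi$, the stalk becomes
\[
\F(U)_T \simeq H^*_c(\widetilde Z; \k), \quad \widetilde Z := \Big\{(a, q_1, \ldots, q_M, t_1) : {\textstyle\sum_{i=1}^M} S(a/M, q_i, q_{i+1}) + t_1 \geq 0,\; -\tfrac{T - t_1}{r^2} \leq a \leq 0 \Big\},
\]
with the cyclic convention $q_{M+1} = q_1$.

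The conclusion then follows by projecting $\mathrm{pr}\colon \widetilde Z \to (\R^n)^M$ onto the $q$-factor and showing that the fiber $\widetilde Z_q$ is compact and contractible (hence $H^*_c(\widetilde Z_q; \k) \simeq \k$ in degree $0$) when $q \in Y_{T/(r^2 M)}$, and empty otherwise. The observation driving this is that at $a = -T/r^2$ the upper constraint $t_1 \leq T + ar^2$ collapses to $t_1 \leq 0$, which together with the lower constraint $t_1 \geq -\sum S(-T/(r^2 M), q_i, q_{i+1})$ forces $t_1 = 0$ and $\sum_{i} S(-T/(r^2 M), q_i, q_{i+1}) \geq 0$ --- precisely the defining inequality of $Y_{T/(r^2 M)}$. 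A suitable deformation retract then slides $(a, t_1) \in \widetilde Z_q$ to $(-T/r^2, 0)$, and proper base change applied to $\mathrm{pr}$ yields $R\mathrm{pr}_! \, \k_{\widetilde Z} \simeq \k_{Y_{T/(r^2 M)}}$, completing the proof.

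The main obstacle is verifying the deformation retract: the region $\widetilde Z_q$ has a stratified boundary, including a singular edge at $a = 0$ where the generating function $S(a/M, \cdot, \cdot)$ blows up, and one must check that a linear contraction preserves both the action inequality $\sum S + t_1 \geq 0$ and the cone inequality throughout the retraction. This rests on the Hamilton--Jacobi identity $\partial_a S = -H$ together with control over the value of $H$ along the trajectories of the retract, which forces $\partial_a(\sum S(a/M, q) + ar^2) = -\frac{1}{M}\sum H_i + r^2$ to have the appropriate sign for $q \in Y_{T/(r^2M)}$. A secondary concern is the bookkeeping of degree shifts introduced by the Fourier--Sato kernel $\k_{\{t + ab \geq 0\}}[1]$ used in Definition \ref{b-p}; these must be tracked carefully through the functorial composition to ensure that the final identification is not off by a shift.
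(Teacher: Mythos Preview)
Your plan is correct and matches the paper's route: unfold $P_U=\F_S\bullet_a\k_\gamma$ via Exercise \ref{exe-ph} and the $M$-fold resolution of $\F_S$, pass $\Delta^{-1}$ through $R\rho_!$ by base change, and reduce the stalk at $T$ to a single compactly supported cohomology. The only difference is organizational. You keep both auxiliary variables $(a,t_1)$ alive in your set $\widetilde Z$ and propose to contract the fiber $\widetilde Z_q$ by a deformation retract; the paper instead integrates them out in two strokes---first $t_1$ (each fiber $[-\sum S,\,T+ar^2]$ is a closed interval, contributing $\k$ in degree $0$), then $a$---and relegates the remaining identification $\bar Y_T=Y_{T/(r^2M)}$ to a ``nestedness'' exercise. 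That nestedness ($Y_{-a'}\subset Y_{-a}$ for $a\le a'$) is precisely the monotonicity your Hamilton--Jacobi identity $\partial_a S=-H$ is meant to supply, so the analytic content is identical.

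One small correction to your heuristic: the constraint $-(T-t_1)/r^2\le a\le 0$ does not by itself force $a\ge -T/r^2$, since $t_1$ may be negative; so $(-T/r^2,0)$ is not an obvious corner of $\widetilde Z_q$ and you genuinely need the monotonicity to see the fiber is compact and contractible. If the two-variable retract near the singular edge $a=0$ worries you, the paper's order---integrate out $t_1$ first, then argue in the single variable $a$---sidesteps that boundary cleanly and is probably the easier way to close the argument.
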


\begin{remark} Since $\sum S (-T/(r^2M), q_i, q_{i+1})$ is a differentiable function on topological space $(\R^n)^{M}$, a finite dimensional approximation of loop space of $\R^n$, $Y_{T/(r^2M)}$ is just a sublevel set and then $H^*_c(Y_{T/(r^2M)}, \k)$ is generated by critical points of $\sum S (-T/(r^2M), q_i, q_{i+1})$. Recall that a critical point of this function is a {\it discrete} Hamiltonian trajectory. Since we always identify $q_1$ (start point) with $q_{M+1}$ (end point), generators of $H^*_c(Y_T; \k)$ are {\it discrete} Hamiltonian loops of period $T/r^2$, see Figure \ref{i24}.
\begin{figure}[h]
\centering
\includegraphics[scale=0.4]{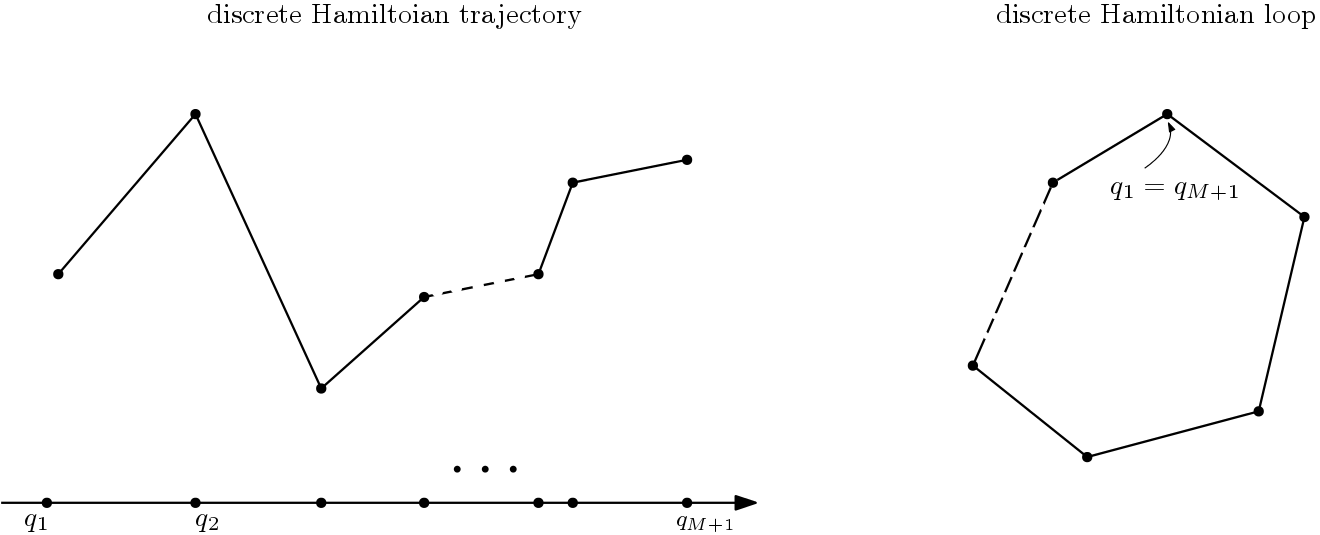}
\caption{Discrete Hamiltonian trajectory and loop}
\label{i24}
\end{figure}   \end{remark}

\begin{ex} \label{ex-sb-ball} Let $U = B(r)$ and take $H = q^2 + p^2$. If $T$ is sufficiently small so we can take $M =1$. Recall the local expression of $S$ in (\ref{local-S}). Let $a = - T/r^2$. Then condition above in $Y_{T/r^2}$ transfers to 
\[ \left(\frac{1}{\tan(2a)} - \frac{1}{\sin(2a)}\right) q^2 \geq 0. \]
But our $a$ is negative and sufficiently close to $0$, 
\[ \frac{1}{\tan(2a)} - \frac{1}{\sin(2a)} = \frac{\cos(2a)-1}{\sin(2a)} \geq 0 \]
There is no constraint on $q$. Hence $\F(B(r))_T = H^*_c(\R^n, \k) = \k[-n]$. \\

Note that, with the same $T$ as above, nothing prevents us from choosing $M=2$, the condition above in $Y_{T/(2r^2)}$ then says 
\[ \frac{1}{\tan(a)} (q_1^2 + q_2^2) - \frac{2 q_1 q_2}{\sin(a)} \geq 0 \]
where $a = - T/(2r^2)$. Note that we can normalize this quadratic form to be $Q_1 = q_1 - q_2$ and $Q_2 = q_1 + q_2$, that is, $\lambda_1(a) Q_1^2 + \lambda_2(a) Q_2^2 \geq 0$ where eigenvalues $\lambda_1$ and $\lambda_2$ are 
\[ \lambda_1(a) = \frac{1}{\tan(a)} - \frac{1}{\sin(a)} \,\,\,\,\mbox{and}\,\,\,\, \lambda_2(a) = \frac{1}{\tan(a)} + \frac{1}{\sin(a)}. \]
Hence $\lambda_1 \geq 0$ and $\lambda_2 \leq 0$. Constraint on $Q_1$ and $Q_2$ is represented by Figure \ref{i19}. 
\begin{figure}[h]
\centering
 \includegraphics[scale=0.5]{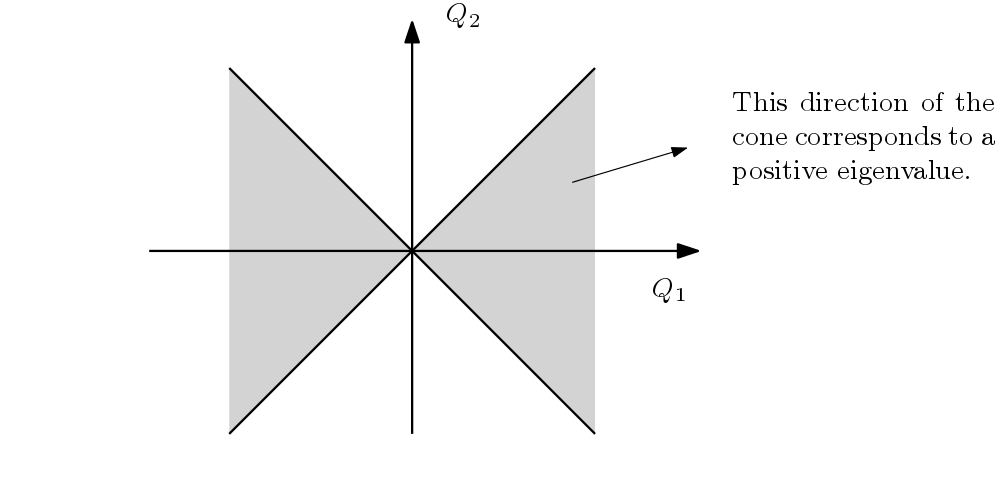}
 \caption{Restriction from generating function}
 \label{i19}
 \end{figure}
Note that this is {\it proper} homotopic to $\R^n$ (representing $Q_1$), so $\F(B(r))_T = H_c^*(Y_{T/(2r^2)}, \k) = H_c^*(\R^n; \k) = \k[-n]$. This works for any choice of $M$ and the answer is still $\k[-n]$. In general, given any $T \geq 0$, choosing any sufficiently large $M$ and let $a = -T/(r^2 M)$, quadratic form constraint is normalized to be 
\[ \lambda_1 (a) Q_1^2 + ... \lambda_M(a) Q_M^2 \geq 0  \]
where 
\[ \lambda_i (a) = \frac{1}{\tan(2a/M)} - \frac{\cos(2\pi(i-1)/M)}{\sin(2a/M)}. \]
Then counting the maximal possible number of positive eigenvalues, we get $\# = 2\ceil{\frac{T}{\pi r^2}} -1$ (independent of $M$!). Each corresponding direction provides $n$ free dimensions. So $\F(B(r))_T = \k[n- 2n \ceil{\frac{T}{\pi r^2}}]$. We can then get a sheaf barcode of $\F(B(r))$ in Figure \ref{i25}.
\begin{figure}[h]
 \centering
 \includegraphics[scale=0.43]{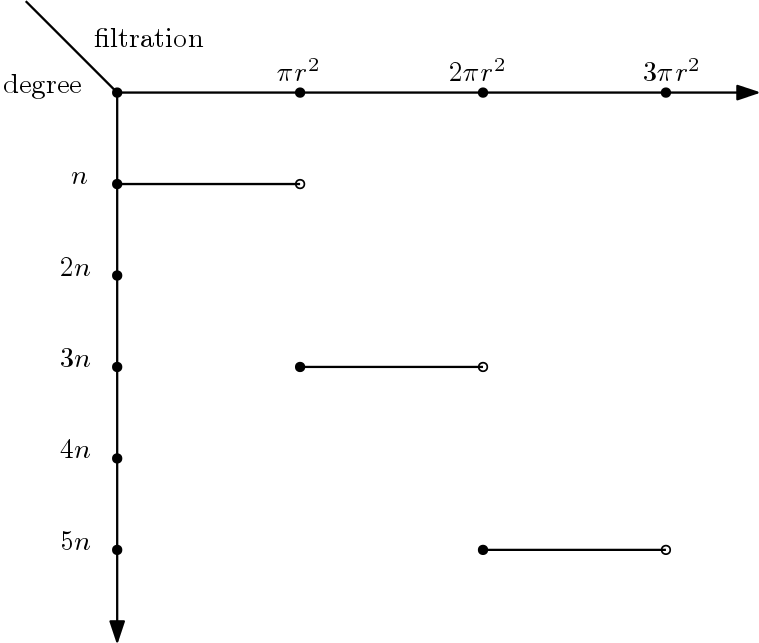}
 \caption{Sheaf barcode associated to $B(r)$}
 \label{i25}
 \end{figure}
\end{ex}

\begin{exercise} \label{exe-ell} Prove $\F(E(r, R,..., R))_T = \k[n - 2(n-1) \ceil{\frac{T}{\pi R^2}} - 2 \ceil{\frac{T}{\pi r^2}}]$.\end{exercise}

We end this section with a proof of Lemma \ref{lemma-fu} (technical part).  

\begin{proof} (Proof of Lemma \ref{lemma-fu}) For any $T \in \R$, 
\[ \F(B(r))_T = (R\pi_! \Delta^{-1} P_{B(r)})_T = H_c^{*}(\R^n, (\Delta^{-1} P_{B(r)})|_{\R^n \times \{T\}}).\]
Recall the construction of $P_{B(r)} \in \D(\k_{\R_1^n \times \R_2^n \times \R})$ 
\[ P_{B(r)} = R\rho_! \k_{\{(a, q_1, ..., q_{M+1}, t)\,|\, \sum S(a/M, q_i, q_{i+1}) + t \geq 0\}} \bullet_{a} \k_{\{(a,t)\,|\, -t/r^2 \leq a \leq 0\}}.\]
Then since the diagonal embedding $\Delta$ is not involving $t$-variable, we can restrict on $t = T$ first for $P_{B(r)}$, that is, 
\begin{align*}
(P_{B(r)})|_{T} & = R\rho_! \k_{\{(a, q_1, ..., q_{M+1}) \,|\, \sum S({a/M}, q_i, q_{i+1})  \geq 0\}} \circ_a \k_{\{a\,|\, -T/r^2 \leq a \leq 0\}}.\\
& = R\rho_! \k_{\{(q_1, ..., q_{M+1})\,|\, \sum S({a/M}, q_i, q_{i+1})  \geq 0 \,\,{\small \mbox{for $a \in [-T/r^2, 0]$}}\}} : = R\rho_! \k_{X_T}.
\end{align*}
Now consider the following commutative diagram 
\[ \xymatrix{
(\R^n)^{M+1} \ar[r]^-{\rho} &  \R^n \times \R^n \\
(\R^n)^M \ar[r]_-{\bar{\rho}} \ar[u]^-{\bar{\Delta}} & \R^n \ar[u]_-{\Delta}} \]
where $\bar{\rho}(q_1, ..., q_M) = q_1$ and $\bar{\Delta}(q_1, ..., q_M) = (q_1, ..., q_M, q_{M+1})$. Then base change formula says 
\[ \Delta^{-1} R\rho_! \k_{X_T} = R\bar{\rho}_! \bar{\Delta}^{-1} \k_{X_T}: = R\bar{\rho}_! \k_{\bar{Y}_T}. \]
Therefore, 
\[ (\Delta^{-1} P_U)|_{\R^n \times \{T\}} = \left\{ q_1 \in \R^n \,\bigg|\, \begin{array}{c} \mbox{$\exists$ a loop $(q_1, ..., q_{M+1} = q_1)$ s.t.}\\ \sum S(a, q_i,q_{i+1}) \geq 0\,\, \mbox{for $a \in [-T/(r^2M), 0]$} \end{array} \right\}.\]
For computational perspective, 
\[ \F(B(r))_T = H_c^*(\R^n, R\bar{\rho}_! \k_{{\bar Y}_T}) = H_c^*((\R^n)^M, \k_{{\bar Y}_T}) = H_c^*({{\bar Y}_T}; \k) \]
where 
\[ \bar{Y}_T = \left\{(q_1, ..., q_M) \in (\R^n)^M\,\bigg|\, \begin{array}{c} \sum S(a, q_i,q_{i+1}) \geq 0 \,\,\mbox{for $a \in [-T/(r^2M), 0]$} \end{array} \right\}. \]
\begin{exercise} Check that when $a \in [-T/(r^2M), 0]$, the sets
\[ Y_{-a} = \left\{(q_1, ..., q_M) \in (\R^n)^M \,\bigg| \, \sum S(a, q_i, q_{i+1}) \geq 0\right\}\]
are nested, i.e. for any $a\leq a'$, $Y_{-a'} \subset Y_{-a}$. Then ${\bar Y}_T = \bigcup_{a \in [-T/(r^2M), 0]} Y_{-a} = Y_{T/(r^2M)}$. Note that Example \ref{ex-sb-ball} supports this conclusion. \end{exercise}
Thus we draw the conclusion. \end{proof}

\subsection{Comparison with symplectic homology} \label{comp-sh} 
Though definition of $U$-projector is quite abstract and in the language of sheaves, in this section, we will demonstrate some common feature shared by $\F(U)$ and symplectic homology of $U$, conventionally denoted as $\SH(U)$. We will briefly recall construction of symplectic homology but only focus on the case of ball $B(r)$. \\

{\bf Symplectic homology of $B(r)$.} Consider the following function $H$ in Figure \ref{i33}, radial symmetric on $\R^{2n}$.
\begin{figure}[h]
 \centering
 \includegraphics[scale=0.3]{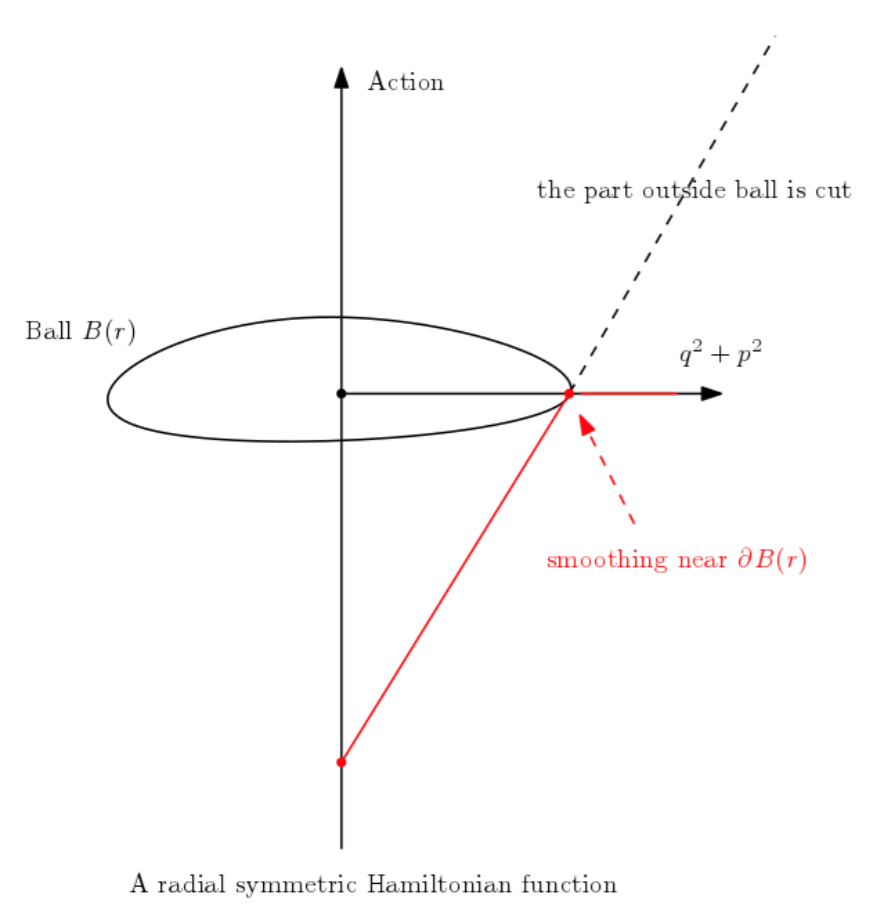}
 \caption{Radial symmetric Hamiltonian associated to $B(r)$}
 \label{i33}
 \end{figure}
Standard Hamiltonian Floer theory enables us to compute (filtered) Floer homology $\HF(H)$, roughly speaking, counts {\bf 1-periodic orbits} of $H$ up to homology. Here is an important exercise. 

\begin{exercise} \label{exe-radial} For radial symmetric $H$ in Figure \ref{i33}, denote the slope of the part inside $B(r)$ by $k_H$. Prove 1-periodic orbits of $H$ appear near $\partial B(r)$, which 1-1 correspond to slopes hitting $n \pi$ within $[0, k_H]$. Moreover, the associated action (for positive time) is the $y$-intercept of the line with slope $n \pi$. More precisely, 
\[ \mbox{action of orbit (corresponds to slope $n\pi$)} = - n \pi r^2. \]
\end{exercise}

In order to associate an object to $B(r)$, we need to remove the dependence of Hamiltonian functions. The ingenious invention is (1) consider a family of such $H$ in Figure \ref{i33} with increasing slopes; (2) compute Hamiltonian Floer homology for each one of them; (3) take a limit to get an algebraic object associated to $B(r)$. This algebraic object is called {\it symplectic homology of $B(r)$}, denoted as 
\[ \SH(B(r)) = \varinjlim_{i \to \infty} \HF(H_i). \]
For general $U$, this procedure applies and one gets $\SH(U)$. It is an interesting fact that $\SH(U)$ is independent of the choice of sequence of Hamiltonians (as long as it is eventually blow-up/dominated). For practical computation, usually a specific Hamiltonian function is fixed, say $H(q,p) = q^2 + p^2 - r^2$. Then take the sequence $\{\lambda H\}_{\lambda \in [1,\infty)}$, see Figure \ref{i35}.
\begin{figure}[h]
 \centering
 \includegraphics[scale=0.35]{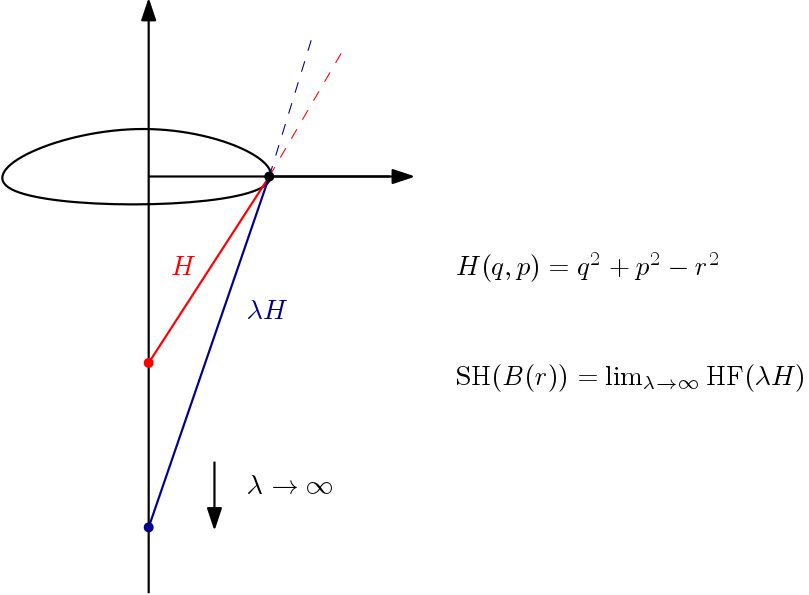}
 \caption{Dominated Hamiltonians computing $\SH(B(r))$}
 \label{i35}
 \end{figure}

{\bf Observations and comparison.} By a simple computation, we know 
\[ \phi_{\lambda H}^a = \begin{pmatrix} 
\cos(2 \lambda a) & \sin(2 \lambda a) \\
- \sin(2 \lambda a) & \cos(2 \lambda a) 
\end{pmatrix} = \phi_H^{\lambda a}.\]
Note that $\mbox{1-periodic orbit of flow $\phi_{\lambda H}^a$} = \mbox{$\lambda$-periodic orbit of flow $\phi^a_H$}$. Therefore, rescaling Hamiltonian is (dynamically) equivalent to rescaling time. Up to homology, $\SH(B(r))$ in fact counts {\it all} periodic orbits of a {\it fixed} Hamiltonian function. Then by the conclusion from Exercise \ref{exe-radial}, rescaling $H$ results in more choices of $n \pi$, so up to a factor, 
\begin{equation} \label{rest-act-time}
\mbox{restriction of time} = \mbox{restriction of action}. 
\end{equation}\\
Here is a list of many similarities between $\F(B(r))$ and $\SH(B(r))$ directly derived from their constructions.
\begin{itemize}
\item{} Both constructions {\it cut} Hamiltonian functions outside $B(r)$.
\item{} Both constructions are {\it independent} of Hamiltonian functions defining them. 
\item{} Both constructions can be {\it generalized} to other reasonable domains of $\R^{2n}$.
\item{} Up to homology, both constructions count the {\it Hamiltonian closed trajectories}. For $\F(B(r))$, it counts {\it discrete} closed trajectories via discrete approximation of loop space; for $\SH(B(r))$, it counts {\it smooth} closed trajectories via Floer theory. 
\item{} Both constructions admit {\it filtered versions} where filtrations are from symplectic actions $T$ (though for $\F(B(r))$, it is really filtered by time from computation in Lemma \ref{lemma-fu}. However, relation (\ref{rest-act-time}) shows they are equivalent.) 
\end{itemize}

\subsection{Sheaf invariant of domains in $\R^{2n}$} \label{sec-domain}

In this section, we will define an invariant of an admissible domain $U \subset \R^{2n}$ via sheaves and prove several key functorial properties. In fact, we have already seen this construction from (\ref{sheaf-sh}). Recall for any $T \geq 0$, 
\begin{equation} \label{sheaf-sh} 
S_T(U) : = R\Hom(P(U), \k_{\{q_1 = q_2; t_2 - t_1 \geq T\}}) = R\Hom(\F(U), \k_{[T, \infty)}[-n]).
\end{equation}

\begin{dfn} $S_T(U)$ defined in (\ref{sheaf-sh}) is called {\it sheaf invariant of $U$ at level $T$}. \end{dfn}

View $\F(U)$ as a sheaf barcode and denote the set of jump points by $\Spec(\F(U))$. In the language of persistence modules, $\{S_T(U)\}_{T \geq 0}$ is also a persistence $\k$-module, thanks to the following lemma guaranteeing the persistence transfer maps. 

\begin{lemma} For any $T_1 \leq T_2$, there exists a natural map $\iota_{T_1, T_2}: S_{T_1}(U) \to S_{T_2}(U)$.  Moreover, if $[T_1, T_2] \cap \Spec(\F(U)) = \emptyset$, then $\iota_{T_1, T_2}$ is an isomorphism. \end{lemma}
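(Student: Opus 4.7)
The plan is to realize $\iota_{T_1,T_2}$ as the image under $R\Hom(\F(U),-[-n])$ of a canonical morphism between the test sheaves $\k_{[T,\infty)}$, and then to identify the obstruction to being an isomorphism with a single Hom complex that must vanish under the spectrum-disjointness hypothesis.

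First, I would observe that for $T_1 \leq T_2$ the closed inclusion $[T_2,\infty)\subset[T_1,\infty)\subset\R$ fits into a natural short exact sequence of sheaves on $\R$,
\begin{equation*}
0 \to \k_{[T_1,T_2)} \to \k_{[T_1,\infty)} \to \k_{[T_2,\infty)} \to 0,
\end{equation*}
equivalently a distinguished triangle in $\D(\k_{\R})$. Applying $R\Hom(\F(U),-[-n])$, which is cohomological in the second variable, yields a distinguished triangle
\begin{equation*}
R\Hom(\F(U),\k_{[T_1,T_2)}[-n]) \to S_{T_1}(U) \xrightarrow{\iota_{T_1,T_2}} S_{T_2}(U) \xrightarrow{+1},
\end{equation*}
which both defines the transfer $\iota_{T_1,T_2}$ and shows it to be an isomorphism exactly when the leftmost term vanishes. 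Functoriality of $R\Hom$ applied to the composable maps $\k_{[T_1,\infty)}\to\k_{[T_2,\infty)}\to\k_{[T_3,\infty)}$ will then deliver the naturality of $\iota$ in $(T_1,T_2)$ for free.

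Second, for the isomorphism assertion, I would invoke constructibility of $\F(U)$ over $\R$ to decompose $\F(U)\simeq \bigoplus_{j\in J}\k_{I_j}[n_j]$ in the style of Subsection \ref{sec-pb-sb}, where $\Spec(\F(U))$ is by definition the set of endpoints of the bars $I_j$. The desired vanishing
\begin{equation*}
R\Hom(\F(U),\k_{[T_1,T_2)}[-n]) \;\simeq\; \bigoplus_{j\in J} R\Hom(\k_{I_j},\k_{[T_1,T_2)})[-n-n_j] \;=\; 0
\end{equation*}
then reduces to a bar-by-bar computation via Theorem \ref{hom-compute-2}, together with Examples \ref{ex-hom-right} and \ref{ex-hom-left} for semi-infinite bars $I_j=[a,\infty)$. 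The conclusion one must extract is that $R\Hom(\k_{I_j},\k_{[T_1,T_2)})$ is non-zero only if at least one endpoint of $I_j$ lies inside the closed interval $[T_1,T_2]$, which the hypothesis $[T_1,T_2]\cap\Spec(\F(U))=\emptyset$ precisely forbids.

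The main obstacle I expect is the endpoint bookkeeping. For each bar $I_j=[a,b)$ one has to tabulate the relative positions against $[T_1,T_2)$ (entirely below $T_1$, entirely above $T_2$, strictly straddling both, or having $a$ or $b$ coincide with $T_1$ or $T_2$) and confirm that each case yields vanishing $R\Hom$; the semi-infinite case $b=\infty$ has to be treated separately through Example \ref{ex-hom-left}. Provided that the open-closed conventions of $S_T(U)$ and the bar decomposition are uniformly $[-,-)$-type, as set up in Subsection \ref{sec-pb-sb}, every admissible case gives zero, which combined with the distinguished triangle above closes the argument.
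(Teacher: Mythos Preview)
Your proposal is correct and follows essentially the same route as the paper: define $\iota_{T_1,T_2}$ via the restriction $\k_{[T_1,\infty)}\to\k_{[T_2,\infty)}$, identify its cone with $R\Hom(\F(U),\k_{[T_1,T_2)}[-n])$, decompose $\F(U)$ into bars, and kill each summand using the interval $R\Hom$ computations. The paper is slightly terser---it notes that under the spectrum hypothesis the only surviving configuration is $[T_1,T_2]\subset[a,b)$ and then computes $R\mathcal{H}om(\k_{[a,b)},\k_{[T_1,T_2)})=\k_{[T_1,T_2)}$, whose global sections vanish---whereas you invoke Theorem~\ref{hom-compute-2} and the semi-infinite examples directly, but the substance is the same.
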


\begin{proof} For $T_1 \leq T_2$, restriction $\k_{[T_1, \infty)} \xrightarrow{res} \k_{[T_2, \infty)}$ induces $\iota_{T_1, T_2}$. Moreover, the mapping cone of $\iota_{T_1, T_2}$ is 
\[ R\Hom(\F(U), \k_{[T_1, T_2)}[-n]). \]
Assume $\F(U) = \bigoplus \k_{[a,b)}$, then our assumption reduces to computation 
\[ R\Hom(\F(U), \k_{[T_1, T_2)}[-n]) = \bigoplus_{[T_1, T_2] \subset [a,b)} R\Hom(\k_{[a,b)}, \k_{[T_1, T_2)}[-n]) = 0\]
because under condition $[T_1, T_2] \subset [a,b)$, $R{\mathcal Hom}(\k_{[a,b)}, \k_{[T_1, T_2)}) = \k_{[T_1, T_2)}$.
\end{proof}

\begin{ex} By Example \ref{ex-sb-ball} and computation of $R\Hom$, for $T \in [m \pi r^2 , (m+1) \pi r^2)$, one gets $S_T(B(r)) = \k[-2mn]$. For any $T_1 \leq T_2 \in (m \pi r^2 , (m+1) \pi r^2)$, $\iota_{T_1, T_2}$ is an isomorphism because $\Spec(\F(B(r))) = \{n \pi r^2 \,| \, n \in \Z_{\geq 0}\}$. \end{ex}

The following proposition is essentially from Lemma \ref{U-fp}. 

\begin{prop} \label{prop-si} We have the following basic functorial properties.
\begin{itemize}
\item[(1)] For any $V \xhookrightarrow{i} U$, there exists a well-defined functorial map $i^*: S_T(U) \to S_T(V)$. Moreover, it commutes with $\iota_{T_1,T_2}$. 
\item[(2)] For any Hamiltonian diffeomorphism $\phi$ on $T^*\R^n$, $S_T(\phi(U)) \simeq S_T(U)$. 
\end{itemize}
\end{prop}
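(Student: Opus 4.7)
\medskip

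\noindent\textbf{Proof proposal.} For part (1), the construction of $i^*$ is dual to Lemma \ref{U-fp}(i). Starting from the morphism $i_*\co P(V)\to P(U)$ supplied there, I apply the contravariant bifunctor $R\Hom(-,\k_{\{q_1=q_2;\,t_2-t_1\geq T\}})$ to obtain
\[
i^*\co S_T(U)=R\Hom(P(U),\k_{\{q_1=q_2;\,t_2-t_1\geq T\}})\longrightarrow R\Hom(P(V),\k_{\{q_1=q_2;\,t_2-t_1\geq T\}})=S_T(V).
\]
The identity $(j\circ i)^*=i^*\circ j^*$ is immediate from the functoriality of $i_*$ already asserted in Lemma \ref{U-fp}. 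Compatibility with $\iota_{T_1,T_2}$ is pure naturality of the bifunctor $R\Hom(-,-)$: the restriction morphism $\k_{\{q_1=q_2;\,t_2-t_1\geq T_1\}}\to\k_{\{q_1=q_2;\,t_2-t_1\geq T_2\}}$ and the map $i_*$ define a commuting square whose $R\Hom$-image is exactly the square $i^*\circ\iota_{T_1,T_2}=\iota_{T_1,T_2}\circ i^*$.

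For part (2), specialize Lemma \ref{U-fp}(ii) to $s=1$ to write $P(\phi(U))\simeq \K^{-1}(\phi)|_{1}\circ P(U)\circ \K(\phi)|_{1}$. Substituting into the definition of $S_T$ and moving the sheaf quantization factors to the right-hand argument using adjunction for sheaf composition gives
\[
S_T(\phi(U))\simeq R\Hom\bigl(P(U),\;\K(\phi)|_{1}\circ \k_{\{q_1=q_2;\,t_2-t_1\geq T\}}\circ \K^{-1}(\phi)|_{1}\bigr).
\]
Here the adjunction is legitimate because, by Remark \ref{rmk-sh-group}, the operator $\K(\phi)|_{1}\circ -$ is a two-sided quasi-inverse to $\K^{-1}(\phi)|_{1}\circ -$ via the identities $\K(\phi)|_{1}\circ\K^{-1}(\phi)|_{1}\simeq\K^{-1}(\phi)|_{1}\circ\K(\phi)|_{1}\simeq\k_{\Delta\times\R}$. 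Thus the whole argument reduces to the conjugation claim
\[
\K(\phi)|_{1}\circ \k_{\{q_1=q_2;\,t_2-t_1\geq T\}}\circ \K^{-1}(\phi)|_{1}\simeq \k_{\{q_1=q_2;\,t_2-t_1\geq T\}}.
\]

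To establish this claim I would identify $\k_{\{q_1=q_2;\,t_2-t_1\geq T\}}$ with the translate ${T_T}_*\k_{\Delta\times\R}$ along the $\R$-coordinate common to both factors, so that this kernel is exactly the identity for sheaf composition shifted by $T$. Since the homogeneous lift underlying $\K(\phi)$ is generated by a Hamiltonian of the form $\tau\,h_s(q,\xi/\tau)$, which is autonomous in the variable $t$, the sheaf quantization is invariant under $t$-translation, and therefore ${T_T}_*$ commutes past both $\K(\phi)|_{1}\circ -$ and $-\circ \K^{-1}(\phi)|_{1}$. The conjugation then collapses to ${T_T}_*\bigl(\K(\phi)|_{1}\circ \k_{\Delta\times\R}\circ \K^{-1}(\phi)|_{1}\bigr)\simeq {T_T}_*\k_{\Delta\times\R}$ by Remark \ref{rmk-sh-group}, which is the desired kernel. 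Combining, $S_T(\phi(U))\simeq R\Hom(P(U),\k_{\{q_1=q_2;\,t_2-t_1\geq T\}})=S_T(U)$.

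The main obstacle I expect is making the translation-invariance of $\K(\phi)$ with respect to the $t$-variable fully rigorous at the level of the derived category, i.e.\ producing a canonical isomorphism ${T_T}_*\K(\phi)|_1\simeq \K(\phi)|_1\circ {T_T}_*\k_{\Delta\times\R}$ (and the symmetric version on the right). The cleanest way I would try is to invoke the uniqueness half of Theorem \ref{gks}: both ${T_T}_*\K(\phi)|_1\circ \K^{-1}(\phi)|_1$ and ${T_T}_*\k_{\Delta\times\R}$ satisfy the singular-support and initial-condition characterization of a sheaf quantization of the trivial homogeneous Hamiltonian flow (whose only effect is the $t$-shift by $T$), and hence must agree. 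Modulo this verification, parts (1) and (2) fall out formally from the already-established functoriality of $P(U)$ and the group-like behaviour of GKS quantizations.
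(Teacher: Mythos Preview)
Your proposal is correct and follows essentially the same route as the paper. Part (1) is identical: apply $R\Hom(-,\k_{\{q_1=q_2;\,t_2-t_1\geq T\}})$ to $i_*$ from Lemma \ref{U-fp}(i), and observe that $i_*$ and the restriction map act on opposite arguments of the bifunctor, so they commute. Part (2) is also the same: substitute $P(\phi(U))\simeq \K^{-1}(\phi)|_{1}\circ P(U)\circ \K(\phi)|_{1}$ and pass the $\K$-factors to the second argument using that $\K(\phi)|_{1}\circ -$ is an autoequivalence.

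The one place you are more careful than the paper is the final conjugation step $\K(\phi)|_{1}\circ \k_{\{q_1=q_2;\,t_2-t_1\geq T\}}\circ \K^{-1}(\phi)|_{1}\simeq \k_{\{q_1=q_2;\,t_2-t_1\geq T\}}$. The paper simply writes this equality without comment; your plan to deduce it from $t$-translation invariance of the sheaf quantization, established via the uniqueness clause of Theorem \ref{gks}, is a perfectly valid way to make that step rigorous. So there is no gap in your argument, and the ``obstacle'' you flag is exactly the point the paper leaves implicit.
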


\begin{proof} (1) This comes from (i) in Lemma \ref{U-fp}. We only need to check that $i^*$ commutes with $res_{T_1,T_2}$. In fact, $i^*$ is induced by $\circ i_*$ and restriction $\iota_{T_1, T_2}$ is induced by $res_{T_1, T_2} \circ$, where they act on different sides. (2) This comes from (ii) in Lemma \ref{U-fp}. In fact, 
\begin{align*}
S_T(\phi(U)) & = R\Hom(P(\phi(U)),  \k_{\{q_1 = q_2; t_2 - t_1 \geq T\}}) \\
& = R\Hom(\K^{-1}(\phi)|_{s=1} \circ P(U) \circ \K(\phi)|_{s=1}, \k_{\{q_1 = q_2; t_2 - t_1 \geq T\}}) \\
& = R\Hom(P(U),\K(\phi)|_{s=1} \circ \k_{\{q_1 = q_2; t_2 - t_1 \geq T\}} \circ \K^{-1}(\phi)|_{s=1})\\
& = R\Hom(P(U), \k_{\{q_1 = q_2; t_2 - t_1 \geq T\}}) = S_T(U)
\end{align*}
where the third equality comes from the fact that $\K(\phi)|_{s=1}\circ$ (or $\circ \K^{-1}(\phi)|_{s=1}$) is an automorphism of $\T(\R^n)$. 
\end{proof}

\begin{remark} Note that symplectic homology $\SH(U)$ can also be viewed as a persistence $\k$-module and satisfies these functorial properties from Proposition \ref{prop-si}.  \end{remark}

\begin{ex} Based on Remark \ref{rmk-inclusion}, we have an explicit description of induced map $i^*: S_T(B(r)) \to S_T(c B(r))$ for $0 < c \leq 1$. Note that this $i^*$ is eventually induced by restriction $\k_{\{b\geq cr^2\}} \to \k_{\{b \geq r^2\}}$ along the following couple of steps, 
\[ \k_{\{b<cr^2\}} \to \k_{\{b<r^2\}}  \Rightarrow P_{cU} \to P_U \Rightarrow {\F(cU) \to \F(U)} \Rightarrow S_T(U) \to S_T(cU). \]
One way to understand $i^*$ is considering its mapping cone, which is eventually induced by mapping cone of restriction $\k_{\{b\geq cr^2\}} \to \k_{\{b \geq r^2\}}$, that is, $\k_{[cr^2 , r^2)}$. The following exercise, which works for more general domain $U$, is a direct modification of Lemma \ref{lemma-fu}. 
\begin{exercise} \label{mapcone-i}
For any $T \geq 0$, fix $M$ such that both $T/(cr^2M)$ and $T/(r^2 M)$ are sufficiently small. Then
\[ Cone(\F(cU) \to \F(U))_T = H^*_c(Y_{T/(cr^2M)}, Y_{T/(r^2 M)}; \k). \]
\end{exercise}
Since the right hand side is a relative cohomology, it actually computes the compactly cohomology of an inter-level subset on $(\R^n)^{M}$ with respect to the function $S(a, \cdot, \cdot)$. In the case $U = B(r)$, we can work out this inter-level subset precisely thanks to the concrete formula of generating function (\ref{local-S}). Recall the computation of $H_c^*(Y_{T/(r^2M)}; \k)$ reduces to count the number of positive eigenvalues of a certain quadratic form. Also when value $T/r^2$ gets larger, more positive eigenvalues will appear. Denote 
\[ \{\mbox{pos. eigenvalues to $T/(cr^2)$}\} = \{\lambda_1, ..., \lambda_{m_1}, ..., \lambda_{m_c} \}\]
where $\lambda_1, ..., \lambda_{m_1}$ are those to $T/r^2$. As from Figure \ref{i19}, each $\lambda_i$ where $i \in \{1, ..., m_1\}$ provides $n$ free dimensions. Therefore, its corresponding inter-level subset is shown in Figure \ref{i36}. 
\begin{figure}[h]
 \centering
 \includegraphics[scale=0.4]{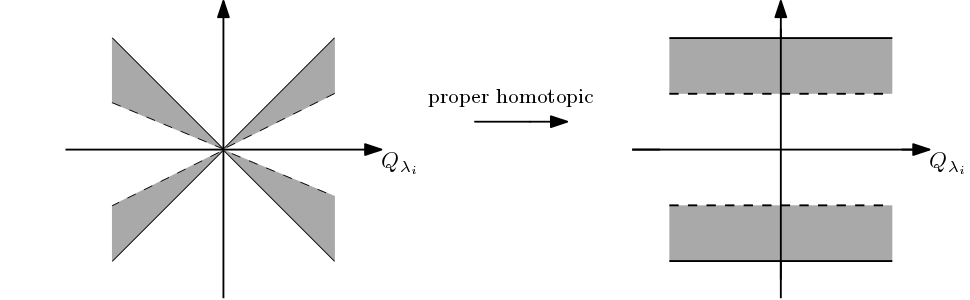}
 \caption{Inter-level subset computing mapping cone}
 \label{i36}
 \end{figure}
\begin{exercise} Direction corresponding to $\lambda_i$ in this inter-level subset provides zero compactly supported cohomology. Therefore, 
\begin{equation} \label{mc-ball}
Cone(\F(cB(r)) \to \F(B(r)))_T = \k[(m_1 - m_c) n]
\end{equation}
which is contributed by new appearing positive eigenvalues. 
\end{exercise}
\end{ex}

\subsection{Proof of Gromov's non-squeezing theorem} \label{sec-proof-ns}
In this section, we give a proof of celebrated Gromov's non-squeezing theorem based on the sheaf invariant developed in the previous sections. In fact, since symplectic cylinder $Z(r)$ can be approximated by ellipsoid $E(r, R,..., R)$ with sufficiently large $R$, we will prove the following (equivalent) statement. 

\begin{theorem} If there exists a symplectic embedding $\phi: B(r_1) \to E(r_2, R, ..., R)$ (with $R >> \max\{r_1,r_2\}$), then $r_1 \leq r_2$. \end{theorem}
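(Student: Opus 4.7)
The plan is to derive a contradiction from the hypothesis $r_1 > r_2$ by exploiting the functoriality of the sheaf invariant $S_T(\cdot)$ in Proposition \ref{prop-si} together with the explicit barcode computations for $S_T(B(r))$ (Example \ref{ex-sb-ball}) and $S_T(E(r_2, R, \ldots, R))$ (Exercise \ref{exe-ell}).

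First I would reduce the general symplectic embedding $\phi$ to an honest inclusion of admissible domains modulo a compactly supported Hamiltonian diffeomorphism. By the standard extension lemma for symplectic embeddings of balls, for any $c < 1$ sufficiently close to $1$ the restriction $\phi|_{B(cr_1)}$ agrees with $\psi|_{B(cr_1)}$ for some $\psi \in \Ham_c(\R^{2n})$. Arguing by contradiction, assume $r_1 > r_2$ and pick $c$ with $cr_1 > r_2$. Then $B(cr_1) \hookrightarrow \psi^{-1}(E)$ is an inclusion of admissible domains (the right-hand side is admissible by Lemma \ref{U-fp}(ii)). Proposition \ref{prop-si}(1) yields a natural persistence morphism $i^{\ast} \colon S_T(\psi^{-1}(E)) \to S_T(B(cr_1))$, and Proposition \ref{prop-si}(2) identifies $S_T(\psi^{-1}(E)) \simeq S_T(E)$; composing these gives a persistence morphism $\Phi_T \colon S_T(E) \to S_T(B(cr_1))$, natural in $T$ under the structure maps.

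Next I would read off the bar structure. By Example \ref{ex-sb-ball}, $\F(B(cr_1))$ decomposes into the disjoint bars $[m\pi(cr_1)^2, (m+1)\pi(cr_1)^2)$, so the first jump of the persistence module $S_T(B(cr_1))$ sits at $T = \pi(cr_1)^2$; analogously for the ellipsoid (with $R > r_2$) the first jump of $S_T(E)$ sits at $T = \pi r_2^2$. Fix $T_1 > 0$ small enough to lie in both initial bars and $T_2 \in (\pi r_2^2, \pi(cr_1)^2)$, which is nonempty since $cr_1 > r_2$, and consider the commutative square
\[
\begin{array}{ccc}
S_{T_1}(E) & \xrightarrow{\Phi_{T_1}} & S_{T_1}(B(cr_1)) \\
\iota^E_{T_1, T_2} \Big\downarrow & & \Big\downarrow \iota^B_{T_1, T_2} \\
S_{T_2}(E) & \xrightarrow{\Phi_{T_2}} & S_{T_2}(B(cr_1))
\end{array}
\]
The right vertical $\iota^B_{T_1, T_2}$ is an isomorphism because both $T_1, T_2$ lie in the single bar $[0, \pi(cr_1)^2)$ of $\F(B(cr_1))$, whereas the left vertical $\iota^E_{T_1, T_2}$ vanishes because $T_1$ and $T_2$ lie in two disjoint bars of $\F(E)$. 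Commutativity forces $\iota^B_{T_1, T_2} \circ \Phi_{T_1} = \Phi_{T_2} \circ \iota^E_{T_1, T_2} = 0$, and since $\iota^B_{T_1, T_2}$ is an isomorphism, $\Phi_{T_1} = 0$.

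The main obstacle, and the step I would need to handle most carefully, is verifying that $\Phi_{T_1}$ is nevertheless nonzero, which produces the contradiction. For this I would trace the canonical first arrow $\alpha_U \colon P(U) \to \k_{\{q_1 = q_2;\, t_2 \geq t_1\}}$ of the orthogonal splitting triangle defining $P(U)$. The construction of $i_{\ast} \colon P(B(cr_1)) \to P(\psi^{-1}(E))$ in Lemma \ref{U-fp}(i) is built precisely so that $\alpha_{\psi^{-1}(E)} \circ i_{\ast} = \alpha_{B(cr_1)}$; this is the naturality of the splitting triangles under inclusion. Composing with the restriction $\k_{\{t_2 \geq t_1\}} \to \k_{\{t_2 \geq t_1 + T_1\}}$ shows that $\Phi_{T_1}$ sends the canonical generator of $h^0 S_{T_1}(E)$ to the canonical generator of $h^0 S_{T_1}(B(cr_1))$; both are nonzero because at this low filtration level both modules are one-dimensional and sit in the initial nonzero bar. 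This contradicts $\Phi_{T_1} = 0$ and forces $r_1 \leq r_2$.
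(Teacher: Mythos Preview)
Your argument is correct in outline and reaches the same contradiction, but it takes a genuinely different route from the paper and leaves two small points that deserve tightening.

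\textbf{Comparison with the paper.} The paper works at a single filtration level $T\in(\pi r_2^2,\pi r_1^2)$ and introduces a large ball $B(R_\dagger)$ containing the support of the extended Hamiltonian diffeomorphism. It then compares two factorizations of $j^\ast:S_T(B(R_\dagger))\to S_T(B(r_1))$: one goes through $S_T(E)=\k[-2]$ and therefore vanishes for degree reasons, while the other is shown to be nonzero by a rank count on the mapping cone, using the explicit description (\ref{mc-ball}) available for inclusions of concentric balls. Your approach instead uses two levels $T_1<T_2$ and the persistence square, avoiding $B(R_\dagger)$ altogether; the nontriviality is established by tracking the canonical class $[\alpha_U]\in h^0S_{T_1}(U)$ coming from the splitting triangle. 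Your route is closer in spirit to standard persistence/capacity arguments and is more conceptual; the paper's route is more computational but has the advantage that the nontriviality step (rank of cone $\leq 1$ versus would-be rank $2$) is an explicit calculation already set up in Remark~\ref{rmk-inclusion}.

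\textbf{Two points to tighten.} First, your claim that $\iota^E_{T_1,T_2}=0$ ``because $T_1$ and $T_2$ lie in two disjoint bars'' is not quite the right justification: crossing a spectrum point does not by itself force the persistence map to vanish. The actual reason, which you should state, is the degree shift: $S_{T_1}(E)=\k$ while $S_{T_2}(E)=\k[-2]$, and there are no nonzero morphisms $\k\to\k[-2]$ in $\D(\k)$. Second, in the last paragraph you assert that the canonical class $\iota_{0,T_1}(\alpha_U)$ is nonzero ``because the module is one-dimensional''; one-dimensionality alone does not give this. You need the observation that $\alpha_U\neq 0$ in $h^0S_0(U)$: if $\alpha_U=0$ then the splitting triangle would give $Q(U)\simeq\k_{\{q_1=q_2;\,t_2\geq t_1\}}\oplus P(U)[1]$, and applying $\F\bullet(-)$ for any nonzero $\F\in\T_{U}(\R^n)$ yields $0\simeq\F\oplus\F[1]$, a contradiction. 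With these two clarifications your proof is complete.
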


\begin{proof} Suppose $r_1 > r_2$. Extend $\phi$ to be a compactly supported Hamiltonian diffeomorphism $\Phi$ on $\R^{2n} (\simeq T^*\R^n)$. Choose $R_{\dagger}$ sufficiently large so that $B(R_{\dagger})$ contains support of $\Phi$. Then by inclusion relation $\Phi(B(r_1)) \subset E(r_2, R, ..., R) \subset B(R_{\dagger})$, one has the following commutative diagram, for any $T \geq 0$,
\[ \xymatrix{
S_T(\Phi(B(r_1))) \ar[d]_-{\simeq} & S_T(E(r_2, R, ..., R)) \ar[l]_-{j^*_2} & S_T(B(R_{\dagger})) \ar[l]_-{j^*_1} \ar[d]^-{j^*} \\
S_T(B(r_1)) && S_T(B(r_1)) \ar[ll]^-{=} } \]
where $j^*$ is induced by inclusion $j: B(r_1) \hookrightarrow B(R_{\dagger})$. Choose $T \in (\pi r_2^2, \pi r_1^2)$. Then 
\[ S_T(\Phi(B(r_1))) \simeq S_T(B(r_1))  = \k. \]
However, since $(R>) T > \pi r_2^2$, by Exercise \ref{exe-ell}, 
\[ \F(E(r_2, R, ..., R))_{T} = \k[n-2(n-1) -4] = \k[-n-2]. \]
Therefore, $S_T(E(r_2, R, ..., R)) = \k[-2]$, which implies $j^*_2 \circ j^*_1=0$. On the other hand, since $S_T(B(R_{\dagger})) = \k$, the proof will be finished if we show $j^* \neq 0$. In fact, if $j^*=0$, then $Cone(j^*) \simeq \k[1] \oplus \k$ simply by definition of mapping cone. In particular, it has rank $2$. Meanwhile, $j^*$ is also induced by (\ref{mc-ball}) which always has rank no greater than $1$. Thus one gets a contradiction! \end{proof}

\newpage
\section{Appendix}
\subsection{Persistence $\k$-modules vs. sheaves} \label{app-1} \label{sec-per-sh}

In this section, we will demonstrate a canonical correspondence between a persistence $\k$-modules and a constructible sheaves over $\R$. We call $V$ is {\it in $(-,-]$-type} if each bar in $\mathcal B(V)$ is in the form $(-, -]$. Similarly, we call a constructible sheaf $\F$ over $\R$ is {\it in $(-,-]$-type} if, after decomposition theorem, each bar in $\mathcal B(\F)$ is in the form $(-,-]$. It is exactly the same to define objects in $[-,-)$-type. The main result is the following. 
\begin{theorem} \label{p-s} Denote $\mathcal P$ as the category of persistence $\k$-modules in $(-,-]$-type and $\Sh_c(\R)$ as the category of constructible sheaves over $\R$ in $(-,-]$-type. Then there exists an equivalence 
\[ \Phi: \mathcal P \simeq \Sh_c(\R). \]
provided by a functor $\Phi$ (with its inverse $\Psi$). Moreover, for any $V \in \mathcal P$, $\mathcal B(V) = \mathcal B(\Phi(V))$ and for any $\F \in \Sh_c(\R)$, $\B(\Psi(\F)) = \B(\F)$. 
\end{theorem}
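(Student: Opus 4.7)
The plan is to construct $\Phi$ and $\Psi$ on indecomposable (interval) objects, extend to all objects via the decomposition theorems on both sides, and verify the equivalence by matching Hom spaces between indecomposables.

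First, I would define $\Phi$ on objects by declaring $\Phi(\I_{(a,b]}) := \k_{(a,b]}$ and extending additively using the normal form theorem for persistence $\k$-modules (Theorem \ref{normal}) on the source and the analogous decomposition for constructible sheaves over $\R$ (Theorem 1.15 of \cite{KS17}) on the target, so that $\Phi\bigl(\bigoplus_j \I_{(a_j,b_j]}^{m_j}\bigr) = \bigoplus_j \k_{(a_j,b_j]}^{m_j}$. This sends $\mathcal{B}(V)$ to $\mathcal{B}(\Phi(V))$ on the nose, yielding the barcode-preservation statement for free once $\Phi$ is shown to be a functor and an equivalence.

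The technical heart of the argument is the matching of Hom spaces between indecomposables. On the persistence side, a direct diagram chase shows that $\Hom_{\mathcal P}(\I_{(a,b]}, \I_{(c,d]}) \cong \k$ precisely when $c \leq a < d$, generated by the natural map that is a fixed scalar on the overlap $(a, \min(b,d)]$ and zero elsewhere; it vanishes otherwise. On the sheaf side, the adjunction $\Hom(j_!\k, \k_{(c,d]}) = \Hom(\k, j^{-1}\k_{(c,d]})$ for the inclusion $j \colon (a,b] \hookrightarrow \R$ reduces the computation to $\Gamma\bigl((a,b]; \k_{(a,b]\cap(c,d]}\bigr)$, and a direct subspace-topology argument in the spirit of Exercise \ref{intersection} shows this is $\k$ under the identical condition $c \leq a < d$ (since exactly then the intersection equals $(a,\min(b,d)]$, which is closed in $(a,b]$). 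I would fix the unique $\k$-linear isomorphism identifying the two generators and then extend $\Phi$ to general morphisms via the decomposition $\Hom(\bigoplus V_i, \bigoplus W_j) = \bigoplus \Hom(V_i, W_j)$, applying the identification componentwise.

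Functoriality is verified by examining three-term compositions $\I_{(a_1,b_1]} \to \I_{(a_2,b_2]} \to \I_{(a_3,b_3]}$: the composition is nonzero iff both pairwise Hom conditions hold together with the consistency condition on the triple of intervals, and in that case the resulting scalar is the product of the two input scalars; the identical combinatorics governs the sheaf side, so $\Phi$ respects composition. Constructing $\Psi$ symmetrically with $\Psi(\k_{(a,b]}) := \I_{(a,b]}$, the compositions $\Phi \circ \Psi$ and $\Psi \circ \Phi$ are identity on indecomposables and on their Hom spaces by construction, hence naturally isomorphic to the identity functors.

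The main obstacle is to show that $\Phi(f)$ is well-defined independently of the (non-canonical) choice of interval decomposition of source and target: decompositions are unique only up to reordering of summands and change of basis within each isotypic component $\k^{m_j}$ of $\I_{(a_j,b_j]}^{m_j}$. This is resolved by observing that the bijection $\Hom_{\mathcal P}(\I_{(a,b]}, \I_{(c,d]}) \to \Hom_{\Sh_c(\R)}(\k_{(a,b]}, \k_{(c,d]})$ is $\k$-linear and natural in both arguments, so componentwise application of $\Phi$ transforms correctly under $\mathrm{GL}_{m_j}(\k)$-changes of basis and commutes with permutations of the summands; hence $\Phi(f)$ depends only on $f$, not on the choice of decomposition.
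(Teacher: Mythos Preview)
Your approach is correct in spirit but takes a genuinely different route from the paper, and there is one computational slip worth flagging.

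The paper does \emph{not} build $\Phi$ by choosing interval decompositions. Instead it constructs $\Phi$ intrinsically as a composite of three sheaf-theoretic operations: first it reinterprets a persistence module as a sheaf on $\R_A$ (the real line with the Alexandrov topology, where $\{[a,\infty)\}$ form a basis), then pushes forward along the identity map $\R_A \to \R_\gamma$ (where $\R_\gamma$ carries the $\gamma$-topology with opens $(a,\infty)$), and finally pulls back along the identity $\R \to \R_\gamma$. Concretely this gives $\Phi(V)((a,b)) = \varprojlim_n V_{a+1/n}$, with the inverse $\Psi$ obtained by reversing the direction of the push/pull. The advantage of the paper's construction is that $\Phi(V)$ and $\Phi(f)$ are defined directly from the data of $V$ and $f$, so there is no well-definedness issue to resolve; the barcode identification then follows from Lemma~\ref{quiver} rather than being built in by hand. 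Your approach, by contrast, leans on the two structure theorems (Theorem~\ref{normal} and the Kashiwara--Schapira decomposition) from the outset and reduces everything to matching indecomposables; it is more elementary in that it avoids the Alexandrov and $\gamma$-topologies entirely, at the cost of having to argue that the functor is independent of the chosen decompositions.

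The slip: your Hom condition ``$c \leq a < d$'' is incomplete. A nonzero morphism $\I_{(a,b]} \to \I_{(c,d]}$ exists precisely when $c \leq a < d \leq b$; the condition $d \leq b$ is needed because if $d > b$ then for $s \in (a,b]$ and $t \in (b,d]$ the persistence square forces $f_s = 0$. Correspondingly, on the sheaf side Exercise~\ref{intersection} requires $I \cap J$ to be both closed in $I$ \emph{and} open in $J$; you only checked the first half. The corrected condition still matches on both sides, so your overall strategy survives, but the stated condition as written is wrong.
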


We call a system $W$ an {\it anti-persistence $\k$-module} if the transfer map $\iota_{t,s}: W_t \to W_s$ for any $s \leq t$.  By reparametrizing $\R$ by $t \to -t$, Theorem \ref{p-s} immediately gives
\begin{cor} \label{p-s-2}
Denote $\overline{\mathcal P}$ as the category of persistence $\k$-modules in $[-,-)$-type and $\overline{\Sh_c(\R)}$ as the category of constructible sheaves over $\R$ in $[-,-)$-type. Then there exists an equivalence 
\[ \overline{\Phi}: \overline{\mathcal P} \simeq \overline{\Sh_c(\R)}. \]
provided by a functor $\overline \Phi$ (with its inverse $\overline \Psi$). 
Moreover, for any $W \in \overline{\mathcal P}$, 
$\mathcal B(W) = \mathcal B(\overline \Phi(W))$ and for any $\G \in \overline{\Sh_c(\R)}$, $\B(\overline\Psi(\G)) = \B(\G)$. 
\end{cor}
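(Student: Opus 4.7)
The plan is to deduce Corollary~\ref{p-s-2} formally from Theorem~\ref{p-s} by pushing everything through the involution $i: \R \to \R$, $i(t) = -t$. The point of the parallel phrasing in the two statements is precisely that the reflection $t \mapsto -t$ sends an interval of type $(a,b]$ to its negative $[-b,-a)$, so this single involution should carry the two categories of Theorem~\ref{p-s} onto the two categories appearing in the corollary. I will therefore construct an involutive equivalence on each side and conjugate $\Phi, \Psi$ by them.

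On the sheaf side the construction is immediate: pullback $i^{-1}: \Sh(\k_\R) \to \Sh(\k_\R)$ is exact and involutive (since $i \circ i = \I_\R$), and on an interval-type constructible sheaf one computes $i^{-1}(\k_{(a,b]}) = \k_{[-b,-a)}$. Hence $i^{-1}$ restricts to an equivalence $\Sh_c(\R) \xrightarrow{\sim} \overline{\Sh_c(\R)}$ whose effect on barcodes is precisely the bijection $(a,b] \mapsto [-b,-a)$.

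On the persistence side I need a companion equivalence $\sigma: \mathcal P \xrightarrow{\sim} \overline{\mathcal P}$. The naive relabeling $t \mapsto -t$ reverses the direction of the transfer maps, so I compose with $\k$-linear duality: for $V \in \mathcal P$ with transfers $\pi^V_{s,t}$, set $(\sigma V)_t := (V_{-t})^*$ and $\iota^{\sigma V}_{s,t} := (\pi^V_{-t,-s})^*$ for $s \le t$. Finite-dimensionality of each $V_t$ makes $\sigma$ an involutive equivalence, and on an interval module one checks directly that $\sigma(\I_{(a,b]}) \simeq \I_{[-b,-a)}$, so $\sigma$ indeed takes $\mathcal P$ into $\overline{\mathcal P}$ and realizes the same rewriting $(a,b]\mapsto[-b,-a)$ on barcodes. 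The left semi-continuity axiom on $V$ transposes to right semi-continuity on $\sigma V$, which is the correct convention for the $[-,-)$-type side.

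With $i^{-1}$ and $\sigma$ in hand, the corollary follows by setting $\overline{\Phi} := i^{-1} \circ \Phi \circ \sigma^{-1}$ and $\overline{\Psi} := \sigma \circ \Psi \circ i^{-1}$; these are mutually inverse by Theorem~\ref{p-s} together with the involutivity of $i^{-1}$ and $\sigma$. The barcode identity $\B(\overline{\Psi}(\G)) = \B(\G)$ (and its $\overline{\Phi}$ counterpart) reduces to compatibility of the two parallel interval rewritings with the barcode-preservation already built into Theorem~\ref{p-s}. The main potential obstacle I anticipate is bookkeeping around the regularity and semi-continuity hypotheses on persistence modules, since swapping boundary conventions is exactly the kind of operation these hypotheses are sensitive to; however, by design these conditions are symmetric under the simultaneous swap $t \leftrightarrow -t$ and $(\mbox{left} \leftrightarrow \mbox{right})$, so the check is routine rather than conceptual.
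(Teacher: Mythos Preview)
Your proposal is correct and follows essentially the same idea as the paper: both deduce the corollary from Theorem~\ref{p-s} by the reparametrization $t \mapsto -t$. The one cosmetic difference is that the paper sidesteps the reversal of transfer-map directions by introducing \emph{anti}-persistence modules (maps $\iota_{t,s}: W_t \to W_s$ for $s\le t$) immediately before the corollary, whereas you keep forward persistence modules and correct the direction with $\k$-linear duality; either convention works and the content is the same.
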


Note that the example computed in Section \ref{sec-pb-sb} involving $\F_f$ and $\F_g$ precisely lies in the conclusion of Corollary \ref{p-s-2} where $[-,-)$-type is given by the defining property of Tamarkin category. \\

For the rest of this section, we will prove Theorem \ref{p-s} by mainly constructing $\Phi: \mathcal P \to \Sh_c(\R)$ and leave some routine checkings as exercises. The conclusion on identification of barcodes comes from exactly the same argument as in Section \ref{sec-pb-sb} due to Lemma \ref{quiver}. As an agenda, we will construct such $\Phi$ by the following several steps. Each of the notations appearing will be defined explicitly later.
\[ \mbox{per. mod} \xrightarrow{\clubsuit} \mbox{sheaf on $\R_A$} \xrightarrow{\heartsuit} \mbox{sheaf on $\R_\gamma$} \xrightarrow{\spadesuit} \mbox{sheaf on $\R$}.\]

$\clubsuit$ In most literatures, a persistence $\k$-module is defined as a functor $V: (\R, \leq) \to {\rm Vect}$ where $(\R, \leq)$ is $\R$ with order $\leq$ and ${\rm Vect}$ is the category of finite dimensional vector spaces. As studied in \cite{Cur14}, Section 4.2.1 and 4.2.2, we can transfer such a functor to a sheaf over $\R$ but with a special topology defined as follows. 

\begin{dfn} Denote $\R_{A}$ be $\R$ with the following topology. 
\[ Op(\R_A) = \{[a, \infty), (b, \infty) \,|\, a, b \in \R\}. \]
Note that basis of this topology is the collection of $\{[a, \infty)\,| \,a \in \R\}$ because any $(b, \infty) = \bigcup_{n} [b + 1/n, \infty)$. This topology is called {\it Alexandrov topology}. \end{dfn}

For a given $V: (\R, \leq) \to {\rm Vect}$, define a pre-sheaf $\F$ as follows. 
\[ \F([a, \infty)) = V_a \,\,\,\,\mbox{and}\,\,\,\, \F((b, \infty)) = \varprojlim_{n \to \infty} \F([b+ 1/n, \infty)). \]

\begin{exercise} Check that this is a pre-sheaf, i.e., restriction is well-defined. Moreover, check $\F$ is indeed a sheaf (cf. Theorem 4.2.10 in \cite{Cur14}).  \end{exercise}


$\heartsuit$ Next, recall $\gamma$-topology on $\R$ with $\gamma = [0, \infty)$ consists of open subset (only) in the form of $(a, \infty)$. Denote $\R_{\gamma}$ as $\R$ with $\gamma$-topology. Then 
\[ \phi (= \I): \R_{A} \to \R_{\gamma} \]
is a continuous map and consider $\G : = \phi_* \F$. For each open subset $(a, \infty)$ in $\R_{\gamma}$, 
\[ \G((a, \infty)) = (\phi_*\F)((a, \infty)) = \F(\phi^{-1}(a,\infty)) = \F((a, \infty)) \]
and for each stalk at $a$, 
\[ \G_a = \varinjlim_{\ep \to 0} \G((a-\ep, \infty)) = \varinjlim_{\ep \to 0} \F((a-\ep, \infty)). \]

\begin{exercise} \label{stalk-id} Check that $\G_a = V_a$ for any $a \in \R$ (This is very much due to the assumption that $V$ is in $(-,-]$-type.) \end{exercise}

$\spadesuit$ Next, in the spirit of \cite{KS17}, 
\[ \psi(=\I): \R \to \R_{\gamma} \,\,\,\,\mbox{$\R$ means $\R$ with usual topology}\]
is a continuous map. Therefore, we can consider $\H : = \psi^{-1} \G$. For every open subset $(a,b)$ in $\R$, 
\begin{equation} \label{open}
\H((a,b)) = (\psi^{-1} \G)((a,b)) = \G((a, \infty)) = \F((a, \infty)).
\end{equation}
At stalk $a$, 
\begin{equation} \label{stalk}
\H_a = (\psi^{-1} \G)_a = \G_{\psi(a)} = \G_a (= V_a) \,\,\,\,\mbox{by Exercise \ref{stalk-id}}.
\end{equation}

Now we just define 
\begin{equation} \label{dfn-Phi}
\Phi(V) = \H \,\,\,\,\mbox{so\,\,\,\, $\Phi(V)((a,b)) = \varprojlim_{n \to \infty} V_{a+ \frac{1}{n}}$}.
\end{equation}

\begin{remark} (\ref{open}) and (\ref{stalk}) give several important observations. First, note that section of $\H$ over $(a,b)$ is independent of the right endpoint $b$. In other words, $\H$ can propagate all the way to $+\infty$, which implies (by definition of singular support), $SS(\H) \subset \{\tau \leq 0\}$. Meanwhile, $\H$ is certainly constructible since always from spectrum of $V$, it is a locally constant sheaf. Then combining decomposition theorem and well-known fact that $SS(\bigoplus F_i) \subset \bigcup SS(\F_i)$, we know $\H \in \Sh_c(\R)$. \end{remark}

\begin{remark} (\ref{dfn-Phi}) helps us to define 
\begin{equation} \label{dfn-mor}
\Phi(\iota_{a,b}) : = \varprojlim_{n \to \infty} \iota_{a + \frac{1}{n}, b + \frac{1}{n}}.
\end{equation}
When $a,b$ are not in spectrum of $V$, $\Phi(\iota_{a,b}) = res_{a,b}: \Phi(V)((a,\infty)) \to \Phi(V)((b, \infty))$, restriction maps of sheaf $\Phi(V)$. 
\end{remark}

\begin{ex} \label{p-s-1} Let $V = \I_{(1,2]}$, then $\F$ satisfies 
\[ \F([a, \infty)) = \left\{ \begin{array}{cc} \k & 1 < a \leq2 \\ 0 & \mbox{otherwise} \end{array} \right. \,\,\,\,\mbox{and}\,\,\,\, \F((a, \infty)) = \left\{ \begin{array}{cc} \k & 1 \leq a <2 \\ 0 & \mbox{otherwise} \end{array} \right.. \]
By a simple computation (also as proved in Exercise \ref{stalk-id}),
\[ \G_a = \left\{ \begin{array}{cc} \k & 1 < a \leq 2 \\ 0 & \mbox{otherwise} \end{array} \right.. \]
Therefore, $\Phi(\I_{(1,2]}) = \H = \k_{(1,2]}$. 
\end{ex}




Finally, if we have a morphism $f: V \to W$ between two persistence $\k$-modules, then $\Phi$ is defined with the help of the following diagram, 
\[ \xymatrix{
V_a \ar[r]^-{f_a} \ar[d]_-{\iota^V_{a,b}}& W_a \ar[d]^-{\iota^W_{a,b}} \\
V_b \ar[r]_-{f_b} & W_b} \,\,\, \Longrightarrow \,\,\,\xymatrix{
\Phi(V)((a, \infty)) \ar[r]^-{\Phi(f_a)} \ar[d]_-{\Phi(\iota^V_{a,b})}& \Phi(W)((a, \infty)) \ar[d]^-{\Phi(\iota^W_{a,b})} \\
\Phi(V)((b, \infty)) \ar[r]_-{\Phi(f_b)} & \Phi(W)((b, \infty))}  \]
where we can define 
\[ \Phi(f_a) = \varprojlim_{n \to \infty} f_{a + \frac{1}{n}} \]
and by commutativity of the left diagram above, we know 
\begin{align*}
\Phi(\iota^W_{a,b}) \circ \Phi(f_a) & = \varprojlim_{n \to \infty} \left(\iota^W_{a + \frac{1}{n},b+ \frac{1}{n}} \circ f_{a + \frac{1}{n}}\right)\\
& = \varprojlim_{n \to \infty} \left(f_{b + \frac{1}{n}} \circ \iota^V_{a + \frac{1}{n},b + \frac{1}{n}}\right)  = {\Phi(f_b)} \circ \Phi(\iota^V_{a,b}).
\end{align*}
To construct $\Psi$ (proved to be inverse of $\Phi$), we just reserve the order of the construction of $\Phi$, that is, $\spadesuit \xrightarrow{\psi_*} \heartsuit \xrightarrow{\phi^{-1}} \clubsuit$. Explicitly, for any $\F \in Sh_c(\R)$, 
\begin{equation} \label{stalk2}
\Psi(\F)_a = \varinjlim_{\ep \to 0} \F((a- \ep, \infty)) (\simeq \F_a).
\end{equation}

\begin{ex} Let $\F = \k_{(0,1]}$. A simple computation, 
\[ \Psi(\F)_a = \varinjlim_{\ep \to 0} \F((a- \ep, \infty)) = \left\{ \begin{array}{lcr} \k &\mbox{for} & 0 < a \leq 1\\ 0 & \mbox{for} & \mbox{otherwise} \end{array} \right.\]
that is, $\Psi(\F) = \I_{(0,1]}$. 
\end{ex}

\begin{remark} Note that (\ref{stalk}) and (\ref{stalk2}) partially prove that $\Psi \circ \Phi$ and $\Phi \circ \Psi$ are identities since stalks are preserved. More details in checking are left as exercises. \end{remark}

\subsection{Computation of $R\mathcal Hom$} \label{app-hom}
As computational results in the form of $R\mathcal Hom (\F, \G)$ or $R\Hom(\F, \G)$ appear quite often in the main body of this note, for reader's connivence, we provide detailed computations in the case when $\F = \k_I$ and $\G = \k_J$ where $I$ and $J$ are intervals in $\R$. It turns out the answers are very sensitive to their relative positions. We will start from the following basic result. 

\begin{theorem} \label{thm1} Fix $a,b \in \R$. 
\[ R\mathcal Hom(\k_{[a,b)}, \k_{[c, \infty)}) = \left\{\begin{array}{lcl} 0 & \mbox{for} & c \geq b \\ \k_{[c, b]} & \mbox{for} & a \leq c <b \\ \k_{(a,b]} & \mbox{for} & c<a \end{array}\right.. \]
\end{theorem}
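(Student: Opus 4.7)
The plan is to reduce the theorem to the half-line case $R\mathcal Hom(\k_{[d,\infty)},\k_{[c,\infty)})$ via the open--closed decomposition of $[a,\infty)$. Since $[b,\infty)$ is closed in $[a,\infty)$ with open complement $[a,b)$, one has the short exact sequence
\[
0 \to \k_{[a,b)} \to \k_{[a,\infty)} \to \k_{[b,\infty)} \to 0
\]
in $\Sh(\k_{\R})$; applying the contravariant functor $R\mathcal Hom(-,\k_{[c,\infty)})$ produces a distinguished triangle
\[
R\mathcal Hom(\k_{[b,\infty)},\k_{[c,\infty)}) \to R\mathcal Hom(\k_{[a,\infty)},\k_{[c,\infty)}) \to R\mathcal Hom(\k_{[a,b)},\k_{[c,\infty)}) \xrightarrow{+1},
\]
so it suffices to compute the first two terms and identify the connecting map.

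For the half-line inputs I would use the identification $R\mathcal Hom(\k_{[d,\infty)},\G) \simeq R\Gamma_{[d,\infty)}\G$ (valid since $[d,\infty)$ is closed) together with the localisation triangle $R\Gamma_{[d,\infty)}\G \to \G \to Rj_*j^{-1}\G$ associated to the open inclusion $j\colon(-\infty,d)\hookrightarrow \R$. For $\G=\k_{[c,\infty)}$ the restriction $j^{-1}\G$ equals $\k_{[c,d)}$ when $c<d$ and vanishes otherwise; a direct stalk computation at the boundary point $d$ shows $Rj_*\k_{[c,d)}\simeq \k_{[c,d]}$. Taking fibers of the adjunction map stalk by stalk then yields
\[
R\mathcal Hom(\k_{[d,\infty)},\k_{[c,\infty)}) \simeq \begin{cases} \k_{(d,\infty)} & c<d,\\ \k_{[d,\infty)} & c=d,\\ \k_{[c,\infty)} & c>d.\end{cases}
\]

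Plugging these values into the main triangle reduces each case of the theorem to a small cofiber calculation. When $c\ge b$ both half-line terms equal $\k_{[\max(b,c),\infty)}$ and the connecting map, induced by the restriction $\k_{[a,\infty)}\to\k_{[b,\infty)}$, becomes the identity under $R\Gamma_{Z}$-functoriality, so the cone vanishes. When $a\le c<b$ the triangle becomes $\k_{(b,\infty)}\hookrightarrow\k_{[c,\infty)}$, an evident inclusion of subsheaves with cokernel $\k_{[c,b]}$ concentrated in degree zero. When $c<a$ the analogous inclusion $\k_{(b,\infty)}\hookrightarrow\k_{(a,\infty)}$ has cokernel $\k_{(a,b]}$, recovering the third case and consistent with the fact $R\mathcal Hom(\k_{[a,b)},\k_{\R})=\k_{(a,b]}$ that arises in the formal limit $c\to-\infty$.

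The principal technical obstacle is identifying the connecting map in the triangle with the tautological inclusion of constant interval sheaves displayed above; concretely, this amounts to tracing $R\Gamma_{Z}$-functoriality for $[b,\infty)\subset[a,\infty)$ (the natural ``enlarge the allowed support'' transformation on sections) through the stalk identifications. Once this bookkeeping is done, each connecting map lives in a single degree and its cokernel is visibly the constant sheaf on the set-theoretic complement of the inclusion, matching the three cases of the formula.
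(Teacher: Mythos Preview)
Your proposal is correct and follows essentially the same route as the paper: under the identification $R\mathcal Hom(\k_Z,-)\simeq R\Gamma_Z$, your triangle coming from the short exact sequence $0\to\k_{[a,b)}\to\k_{[a,\infty)}\to\k_{[b,\infty)}\to0$ is exactly the paper's triangle $R\Gamma_{[b,\infty)}\to R\Gamma_{[a,\infty)}\to R\Gamma_{[a,b)}$, and your half-line computation via the localisation triangle $R\Gamma_{[d,\infty)}\G\to\G\to Rj_*j^{-1}\G$ is the paper's Proposition~\ref{prop1} (the paper takes a small detour through $R\Gamma_{[x,\infty)}\k_\R$ first, which your direct computation of $Rj_*\k_{[c,d)}\simeq\k_{[c,d]}$ bypasses). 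The subsequent case analysis and the identification of the connecting maps with the evident inclusions of interval sheaves are handled in the same way in both arguments.
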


Before proving this theorem, we need some preparation \footnote{This arises from an intensive discussion with Semyon Alesker.}. Denote $\Sh(\k_{\R})$ as the category of sheaves of $\k$-modules over $\R$.

\begin{lemma} \label{lem1} Let $A$ and $B$ be two intervals of $\R$ and $A$ is closed. Suppose $\F \in \Sh(\k_{\R})$ such that $\sp(\F) \subset A$. Then
\begin{itemize}
\item[(1)] if $A \subset \bar{B}$, then $R\Gamma_B \F \simeq \F$;
\item[(2)] if $A \cap \bar{B} = \emptyset$, then $R\Gamma_B \F \simeq 0$.
\end{itemize}
\end{lemma}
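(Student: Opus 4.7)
The plan is to derive both parts from the canonical closed--open distinguished triangle for the closed interval $\bar B$. Writing $i\colon \bar B \hookrightarrow \R$ and $j\colon \R \setminus \bar B \hookrightarrow \R$ for the complementary inclusions, one has the standard triangle
\[
R\Gamma_{\bar B}\F \to \F \to Rj_* j^{-1}\F \xrightarrow{+1}.
\]
Because $B$ is an interval, the set $\bar B \setminus B$ consists of at most two endpoints, and under the support hypothesis $\sp(\F) \subset A$ a short stalkwise check identifies $R\Gamma_B\F$ with $R\Gamma_{\bar B}\F$; both statements of the lemma then reduce to analyzing this single triangle.

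For part (2), the assumption $A \cap \bar B = \emptyset$ combined with the closedness of $A$ produces an open neighborhood $V \supset \bar B$ with $\F|_V = 0$. I would verify that the canonical map $\F \to Rj_* j^{-1}\F$ is a stalkwise quasi-isomorphism: at $x \notin \bar B$ this is automatic, while at $x \in \bar B$ both stalks vanish, the left because $x \in V$ and the right because any sufficiently small $U \ni x$ sits inside $V$, forcing $R\Gamma(U \setminus \bar B, \F|_U) = 0$. The cone of this quasi-isomorphism, which is $R\Gamma_{\bar B}\F$, is therefore zero.

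For part (1), the inclusion $A \subset \bar B$ forces $\sp(\F) \subset \bar B$, so $j^{-1}\F = 0$ on the open set $\R \setminus \bar B$ and consequently $Rj_* j^{-1}\F = 0$. The distinguished triangle then collapses to give the desired isomorphism $R\Gamma_{\bar B}\F \simeq \F$.

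The main obstacle is the identification of $R\Gamma_B\F$ with $R\Gamma_{\bar B}\F$ when $B$ is not itself closed. I expect this to amount to applying (a local form of) part (2) at each endpoint $p \in \bar B \setminus B$: if $p \notin A$ then the support hypothesis kills the contribution directly, and if $p \in A$ one uses the intervalhood of $B$ together with the open/closed triangle for the pair $(\bar B,\, \bar B \setminus B)$ to push the endpoint analysis through stalk-by-stalk. This endpoint bookkeeping is where the specific geometry of intervals is genuinely used, and is the step I expect to be the most delicate; the rest of the argument is a direct consequence of the closed--open triangle and the support hypothesis on $\F$.
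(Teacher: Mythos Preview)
Your approach via the closed--open triangle for $\bar B$ is genuinely different from the paper's, which avoids the reduction $R\Gamma_B\F\simeq R\Gamma_{\bar B}\F$ altogether. The paper observes that since $A$ is closed and $\sp(\F)\subset A$, one can choose an injective resolution $\F\to i_*I^\bullet$ with every term supported in $A$: take an injective resolution $I^\bullet$ of $i^{-1}\F$ on $A$ and push forward along the closed inclusion $i\colon A\hookrightarrow\R$ (closed pushforward is exact and preserves injectives by the adjunction $\Hom(-,i_*I)\simeq\Hom(i^{-1}-,I)$). Then $R\Gamma_B\F$ is computed by applying the underived $\Gamma_B$ term by term, and on a sheaf supported inside $A$ the functor $\Gamma_B$ is the identity when $A\subset B$ and zero when $A\cap B=\emptyset$. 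No endpoint analysis enters.

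The step you flag as the ``main obstacle'' is more than bookkeeping: the identification $R\Gamma_B\F\simeq R\Gamma_{\bar B}\F$ genuinely fails when an endpoint $p\in\bar B\setminus B$ lies in $A$. Take $A=\{b\}$, $B=[a,b)$, $\F=\k_{\{b\}}$: then $A\subset\bar B$, yet the triangle $R\Gamma_{[b,\infty)}\F\to R\Gamma_{[a,\infty)}\F\to R\Gamma_{[a,b)}\F\xrightarrow{+1}$ gives $R\Gamma_{[a,b)}\k_{\{b\}}=0\not\simeq\k_{\{b\}}$. So part (1) as literally stated with $\bar B$ is false, and your hand-waved case ``$p\in A$'' cannot be salvaged. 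In fact the paper's own proof writes ``if $A\subset B$'' rather than ``if $A\subset\bar B$'', and every subsequent application of the lemma in the paper satisfies $A\subset B$; the bar in the statement is a slip. Under the hypothesis $A\subset B$ every endpoint $p\in\bar B\setminus B$ automatically lies outside $A$, your reduction does go through, and your argument for part (2) is already correct as written.
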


\begin{proof} Consider inclusion $i: A \hookrightarrow \R$. We know $i_* i^{-1} \F = \F$. Take an injective resolution of $i^{-1} \F$, that is 
\[ 0 \to i^{-1} \F \to I^1 \to I^2 \to ....\]
Applying $i_*$, which is exact because $A$ is closed, we get an exact sequence, 
\[ 0 \to \F \to i_*I^1 \to i_*I^2 \to ... \]
where, for any $n \geq 1$, $i_* I^n$ is still injective because functor $\F \to \Hom(\F, i_*I^n) = \Hom(i^{-1} \F, I^n)$ is exact (since $I$ is injective). Therefore, $(i_*I^{\bullet})$ is an injective resolution of $\F$. Moreover, for any $n \geq 1$, $\sp(i_*I^n) \subset A$. Then 
\[ R\Gamma_B \F = 0 \to \Gamma_B(i_*I^1) \to \Gamma_B(i_*I^2) \to .... \]
By definition of $\Gamma_B$ (see Definition 2.3.8 in \cite{KS90}), 
\begin{itemize}
\item[(1)] if $A \subset B$, then $\Gamma_B(i_*I^n) = i_*I^n$;
\item[(2)] if $A \cap B = \emptyset$, then $\Gamma_B(i_*I^n) = 0$;
\end{itemize}
for any $n \geq 1$. So up to quasi-isomorphisms, we get the desired conclusion. \end{proof}
\begin{remark} \label{rem1} In the proof of Lemma \ref{lem1}, we did not use any specific property of $\R$, therefore, the same conclusion works for any (smooth) manifold $X$. \end{remark}

\begin{ex} \label{ex1} Fix two numbers $x, y \in \R$. 
\begin{itemize}
\item[(1)] If $x \leq y$, then $R\Gamma_{[x, \infty)} \k_{[y, \infty)} = \k_{[y, \infty)}$;
\item[(2)] if $x >y$, then $R\Gamma_{[x, \infty)} \k_{(-\infty, y)} = 0$; 
\item[(3)] if $z > y > x$, then $R \Gamma_{[x,y)} \k_{[z, \infty)} = 0$.
\end{itemize}
\end{ex}

\begin{prop} \label{prop1} $R\Gamma_{[x, \infty)}(\k_{[y, \infty)}) \simeq \k_{(x, \infty)}$ if $x>y$.\end{prop}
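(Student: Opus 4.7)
The natural approach is to invoke the standard distinguished triangle associated to the closed-open decomposition $\R = [x, \infty) \sqcup (-\infty, x)$:
$$R\Gamma_{[x, \infty)} \F \to \F \to Rj_* j^{-1} \F \xrightarrow{+1},$$
where $j:(-\infty, x) \hookrightarrow \R$ denotes the open inclusion. Specializing to $\F = \k_{[y, \infty)}$ and using the hypothesis $y < x$, the restriction $j^{-1} \k_{[y, \infty)}$ is $\k_{[y, x)}$ as a sheaf on $(-\infty, x)$.

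First I would compute $Rj_* \k_{[y, x)}$ directly on stalks. Away from $t = x$ the stalks agree with those of $\k_{[y, x)}$, while at $t = x$ we find
$$(R^i j_* \k_{[y, x)})_x = \varinjlim_{\varepsilon \to 0} H^i\bigl((x - \varepsilon, x); \k_{[y, x)}\bigr),$$
which for $\varepsilon$ small reduces to $H^i((x - \varepsilon, x); \k)$; this is $\k$ when $i = 0$ and vanishes for $i \geq 1$ by contractibility of a small interval. A straightforward stalk comparison then yields $Rj_* \k_{[y, x)} \simeq \k_{[y, x]}$, so our triangle becomes
$$R\Gamma_{[x, \infty)} \k_{[y, \infty)} \to \k_{[y, \infty)} \to \k_{[y, x]} \xrightarrow{+1}.$$

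Next I would realize the same cofiber via an honest short exact sequence of sheaves on $\R$:
$$0 \to \k_{(x, \infty)} \to \k_{[y, \infty)} \to \k_{[y, x]} \to 0,$$
with the first map the natural inclusion (legitimate since $(x, \infty) \subset [y, \infty)$) and the second the obvious ``restrict to $[y, x]$'' quotient. Exactness is immediate on stalks: in each of the four regions $t < y$, $y \leq t < x$, $t = x$, and $t > x$ the sequence is either trivial or an identity $\k \xrightarrow{=} \k$ on two consecutive terms. This SES produces a second distinguished triangle with the same endpoints as the previous one; provided the two morphisms $\k_{[y, \infty)} \to \k_{[y, x]}$ coincide (checkable stalkwise --- both are the identity on $[y, x]$ and zero elsewhere), uniqueness of fibers in a triangulated category gives $R\Gamma_{[x, \infty)} \k_{[y, \infty)} \simeq \k_{(x, \infty)}$.

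The main subtlety is the identification of the two morphisms $\k_{[y, \infty)} \to \k_{[y, x]}$ --- one arising as the SES quotient, the other as the adjunction unit $\F \to Rj_* j^{-1} \F$ --- since equality of the third vertex of a triangle depends on the explicit morphism. A safe fallback is to bypass this identification entirely by computing $\Gamma_{[x, \infty)} \k_{[y, \infty)}$ directly on opens: a section $c \cdot 1 \in \k_{[y, \infty)}(V)$ has $\mathrm{supp}\subset [x, \infty)$ iff $V \cap [y, x) = \emptyset$ and $V \cap [x, \infty) \neq \emptyset$, equivalently $V \subset (x, \infty)$. This recovers $\Gamma_{[x, \infty)} \k_{[y, \infty)} = \k_{(x, \infty)}$, and the vanishing of $R^i \Gamma_{[x, \infty)} \k_{[y, \infty)}$ for $i \geq 1$ then follows by reading off the long exact sequence from the triangle above together with the stalk computation of $Rj_*$.
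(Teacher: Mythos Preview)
Your argument is correct. Both you and the paper use the same fundamental triangle
\[ R\Gamma_{[x,\infty)}\F \to \F \to Rj_*j^{-1}\F \xrightarrow{+1} \qquad (j:(-\infty,x)\hookrightarrow\R), \]
but you apply it in slightly different places. The paper first reduces $\k_{[y,\infty)}$ to $\k_{\R}$ via the short exact sequence $0\to\k_{(-\infty,y)}\to\k_{\R}\to\k_{[y,\infty)}\to 0$, using the already-established Lemma~\ref{lem1} (disjoint supports) to kill $R\Gamma_{[x,\infty)}\k_{(-\infty,y)}$; only then does it apply the closed--open triangle to $\k_{\R}$ and identify $Rj_*\k((-\infty,x))\simeq\k_{(-\infty,x]}$. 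You instead apply the closed--open triangle directly to $\k_{[y,\infty)}$ and compute $Rj_*\k_{[y,x)}\simeq\k_{[y,x]}$ by the same stalk check. Your route is one step shorter and avoids invoking Lemma~\ref{lem1}; the paper's route has the minor advantage that the $Rj_*$ computation is for the constant sheaf on the whole open set, which is marginally cleaner.

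Your caution about identifying the two morphisms $\k_{[y,\infty)}\to\k_{[y,x]}$ is well placed, though in this case it is harmless: $\Hom(\k_{[y,\infty)},\k_{[y,x]})=\k$ by Exercise~\ref{intersection}, and both morphisms are nonzero on stalks over $[y,x]$, so they differ at most by a unit and have isomorphic fibers. Your fallback direct computation of $\Gamma_{[x,\infty)}\k_{[y,\infty)}=\k_{(x,\infty)}$ is also fine, though be slightly careful with the phrasing: the point is that for an open interval $V$ meeting $[y,x)$, the only section of $\k_{[y,\infty)}$ on $V$ with support in $[x,\infty)$ is zero, and for such $V$ one also has $\k_{(x,\infty)}(V)=0$ (since $j_!$ of a constant sheaf has no nonzero sections over opens meeting the boundary).
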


\begin{proof} Consider the short exact sequence
\[ 0 \to \k_{(-\infty, y)} \to \k_{\R} \to \k_{[y, \infty)} \to 0. \]
Apply $R\Gamma_{[x, \infty)}$ and we get a distinguished triangle 
\[ R\Gamma_{[x, \infty)} \k_{(-\infty, y)} \to R\Gamma_{[x, \infty)} \k_{\R} \to R\Gamma_{[x, \infty)} \k_{[y, \infty)} \xrightarrow{+1}. \]
By Example \ref{ex1} (2), $R\Gamma_{[x, \infty)} \k_{(-\infty, y)}=0$, 
\begin{equation} \label{equ}
R\Gamma_{[x, \infty)} \k_{\R} \simeq R\Gamma_{[x, \infty)} \k_{[y, \infty)}.
\end{equation}
Now by (iv) in Proposition 2.4.6 (or (2.6.32)) in \cite{KS90}, we have another useful distinguished triangle 
\begin{equation} \label{res1}
R\Gamma_{[x, \infty)} \k_{\R} \to R \Gamma_{\R} \k_{\R} \to R \Gamma_{(-\infty, x)} \k_{\R} \xrightarrow{+1}.
\end{equation}
Note $R \Gamma_{\R} \k_{\R} = \k_{\R}$. Moreover, $\Gamma_{(-\infty, x)}\k_{\R} = (j_* \circ j^{-1})(\k_{\R})$ where $j: (-\infty,x) \hookrightarrow \R$ by (iii) in Proposition 2.3.9 in \cite{KS90}. Therefore, 
\[ R\Gamma_{(-\infty, x)} \k_{\R} = (Rj_* \circ j^{-1})(\k_{\R}) = Rj_*(\k(-\infty, x)) = \k_{(-\infty, x]} \]
where $\k(-\infty, x)$ is the constant sheaf over $(-\infty,x)$ and the final step is checked by stalks. Therefore (\ref{res1}) is reduced to 
\[ R\Gamma_{[x, \infty)} \k_{\R} \to \k_{\R} \xrightarrow{res} \k_{(-\infty, x]} \xrightarrow{+1}.\]
Therefore, up to quasi-isomorphisms, we have $R\Gamma_{[x, \infty)} \k_{\R} = \k_{(x, \infty)}$. Thus we get the conclusion by (\ref{equ}). \end{proof}

Now we are ready to compute $R\mathcal Hom(\k_{[a,b)}, \k_{[c, \infty)}) = R\Gamma_{[a,b)}\k_{[c, \infty)}$ (by (2.3.16) in Proposition 2.3.10 in \cite{KS90})). 

\begin{proof} (Proof of Theorem \ref{thm1}) We will carry out the computation in cases. Here we always assume $b>a$. 
\begin{itemize}
\item[(i)] When $c>b$, by (3) in Example \ref{ex1}, we know $R\Gamma_{[a,b)}\k_{[c, \infty)}=0$.
\item[(ii)] When $c=b$, by (2.6.32) in \cite{KS90}, consider the following distinguished triangle
\[ R\Gamma_{[b, \infty)} \k_{[b, \infty)} \to R\Gamma_{[a, \infty)} \k_{[b, \infty)} \to R\Gamma_{[a, b)} \k_{[b, \infty)} \xrightarrow{+1}.\]
By Example \ref{ex1} (1), the first and the second term are both equal to $\k_{[b, \infty)}$. So we get 
\[ \k_{[b, \infty)} \rightarrow \k_{[b, \infty)} \xrightarrow{res} R\Gamma_{[a, b)} \k_{[b, \infty)} \xrightarrow{+1} \]
which implies $R\Gamma_{[a, b)} \k_{[b, \infty)} = 0$.
\item[(iii)] When $a<c<b$, again consider the following distinguished triangle
\[ R\Gamma_{[b, \infty)} \k_{[c, \infty)} \to R\Gamma_{[a, \infty)} \k_{[c, \infty)} \to R\Gamma_{[a, b)} \k_{[c, \infty)} \xrightarrow{+1}.\]
By Proposition \ref{prop1}, $R\Gamma_{[b, \infty)} \k_{[c, \infty)} = \k_{(b, \infty)}$. By Example \ref{ex1} (1), $R\Gamma_{[a, \infty)} \k_{[c, \infty)} = \k_{[c, \infty)}$. So we get 
\[ \k_{(b, \infty)} \rightarrow \k_{[c, \infty)} \xrightarrow{res} R\Gamma_{[a, b)} \k_{[c, \infty)} \xrightarrow{+1} \]
which implies $R\Gamma_{[a, b)} \k_{[c, \infty)} = \k_{[c,b]}$. 
\item[(iv)] When $c=a$, the same argument as in (iii) implies $R\Gamma_{[a, b)} \k_{[a, \infty)} = \k_{[a,b]}$.
\item[(v)] When $c<a$, again consider the following distinguished triangle
\[ R\Gamma_{[b, \infty)} \k_{[c, \infty)} \to R\Gamma_{[a, \infty)} \k_{[c, \infty)} \to R\Gamma_{[a, b)} \k_{[c, \infty)} \xrightarrow{+1}.\]
By Proposition \ref{prop1}, $R\Gamma_{[b, \infty)} \k_{[c, \infty)} = \k_{(b, \infty)}$ and $R\Gamma_{[a, \infty)} \k_{[c, \infty)} = \k_{(a, \infty)}$. So by exact sequence (v) in Proposition 2.16 in \cite{KS90}, we get 
\[ \k_{(b, \infty)} \rightarrow \k_{(a, \infty)} \xrightarrow{res} R\Gamma_{[a, b)} \k_{[c, \infty)} \xrightarrow{+1} \]
which implies $R\Gamma_{[a, b)} \k_{[c, \infty)} = \k_{(a,b]}$.  
\end{itemize}
\end{proof}

\begin{cor} \label{cor1} Let $a<b$ and $c<d$ in $\R$. Then 
\[ R\mathcal Hom(\k_{[a,b)}, \k_{[c, d)}) = \left\{\begin{array}{lcl} R\mathcal Hom(\k_{[a,b)}, \k_{[c, \infty)}) & \mbox{for} & d\geq b \\ \k_{[c, d)} & \mbox{for} & a \leq c <d < b \\ \k_{(a,d)} & \mbox{for} & c<a<d<b \\ \k_{\{a\}}[-1] & \mbox{for} & d=a \\ 0 & \mbox{for} & d<a \end{array}\right.. \]
\end{cor}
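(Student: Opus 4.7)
The plan is to reduce each case to Theorem \ref{thm1} via the short exact sequence
\[ 0 \to \k_{[d,\infty)} \to \k_{[c,\infty)} \to \k_{[c,d)} \to 0, \]
which is valid because $[d,\infty)$ is closed in $[c,\infty)$ with open complement $[c,d)$. Applying the left exact (bi)functor $R\mathcal{Hom}(\k_{[a,b)},-)$ produces a distinguished triangle
\[ R\mathcal{Hom}(\k_{[a,b)},\k_{[d,\infty)}) \to R\mathcal{Hom}(\k_{[a,b)},\k_{[c,\infty)}) \to R\mathcal{Hom}(\k_{[a,b)},\k_{[c,d)}) \xrightarrow{+1}, \]
so the desired sheaf is realized as the cone of the first arrow, and Theorem \ref{thm1} identifies the first two terms.

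The case $d\geq b$ is immediate: Theorem \ref{thm1}(i) (replacing $c$ by $d$) gives that the first term vanishes, so the arrow in the middle is a quasi-isomorphism, proving the claim. The case $d<a$ I would handle separately and even more directly, by noting $\sp(\k_{[c,d)})\cap\overline{[a,b)}=\emptyset$ and invoking Lemma \ref{lem1}(2) to conclude $R\Gamma_{[a,b)}\k_{[c,d)}\simeq 0$; alternatively, Theorem \ref{thm1}(v) yields $\k_{(a,b]}$ on both sides, and the connecting map is visibly the identity, so the cone vanishes.

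For the two ``interior'' cases $a\leq c<d<b$ and $c<a<d<b$, I would identify both ends using the appropriate branches of Theorem \ref{thm1}, obtaining the triangles
\[ \k_{[d,b]} \to \k_{[c,b]} \to C_1 \xrightarrow{+1} \qquad \text{and} \qquad \k_{[d,b]}\to\k_{(a,b]}\to C_2\xrightarrow{+1} \]
respectively. The arrows are induced by the structural morphism $\k_{[d,\infty)}\to\k_{[c,\infty)}$, which after the $R\mathcal{Hom}$ computation corresponds to the natural inclusions of constant sheaves on nested locally closed subsets. Invoking the standard short exact sequences $0\to\k_{[d,b]}\to\k_{[c,b]}\to\k_{[c,d)}\to 0$ (closed in closed) and $0\to\k_{[d,b]}\to\k_{(a,b]}\to\k_{(a,d)}\to 0$ (closed in locally closed) identifies $C_1=\k_{[c,d)}$ and $C_2=\k_{(a,d)}$, matching the claim.

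The remaining case $d=a$ is the main obstacle, since here Theorem \ref{thm1} returns $\k_{[a,b]}$ and $\k_{(a,b]}$, which differ only at a single point, so extra care is needed to pin down the connecting morphism and its cone. I would handle this by switching to the rotated triangle
\[ R\mathcal{Hom}(\k_{[a,b)},\k_{[c,a)}) \to \k_{(a,b]}\to\k_{[a,b]}\xrightarrow{+1}, \]
coming from the short exact sequence $0\to\k_{[c,a)}\to\k_{[c,\infty)}\to\k_{[a,\infty)}\to 0$ and the corresponding identifications (v) and (iv) of Theorem \ref{thm1}. The connecting arrow $\k_{(a,b]}\to\k_{[a,b]}$ is then the canonical injection of sheaves (checked on stalks: zero at $x=a$, identity on $(a,b]$), whose cokernel is $\k_{\{a\}}$, giving the standard triangle $\k_{(a,b]}\to\k_{[a,b]}\to\k_{\{a\}}\xrightarrow{+1}$. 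Rotating once backwards yields $\k_{\{a\}}[-1]\to\k_{(a,b]}\to\k_{[a,b]}\xrightarrow{+1}$, whence $R\mathcal{Hom}(\k_{[a,b)},\k_{[c,a)})\simeq\k_{\{a\}}[-1]$, completing the corollary.
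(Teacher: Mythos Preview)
Your overall strategy matches the paper's exactly: reduce to Theorem \ref{thm1} via the distinguished triangle coming from the short exact sequence relating $\k_{[c,d)}$, $\k_{[c,\infty)}$, and $\k_{[d,\infty)}$. However, you have that sequence written the wrong way around, and this error propagates.

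For a locally closed $S$ with $U\subset S$ open and $Z=S\setminus U$ closed, the standard sequence is
\[
0 \to \k_U \to \k_S \to \k_Z \to 0,
\]
not $0\to\k_Z\to\k_S\to\k_U\to 0$. With $S=[c,\infty)$, $U=[c,d)$, $Z=[d,\infty)$, the correct sequence is $0\to\k_{[c,d)}\to\k_{[c,\infty)}\to\k_{[d,\infty)}\to 0$, so the triangle you should be using is
\[
R\mathcal Hom(\k_{[a,b)},\k_{[c,d)}) \to R\mathcal Hom(\k_{[a,b)},\k_{[c,\infty)}) \to R\mathcal Hom(\k_{[a,b)},\k_{[d,\infty)}) \xrightarrow{+1},
\]
exactly as in the paper. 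Your reversed triangle is not a rotation of this one and is not distinguished. Concretely, in your interior cases the ``natural inclusion'' $\k_{[d,b]}\to\k_{[c,b]}$ that you invoke is the zero map (cf.\ Exercise \ref{intersection}: $[d,b]$ is closed, not open, in $[c,b]$), so the auxiliary short exact sequences $0\to\k_{[d,b]}\to\k_{[c,b]}\to\k_{[c,d)}\to 0$ and $0\to\k_{[d,b]}\to\k_{(a,b]}\to\k_{(a,d)}\to 0$ do not exist. The right computation identifies $R\mathcal Hom(\k_{[a,b)},\k_{[c,d)})$ as the \emph{fiber} of the restriction $\k_{[c,b]}\to\k_{[d,b]}$ (resp.\ $\k_{(a,b]}\to\k_{[d,b]}$), which immediately gives $\k_{[c,d)}$ (resp.\ $\k_{(a,d)}$).

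Your treatment of $d\geq b$ and $d<a$ survives because the relevant term vanishes, and for $d=a$ you actually switch to the correct sequence $0\to\k_{[c,a)}\to\k_{[c,\infty)}\to\k_{[a,\infty)}\to 0$ and argue exactly as the paper does; so once you fix the orientation of the basic sequence your proof coincides with the paper's.
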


\begin{proof} We will keep using the following distinguished triangle 
\[ R\Gamma_{[a,b)}\k_{[c,d)} \to R\Gamma_{[a,b)} \k_{[c, \infty)} \to R\Gamma_{[a,b)} \k_{[d, \infty)} \xrightarrow{+1}. \]
\begin{itemize}
\item{} When $d\geq b$, by Theorem \ref{thm1}, we know the third term is $0$, which implies $R\Gamma_{[a,b)}\k_{[c,d)} \simeq R\Gamma_{[a,b)} \k_{[c, \infty)}$. 
\item{} When $a\leq c<d<b$, by Theorem \ref{thm1}, we have
\[ R\Gamma_{[a,b)}\k_{[c,d)} \to \k_{[c,b]} \to \k_{[d,b]} \xrightarrow{+1} \]
which implies $R\Gamma_{[a,b)} \k_{[c,d)} = \k_{[c,d)}$. 
\item{} When $c<a<d<b$, by Theorem \ref{thm1}, we have 
\[ R\Gamma_{[a,b)}\k_{[c,d)} \to \k_{(a,b]} \to \k_{[d,b]} \xrightarrow{+1} \]
which implies $R\Gamma_{[a,b)} \k_{[c,d)} = \k_{(a,d)}$. 
\item{} When $d=a$, by Theorem \ref{thm1}, we have 
\[ R\Gamma_{[a,b)}\k_{[c,d)} \to \k_{(a,b]} \xrightarrow{\ast} \k_{[a,b]} \xrightarrow{+1}. \]
Since $[a,b] \supset (a,b]$, the map $\ast$ above will change from restriction to inclusion and therefore the non-trivial term will appear in the degree-1 term, that is 
\[ 0 \to 0 \to \k_{(a,b]} \to \k_{[a,b]} \to \k_{\{a\}} \to 0 \to 0 \to ... \]
which implies $R^1\Gamma_{[a,b)}\k_{[c,d)}  = \k_{\{a\}}$ and on the rest degrees they are $0$. 
\item{} When $d<a$, by Theorem \ref{thm1}, we have 
\[ R\Gamma_{[a,b)}\k_{[c,d)} \to \k_{(a,b]} \xrightarrow{\ast} \k_{(a,b]} \xrightarrow{+1} \]
which implies $R\Gamma_{[a,b)} \k_{[c,d)} = 0$.
\end{itemize}
\end{proof}

\begin{remark} In Corollary \ref{cor1}, we have seen there exist some examples such that $R\mathcal Hom$ can have non-trivial degree-1 terms. Here we want to address the following strong claim that for any two sheaves $\F, \G \in \Sh(\R)$, for any $j \geq 2$, $R^j\mathcal Hom(\F, \G) = 0$. This comes from the fact that homological dimension of category $\Sh(\R)$ is $2$ (see Theorem 5.11 in \cite{Bj93} for a general result). Therefore, replace $\G$ by an injective resolution $0 \to I^1 \to I^2 \to 0 \to ...$ with length at most $2$ and apply functor $R\mathcal Hom(\F, \cdot)$. Thus we get our claim. \end{remark}

In a different direction, instead of (complex of) sheaves $R\mathcal Hom(\k_{[a,b)}, \k_{[c,d)})$, people are very often interested in the $\k$-module $R\Hom(\k_{[a,b)}, \k_{[c,d)})$. Since 
\begin{equation} \label{equ2}
R\Hom(\k_{[a,b)}, \k_{[c,d)})= R\Gamma(\R, -) \circ R\mathcal Hom(\k_{[a,b)}, \k_{[c,d)}).
\end{equation}
Together with Corollary \ref{cor1}, this implies Theorem \ref{hom-compute-2}. 

\subsection{Dynamics of GKS's sheaf quantization} \label{app-2}

In A.3 in \cite{GKS12}, an algebraic trick was introduced, lifting from $T^*M$ to $T^*(M \times \R)$ (adjusting some $0$-section part) in order to fit the homogenous machinery developed in their paper. In this section, we will give a pure dynamics explanation of this trick. This is informed by Prof. Uribe and also elaborated in his paper \cite{PU95}. \\

{\bf Dynamical motivation of homogenization}. The motivation comes from the attempt to include Plank constant $\hbar$ inside classical mechanics (where this philosophy very often appears in {\it semi-classical analysis}). For instance, for Lagrangian mechanics, instead of Lagrangian $L_a(m, \dot{m}): \R \times TM \to \R$ (where $a \in \R$ is parameter for time), we consider rescaled Lagrangian $\frac{1}{\hbar} L_a$. By the well-known transformation from Lagrangian mechanics to Hamiltonian mechanics, that is, $H_a(m,p) = p \dot m - L_a(m,\dot m)|_{p = \frac{\partial L}{\partial \dot m}}$, we know by introducing variable $\xi = \frac{p}{\hbar}$, 
\begin{align*}
(H_{\hbar})_a(m, \xi) &= \xi \dot m -  \left(\frac{1}{\hbar} L_a\right)(m, \dot m) |_{\xi = \frac{\partial L/\hbar}{\partial \dot m}}\\
& = \frac{p}{\hbar}  \dot m - \frac{1}{\hbar} L_a(m, \dot m) |_ {p = \frac{\partial L}{\partial \dot m}}\\
& = \frac{1}{\hbar} H_a\left(m, p\right) =  \frac{1}{\hbar} H_a\left(m, \xi \hbar \right).
\end{align*}
In other words, denote $\tau = \frac{1}{\hbar} \in \R_{>0}$, we get a function $(H_{1/\tau})_a(m, \xi) = \tau H_a\left(m, \frac{\xi}{\tau} \right)$. In fact, we will consider a function 
\begin{equation} \label{hom-h} \tilde{H}: \R \times T^*M \times T^*_{>0}\R \to \R \,\,\,\,\mbox{by} \,\,\,\, \tilde{H}(a, m, \xi, t, \tau) = \tau H_a \left(m, \frac{\xi}{\tau}\right)
\end{equation}
where $t$ is the dual coordinate of $\tau$. The symplectic manifold $(T_{\{\tau >0\}}^*(M \times \R), dm \wedge d\xi + dt \wedge d\tau)$ can be obtained by contactization-then-symplectization of symplectic manifold $(T^*M, dm \wedge dp)$ where $p = \xi/\tau$. \\ 

What's absolutely crucial is viewing $t$ and $\tau$ as dynamical variables (so we can take derivatives with respect to). Then by standard computation, using standard symplectic structure on $T_{\{\tau >0\}}^*(M \times \R)$ as above, one can get Hamiltonian equations - a system of first-order differential equations. The detailed result is in (14)-(17) in \cite{PU95}. Here, we just want to emphasize that for $t$-component, 
\begin{align*}
 \dot t & = H_a\left(m, \frac{\xi}{\tau}\right) - \frac{\xi}{\tau} \frac{\partial H}{\partial \xi} \left(m, \frac{\xi}{\tau} \right)\\
 &= H_a (m, p) - p \dot m (= - L_a(m,\dot m)).
 \end{align*}
 This is a remarkable fact that $t$-component changes in terms of initial (negative) Lagrangian $L_a$!\\
 
Now this $\tilde{H}$ generates an isotopy of Hamiltonian diffeomorphisms $\Phi_a: \R \times T_{\{\tau >0\}}^*(M \times \R) \to T_{\{\tau >0\}}^*(M \times \R)$. Therefore, we can consider its Lagrangian suspension as a Lagrangian subspace of $T^*\R \times  T_{\{\tau >0\}}^*(M \times \R) \times  T_{\{\tau >0\}}^*(M \times \R)$, 
\begin{align*}
\tilde{\Lambda} & : = \left\{ ((a, - \tilde{H}), x, - \Phi_a(x)) \,| \, a \in \R, x \in T_{\{\tau >0\}}^*(M \times \R) \right\}\\
& = {\small \left\{ \left(a, -\tau H_a \left(m, \frac{\xi}{\tau} \right), (m, \xi, t, \tau), \left(- \tau \phi_a\left(m, \frac{\xi}{\tau} \right)\right), t + (\ast), - \tau) \right) \,\bigg|\,\begin{array}{c} (m, \xi/\tau) \in T^*M \\ \tau  >0 \end{array} \right\}}
\end{align*}
where $\phi_a$ is the isotopy of Hamiltonian diffeomorphisms generated by $H_a$ on $T^*M$ and $(\ast)$ is defined as for any fixed terminal time $A \in \R$, 
\begin{equation} \label{gen-fcn}
(\ast) = \int_0^{A} \left(H_a (m, p) - p \dot m \right) \circ \phi_a da : = F_A(m,p).
\end{equation}
where this $F_A(m,p)$ is, in the language of symplectic geometry, simply the symplectic action functional. \\

{\bf Geometry of GKS's sheaf quantization (revised)}. Recall the main theorem in \cite{GKS12} (Theorem \ref{gks}) implies for any Hamiltonian isotopy $\phi_a$, generated by $H$ on $T^*M$, there exists a unique sheaf $\K \in \D(\k_{\R \times M \times \R \times M \times \R})$ (where we extend $I$ to $\R$) such that $SS(\K) \cap (T^*\R \times  T_{\{\tau >0\}}^*(M \times \R) \times  T_{\{\tau >0\}}^*(M \times \R)) \subset \tilde{\Lambda} \cup \mbox{\{$0$-section\}}$, i.e. the geometry of $\K$ is characterized by $\tilde{\Lambda}$. In order to get a more friendly expression, consider subtract map $\bar{s}: \R \times \R \to \R$ by $\bar{s}(t_1, t_2) = t_1 - t_2$. Then on co-vector part it induces just anti-diagonal embedding, that is, $\bar{s}^*: \R^* \to \R^* \times \R^*$ by $\bar{s}^*(\tau) = (\tau, - \tau)$. Therefore, $R\bar{s}_* \K \in  \D(\k_{\R \times M \times M \times \R})$, and
\[ SS(R\bar{s}_* \K) \cap (T^*\R \times  T^*M \times  T^*M \times T^*_{>0}\R) \subset \Lambda \cup \mbox{\{$0$-section\}} \]
where 
\begin{equation} \label{app-lag-sub}
{\small \Lambda: = \left\{ \left(a, -\tau H_a \left(m, \frac{\xi}{\tau} \right), m, \xi, - \tau \phi_a\left(m, \frac{\xi}{\tau} \right), - F_a(m, p), \tau \right) \,\bigg|\, \begin{array}{c} (m, \xi/\tau) \in T^*M \\ \tau  >0 \end{array} \right\}.}
\end{equation}
Then reduction of $\Lambda$ along $\{\tau =1\}$ (note that along $\{\tau = 1\}$, $\xi/\tau = p = \xi/1$), we get the following submanifold,
\[ \Lambda_0 = \left\{ \left(a, - H_a \left(m, p \right), m, p, - \phi_a\left(m, p \right) \right) \,\bigg|\, (m, \xi) \in T^*M \right\}.\]
This is just the Lagrangian suspension of $\phi_a$ on $T^*M$, so $R\bar{s}_* \K$ represents a ``semi-classical counterpart'' of usual Lagrangian suspension. \\

In fact, in a more concrete case when $M = \R^n$, we have seen this geometric constraint (\ref{app-lag-sub}) from a direct construction $\F_S := \k_{\{S + t \geq 0\}} \in \D(\k_{\R \times M \times M \times \R})$ (modulo convolution) using generating function method, see (\ref{ss-fs}) where $F_a$ here is the same as $S(a, \cdot)$ there. Such $\F_S$ can also be called a sheaf quantization of Hamiltonian isotopy $\phi_a$ because $\K$ and $\F_S$ are comparable to some extent (L. Polterovich's question). To be precise, we need to modify them into the same space. Recall singular support of $\F_S$ always has its $\tau$-component in $\{\tau \geq 0\}$. Let us denote $\K_+$ as the restriction of $\K$ on the part where $\{\tau>0\}$. Then we can consider the following two options.
\begin{itemize}
\item[(i)] $R\bar{s}_*\K_+$, $\F_S \in \D(\k_{\R \times M \times M \times \R})$;
\item[(ii)] $\K_+$, $\bar{s}^{-1} \F_S \in \D(\k_{\R \times M \times \R \times M \times \R})$.
\end{itemize}
Note that by pullback formula of $SS$ (see Proposition \ref{pullback}), one can check $\bar{s}^{-1} \F_S$ satisfies  $SS(\bar{s}^{-1} \F_S) \subset \tilde{\Lambda} \cup \mbox{\{$0$-section\}}$ and $(\bar{s}^{-1} \F_S)|_{a=0} = \k_{\Delta}$. Then by the uniqueness part of GKS's sheaf quantization (which also holds for $\{\tau \geq 0\}$-restriction situation), for the case (ii) above, $\K_+ \simeq \bar{s}^{-1} \F_S$. Meanwhile, interested readers can check that the method proving uniqueness of sheaf quantization works perfectly well for the space $\R \times M \times M \times \R$ (instead of $\R \times (M \times \R) \times (M \times \R)$) because constraint of singular support as demonstrated in Proposition \ref{zero-1} is only from the first $\R$-component (variable of time) and here only the last two $\R$-components are changed (to be $\R$) by map $\bar{s}$. Therefore, this uniqueness shows $R\bar{s}_*\K_+ \simeq \F_S$ in the (i) above.

\newpage

\end{document}